\newcommand{\new}[1]{\textcolor{black}{#1}}
\newcommand{\tRe}{\textup{Re}}
\newcommand{\tIm}{\textup{Im}}
\newcommand{\vb}{\boldsymbol v}
\newcommand{\lambb}{\boldsymbol \lambda}
\newcommand{\sumh}{\sideset{}{^h}\sum}
\newcommand{\sumd}{\sideset{}{^d}\sum}
\newcommand{\sumsharp}{\sideset{}{^\#}\sum}
\newcommand{\R}{\mathbb{R}}
\newcommand{\N}{\mathcal{N}}
\newcommand{\es}[1]{\begin{equation}\begin{split}#1\end{split}\end{equation}}
\newcommand{\est}[1]{\begin{equation*}\begin{split}#1\end{split}\end{equation*}}
\newcommand{\M}{\mathcal{M}}
\newcommand{\cH}{\mathcal{H}}
\newcommand{\LL}{\mathcal{L}}
\renewcommand{\mod}[1]{~\pr{\textnormal{mod}~#1}}
\newtheorem{thm}{Theorem}[section]
\newtheorem*{thm*}{Theorem}
\newtheorem{prop}[thm]{Proposition}
\newtheorem{lem}[thm]{Lemma}
\newtheorem{lemma}[thm]{Lemma}
\newtheorem{cor}[thm]{Corollary}
\newtheorem{defi}{Definition}
\theoremstyle{remark}
\newtheorem{rem}{Remark}
\newtheorem{rem*}{Remark}
\newcommand{\pr}[1]{\left( #1\right)}
\newcommand{\e}[1]{\operatorname{e}\pr{ #1}}
\newcommand{\bfrac}[2]{\left(\frac{#1}{#2}\right)}
\newcommand{\m}{\mathfrak m}
\renewcommand{\c}{\mathfrak{c}}
\newcommand{\lt}{\left}
\newcommand{\rt}{\right}
\newcommand{\bsm}{\lt(\begin{smallmatrix}}
\newcommand{\esm}{\end{smallmatrix}\rt)}
\newcommand{\sym}{\textup{sym}}
\newcommand{\cS}{\mathcal{S}}
\newcommand{\Dis}{\textup{Dis}}
\newcommand{\Ctn}{\textup{Ctn}}
\newcommand{\Hol}{\textup{Hol}}
\newcommand{\non}{\textup{non}}
\newcommand{\sing}{\textup{s13}}
\def\sumh{\operatornamewithlimits{\sum\nolimits^h}}
\def\sumprime{\operatornamewithlimits{\sum\nolimits^\prime}}
\newcommand{\summany}{\operatorname*{\sum... \sum}}
\let\originalleft\left
\let\originalright\right
\renewcommand{\left}{\mathopen{}\mathclose\bgroup\originalleft}
\renewcommand{\right}{\aftergroup\egroup\originalright}
\numberwithin{equation}{section}
\newcommand{\W}{\mathcal W}
\newcommand{\Rep}{\textrm{Re}}
\newcommand{\mL}{\mathcal L}
\newcommand{\elltwo}{\ell_{\infty}}
\begin{document}
\title[The $n^{th}$ centered moments of a large orthogonal family]{The $n^{th}$ centered moments of a large orthogonal family of automorphic $L$-functions}

\date{
\today}
%\subjclass[2010]{11M06, 11M26}
%\keywords{automorphic $L$-functions, $\Gamma_1(q)$, moments}

\author[V. Chandee]{Vorrapan Chandee}
\address{Mathematics Department \\ Kansas State University \\ Manhattan, KS 66503}
\email{chandee@ksu.edu}

\author[Y. Lee]{Yoonbok Lee$^{\dagger}$}
\thanks{$^{\dagger}$corresponding author.}
\address{Department of Mathematics \\ Incheon National University \\ Incheon 22012, Korea}
\email{leeyb@inu.ac.kr, leeyb131@gmail.com}

\author[X. Li]{Xiannan Li}
\address{Mathematics Department \\ Kansas State University \\ Manhattan, KS 66503}
\email{xiannan@ksu.edu }

\subjclass[2020]{11M50, 11F11, 11F72 }
\keywords{$n$-th centered moments, Low-lying zeros, Orthogonal family, $GL(2)$ $L$-functions.}

\allowdisplaybreaks
\numberwithin{equation}{section}
%\selectlanguage{english}
\begin{abstract}
We obtain the $n$th centered moments of one level densities of a large orthogonal family of $L$-functions associated with holomorphic Hecke newforms of level $q$, averaged over $q\sim Q$.  We verify the Katz-Sarnak conjecture for these statistics, in the range where the sum of the supports of the Fourier transforms of test functions lies in $(-4, 4)$.  In so doing, we need to understand certain phantom oversized terms, which allow us to extract the right off-diagonal contributions.  We further need to resolve the combinatorial problem that arises when matching our main terms with random matrix predictions.

\end{abstract}

\maketitle

\section{Introduction} %\label{sec:intro}

It is a fundamental heuristic that the statistical behavior of zeros of families of $L$-functions mirrors the corresponding statistics of eigenvalues of classical compact groups of random matrices. The first indication of this starts with Montgomery's pair correlation conjecture \cite{Mont} and his conversation with Dyson. Later, Katz and Sarnak \cite{KaSa} established that, for various families of zeta and $L$-functions over function fields, the distribution of low-lying zeros near the central point coincides with that of eigenvalues near 1 in the scaling limit of classical compact groups such as the unitary, symplectic, or orthogonal groups, depending on the symmetry type of the family. They further conjectured that this correspondence extends to families of $L$-functions over number fields, giving rise to a heuristic framework for predicting zero statistics and symmetry types. 

To be more specific, we define the one-level density of zeros as
$$ \mathcal{OL}(\Phi, C) =   \frac{1}{| \mathcal H(C)|}  \sum_{f \in \mathcal H(C)} \ \sum_{j}  \Phi(\mathcal U \gamma_{j, f}  )   ,$$
%\vspace{-0.4cm}
where $\mathcal H(C)$ is an appropriate family of automorphic forms to which we have associated $L$-functions with analytic conductor around size $C$, $\mathcal U = \frac{\log C}{2\pi}$, and $\Phi$ is a Schwartz class function. In the above, we have written the nontrivial zeros of $L$-functions associated to $f \in \mathcal H({C})$ as $\frac 12 + i\gamma_{j, f}$, where $\gamma_{j, f}$ is real under the Generalized Riemann Hypothesis (GRH). The one-level density conjecture states
$$ \displaystyle \mathcal {OL}(\Phi, C) = \int \Phi(x) W_G(x) \> dx +  \new{ o(1) }, $$
where $W_G(x)$ is a density function depending only on some underlying symmetry group. For example, $W_G(x) = 1$ for unitary group, and $W_G(x) = 1 + \frac{\delta_0}{2}$ for the orthogonal group, where $\delta_0$ is the usual Dirac delta distribution. Evidence for this conjecture appears in various families of $L$-functions but with {\it restricted support} on $\widehat \Phi$, the Fourier transform of $\Phi$. For example, Iwaniec, Luo and Sarnak \cite{ILS} studied the one-level density for the family associated with cuspidal new forms of fixed weight $k$ and squarefree level $q$.  Under GRH, they showed that the conjecture holds as long as $\widehat \Phi$ is supported in $(-2, 2)$ as $q \rightarrow \infty$.  Hughes and Rudnick \cite{HR} studied the one-level density with the family of Dirichlet $L$-functions of non-trivial characters mod $q$ for a fixed odd prime $q$, which is associated with unitary group.  They proved the conjecture when $\widehat \Phi$ is supported on $[-2, 2].$ In \cite{BCL}, Baluyot and the first and third authors develop a new approach that yields a stronger result for a larger family of $L$-functions. In particular, they consider the orthogonal family of $L$-functions attached to holomorphic Hecke newforms of level $q$, averaged over all $q \asymp Q$. Assuming GRH, they showed that the one-level density for this extended family matches the Katz-Sarnak prediction when the support of the Fourier transform of the test function is contained in the interval $(-4,4)$, the widest support in the literature.  The family studied in this work is amenable to such an extension; in contrast, the best known analogous result for a large family of Dirichlet $L$-functions due to Drappeau, Pratt and Radziwi\l\l \cite{DPR} has the support restricted to $(-2 - 50/1093, 2 + 50/1093)$.

The bandwidth restriction on the support of $\widehat {\Phi}$ is not merely a  technical condition.  The uncertainty principle from harmonic analysis tells us that if we want to isolate the contribution of low zeros by choosing $\Phi$ with narrow support, the wider the support needs to be for $\widehat{\Phi}$.  This is highly desirable since it is arithmetically significant whether $L(s, f)$ vanishes at $s = 1/2$ in many examples.  More generally, in such examples, the order of vanishing of $L(s, f)$ at $s= 1/2$ contains important arithmetic information.  In order to extract such refined information about the low-lying zeros, one can consider not only extending the support of the test function, but also studying higher moments of the sum over zeros.  To describe our results, we now fix some notation.

Let $S_k(q)$ be the space of cusp forms of fixed even weight $k \ge 4$ for the group $\Gamma_0(q)$ with trivial nebentypus,  where  
$$
\Gamma_0(q) := \left\{ \left( \left.{\begin{array}{cc}
   a & b \\
   c & d \\
  \end{array} } \right)  \ \right|  \  ad- bc = 1 , \ \  c \equiv 0 \mod q \right\}.
$$
Let $\mathcal H_k(q) $ be an orthogonal basis of the space of newforms in $ S_k(q)$ consisting of Hecke cusp newforms, normalized so that the first Fourier coefficient is $1$. 
For convenience, we normalize our sums over $f$ to play well with spectral theory. To be more specific, we define the harmonic average of $\alpha_f \in \mathbb{C} $ over $\mathcal H_k(q)$ to be 
\begin{equation}\label{eqn: harmonicweightsummation}
\sumh_{f \in \mathcal H_k(q)} \alpha_f = \frac{\Gamma(k-1)}{(4\pi)^{k-1}}\sum_{f \in \mathcal H_k(q)} \frac{\alpha_f}{\|f\|^2},
\end{equation}
where  $ \displaystyle \|f \|^2 = \int_{\Gamma_0(q) \backslash \mathbb H} |f(z)|^2 y^{k-2} \> dx \> dy    $ and $\mathbb H$ is the upper half plane.

For each $f \in \mathcal H_k(q) $,   the $L$-function associated to $f$ is defined  by
\es{ \label{def:Lsf}
L(s, f) = \sum_{n \geq 1} \frac {\lambda_f(n)}{n^{s}} &= \prod_p \pr{1 - \frac{\lambda_f(p)}{p^s} + \frac{\chi_0(p)}{p^{2s}}}^{-1} \\
&= \prod_p \pr{1 - \frac{\alpha_f(p)}{p^s} }^{-1} \pr{1 - \frac{\beta_f(p)}{p^s} }^{-1}}
for $\Rep(s) > 1 $, where the $\lambda_f (n) $ are the Hecke eigenvalues of $f$ and $\chi_0$ denotes the trivial Dirichlet character modulo $q$. Since $f$ is a newform, $L(s, f)$ is entire and satisfies the functional equation
 $$\Lambda\pr{\tfrac 12 + s, f} = \epsilon_f \Lambda\pr{ \tfrac 12 - s, f},$$  
where the completed $L$-function $\Lambda(s, f)$ is defined by
\est{\Lambda\pr{ \tfrac 12 + s, f} := \pr{\frac q{4\pi^2}}^{\frac s2 +   \frac14  } \Gamma \pr{s + \frac k2} L\pr{ \tfrac 12 + s, f},}
and $\epsilon_f = \pm 1 $ is the sign of the functional equation.  When $\epsilon_f = 1$, we say that $f$ is even.  Otherwise, we say $f$ is odd.  Note that the functional equation implies that $L(\frac12 , f ) = 0 $ for all odd $f$.

Assume GRH for $L(s,f)$. We list the nontrivial zeros $\frac12 + i \gamma_{j, f}$ of $L(s,f)$ as 
$$ \cdots  \leq \gamma_{-3, f} \leq \gamma_{-2, f} \leq \gamma_{-1,f} \leq 0 \leq \gamma_{1, f} \leq \gamma_{2, f} \leq \gamma_{3, f} \leq \cdots $$
for an even form $f$ and 
$$ \cdots \leq \gamma_{-3, f} \leq \gamma_{-2, f} \leq \gamma_{-1,f} \leq \gamma_{0, f} = 0 \leq \gamma_{1, f} \leq \gamma_{2, f} \leq \gamma_{3, f} \leq \cdots$$
for an odd form $f$. 
By the functional equation we see that $ \gamma_{ -j , f } = - \gamma_{j,f} $.    
Let $ \Psi(x)$ be a smooth function, compactly supported in $(a,b)$ for fixed $ 0<a<b$ and let $ \Phi_i (x) $ be an even Schwartz class function for $ i \leq n $. 
Then the $n$-th centered moment  for $\mathcal H_k (q) $ is defined by
\es{\label{def:nthcenterNT}
\mathscr{L}_n(Q) := \frac{1}{N_0 (Q)}\sum_q \Psi\bfrac{q}{Q} \sumh_{f \in \mathcal H_{k}(q)} \prod_{i = 1}^n \left[ \sum_{j} \Phi_{i} \left( \frac{\gamma_{j,f} }{2 \pi} \log Q\right) - \widehat{\Phi}_i(0) - \frac{\Phi_i (0)}{2} \right],
}
where
\begin{equation*}
N_0 (Q) := \sum_q \Psi\bfrac{q}{Q} \sumh_{f \in \mathcal H_{k}(q)} 1 \sim   c \widetilde{ \Psi } (1) Q  
\end{equation*}
for some constant $ c>0$ by Lemma \ref{lem:asympforNQ},  
$$ \widetilde \Psi (s) := \int_0^\infty \Psi(x)x^{s-1}dx  $$
is the Mellin transform of $\Psi(x)$, and  
$$ \widehat{\Phi}(t) := \int_{-\infty}^{\infty} \Phi(x) e^{- 2 \pi i tx } dx      $$
is the Fourier transform of $\Phi(x)$.
This is analogous to the $n^{th}$ centered moments appearing in Hughes-Miller's work \cite{HM}.  The study of such $n^{th}$ moments is motivated by applications towards high order non-vanishing results at the critical point, and in particular towards proving that a high percentage of $L$-functions do not vanish to high order.  \footnote{Indeed, if many $L$-functions vanish to high order, then the quantity in \eqref{def:nthcenterNT} must be very large, for appropriate choices of test functions $\Phi_i$.}

In the aforementioned work of Baluyot, Chandee and Li \cite{BCL}, a one-level density result corresponding to $n=1$ was derived, with $\widehat \Phi$ compactly supported in $(-4, 4).$  To be more precise, assuming GRH, their result \cite{BCL} shows that for $\Phi_1$ an even Schwartz function with $\widehat \Phi_1$ compactly supported in $(-4, 4),$  
$$ \lim_{Q \rightarrow \infty}  \mathscr{L}_1 (Q) =0.
$$

In this paper, we are interested in studying the more complex quantity $\mathscr{L}_n (Q) $ for general $n\ge 1$.  To be more precise, let $O(N)$ denote the group of $N\times N$ orthogonal matrices.  Further let $SO(  N) $ be the subgroup of $O(N)$ with determinant $1$ and $ O^{-} ( N)$ be the coset of $O( N)$ with determinant $ -1$, so that $O(N) $ is the disjoint union of $SO(N) $ and $ O^{-}(N)$.  
 If $ e^{i\theta}$ is an eigenvalue of an orthogonal matrix, then so is $ e^{-i \theta}$. Thus, we may write the eigenvalues of $X_N \in SO(2N)$ as $   e^{ \pm  i \theta_1} , \ldots ,   e^{ \pm  i \theta_N}$, and the eigenvalues of $ X_{N} \in O^{-} (2N+2) $ are $  \pm 1 = \pm e^{ i \theta_0}    ,   e^{ \pm  i \theta_1} , \ldots ,  e^{ \pm i \theta_{N} }$ with $ 0 = \theta_0 \leq \theta_1 \leq \cdots \leq \theta_N \leq \pi$, where $ \theta_{-k} := - \theta_k $.  Let 
\begin{equation} \label{def C even}
 C_{even}(n) :=   \lim_{N \to \infty}     \int_{SO (2N)}  \prod_{\ell =1}^n \left[  \sum_{ 0< |  j |  \leq N     } \Phi_\ell  \bigg( \frac{N \theta_{j }}{ \pi}   \bigg) - \widehat{\Phi}_\ell (0) - \frac{ \Phi_\ell ( 0 )}{2} \right]  dX_{SO(2N)}      
 	\end{equation}
 and	%the $n$-th centered moment for $O^{-}(2N+2)$ by
\begin{equation}\label{def C odd}
C_{odd}(n)    := \lim_{N \to \infty}     \int_{ O^{-} (2N+2 )} \prod_{\ell=1}^n \left[  \sum_{ 0 \leq |  j|  \leq N     } \Phi_\ell  \bigg( \frac{N \theta_{j }}{ \pi}   \bigg) - \widehat{\Phi}_\ell (0) - \frac{ \Phi_\ell ( 0 )}{2} \right]   dX_{O^{-}(2N+2)}   ,
 	\end{equation}
 where  $dX_S$ is the measure induced by the Haar measure on $O(N)$, normalized such that $S$ has measure $1$. Then the $n$-th centered moment for $O(N)$ is 
 \begin{equation}\label{def C}
  C(n) := \frac{1}{2} (C_{even}(n) + C_{odd}(n)).
  \end{equation}
 
Our main theorem for general $n$ is below.
  \begin{thm}\label{thm:main} Assume GRH. Let $\Phi_i$ be an even Schwartz function with $\widehat \Phi_i$ compactly supported in $(-\sigma_i, \sigma_i)$, where $\sum_{i = 1}^n \sigma_i < 4$. Then with notation as before,
$$ \lim_{Q \rightarrow \infty} \mathscr{L}_n(Q) = C(n).
$$
\end{thm}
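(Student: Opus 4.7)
The plan is to apply Riemann's explicit formula to each factor, rewriting
$$\sum_j \Phi_i\!\left(\tfrac{\gamma_{j,f}}{2\pi}\log Q\right) - \widehat\Phi_i(0) - \tfrac{\Phi_i(0)}{2}$$
as a prime-power sum $\mathcal S_i(f)$ whose dominant contributions are
$$-\frac{2}{\log Q}\sum_p \frac{\lambda_f(p)\log p}{\sqrt p}\,\widehat\Phi_i\!\left(\tfrac{\log p}{\log Q}\right) - \frac{1}{\log Q}\sum_p \frac{\lambda_f(p^2)\log p}{p}\,\widehat\Phi_i\!\left(\tfrac{2\log p}{\log Q}\right),$$
with terms from the gamma factor yielding the $\widehat\Phi_i(0)$ and $\Phi_i(0)/2$ subtractions (the latter absorbing the symmetry-type / root-number split). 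Expanding the $n$-fold product in $\mathscr{L}_n(Q)$ produces a sum of multilinear monomials $\prod_i \lambda_f(p_i^{m_i})$ to be averaged over $f \in \mathcal H_k(q)$ and then over $q\sim Q$ weighted by $\Psi(q/Q)$.

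For the $f$-average I would apply the Petersson trace formula, splitting each monomial into a diagonal piece and an off-diagonal Kloosterman piece. On the diagonal, iterated Hecke multiplicativity $\lambda_f(p^a)\lambda_f(p^b)=\sum_c \lambda_f(p^{a+b-2c})$ organizes the $n$ indices into pair-partitions: each pair $\{i,j\}$ contributes a two-point integral involving $\widehat\Phi_i\widehat\Phi_j$ supported on $(-\sigma_i,\sigma_i)\cap(-\sigma_j,\sigma_j)$, while unpaired singleton indices are designed to cancel against the centering subtraction $\widehat\Phi_i(0)+\tfrac{1}{2}\Phi_i(0)$, exactly as in the one-level-density calculation of Baluyot-Chandee-Li.

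The off-diagonal contribution is the technical core. Here I would adapt and extend the method of Baluyot-Chandee-Li: the additional $q$-average smooths the Kloosterman sums, and after Poisson or Voronoi summation they can, on GRH, be handled up to the wider support $\sum_i \sigma_i < 4$. The main obstacle, flagged in the abstract, is that certain intermediate dual sums, the \emph{phantom oversized terms}, appear individually to exceed the expected main term. The plan is to track them through the functional equations of the auxiliary $L$-functions and show that they cancel in structured groups, so that what survives is precisely the genuine off-diagonal pair-contribution, which when combined with the diagonal pair-contribution produces a clean sum over pair-partitions of $\{1,\ldots,n\}$.

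Finally, this number-theoretic pair-partition expansion must be matched to $C(n)=\tfrac{1}{2}(C_{even}(n)+C_{odd}(n))$. Using the determinantal/Pfaffian formulas for the $n$-point correlation functions of $SO(2N)$ and $O^-(2N+2)$, one expands the centered product on the RMT side into a sum over pair-partitions, each pair carrying the Katz-Sarnak two-point kernel whose Fourier transform agrees with the two-point integral produced arithmetically. The combinatorial obstacle, also flagged in the abstract, is to show that the singleton terms, the $\Phi_i(0)/2$ boundary corrections, and the restricted Fourier supports combine on both sides in precisely matching ways, partition-by-partition, and including the subtle interaction between $C_{even}$ and $C_{odd}$ that arises from the eigenvalue forced at $\pm 1$. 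I expect the phantom-term cancellation and this combinatorial matching to be the two principal obstacles; both trivialize in the one-level case $n=1$ previously handled.
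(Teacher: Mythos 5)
Your high-level outline (explicit formula, Petersson, diagonal vs.\ off-diagonal, match against random matrix predictions) is the right skeleton, but several of the load-bearing steps are materially different from what actually works, and two of your proposed substeps would not succeed as stated.

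\textbf{The off-diagonal step.} You propose to handle the Kloosterman terms after Petersson by ``Poisson or Voronoi summation'' with the $q$-average smoothing the sums. This is not what the paper does, and for good reason: Poisson/Voronoi on the Kloosterman sums directly does not reach total support $4$. The actual mechanism is the \emph{complementary level} device: after Ng's newform Petersson formula one observes that in the transition range of $J_{k-1}$ the modulus $cq$ has $c\asymp Q^{1-\delta/2}\ll q$, so one applies Kuznetsov's formula \emph{to the sum over $q$} (treating $c$ as the new level), converting the level-average into a spectral sum over Maass forms, holomorphic forms and Eisenstein series of level $cL_0$. The phantom oversized term you allude to is then not a functional-equation artifact. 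It arises from the pole of $\zeta(1-s)$ at $s=0$ in the trivial-character Eisenstein contribution, and it is killed by the vanishing at $s=0$ of the transform
$\int_0^\infty (J_{2ir}(\xi)-J_{-2ir}(\xi))J_{k-1}(\xi)\,\xi^{s-1}\,d\xi$,
which is precisely the orthogonality of the holomorphic cuspidal spectrum against the Eisenstein spectrum (Lemma \ref{lem:MellinofprodofJbessel}, in particular \eqref{eqn:diffJbesselat0}). ``Tracking phantom terms through functional equations'' is not a mechanism that would find this cancellation.

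\textbf{The shape of the answer.} You claim that after the cancellations ``what survives is precisely the genuine off-diagonal pair-contribution, which when combined with the diagonal pair-contribution produces a clean sum over pair-partitions of $\{1,\ldots,n\}$.'' This is false for the family at hand. The limit is $C(n)=C_0(n)+C_2(n)$ where $C_0(n)$ is the Gaussian pair-partition piece, but $C_2(n)$ in \eqref{def:C2(n)} is \emph{not} a pair-partition expression: one picks a distinguished pair $K'$ with $|K'|=2$, a residual $K''$ of arbitrary size, and the weight $\mathscr{V}(K',K'')$ in \eqref{def:V} mixes the two distinguished test functions with \emph{products} $\Phi_{k_1,G_3}=\Phi_{k_1}\prod_{j\in G_3}\Phi_j$ over subsets of $K''$. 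So the off-diagonal main term genuinely correlates more than two of the $\Phi_i$. Organizing the diagonal and off-diagonal pieces into a pair-partition sum would not reproduce $C(n)$, and indeed would predict Gaussian behaviour for the full family, which Corollary \ref{cor:Phisupported in 4/n} shows is wrong once the support exceeds $(-2/n,2/n)$.

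\textbf{The random matrix matching.} Your proposal to expand the centered product via determinantal/Pfaffian $n$-point correlations and match ``partition-by-partition'' is essentially the cumulant route used in Hughes--Rudnick and Hughes--Miller; the paper explicitly abandons that route because with support up to $4$ (rather than $2$) the cumulant combinatorics becomes unmanageable. Instead they use the Mason--Snaith contour-integral representation for centered moments on $SO(2N)$ and $USp(2N)$ (Lemma \ref{lemma eqn MS} and \eqref{def J star}), reduce $O^-(2N+2)$ to $USp(2N)$ via Lemma \ref{lemma O- to USp}, and exploit a cancellation between the $\pm$ (even/odd) versions to kill odd $|K'|$ in \eqref{eqn:C even odd nice cancellation}, arriving at \eqref{eqn Cn Tplusn}. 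This is a genuinely different organizing principle, and the support restriction in Lemma \ref{lemma:supports restriction} is built into it in a way that mirrors, term-by-term, the number-theoretic calculation; the Pfaffian approach would not naturally produce the $\mathscr{V}(K',K'')$ structure on the RMT side.

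One smaller point: the prime-square sum you write in your version of the explicit formula is not retained as a main term. In Lemma \ref{lem:Explicit} it is absorbed into the $O(\log\log Q/\log Q)$ error using the GRH bound for $L(s,\mathrm{sym}^2 f)$, so the entire analysis proceeds on $\mathfrak{M}_{\Phi,f}(Q)$ alone; the centering constants $\widehat\Phi_i(0)+\Phi_i(0)/2$ are cancelled by the explicit formula directly, not against diagonal Petersson terms.
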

In contrast to previous work on the $n^{\text{th}}$ centered moments for orthogonal families \cite{HM, REUandMiller, Coh et al}, we encounter off-diagonal main terms contributing to $C(n)$ which requires careful identification.  We describe $C(n)$ more precisely below, but here we first qualitatively discuss the case where all the test functions $\Phi_i = \Phi$ are the same, as this easily facilitates comparison with previous works.  In this case, our result allows for $\widehat{\Phi}$ to be supported in $(-4/n, 4/n)$.

Hughes and Rudnick studied such statistics in the case of Dirichlet $L$-functions in \cite{HR} and \cite{HR2}, where the test functions $\Phi_i = \Phi$ for all $i$, with the support of $\widehat{\Phi}$ restricted to $(-2/n, 2/n)$.  The moments they derived matched Gaussian moments.  However, based on calculations on the random matrix side, they conjectured that such moments would not be Gaussian if the support is suitably extended.  Hughes and Miller \cite{HM} studied this for the orthogonal family of automorphic $L$-functions similar to ours, but without an average over the level $q$.  Their work was extended by the recent work of Cohen et al. \cite{Coh et al}, with the best known result when $\Phi_i = \Phi$ for all $i$ and again when $\widehat{\Phi}$ has support in $(-2/n, 2/n)$.  These works verify that the moments are non-Gaussian when the average is restricted to even forms or odd forms.  However, the non-Gaussian term from the even forms and the odd forms precisely cancel in their work, so the non-Gaussian behavior is not visible for the full family. 

It is noteworthy that with the support restricted to $(-2/n, 2/n)$ as discussed above, the $n^{\text{th}}$ centered moments do not distinguish between the unitary group and the full orthogonal group, as both exhibit Gaussian behavior.  To see the non-Gaussian behavior for our family, the support needs to be further extended, and our Theorem \ref{thm:main} and Corollary \ref{cor:Phisupported in 4/n} verify the expected deviation from Gaussian for the full orthogonal family for the first time.  

To be more precise, we need to introduce some notation involving set partitions. 
\begin{defi}\label{def:set partition}
A set partition $\underline G = \{ G_1 , \ldots , G_\nu \} $ of a finite set $K$ is a decomposition of $K$ into disjoint nonempty subsets $G_1 , \ldots , G_\nu$.   Let $\Pi_K$ be the collection of these set partitions.  Let $ \pi_{K,1} = \{ \{k \} \ | \ k \in K \}  \in \Pi_K $ and define $ \Pi_{K,2} $ by the set of $ \underline G \in \Pi_K $ such that $ | G_i | = 2 $ for all $ G_i \in \underline G $. We also 
 let $ \Pi_n := \Pi_{ [n]} $ and $ \Pi_{n,2} := \Pi_{ [n], 2 }$ for a positive integer $n$, where
$$ [n] := \{ 1, 2, \ldots, n \} . $$
\end{defi}

We have the following expression for $C(n)$.
\begin{thm}\label{thm:Cn}
Suppose that $ \sum_{i\leq n } \sigma_ i <  4 $. Then we have
\begin{equation} \label{def:Cn} 
C (n) = C_0 (n) + C_2 (n), 
\end{equation}
where
\begin{equation} \label{def:C0(n)}
 C_0 (n)  :=  \sum_{ \underline G \in \Pi_{n,2} }           \prod_{G_i \in \underline{G}}  \mathscr{I}_{2 }  ( G_i  )      
\end{equation}
     and
     \begin{equation} \label{def:C2(n)}
       C_2 (n): =  \sum_{ \substack{    K_0 \sqcup K'     \sqcup K'' = [n]   \\
   |K'| = 2   } }            \mathscr{V} ( K' , K'' )  \sum_{  \underline G \in \Pi_{K_0,2} }  \prod_{G_i \in \underline G}  \mathscr{I}_{2 }  (G_i  )      ,
     \end{equation}
where   $ \mathscr{V} (K', K'' )  $ is defined in \eqref{def V K} and 
\begin{equation} \label{def:I2Kr}
\mathscr{I}_{2 }  ( \{ k_1 , k_2 \}  )  := 2 \int_{-\infty}^\infty |t| \widehat{\Phi}_{k_1} (t) \widehat{\Phi}_{k_2} (t) dt.
\end{equation}
In particular, we have
\begin{multline}\label{def:V}
 \mathscr{V} ( \{k_1, k_2 \}, G) = \sum_{\substack{G_1 \sqcup G_2 \sqcup G_3 \sqcup G_4 = G \\ G_3 \subset \{k_1 + 1,..., n \} \\ G_4 \subset \{k_2 + 1,...,n \}  } } (-2)^{|G|+ |G_1| + |G_2|}  \\
 \times \int_{[0, \infty)^{|G_1| + |G_2|}}  \mathscr{I} \left( \Phi_{k_1 , G_3 }, \Phi_{k_2 , G_4} ; \sum_{j \in G_1} w_j , \sum_{j \in G_2 } w_j  \right) \prod_{j \in G_1 \sqcup G_2} \widehat \Phi_j(w_j) \> dw_j    
  \end{multline}
for $ \{ k_1  , k_2 \} \sqcup G \subset [n]$,  where   
\begin{equation}\label{def:I12}\begin{split}
\mathscr{I}( \Phi_1, \Phi_2 ; U_1, U_2 ) :=  &  \int_{0}^{\infty} \int_{0}^{\infty} 
 \widehat \Phi_1\left( t_1+1+ U_1 \right)    \widehat \Phi_2 \left( t_2 +1+ U_2 \right)  \> dt_1 \> dt_2   \\
 & - 4\int_{0}^{\infty}  t
 \widehat \Phi_1\left( t+1+U_1  \right)  \widehat \Phi_2\left( t +1+ U_2  \right)   \> dt
 \end{split}\end{equation}
and 
 \begin{equation} \label{eqn:PhikG} 
 \Phi_{k, G}(x) := \Phi_k(x) \prod_{j \in G} \Phi_j(x).
 \end{equation}

\end{thm}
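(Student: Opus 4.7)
The plan is to evaluate $C_{even}(n)$ and $C_{odd}(n)$ in \eqref{def C even}-\eqref{def C odd} using the Gaudin--Mehta theory of eigenvalue correlations for $SO(2N)$ and $O^-(2N+2)$, and then average as in \eqref{def C}. These groups have determinantal eigenvalue statistics with scaled kernel
$K_\pm(x,y) = \frac{\sin\pi(x-y)}{\pi(x-y)} \pm \frac{\sin\pi(x+y)}{\pi(x+y)}$, plus a contribution from the forced eigenvalue $\pm 1$ in the $O^-$ case.

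First, I would expand the product in the integrand of \eqref{def C even} by multilinearity into a sum over subsets $S \subseteq [n]$, each term being a matrix integral of $\prod_{\ell\in S}\sum_j\Phi_\ell(N\theta_j/\pi)$ against $\prod_{\ell\notin S}(-\widehat{\Phi}_\ell(0)-\Phi_\ell(0)/2)$. The $|S|$-level density formula expands each matrix integral via the cycle decomposition of the corresponding determinant as a sum over set partitions $\underline G \in \Pi_S$, where each block $G_i$ contributes a cyclic ``connected correlator'' of $|G_i|$ kernel factors. The centering constants $\widehat{\Phi}_\ell(0)+\Phi_\ell(0)/2$ are precisely the singleton ($|G_i|=1$) contribution, so after summing over $S$ the singletons cancel, leaving only partitions $\underline G \in \Pi_n$ all of whose blocks have size $\ge 2$.

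Second, I would pass to the Fourier variables $\widehat{\Phi}_\ell$. The Fourier transform of the scaled orthogonal kernel restricted to a cycle is piecewise polynomial, roughly of the form $1-|u|$ on $|u|\le 1$ with a jump at $|u|=1$. Consequently a connected correlator of $r$ functions $\Phi_{\ell_1},\ldots,\Phi_{\ell_r}$ is tested against a piecewise function whose ``main'' region requires $\sum\sigma_{\ell_j}\le 2$ and whose ``extended'' region (supported in $|u|\in[1,2]$) requires $\sum\sigma_{\ell_j}\le 4$. Under the hypothesis $\sum_{i=1}^n\sigma_i<4$, blocks of size $\ge 3$ in the main region drop out, and only two families of terms survive. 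The first is pure pair partitions of $[n]$, yielding $C_0(n)$ with $\mathscr{I}_2(G_i)$ as in \eqref{def:I2Kr}. The second is partitions in which a single distinguished pair $K'=\{k_1,k_2\}$ is augmented by a nonempty subset $K''\subset [n]\setminus K'$ entering the extended region, with the remaining indices paired among themselves; this yields $C_2(n)$. Taking the half-sum $\tfrac12(C_{even}+C_{odd})$ symmetrizes the $K_\pm$ sign and produces the absolute value $|t|$ in $\mathscr{I}_2$ and the signed combination appearing in $\mathscr{I}(\Phi_1,\Phi_2;U_1,U_2)$.

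The main obstacle is the combinatorial matching of this augmented-pair contribution with the explicit formula \eqref{def:V}. Each index $j\in K''=G$ must be classified by how it is ``chained'' into the cyclic correlator anchored at $\{k_1,k_2\}$: indices tailing $k_1$ form $G_3$, those tailing $k_2$ form $G_4$, with the inclusions $G_3\subset\{k_1+1,\ldots,n\}$ and $G_4\subset\{k_2+1,\ldots,n\}$ enforced by a canonical cyclic ordering to avoid overcounting; indices that ramify off the chain form $G_1$ or $G_2$, contributing free shift variables $w_j$ and a factor $-2$ depending on whether the shift lands on the $k_1$-leg or the $k_2$-leg. The shifts $t_i+1+U_i$ and the factor $-4$ on the $|t|$-integral in \eqref{def:I12} encode, respectively, that the extended-region kernel is supported on $[1,2]$ and that the half-sum of $SO(2N)$ and $O^-(2N+2)$ combines the two branches of the kernel. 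Verifying this identification reduces to an inductive unwinding of the Gaudin--Mehta cycle expansion and is the most technical step.
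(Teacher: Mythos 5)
Your proposal takes a genuinely different route from the paper. You propose the Gaudin--Mehta determinantal/cumulant expansion for the scaled kernels of $SO(2N)$ and $O^-(2N+2)$, passing to Fourier space and organizing terms by cycles of the kernel. The paper instead invokes the Mason--Snaith contour-integral representation (extending Conrey--Snaith), writing $\mathscr{T}_\pm$ via the functions $J^*_\pm$ of \eqref{def J star}, and then evaluating the multiple contour integrals directly. The paper explicitly remarks in \S 1 that the cumulant route you outline is the one used in \cite{HR,HM,Coh et al}, and that for supports summing to near $4$ it becomes ``more arduous'' -- which is precisely why the authors adopt the Mason--Snaith machinery. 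So yours is a different and, by the authors' own assessment, harder path.

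Beyond this strategic mismatch, the sketch glosses over the two steps that actually carry the weight of Theorem \ref{thm:Cn}, and at least one of your stated mechanisms is not quite right. First, your claim that ``blocks of size $\ge 3$ in the main region drop out'' is asserted without justification; it is not an immediate consequence of the unitary sine-kernel fact, since the orthogonal and symplectic kernels contain the extra $\pm\frac{\sin\pi(x+y)}{\pi(x+y)}$ piece. In the paper the analogous restriction is Lemma \ref{lemma:supports restriction}, proved by direct estimation of the Mason--Snaith integrand using the decay bounds of Lemma \ref{lem:Phibdd} under the hypothesis $\sum_i\sigma_i < 4$, and it gives $|K'|\le 3$, not simply ``size $\le 2$.'' Second, you attribute the emergence of $|K'|=2$ (and the disappearance of $|K'|=1$ and $|K'|=3$) to ``symmetrizing the $K_\pm$ sign,'' but what actually happens in the paper is the explicit cancellation in \eqref{eqn:C even odd nice cancellation}--\eqref{eqn Cn Tplusn}: $C_{even}$ carries a factor $(-1)^{|K'|}$ while $C_{odd}$ does not, so the half-sum kills the odd $|K'|$ terms. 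Your sketch does not produce that sign, and it is not a property of the kernel symmetrization alone -- it comes from the $\cH_0^{\mp}$ and $\cH_1^{\mp}$ sign structure in \eqref{def HWs} surviving the contour shifts. Third, you concede that the matching with \eqref{def:V}, including the $\{k_1+1,\dots,n\}$ restrictions on $G_3,G_4$ and the shift variables $w_j$, is ``the most technical step'' left to ``inductive unwinding''; in the paper these restrictions arise mechanically from the indicator $\mathbf{1}_{k<\ell}$ produced by the residue computation in Lemma \ref{lemma integrals 1}, not from a ``canonical cyclic ordering,'' so the mechanism you imagine would need a separate (and nontrivial) verification to align with \eqref{def:V}. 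Until those three gaps are closed, the proposal is an outline of a viable but substantially different program, not a proof.
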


To illustrate Theorem \ref{thm:main}, let $\Phi_i = \Phi$ for all $i$, and define
 $$ \sigma^2_{\Phi} := 2 \int_{-\infty}^\infty |t| \widehat{\Phi} (t)^2 \> dt.$$ This coincides with $\mathscr I_2(\{k_1, k_2 \})$, as defined in \eqref{def:I2Kr}, when $\Phi_{k_1} = \Phi_{k_2} = \Phi.$ 
 
\begin{cor} \label{cor:Phisupported in 4/n} Let $\Phi$ be an even Schwartz function with $\widehat \Phi$ is compactly supported in $\left( -\tfrac 4n, \tfrac 4n\right)$, and $\mathscr L_n(Q)$ be defined as before with $\Phi_i = \Phi$ for all $1\le i\le n$. Then 
\est{\lim_{Q \rightarrow \infty} \mathscr L_n(Q) =  (n - 1)!! ( \sigma^2_{\new\Phi})^{n/2} \delta_{even}(n) + C_2(n)}
    where $(n-1)!!$ denotes the product of all the positive integers up to $n\new{-1}$ that have the same parity as $n\new{-1}$, and $\delta_{even}(n)$ equals 1 if $n$ is even and 0 otherwise.
\end{cor}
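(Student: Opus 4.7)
The plan is to derive Corollary \ref{cor:Phisupported in 4/n} as a direct specialization of Theorem \ref{thm:main} together with the explicit formula for $C(n)$ in Theorem \ref{thm:Cn}. First, I would verify the hypothesis. Since $\widehat\Phi$ is compactly supported inside $(-4/n,4/n)$, there is some $\sigma<4/n$ with $\mathrm{supp}(\widehat\Phi)\subset(-\sigma,\sigma)$. Taking $\sigma_i=\sigma$ for every $i$, we have $\sum_{i\leq n}\sigma_i=n\sigma<4$, so both Theorem \ref{thm:main} and Theorem \ref{thm:Cn} apply and give
\est{\lim_{Q\to\infty}\mathscr L_n(Q)=C(n)=C_0(n)+C_2(n).}
Since the formula in the statement of the corollary already leaves $C_2(n)$ untouched, the only remaining task is to evaluate $C_0(n)$ under the assumption $\Phi_i=\Phi$ for all $i$.

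Next, I would simplify $C_0(n)$ defined in \eqref{def:C0(n)}. When $\Phi_{k_1}=\Phi_{k_2}=\Phi$, the functional \eqref{def:I2Kr} becomes
\est{\mathscr I_2(\{k_1,k_2\})=2\int_{-\infty}^\infty |t|\,\widehat\Phi(t)^2\,dt=\sigma^2_\Phi,}
which is independent of the pair $\{k_1,k_2\}$. Hence for any $\underline G\in\Pi_{n,2}$, the product $\prod_{G_i\in\underline G}\mathscr I_2(G_i)=(\sigma^2_\Phi)^{n/2}$, and so
\est{C_0(n)=|\Pi_{n,2}|\,(\sigma^2_\Phi)^{n/2}.}

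Finally, I would count the perfect matchings $|\Pi_{n,2}|$. If $n$ is odd there are no partitions of $[n]$ into pairs, so $|\Pi_{n,2}|=0$ and this contribution vanishes, consistent with the factor $\delta_{even}(n)$. If $n$ is even, pairing the least unmatched element with any of the remaining elements gives the standard recursion yielding $|\Pi_{n,2}|=(n-1)(n-3)\cdots 3\cdot 1=(n-1)!!$. Combining these observations produces
\est{C_0(n)=(n-1)!!\,(\sigma^2_\Phi)^{n/2}\,\delta_{even}(n),}
and adding the unchanged $C_2(n)$ term completes the proof. There is no real obstacle here; the corollary is a combinatorial corollary, and the only subtlety is the routine verification of the support hypothesis and the identification of $|\Pi_{n,2}|$ as the double factorial, both of which are standard.
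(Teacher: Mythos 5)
Your proof is correct and is precisely the intended derivation: the corollary is a direct specialization of Theorem \ref{thm:main} and Theorem \ref{thm:Cn}, with the only substantive content being that $\mathscr I_2(\{k_1,k_2\})$ collapses to $\sigma^2_\Phi$ when all test functions are equal and that $|\Pi_{n,2}| = (n-1)!!\,\delta_{even}(n)$. The paper treats this as immediate, and your verification of the support hypothesis, the simplification of $C_0(n)$, and the double-factorial count of perfect matchings are exactly what is needed.
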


\begin{rem} 
In our Corollary \ref{cor:Phisupported in 4/n}, the first term  $(n - 1)!! ( \sigma^2_{\new\Phi})^{n/2} \delta_{even}(n)$ corresponds to Gaussian moments, while the second term $C_2(n)$ represents deviation from Gaussian behavior.

When $\widehat \Phi$ is compactly supported in $\left( -\tfrac 2n, \tfrac 2n\right)$, a little calculation shows that $C_2(n)=0$ so that the moments exhibit Gaussian behavior.  However, our Corollary \ref{cor:Phisupported in 4/n} implies that generically the moments are not Gaussian when the support of $\widehat{\Phi}$ is not inside $\left( -\tfrac 2n, \tfrac 2n\right)$.
\end{rem}

We also mention the work of Cheek et al. \cite{REUandMiller}, which studies the same family by extending the work of Baluyot, Chandee and Li \cite{BCL}.  In their Theorem 1, they derive a result with complicated restrictions on the support.  When $\widehat{\Phi_i}$ are taken to have the same support, their support conditions look roughly similar to the support $(-2/n, 2/n)$ in the work of Cohen et al. \cite{Coh et al} for large $n$.  In contrast, their work presents an unexpected feature when the test functions are not the same, and the supports differ.  In particular, their Corollary 1.2 states a result for $n=2$ where $\sigma_1 = 3/2$ and $\sigma_2 = 5/6$.  The underlying cause of this curiously asymmetric setup is the presence of an oversized phantom contribution from the continuous spectrum.  Once properly understood, we will see that the phantom contribution vanishes.  A proper identification of the phantom term reveals additional off-diagonal contributions from the continuous spectrum, which is closely related to the non-Gaussian behavior exhibited in Corollary \ref{cor:Phisupported in 4/n}.  In contrast, in the previous work of Baluyot, Chandee and Li \cite{BCL} on the case $n=1$, this phantom contribution did not present difficulties, and there were no off-diagonal contributions to the main term.  We describe this in more detail in the outline in \S \ref{sec:outline}.     

The study of $\mathscr{L}_n(Q)$ presents a number of significant new difficulties for larger $n$.  One well known difficulty in such problems is that after a successful asymptotic evaluation of $\lim_{Q \rightarrow \infty}\mathscr{L}_n(Q)$, it is not clear that the resulting expression agrees with the random matrix prediction.  To be precise, proving that $ \lim_{Q \rightarrow \infty} \mathscr{L}_n(Q)$ agrees with $C(n)$ is a challenging combinatorics problem.  A well known example is the work of Gao \cite{Gao} where the number theory side was computed, but it was not until the work of Entin, Roditty-Gershon and Rudnick \cite{ERRudnick} when this was successfully matched with the random matrix prediction.  

In the previous works \cite{HR} \cite{HM} \cite{Coh et al}, this combinatorial matching was accomplished with a difficult argument involving cumulants.  The support allowed in our result is double or more compared to previous works, rendering such an argument even more arduous.  In this paper, we instead build on the work of Mason and Snaith \cite{MS}, which extends earlier work of Conrey and Snaith \cite{CS1, CS2}.  This simplifies combinatorial arrangements in the case of large support and allows us to find the explicit integral representation for $C(n)$ stated in Theorem \ref{thm:Cn} in terms of the $ \Phi_i$ and $ \widehat{\Phi}_i$, which is of independent interest.

As mentioned before, our result would lead to high quality bounds towards the proportion of $L$-functions which do not vanish to large order and other related problems.  We omit such bounds here due to the length and technical depth of the current paper. 

\subsection{Outline of the paper} \label{sec:outline}

We now provide an outline to the rest of the paper, focusing more on the flow of ideas, and suppressing technical details.

In \S \ref{sec:prelimresults}, we introduce some notation and preliminary results. In \S \ref{sec:outline of proof main theorem}, we setup the initial steps in the proof of Theorem \ref{thm:main}.  In particular, by the explicit formula, we want to study a quantity roughly of the form
$$ \mathscr{L}_n(Q) := \frac{1}{Q}\sum_q \Psi\bfrac{q}{Q} \sumh_{f \in \mathcal H_{k}(q)} \prod_{i = 1}^n \bigg[\frac{1}{\log Q}\sum_{\substack{p_i \\ p_i \nmid q}}\frac{\log p_i \lambda_f(p_i)}{\sqrt{p_i}} \widehat\Phi_i\left(\frac{\log p_i}{\log Q}\right) + O\bfrac{\log \log Q}{\log Q} \bigg], $$
where the $O\bfrac{\log \log Q}{\log Q}$ comes, for example, from the contributions of the prime squares.  Our first step is to get rid of these $O\bfrac{\log \log Q}{\log Q}$, which requires some dexterity.  This is because we now need to bound quantities involving 
$$\bigg| \sum_{\substack{p_i \\ p_i \nmid q}}\frac{\log p_i \lambda_f(p_i)}{\sqrt{p_i}} \widehat\Phi_i\left(\frac{\log p_i}{\log Q}\right)\bigg|,
$$and the sum over primes inside the absolute value may be too long to allow the use of Cauchy-Schwarz or H\"older inequality due to the fact that we allow the support of $\Phi_i$ to differ.  Instead, we take advantage of the uncertainty principle by exchanging the (morally long) sums over primes for short sums over zeros.  We then bound the short sums over zeros by long sums over zeros using positivity.  The long sums over zeros convert to short sums over primes, which can be bounded easily.

Next, we reduce the sums over primes to sums over distinct primes, dependent on some set partition of $\{1,...,n\}$, and isolate those set partitions which contribute.  The relevant Propositions for the above are stated in \S \ref{sec:outline of proof main theorem}, and proven in \S \ref{section:proof props}.

We now want to apply Petersson's formula to understand a sum of the form
$$\sumh_{f \in \mathcal H_{k}(q)} \frac{1}{\log^n Q}\sum_{\substack{m \le Q^{4-\delta} \\ (m, q) = 1}}\frac{a(m) \lambda_f(m)}{\sqrt{m}},
$$
where $a(m)$ is some coefficient which restricts $m$ to products of $n$ primes and our restriction $m \le Q^{4-\delta}$ is inherited from the support conditions on $\widehat{\Phi_i}$.

In the application of Petersson's formula for primitive forms, we see a complicated inclusion-exclusion-type of formula, which we need to prune in \S \ref{section:proof of main proposition}.  Ignoring such technicalities, we are left to consider a quantity roughly of the form

\begin{align*}
    \frac{1}{Q} \sum_{\substack{m \asymp Q^{4-\delta} }}\frac{a(m)}{\sqrt{m}}   \sum_q \Psi\bfrac{q}{Q} \sum_{c} \frac{S(m, 1; cq)}{cq} J_{k-1}\bfrac{\sqrt{m}}{cq},
\end{align*}
where we have removed the condition $(m, q) = 1$ and assumed $m\asymp Q^{4-\delta}$ for convenience.  In the transition region of the Bessel function, we have
$$c \asymp \frac{\sqrt{m}}{q} \asymp Q^{1-\delta/2}
$$is smaller than $q$.  Hence, it makes sense to switch to the complementary level $c$ by applying Kuznetsov's formula to the sum over $q$.  This sort of complementary level trick was previously used in the work of Baluyot, Chandee, and Li in the context of one level density \cite{BCL}.  The earliest appearance that we are aware of is in \S 8 of Deshouillers and Iwaniec's foundational work \cite{DI} in their bound on the contribution of exceptional eigenvalues on average over the level.  The details here appear in \S \ref{sec:applyKuznetsov}.  

The result of this is a sum over the complementary level $c\asymp Q^{1-\delta/2}$ of holomorphic cusp forms, Maass forms, and Eisenstein contributions.  The contribution of the holomorphic forms and Maass forms are bounded in \S \ref{sec:dispart}.  The contribution of the continuous spectrum is separated into the contribution of the trivial character and the non-trivial characters.  The non-trivial characters give a small contribution, and this is shown in \S \ref{sec:ctn}.  In both of these bounds, we write the orthonormal basis from Kuznetsov's formula in terms of primitive forms, and then GRH is invoked to bound the sums over primes by $Q^\epsilon$, so that the resulting bound looks like
$$\frac{1}{Q}\sum_{c\asymp Q^{1-\delta/2}} Q^\epsilon \ll Q^{-\delta/2 + \epsilon}.
$$

In \S \ref{sec:trivialchar off diag main terms}, we begin the treatment of the contribution of the trivial character.  Here, the prime sums by themselves can be genuinely huge, giving a contribution that appears far larger than the main term.  There are a number of examples in the literature where oversized contributions from Eisenstein series are canceled out.  The first example of this appears in the work of Duke, Friedlander, Iwaniec \cite{DFI}.  We also mention the work of Blomer, Humphries, Khan, and Milinovich \cite{BHKM}, which has a setup similar to our work.  In both works, they start with an average over Maass forms and Eisenstein series, and the Eisenstein contribution on one side cancels out the Eisenstein contributions on the other side of Kuznetsov.  We start with holomorphic modular forms, and so we instead use the orthogonality of the space of holomorphic cusp forms with the continuous spectrum.  To explain this conceptually, we note that if we had no restrictions on the level $q$, the contribution of the Eisenstein series is weighted by
\begin{equation}\label{eqn:outlineJortho}
\int_0^{\infty} (J_{2ir}(\xi) - J_{-2ir} (\xi) )  J_{k-1}(\xi) \,\frac{d\xi}{\xi} = 0,
\end{equation}
which is simply an echo of the orthogonality of the space of cusp forms with the Eisenstein spectrum.  In our work we have the presence of $\Psi\bfrac{q}{Q}$ restricting $q\asymp Q$, which using Mellin inversion gives us an integral transform of the form 
\begin{equation}\label{eqn:outlineJs}
\int_0^{\infty} (J_{2ir}(\xi) - J_{-2ir} (\xi) )  J_{k-1}(\xi) \xi^s \,\frac{d\xi}{\xi}.
\end{equation}
In general, this means that the contribution of the Eisenstein spectrum is nonzero.  However, when we restrict our attention to only the contribution of the trivial character, and when we additionally sum over the complementary level, we are led to study a quantity very roughly like
\begin{align*}
&\int_{(-\epsilon_1)}  \int_{-\infty}^{\infty} \bfrac{Q}{4\pi}^{s} \widetilde \Psi(s) \zeta(1 - s) \zeta(2 - s) \sum_{m \asymp Q^{4-\delta}} \frac{a_{it}(m)}{m^{1/2+s/2}} \\
\times 
&\int_0^{\infty} (J_{2it}(\xi) - J_{-2it} (\xi) )  J_{k-1}(\xi) \xi^s \,\frac{d\xi}{\xi} \> dt \> ds,
\end{align*}
for some coefficient $a_{it}(m)$ depending on the spectral parameter $t$.  \footnote{This has been oversimplified for illustrative purposes and we refer the reader to \eqref{eqn:SigmaCtn0} - \eqref{def:K1Lsz}  for the precise version.}  The phantom term comes from the pole of $\zeta(1-s)$ at $s = 0$, which appears to give a contribution of size roughly

$$\frac{1}{Q} \sum_{m \asymp Q^{4-\delta}} \frac{1}{m^{1/2}} \asymp Q^{1-\delta/2},
$$and this is much larger than the main term of size $1$.

However, this pole is cancelled by the zero of \eqref{eqn:outlineJs} at $s=0$ due to the orthogonality relation \eqref{eqn:outlineJortho}.  We then extract off-diagonal main terms from this contribution near the $s=1$ line.  Here, we have neglected to present the inherent complexity of the task, especially the special combinatorics of this problem.  The complex combinatorial phenomena presents serious impediments in all previous works of this type.

We refer the reader to \S \ref{sec:trivialchar off diag main terms} and \S \ref{sec:ctnrescalc} for the details, where a number of combinatorial arrangements are made, parallel to the computations over random matrices in \S \ref{section:nth centered moment ON}.  In this outline, we only point out one particular feature of this computation, which gives some hints towards the combinatorics involved, and also reflects the inherent properties of the family.

For simplicity, suppose that $n=2$, so that we have two prime sums, one of length $P_1$ and the other of length $P_2$.  The Prime Number Theorem \footnote{Here, we can assume a small error term, assuming RH.} would show us that the contributions of the prime sums give rise to factors like $P_j^{\pm it}$, where $t$ is the spectral parameter.  The contribution of $(P_1 P_2)^{\pm it}$ can be shown to be negligible by setting $z = it$ and shifting the contour appropriately in $z$.  Thus the main term has to involve terms like $P_1^{it} P_2^{-it}$ or $P_1^{-it} P_2^{it}$.  We refer the reader to Lemma \ref{lemma:SigmaCtn0 to SigmaCtn1}  for the actual statement.  This pairing phenomenon correlates with the conjectural behavior of the moments of this family involving even swaps (e.g. \S 4.5 of \cite{CFKRS}).  Both are closely related to the fact that our $L$-functions has root number $\pm 1$ which square to $1$.  

When applying Kuznetsov in \S \ref{sec:applyKuznetsov}, we need to remove a coprimality condition of the form $(m, q) = 1$.  This condition was desirable before to apply Petersson's formula, but is now an impediment.  Removing this condition results in sums which can be treated similarly to our main sum, and which would result in contributions which are a power of $\log Q$ less than the actual main term.  The proof of this is sketched in \S \ref{sec:fillprimes}.

Lastly, we prove Theorem \ref{thm:Cn} in \S \ref{section:nth centered moment ON}, which is logically independent of the other sections.  However, we emphasize that the random matrix theory calculation and the computations on the number theory side mirror each other in the computations of the main terms. The resulting formula for $C(n)$ in Theorem \ref{thm:Cn} provided a useful guide for the computations of the main terms on the number theory side.

We start the random matrix computation from the observation that the integrals on $USp(2N)$ and $ O^- ( 2N+2)$ are essentially the same as in Lemma \ref{lemma O- to USp}. This allows us to apply the results for $SO(2N)$ and $USp(2N)$ for non-normalized $n$-th centered moments in Mason and Snaith \cite{MS}.  The resulting expressions are combinatorially complicated and we proceed to make some simplifications.  In particular, we show nice cancellation in the deduction \eqref{eqn Cn Tplusn} from \eqref{eqn:C even odd nice cancellation} where the terms with odd $|K'| $ in \eqref{eqn:C even odd nice cancellation} cancel each other out.
%Note that the condition $| K' |  \leq 3$ in \eqref{eqn:C even odd nice cancellation} comes from the assumption on the size of the supports, which is proved in Lemma \ref{lemma:supports restriction}. 
Then, applications of Fourier inversion and complex analysis leads to the proof of Theorem \ref{thm:Cn}.

%------------------

\section{Notation and Preliminary Results} \label{sec:prelimresults}
%------------------

\subsection{Notation}
Throughout the paper, we adopt the standard convention in analytic number theory of letting $\epsilon$ denote an arbitrarily small positive real number, whose value may vary from line to line. In contrast, the symbols $\epsilon_i$ and $\delta$ represent fixed positive constants. We  use $p$ (and subscripts of $p$) exclusively to denote prime numbers. For a finite set $K$ of positive integers and a positive integer $\kappa$, we define the product of primes as
\begin{equation*}\label{def:mathfrak p K}
\mathfrak p (K) := \prod_{ j \in K }p_j , \qquad \mathfrak p (\kappa)  := \mathfrak p([\kappa]) = p_1  \cdots p_\kappa.
\end{equation*}

We use $\sumsharp$ a sum over mutually distinct indices. We write $\e{x} = \exp(2\pi i x)$, and   $A \sqcup B$ is the disjoint union of sets $A$ and $B$. Also, a function $\delta_{\textrm{condition}}$ equals 1 if the condition is satisfied, and 0 otherwise.

\subsection{Petersson's formula and related results}
 We state the orthogonality relations for our family. These are the standard Petersson's formula (e.g. see \cite{Iwaniec}), and a version of Petersson's formula that is restricted to newforms and is due to Ng~\cite{Ng}.

Recall that $S_k(q)$ is the space of cusp forms of weight $k$ and level $q$. Let $B_k(q)$ be any orthogonal basis of $S_k(q)$. Define
\begin{equation} \label{def:Deltaq(m,n)}
\Delta_q(m,n ) = \Delta_{k, q}(m, n) = \sumh_{f\in B_k(q)}\lambda_f(m)\lambda_f(n),
\end{equation}
where the summation symbol $\sumh$ means we are summing with the same weights found in \eqref{eqn: harmonicweightsummation}. The usual Petersson's formula (e.g. see \cite{Iwaniec}) is the following.
\begin{lemma}\label{lem:usualPetersson}
If $m,n,q$ are positive integers, then
$$ \Delta_q(m, n) = \delta(m, n)+ 2\pi i^{-k} \sum_{c\geq 1} \frac{S(m, n;cq)}{cq} J_{k-1}\bfrac{4\pi \sqrt{mn}}{cq},
$$
where $\delta(m,n)=1$ if $m=n$ and is $0$ otherwise, $S(m,n;cq)$ is the usual Kloosterman sum, and $J_{k-1}$ is the Bessel function of the first kind.
\end{lemma}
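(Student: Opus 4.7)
The statement is the classical Petersson trace formula for $\Gamma_0(q)$, so my plan is to follow the standard Poincaré series proof, being careful about the specific normalization in \eqref{eqn: harmonicweightsummation}. First, I would introduce the Poincaré series of weight $k$, level $q$, and index $m\ge 1$,
\[
P_m(z) := \sum_{\gamma \in \Gamma_\infty \backslash \Gamma_0(q)} (j(\gamma,z))^{-k}\, e(m\gamma z),
\]
where $\Gamma_\infty$ is the stabilizer of the cusp at infinity in $\Gamma_0(q)$ and $j(\gamma,z)=cz+d$ is the usual automorphy factor. Standard convergence arguments show $P_m\in S_k(q)$ for $k\ge 4$.

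The next step is the unfolding computation: for any $f\in S_k(q)$ with Fourier expansion $f(z) = \sum_{n\ge 1}\rho_f(n) e(nz)$ at $\infty$, unfolding the Petersson inner product against $P_m$ gives
\[
\langle P_m, f\rangle = \frac{\Gamma(k-1)}{(4\pi m)^{k-1}}\, \overline{\rho_f(m)}.
\]
Writing $\rho_f(n) = \rho_f(1)\lambda_f(n)$ for a Hecke form (normalized so $\lambda_f(1)=1$), and recalling $\rho_f(1)\overline{\rho_f(1)}$ is absorbed into $\|f\|^2$ via the standard relation between $|\rho_f(1)|^2$ and $\|f\|^{-2}$, this identifies $P_m$ as essentially the reproducing kernel for the $m$-th Hecke eigenvalue.

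To compute $\langle P_m,P_n\rangle$ in two different ways: on one hand, expanding $P_m$ in any orthogonal basis $B_k(q)$ and using the unfolding identity twice produces the harmonic sum $\sum^h_{f\in B_k(q)} \lambda_f(m)\overline{\lambda_f(n)}$, which equals $\Delta_q(m,n)$ up to explicit gamma/power factors matching the normalization of $\sumh$ in \eqref{eqn: harmonicweightsummation}. On the other hand, one computes $\langle P_m,P_n\rangle$ directly by unfolding once against $P_n$, expanding $P_n$ via the Bruhat decomposition of $\Gamma_0(q)$: the identity double coset contributes the Kronecker delta $\delta(m,n)$, while the nontrivial double cosets, parametrized by lower-left entries $cq$ with $c\ge 1$, yield Kloosterman sums $S(m,n;cq)$ multiplied by an integral of the form
\[
\int_0^\infty y^{k-2}\, e\!\left(-\frac{m+n}{cqy}\right)\cdots dy,
\]
which after a standard change of variable is a Bessel integral evaluating to $2\pi i^{-k} J_{k-1}(4\pi\sqrt{mn}/(cq))/(cq)$ (see e.g.\ the Gradshteyn--Ryzhik formula for $\int y^{-k}e(\alpha/y+\beta y)\,dy$ in terms of $J_{k-1}$).

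Equating the two evaluations of $\langle P_m,P_n\rangle$ and dividing through by the common gamma/power normalization gives the stated formula. The main technical point is bookkeeping: one must verify that the constant in front of $\sum_c S(m,n;cq) J_{k-1}(\cdot)/(cq)$ really is $2\pi i^{-k}$ with the normalization of $\sumh$ used here, and that the Bruhat parametrization for $\Gamma_0(q)$ gives lower-left entries exactly of the form $cq$ with $c\ge 1$ (so no extra divisibility loss). Since the lemma is standard, I would cite Iwaniec's book \cite{Iwaniec} for the precise constants rather than redoing the Bessel integral identity in full. The only step where one needs to be alert is confirming the Bessel transform identity and the placement of $i^{-k}$, both of which are classical but easy to misplace by a sign or power of $i$.
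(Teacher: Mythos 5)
Your proposal is correct and follows the standard Poincar\'e-series/Bruhat-decomposition proof that the paper itself cites (Iwaniec, \emph{Topics in classical automorphic forms}, Theorem 3.6); the paper states this lemma without proof precisely because it is classical. One small caution in your bookkeeping: the identity $\Delta_q(m,n)=\sum^h_f \psi_f(m)\overline{\psi_f(n)}$ coming from the double unfolding is basis-independent, but an arbitrary orthogonal basis $B_k(q)$ of $S_k(q)$ is not a Hecke basis when $q>1$ (oldforms!), so the passage from normalized Fourier coefficients $\psi_f$ to Hecke eigenvalues $\lambda_f$ requires either choosing a Hecke basis (valid by basis-independence) or stating the formula in terms of $\psi_f$ throughout; this is the standard resolution, but worth flagging explicitly given how the paper writes $\Delta_q$ directly with $\lambda_f$.
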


Lemma~\ref{lem:usualPetersson}, the Weil bound for Kloosterman sums, and standard facts about the Bessel function imply the following lemma (see \cite[Corollary 2.2]{ILS}).
\begin{lemma}\label{lem:petertruncate}
If $m,n,q$ are positive integers, then
\begin{equation*}
\Delta_q(m, n) = \delta(m, n) + O\left(\frac{\tau(q) (m, n, q)(mn)^{\epsilon}}{q ((m, q)+(n, q))^{1/2}} \bfrac{mn}{\sqrt{mn} + q}^{1/2} \right),
\end{equation*}
where $\tau(q)$ is the divisor function and $\delta(m,n)=1$ if $m=n$ and is $0$ otherwise.
\end{lemma}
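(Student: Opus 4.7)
The plan is to apply Lemma \ref{lem:usualPetersson}, which expresses $\Delta_q(m,n) - \delta(m,n)$ as
$$2\pi i^{-k} \sum_{c \geq 1} \frac{S(m,n;cq)}{cq}\, J_{k-1}\!\left(\frac{4\pi\sqrt{mn}}{cq}\right),$$
and bound this sum term by term using the Weil bound for Kloosterman sums together with standard estimates on the Bessel function $J_{k-1}$.

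The three ingredients I would combine are: (i) the Weil bound $|S(m,n;cq)| \ll \tau(cq)(m,n,cq)^{1/2}(cq)^{1/2}$, together with the multiplicative inequalities $\tau(cq) \leq \tau(c)\tau(q)$ and the prime-by-prime bound $v_p((m,n,cq)) \leq v_p((m,n,c)) + v_p((m,n,q))$, used to separate the $c$-dependent and $q$-dependent parts of the gcd; (ii) the uniform Bessel estimate $J_{k-1}(x) \ll x/(1+x)^{3/2}$ for $k \geq 2$, which interpolates between $J_{k-1}(x) \ll x$ when $x \leq 1$ and $J_{k-1}(x) \ll x^{-1/2}$ when $x \geq 1$, yielding the factor $(mn/(\sqrt{mn}+q))^{1/2}$ after setting $x = 4\pi\sqrt{mn}/(cq)$ and summing over $c$; and (iii) the divisor-type estimate $\sum_{c \leq X} \tau(c)(m,n,c)^{1/2} \ll X(mnX)^{\epsilon}$, obtained by grouping $c$ according to $d = (m,n,c) \mid (m,n)$ and summing the divisor function along multiples of $d$. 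I would split the $c$-sum at $c \asymp \sqrt{mn}/q$ and combine these ingredients on each of the two ranges.

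The main obstacle is producing the refined factor $(m,n,q)/((m,q)+(n,q))^{1/2}$ in place of the cruder $(m,n,q)^{1/2}$ that falls out of a direct application of the Weil bound; indeed, since $2(m,n,q) \leq (m,q) + (n,q)$, the refined factor is strictly smaller than $(m,n,q)^{1/2}$. To capture it, I would perform a more delicate analysis of the Kloosterman sum modulo $q$: using twisted multiplicativity of $S(m,n;cq)$ in the coprime case $(c,q) = 1$ to isolate the $q$-part, and then exploiting the symmetric dependence of $S(m\bar c, n\bar c; q)$ on $(m,q)$ and $(n,q)$ via the Hensel-lifting identities $S(m,n;p^{a}) = p\,S(m/p, n/p; p^{a-2})$ valid for prime-power moduli with $p \mid m,n$. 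Once the exponents of $q$, of $(m,q)+(n,q)$, and of $\sqrt{mn}+q$ are collected, the claimed bound follows. This computation is carried out in detail in the proof of Corollary 2.2 of \cite{ILS}, from which the present statement is directly extracted.
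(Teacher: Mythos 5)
Your proposal is correct and takes essentially the same route as the paper: the paper offers no proof of its own, noting only (immediately above the lemma statement) that the bound follows from Lemma~\ref{lem:usualPetersson}, the Weil bound, and standard Bessel estimates, and citing \cite[Corollary 2.2]{ILS}, which is precisely the chain of ingredients and the reference you invoke. One small slip worth flagging: the reduction identity for $p \mid m$, $p \mid n$, $a \ge 2$ reads $S(m,n;p^a) = p\,S(m/p,n/p;p^{a-1})$, not $p^{a-2}$, though this does not affect the soundness of your outline since you defer the detailed computation to ILS.
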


For our purposes, we need to isolate the newforms of level $q$.  To be precise, recall that $\mathcal H_k(q)$ is the set of newforms of weight $k$ and level $q$ which are also Hecke eigenforms. We need a formula for
\begin{equation*}%\label{dfn: deltaq*}
\Delta_q^*(m, n) := \sumh_{f\in \mathcal H_k(q)} \lambda_f(m)\lambda_f(n).
\end{equation*}
A formula is known for squarefree level $q$ due to Iwaniec, Luo and Sarnak~\cite{ILS}, and for $q$ a prime power due to Rouymi~\cite{rouymi}. These formulas have been generalized to all levels $q$ by Ng \cite{Ng} (see also the works of Barret et al.\cite{BBDDM}, and Petrow \cite{Pe}). Ng's Theorem 3.3.1 contains some minor typos, but the corrected version is as follows.
 \begin{lemma}\label{lem:PeterssonNg}
Suppose that $m,n,q$ are positive integers such that $(mn,q)=1$, and let $q = q_1q_2$, where $q_1$ is the largest factor of $q$ satisfying $p|q_1 \Leftrightarrow p^2|q$. Then
\begin{align*}
\Delta_q^*(m,n) = \sum_{\substack{q=L_1L_2d \\ L_1|q_1 \\ L_2|q_2}} \frac{\mu(L_1L_2)}{L_1L_2}  \prod_{\substack{p|L_1 \\ p^2 \nmid d}}  \left( 1-\frac{1}{p^2} \right)^{-1} \sum_{\elltwo |L_2^{\infty} }\frac{\Delta_d(m,n\elltwo^2)}{\elltwo}.
\end{align*}
Furthermore, the condition that $L_1|q_1$ and $L_2|q_2$ is equivalent to the condition that $L_1|d$ and $(L_2,d)=1$. 
\end{lemma}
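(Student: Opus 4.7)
The plan is to derive this formula via Möbius inversion applied to the Atkin--Lehner decomposition of the space of cusp forms. First I would recall that Atkin--Lehner theory gives a direct sum decomposition
\[
S_k(q) \;=\; \bigoplus_{d \mid q} \bigoplus_{\ell \mid q/d} \iota_\ell\bigl(\mathcal{H}_k(d)\bigr),
\]
where $\iota_\ell g(z) := g(\ell z)$ lifts a newform of level $d$ to an oldform of level $d\ell$. This is a direct sum of vector spaces but not an orthogonal one: for a fixed $f \in \mathcal{H}_k(d)$, the vectors $\{\iota_\ell f : \ell \mid q/d\}$ are linearly independent, but their pairwise Petersson inner products on $\Gamma_0(q)\backslash \mathbb{H}$ are given by Hecke eigenvalues of $f$ at divisors of $\ell_1 \ell_2/(\ell_1,\ell_2)^2$ times standard level-change factors involving the index $[\Gamma_0(d):\Gamma_0(q)]$.

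Next I would compute the Gram matrix of $\{\iota_\ell f\}_{\ell \mid q/d}$ explicitly. Because the vectors are indexed by divisors of $q/d$ and the relevant Hecke eigenvalues are multiplicative, the Gram matrix factors as a tensor product of local matrices over primes $p \mid q/d$. At each prime $p$ the local block is of small size and can be inverted in closed form; this yields an explicit orthogonal projection onto the span of the oldform lifts of newforms at level $d$. Summing these projections over $d \mid q$ and using the orthogonality condition $(mn, q) = 1$ to simplify eigenvalue factors, one obtains an identity of the shape
\[
\Delta_q(m,n) \;=\; \sum_{d \mid q}\; \sum_{\ell \mid (q/d)^\infty}\, C_{q,d}(\ell)\, \Delta_d^{*}(m, n\ell^2),
\]
where $C_{q,d}(\ell)$ is a local weight supported on primes dividing $q/d$. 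Möbius inversion on the divisor lattice of $q$ then solves this upper-triangular relation for $\Delta_q^*$ as a combination of $\Delta_d$ with $d \mid q$, and a careful rearrangement of the local weights produces the expression in the statement.

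The main technical obstacle is the treatment of primes $p$ with $p^2 \mid q$, which are collected into $q_1$; this is precisely why the squarefree case treated by Iwaniec--Luo--Sarnak is simpler. At such primes the local Gram block is genuinely non-diagonal, and its inversion introduces the factor $(1 - 1/p^2)^{-1}$ appearing in the product over $p \mid L_1$ with $p^2 \nmid d$. At primes $p \mid q_2$ (those appearing to the first power in $q$ only), the oldform lifts produce a geometric-series contribution which collapses into the inner sum $\sum_{\elltwo \mid L_2^\infty} \Delta_d(m, n\elltwo^2)/\elltwo$. Once these local calculations are in place, the global Möbius inversion is bookkeeping: the equivalence between ``$L_1 \mid q_1,\ L_2 \mid q_2$'' and ``$L_1 \mid d,\ (L_2, d) = 1$'' asserted at the end follows directly from the definitions of $q_1, q_2$ together with the constraint $d = q/(L_1 L_2)$, so the two parametrizations of the outer sum coincide.
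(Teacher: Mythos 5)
Your proposal follows the same strategy as the proof in Ng's thesis (Theorem 3.3.1), which the paper itself does not reproduce but cites via \cite[Lemma~2.3]{BCL} together with its remark: Atkin--Lehner decomposition of $S_k(q)$, an explicit prime-by-prime computation and inversion of the Gram matrices of the oldform lifts $\{\iota_\ell f\}_{\ell\mid q/d}$, expression of $\Delta_q$ as a triangular combination of the $\Delta_d^*$, and inversion to solve for $\Delta_q^*$. The one point I would flag is that the final step is not a vanilla divisor-lattice M\"obius inversion, because the relation carries the twist by $\ell^2$ in the second argument of $\Delta_d^*(m,n\ell^2)$; the inversion has to be carried out locally at each prime as a formal-power-series (or finite triangular-matrix) inversion, and it is precisely this local calculation that distinguishes the two cases and produces, on the one hand, the Euler factor $\bigl(1-1/p^2\bigr)^{-1}$ for $p\mid L_1$ with $p^2\nmid d$ and, on the other, the geometric series $\sum_{\ell\mid L_2^{\infty}}\ell^{-1}\Delta_d(m,n\ell^2)$ for $p\mid L_2$ --- so describing it as mere bookkeeping understates where the real work lies, though your identification of those two local behaviors is correct.
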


For a proof, see \cite[Lemma 2.3]{BCL} and its remark.

\subsection{Kuznetsov's formula} \label{sec:kuz}

In this section, we state some relevant results from spectral theory. We refer the reader to \cite{DI} and \cite{Iwaniec} for background reading.  

We start by introducing some notation that will appear in Kuznetsov's formula. There are three parts in Kuznetsov's formula---contributions from holomorphic forms, Maass forms, and Eisenstein series---and we now define the Fourier coefficients of these forms.

\subsubsection*{Holomorphic forms}

Let $B_{\ell}(N)$ be an orthonormal basis of the space of holomorphic cusp forms of weight $\ell$ and level $N$, and  $\theta_{\ell}(N)$ be the dimension of the space $S_{\ell}(N)$. We can write $B_{\ell}(N) = \{f_1, f_2,...., f_{\theta_{\ell}(N)} \}$, and the Fourier expansion of $f_j \in B_{\ell}(N)$ can be expressed as follows
$$ f_j(z) = \sum_{n \geq 1} \psi_{j, \ell}(n) (4\pi n)^{\ell / 2} \e{n z}.$$

We call $f$ a Hecke eigenform if it is an eigenfunction of all the Hecke operators $T(n)$ for $(n, N) = 1$.  In that case, we denote the Hecke eigenvalue of $f$ for $T(n)$ as $\lambda_f(n)$.  Writing $\psi_f(n)$ as the Fourier coefficient, we have that
$$\lambda_f(n) \psi_f(1) = \sqrt{n} \psi_f(n),
$$for $(n, N) = 1$.  When $f$ is a newform, this holds for all $n$.  We also have the Ramanujan bound
$$\lambda_f(n) \ll \tau(n) \ll n^\epsilon.
$$

\subsubsection*{Maass forms}
Let 
$
\lambda_j := \frac{1}{4}+\kappa_j^2,
$
where
$
0=\lambda_0 \leq \lambda_1\leq \lambda_2 \leq \dots
$
are the eigenvalues, each repeated according to multiplicity, of the Laplacian $-y^2 ( \frac{\partial^2}{\partial x^2} + \frac{\partial^2}{\partial y^2})$ acting as a linear operator on the space of cusp forms in $L^2(\Gamma_0(N) \backslash \mathbb{H})$, where by convention we choose the sign of $\kappa_j$ that makes $\kappa_j\geq 0$ if $\lambda_j\geq \frac{1}{4}$ and $i\kappa_j >0$ if $\lambda_j <\frac{1}{4}$. For each of the positive $\lambda_j$, we may choose an eigenvector $u_j$ in such a way that the set $\{u_1,u_2,\dots\}$ forms an orthonormal system, and we define $\rho_j(m)$ to be the $m$th Fourier coefficient of $u_j$, i.e.,
$$
u_j(z) =  \sum_{m\neq 0} \rho_j(m)W_{0, i\kappa_j} (4\pi |m|y) \e{mx}
$$
with $z=x+iy$, where $W_{0, it}(y) =  \left( y/\pi\right)^{1/2}K_{it}(y/2)$ is a Whittaker function, and $K_{it}$ is the modified Bessel function of the second kind. 

We call $u$ a Hecke eigenform if it is an eigenfunction of all the Hecke operators $T(n)$ for $(n, N) = 1$.  In that case, we denote the Hecke eigenvalue of $u$ for $T(n)$ as $\lambda_u(n)$.  Writing $\rho_u(n)$ as the Fourier coefficient, we have that
\begin{equation}\label{eqn: fouriercoeffintermsofeigenvalue}
\lambda_u(n) \rho_u(1) = \sqrt{n} \rho_u(n)
\end{equation}
for $(n, N) = 1$.  When $u$ is a newform, this holds in general.  We also have that
\begin{equation}\label{eqn:KimSbdd}
\lambda_u(n) \ll \tau(n) n^{\theta} \ll n^{\theta+ \epsilon},
\end{equation}
where we may take $\theta = \frac{7}{64}$ due to work of Kim and Sarnak \cite{KimS}.

\subsubsection*{Eisenstein series}
We follow the treatment of Blomer and Khan \cite{BK}, whose work is in turn based on the work of Knightly and Li \cite{KL}.

The Eisenstein series for $\Gamma_0(N)$ are parametrized by a pair $(\chi, M)$ and the spectral parameter $s = 1/2 + it$.  Here $\chi$ is a primitive Dirichlet character modulo $\c_\chi$, and we have that $\c_\chi^2|M|N$.  We chose this parametrization as the principal character contribution from the Eisenstein series needs to be explicitly calculated, and this is more convenient for that purpose.
We write $M = \c_\chi M_1 M_2$ where $(M_2, \c_\chi)=1$ and $M_1 | \c_\chi^{\infty}$.  

The Eisenstein series $E_{\chi, M, N}(z, s)$ of level $N$ corresponding to   $(\chi, M)$ has the Fourier expansion 
$$E_{\chi, M, N}(z, 1/2 + it) = \rho^{(0)}_{\chi, M, N}(t, y) + \frac{2 \pi^{1/2 + it} y^{1/2}}{\Gamma(1/2 + it)} \sum_{n \not= 0} \rho_{\chi, M, N}(n, t) K_{it}(2\pi|n|y) \e{nx}.$$
The coefficients $\rho_{\chi, M, N}$ are defined by
\es{\label{eqn:Eisensteincoeff}
	\rho_{\chi, M, N}(n, t) := &\frac{\widetilde{C}(\chi, M, t) \sqrt{M_1 \zeta_{(M, N/M)}(1) }}{ \sqrt{M_2 N}   L^{(N)}(1+2it, \chi^2)}    |n|^{it} \rho'_{\chi, M, N}(n, t), \\ 
     \rho'_{\chi, M, N}(n, t):= & \sum_{m_2 | M_2} m_2  \mu\bfrac{M_2}{m_2} \bar{\chi}(m_2)\sum_{\substack{n_1 n_2 = n /( M_1 m_2)  \\ (n_2, N/M)=1}} \frac{\bar \chi(n_1)\chi(n_2)}{n_2^{2it}} ,
}
where $L^{(N)}$ is the Dirichlet $L$-function with the Euler factors at primes dividing $N$ omitted and
%\es{ \label{def:nu(N)} 	\nu(N) = \prod_{p|N} \left(1+ \frac 1p\right), }
% \es{\label{eqn:nNM} \tilde{\frak n}_N(M) &=  \prod_{\substack{p|N \\ p \nmid (M, N/M)}} \left(\frac{p}{p+1}\right)^{1/2}  \prod_{\substack{p|(M, N/M)}} \left( \frac{p-1}{p+1}  \right)^{1/2} = \prod_{p | N} \left( \frac p{p+1} \right)^{1/2} \prod_{p | (M, N/M)} \left( \frac{p-1}{p}  \right)^{1/2} . }
\begin{equation}\label{def:zetaNs}
\zeta_N (s) :=   \prod_{p | N} \left(1- \frac{1}{p^s }  \right)^{-1}.
\end{equation}
Moreover, $|\widetilde{C}(\chi, M, t)|=1$. In our application, we always have an expression of the form $\rho_{\chi, M, N}(n, t) \overline{\rho_{\chi, M, N}(m, t)}$ and $\widetilde{C} \overline{\widetilde{C}} = 1$, so we do not need anything more explicit.

\subsubsection*{Kuznetsov's formula} We state the version given by \cite[Lemma 10]{BM}, but with Fourier coefficients of Eisenstein series given by \eqref{eqn:Eisensteincoeff}. 
\begin{lem}\label{lem:kuznetsov}
Let  $m$, $n$ and $N$ be positive integers and let $J_\alpha (\xi) $ be the Bessel function of the first kind. Suppose that $\phi:(0,\infty)\rightarrow \mathbb{C}$ is smooth and compactly supported. Then we have
\begin{align*}
\sum_{\substack{c\geq 1 \\ c\equiv 0 \bmod{N}}} \frac{S(m,n;c)}{c} \phi
& \bigg( 4\pi \frac{\sqrt{mn}}{c}\bigg) = \sum_{j=1}^{\infty} \frac{\overline{\rho_j}(m)\rho_j(n) \sqrt{mn}}{\cosh(\pi \kappa_j)}\phi_+(\kappa_j)\\
&+\frac{1}{4\pi}\sum_{ \c_{\chi}^2\mid M \mid N} \int_{\Bbb{R}} \rho_{\chi, M, N}(n, t) \overline{ \rho_{\chi, M, N}(m, t)} \phi_+ (t) dt\\
& + \sum_{ \substack{\ell \geq 2 \mbox{\scriptsize{ \upshape{even}}} \\ 1 \leq j \leq \theta_{\ell}(N)} } (\ell-1)! \sqrt{mn} \, \overline{\psi_{j,\ell}}(m) \psi_{j,\ell} (n) \phi_h(\ell),
\end{align*}
where the Bessel transforms $\phi_+$ and $\phi_h$ are defined by
$$
\phi_+(r):=\frac{2\pi i}{\sinh(\pi r)} \int_0^{\infty} (J_{2ir}(\xi) - J_{-2ir} (\xi) ) \phi(\xi) \,\frac{d\xi}{\xi}
$$
and
$$
\phi_h(\ell) := 4 i^\ell \int_0^{\infty} J_{\ell - 1}(\xi) \phi(\xi) \,\frac{d\xi}{\xi}.
$$
\end{lem}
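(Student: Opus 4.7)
The plan is to establish Lemma \ref{lem:kuznetsov} by starting from the standard form of Kuznetsov's formula proved in \cite[Lemma 10]{BM} and then verifying that the explicit formula \eqref{eqn:Eisensteincoeff} for the Fourier coefficients of the Eisenstein series holds in our chosen parametrization. The underlying formula itself is classical: one computes the inner product of two Poincar\'e series $P_m$ and $P_n$ on $\Gamma_0(N)\backslash\mathbb{H}$ in two ways. On the geometric side, unfolding via the Bruhat decomposition of $\Gamma_0(N)$ produces the Kloosterman-sum side together with a Bessel-type transform of the test function $\phi$. On the spectral side, the Plancherel theorem for $L^2(\Gamma_0(N)\backslash\mathbb{H})$ yields a sum over cuspidal Maass forms, an integral over the Eisenstein spectrum, and---via the Petersson trace formula (Lemma \ref{lem:usualPetersson}) applied at each weight $\ell$ separately and summed against a suitable weight---a sum over holomorphic cusp forms. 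The specific Bessel transforms $\phi_+$ and $\phi_h$ fall out from standard Mellin-Barnes identities for $J_\nu$ and $K_\nu$.

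The only genuinely new piece relative to \cite{BM} is the passage from the cusp-indexed basis of Eisenstein series to the $(\chi, M)$-parametrization of Knightly-Li \cite{KL}. In that parametrization, the Eisenstein spectrum on $\Gamma_0(N)$ is spanned by series $E_{\chi, M, N}(z, s)$ indexed by pairs $(\chi, M)$ with $\c_\chi^2 \mid M \mid N$, where $\chi$ is a primitive Dirichlet character; this basis is adapted to the Hecke action away from the level and is the natural analogue of newform/oldform theory on the Eisenstein side. A direct computation of the $n$-th Fourier coefficient of $E_{\chi, M, N}(z, 1/2 + it)$ in this basis yields \eqref{eqn:Eisensteincoeff}, with the inner sum over divisors $m_2 \mid M_2$ reflecting the way Eisenstein oldforms are assembled from the newforms of smaller level $\c_\chi M_1$, and the factor $1/L^{(N)}(1+2it, \chi^2)$ arising from the standard Dirichlet-series computation for the Fourier coefficients of the completed Eisenstein series.

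The main obstacle will be the bookkeeping for this change of basis: the two bases are related by a matrix encoding the Atkin-Lehner-Li theory for the Eisenstein spectrum, and one must verify that all ambient phases combine into a single constant $\widetilde C(\chi, M, t)$ of modulus one, while the diagonal entries reproduce the arithmetic factor $\sqrt{M_1\,\zeta_{(M, N/M)}(1)} / (\sqrt{M_2 N}\, L^{(N)}(1 + 2it, \chi^2))$ appearing in \eqref{eqn:Eisensteincoeff}. This is a mechanical but bookkeeping-heavy calculation. Fortunately, in every later application in this paper only the product $\rho_{\chi, M, N}(n, t)\,\overline{\rho_{\chi, M, N}(m, t)}$ ever appears, so $\widetilde C$ enters only through $|\widetilde C|^2 = 1$ and its precise value plays no role; this observation considerably lightens what would otherwise be the most delicate step of the verification.
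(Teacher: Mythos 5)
Your proposal is correct and takes the same approach as the paper, which offers no independent proof of this lemma: it simply quotes \cite[Lemma 10]{BM} and substitutes the $(\chi,M)$-parametrization of the Eisenstein spectrum from \cite{BK} (following \cite{KL}), exactly as you describe. Your closing observation that only $\widetilde{C}\,\overline{\widetilde{C}}=1$ ever matters mirrors the remark made immediately after \eqref{eqn:Eisensteincoeff}.

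One caveat on your background sketch, which does not affect the argument since it rests on the citation rather than a re-derivation: the holomorphic term in the same-sign Kuznetsov formula does not arise from the Plancherel theorem for $L^2(\Gamma_0(N)\backslash\mathbb{H})$ (weight-$\ell$ holomorphic forms are not weight-zero Maass forms and play no role in that spectral decomposition), nor is it obtained by merely summing Petersson's formula against a weight. It appears because the Kontorovich--Lebedev/Sears--Titchmarsh inversion of the Bessel transform $\phi\mapsto\phi_{+}$ has a discrete component located at the parameters $r=i(\ell-1)/2$, $\ell\geq 2$ even, and this is what forces the holomorphic spectrum into the formula; equivalently one can derive the same-sign version by adding a carefully chosen multiple of the weight-$\ell$ Petersson formulas to the opposite-sign Kuznetsov formula, but the choice is dictated by that inversion, not by $L^2$ spectral theory. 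If you wanted to carry out the Poincar\'e-series computation from scratch rather than cite \cite{BM}, this step would need to be repaired.
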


We next state bounds for the transforms $\phi_+$ and $\phi_h$ that appear in Kuznetsov's formula. These bounds are consequences of \cite[Lemma 1]{BHM}. 
\begin{lem} \label{lem:boundforcoefficientafterKuznetsov}
\begin{enumerate}
\item Let $\phi(x)$ be a smooth function supported on $x \asymp X$ such that $\phi^{(j)}(x) \ll_{j} X^{-j}$ for all integers $j \geq 0$. For $t \in \mathbb R$, we have
$$ \phi_+(t), \ \  \phi_h(t) \ll_C \frac{1 + |\log X|}{1 + X} \left( \frac{1 + X}{1 + |t|}\right)^C$$
for any constant $C \geq 0.$
\item  Let $\phi(x)$ be a smooth function supported on $x \asymp X$ such that $\phi^{(j)}(x) \ll_{j} (X/Z)^{-j}$ for all integers $j \geq 0$. For $t \in (-1/4, 1/4)$, we have
$$ \phi_+ (it) \ll \frac{1 + (X/Z)^{-2 |t|}}{1 + X/Z}.$$
\item Assume that $\phi(x) = e^{iax} \psi(x)$ for some constant $a$ and some smooth function $\psi(x)$ supported on $x \asymp X$ such that $\psi^{(j)}(x) \ll_j X^{-j}$ for all integers $j \geq 0.$  Then
$$ \phi_+(t), \ \  \phi_h(t) \ll_{C, \epsilon} \frac{1+|\log X|}{F^{1 -\epsilon}} \left( \frac{F}{1 + |t|} \right)^C$$
for any $C \geq 0,$ $\epsilon > 0$ and some $F = F(X, a) < (|a|+1)(X+1).$
\end{enumerate} 
\end{lem}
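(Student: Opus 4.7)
The plan is to deduce all three bounds as direct consequences of the general transform estimates in \cite[Lemma 1]{BHM}, which control $\phi_+$ and $\phi_h$ in terms of the support, derivatives, and any oscillatory factor of $\phi$. In each part the core mechanism is iterated integration by parts in the defining integrals
$$\phi_+(r)=\frac{2\pi i}{\sinh(\pi r)}\int_0^\infty (J_{2ir}(\xi)-J_{-2ir}(\xi))\phi(\xi)\frac{d\xi}{\xi},\qquad \phi_h(\ell)=4i^\ell\int_0^\infty J_{\ell-1}(\xi)\phi(\xi)\frac{d\xi}{\xi},$$
together with the Bessel recurrence $\xi J_\nu'(\xi)=\nu J_\nu(\xi)-\xi J_{\nu+1}(\xi)$ which lets one trade a derivative on the Bessel function for a derivative on $\phi$. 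The small-argument expansion $J_\nu(\xi)\sim (\xi/2)^\nu/\Gamma(\nu+1)$ governs the regime $X\le 1$ and is the source of the $1+|\log X|$ factor coming from the $d\xi/\xi$ measure near $\xi=0$, while the large-argument oscillation $J_\nu(\xi)\sim (2/\pi\xi)^{1/2}\cos(\xi-\nu\pi/2-\pi/4)$ governs $X\gg 1$.

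For part (1), the hypothesis $\phi^{(j)}(x)\ll X^{-j}$ means that each integration by parts costs only $X^{-1}$ while the Bessel side contributes factors comparable to $(1+X)/(1+|t|)$ in the transition region $\xi\asymp|t|$; performing $C$ such integration by parts produces the decay $((1+X)/(1+|t|))^C$. Part (2) is the imaginary-parameter version relevant to Selberg's exceptional eigenvalues; here the dominant contribution arises from the $(\xi/2)^{\pm 2|t|}$ asymptotic of $J_{\pm 2t}(\xi)$ near the origin, integrated against $\phi$ supported at scale $X$ with derivatives of size $(X/Z)^{-j}$, which after isolating the effective oscillation scale $X/Z$ yields the bound $(1+(X/Z)^{-2|t|})/(1+X/Z)$.

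For part (3), the additional oscillation $e^{iax}\psi(x)$ must be combined with the intrinsic oscillation of the Bessel function. Writing $J_\ell(\xi)\sim (2\pi\xi)^{-1/2}(e^{i(\xi-\cdots)}+e^{-i(\xi-\cdots)})$ for large $\xi$, the stationary phase of $e^{iax}e^{\pm i\xi}$ determines a combined scale $F=F(X,a)$ with $F<(|a|+1)(X+1)$ beyond which genuine cancellation appears; integration by parts in this oscillatory integral, using only $\psi^{(j)}\ll X^{-j}$ and the prescribed $F$-scale, produces the factors $F^{\epsilon-1}$ and $(F/(1+|t|))^C$. The main obstacle is not any single estimate but the bookkeeping: correctly tracking the interplay between the support scale $X$, the derivative scale $Z$ (or $1$), and the oscillation-induced scale $F$ in the three regimes, and verifying in each case that the hypotheses of \cite[Lemma 1]{BHM} match ours so that the cited estimates apply verbatim after renormalization.
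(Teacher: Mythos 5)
Your plan coincides with the paper's: the paper gives no detailed proof of this lemma, but simply states that the bounds follow from \cite[Lemma 1]{BHM}, adding that part (3) is a slight generalization of \cite[Lemma 1 (c)]{BHM} which "incorporates the bound in \cite[Lemma 1 (a)]{BHM}" so as to be uniform in $a$. Your mechanism-level description of the Bessel integrals and integration by parts matches the underlying ideas of BHM, so the overall route is the same.

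However, the one substantive step that the paper actually flags is exactly the one your sketch glosses over. \cite[Lemma 1 (c)]{BHM} is stated under a restriction on the size of $|a|$ (the oscillation must be genuinely nontrivial relative to the support scale), so it does not "apply verbatim after renormalization" for all $a$. The content of part (3) is precisely that one can remove that restriction: when $|a|$ is too small for BHM (c), one falls back on BHM (a) with $F$ taken to be of order $1 + X$, and the two estimates are combined into the single uniform bound with $F < (|a|+1)(X+1)$. Your stationary-phase discussion covers the large-$|a|$ regime only, and the phrase about "verifying in each case that the hypotheses of \cite[Lemma 1]{BHM} match ours" does not acknowledge that in one regime they do not match and a different part of BHM's lemma must be used. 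You should make the case split explicit.

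A smaller point: you attribute the $1 + |\log X|$ factor to "the $d\xi/\xi$ measure near $\xi = 0$," but $\phi$ is supported on $\xi \asymp X$, bounded away from the origin, so the measure $d\xi/\xi$ over the support contributes only $O(1)$. The logarithm in BHM's estimate arises elsewhere (from the analysis near the Bessel transition region $\xi \asymp |t|$ and from the uniformity of the asymptotic expansions, not from the support endpoint). This doesn't affect the validity of a citation-based argument, but as an explanation of the mechanism it is incorrect.
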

Lemma~\ref{lem:boundforcoefficientafterKuznetsov}~(3) is a slight generalization of \cite[Lemma~1~(c)]{BHM}. This generalization incorporates the bound in \cite[Lemma~1~(a)]{BHM}. It is convenient for us that Lemma~\ref{lem:boundforcoefficientafterKuznetsov}~(3) holds uniformly for all $a$.

Next, we record the following bounds from \cite[Lemma 3.3]{BCL}. 
\begin{lem} \label{lem:boundforhu}
Suppose that $W$ is a smooth function that is compactly supported on $(0,\infty)$. For real $X>0$ and real numbers $u$ and $\xi$, let 
\begin{align*}
h_u(\xi)  = J_{k-1}(\xi)W\bfrac{\xi}{X} \e{u\xi}.
\end{align*}
%where $W$ is a compactly supported smooth function on $(a, b)$ and $a > 0.$
Then for all $C\ge 0$,
\begin{enumerate}
    \item $h_{u,+} (r) \ll    \frac{1 + |\log X|}{F^{1 - \epsilon}} \left( \frac{F}{1 + |r|}\right)^C \min \left\{ X^{k - 1}, \frac 1{\sqrt X}\right\} \ \ \ \ \ \ \ \   \textrm{for some} \ F < (|u| + 1)(1 + X). $
    \item If $r\in (- 1/4,1/4)$, then $h_{u,+}(ir) \ll  \left( \frac 1{\sqrt X} + (1 + |u|)^{\frac 12}\right) \min \left\{ X^{k - 1}, \frac 1{\sqrt X}\right\}. $ 
    
\end{enumerate}

\end{lem}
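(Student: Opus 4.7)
The plan is to apply Lemma \ref{lem:boundforcoefficientafterKuznetsov} to $h_u$ after isolating its oscillatory phase and amplitude scale. The argument splits naturally into the regimes $X \lesssim 1$ and $X \gg 1$, corresponding to the two asymptotic behaviors of $J_{k-1}(\xi)$ near the origin and at infinity.

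For $X \ll 1$ the support condition $\xi \asymp X$ forces $\xi \ll 1$, and the power series expansion gives $J_{k-1}(\xi) = \xi^{k-1}\widetilde{J}(\xi)$ with $\widetilde{J}$ smooth and bounded together with all its derivatives on compact sets. Setting $\psi(\xi) := X^{-(k-1)} J_{k-1}(\xi) W(\xi/X)$ yields a function supported on $\xi \asymp X$ with $\psi^{(j)}(\xi) \ll X^{-j}$, so that $h_u(\xi) = X^{k-1}\e{u\xi}\psi(\xi)$. For $X \gg 1$ the standard large-argument asymptotic yields
$$J_{k-1}(\xi) = \xi^{-1/2}\bigl(e^{i\xi} a(\xi) + e^{-i\xi}b(\xi)\bigr)$$
with $a,b$ smooth and satisfying $a^{(j)}(\xi), b^{(j)}(\xi) \ll \xi^{-j}$, so that $h_u$ splits into two pieces of the form $X^{-1/2}\e{(u \pm \frac{1}{2\pi})\xi}\psi_\pm(\xi)$, where each $\psi_\pm$ is supported on $\xi \asymp X$ with $\psi_\pm^{(j)}(\xi) \ll X^{-j}$. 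The transition region $\xi \asymp 1$ between the two asymptotic regimes is handled by a smooth partition of unity and contributes only admissible error.

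Part (1) then follows by applying Lemma \ref{lem:boundforcoefficientafterKuznetsov}(3) to each normalized piece. The oscillation frequency in each case is bounded by $2\pi\bigl(|u| + \frac{1}{2\pi}\bigr)$, so the parameter $F$ can be chosen at most $(|u| + 1)(1 + X)$ up to absolute constants, and the extracted amplitude factors $X^{k-1}$ and $X^{-1/2}$ from the two regimes combine to yield $\min\{X^{k-1}, X^{-1/2}\}$. For part (2), I would apply Lemma \ref{lem:boundforcoefficientafterKuznetsov}(2) to the same pieces while tracking derivative bounds that now include the oscillation. A direct Leibniz computation gives $h_u^{(j)}(\xi) \ll X^{k-1}(|u|+1/X)^j$ for $X \ll 1$ and $h_u^{(j)}(\xi) \ll X^{-1/2}(1+|u|+1/X)^j$ for $X \gg 1$. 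Choosing $X/Z = (|u|+1/X)^{-1}$ or $X/Z = (1+|u|)^{-1}$ respectively and using $|r| < 1/4$, the bound $(1+(X/Z)^{-2|r|})/(1+X/Z)$ collapses to $\ll (|u|+1/X)^{1/2} \ll |u|^{1/2} + X^{-1/2}$ or $\ll (1+|u|)^{1/2}$; multiplying by the amplitude factor gives the claimed $(X^{-1/2} + (1+|u|)^{1/2})\min\{X^{k-1}, X^{-1/2}\}$.

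The main obstacle is the careful bookkeeping in part (2): the oscillation frequency $u$ couples nontrivially to the derivative scale, so the choice of $Z$ must be made so that $(X/Z)^{-2|r|}$ collapses to the clean bound $X^{-1/2} + (1+|u|)^{1/2}$ once combined with the amplitude factor, and this requires verifying uniformity across the small-$X$ and large-$X$ regimes, together with a clean treatment of the transition region $\xi \asymp 1$.
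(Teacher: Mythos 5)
The paper does not supply a proof of this lemma; it simply cites \cite[Lemma 3.3]{BCL}, so there is no in-paper argument to compare against. Taken on its own merits, your proposal is correct and is the natural way to derive the stated bounds from the ingredients the paper does record, namely Lemma~\ref{lem:boundforcoefficientafterKuznetsov} (from \cite{BHM}) and the Bessel asymptotics in Lemma~\ref{jbessel}: normalizing out the amplitude $X^{k-1}$ (small $X$) or $X^{-1/2}$ (large $X$) produces, up to the explicit oscillatory phases $e^{i(2\pi u\pm 1)\xi}$, a function whose $j$-th derivative is $\ll X^{-j}$ on $\xi\asymp X$, exactly the input that Lemma~\ref{lem:boundforcoefficientafterKuznetsov}(3) requires for part (1), with $F\ll(|u|+1)(1+X)$ coming from the frequency $2\pi u\pm 1$. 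For part (2) your Leibniz computation $h_u^{(j)}\ll X^{k-1}(|u|+1/X)^j$ (resp.\ $X^{-1/2}(1+|u|)^j$), the choice $X/Z=(|u|+1/X)^{-1}$ (resp.\ $(1+|u|)^{-1}$), and the inequality $(X/Z)^{-2|r|}<(X/Z)^{-1/2}$ for $|r|<1/4$ give precisely $X^{k-1}(X^{-1/2}+(1+|u|)^{1/2})$ and $X^{-1/2}(1+|u|)^{1/2}$, both of which are $\ll(X^{-1/2}+(1+|u|)^{1/2})\min\{X^{k-1},X^{-1/2}\}$ using $k\geq 4$. The transition region $X\asymp 1$ is indeed harmless since $\min\{X^{k-1},X^{-1/2}\}\asymp 1$ there and either normalization works. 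This is almost certainly the same route taken in \cite{BCL}, which likewise rests on \cite[Lemma 1]{BHM}; the one thing I would spell out more explicitly in a written version is the verification that the remainder amplitudes $W_k$ in Lemma~\ref{jbessel} satisfy $W_k^{(j)}(\xi)\ll\xi^{-j}$ on the whole range $\xi\asymp X\gg 1$ (the paper states this only as $\xi\to\infty$), and a sentence handling $j\geq k$ in the small-$X$ derivative count for $(\xi/X)^{k-1}$, but neither is a real gap.
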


\subsection{Oldforms and newforms}\label{subsec:newformoldform} 
In the application of Petersson's or Kuznetsov's formula, one often encounters an orthonormal basis of Maass forms $\{ u_j \}$ or of holomorphic modular forms $B_\ell(N)$. However, to apply GRH for Hecke $L$-functions in bounding sums over primes, it is necessary to express our basis in terms of newforms. The relevant theory was developed by Atkin and Lehner~\cite{AL}. For further background, we refer the reader to \S 14.7 of \cite{IK}, \S 2 of \cite{ILS}, and \S 5 of \cite{BM}. The information below is taken from \S 3.1 in \cite{BCL}.

We will state this theory for Maass forms only, although the theory applies also to holomorphic modular forms with slight changes in notation.  Let $S(\N)$ denote the space of all Maass forms of level $\N$ and $S^*(\M)$ denote the space orthogonal to all old forms of level $\M$.  By the work of Aktin and Lehner~\cite{AL}, $S^*(\M)$ has an orthonormal basis consisting of primitive Hecke eigenforms, which we denote by $H^{*}(\M)$.  Then, we have the orthogonal decomposition
$$
S(\N) = \bigoplus_{\N = \mL \M} \bigoplus_{f \in H^*(\M)} S(\mL; f),
$$
where $S(\mL; f)$ is the space spanned by $f|_l$ for $l|\mL$, where 
$f|_l(z) = f(lz).$ Let $f$ denote a newform of level $\M|\N$, normalized as a level $\N$ form, which means that the first coefficient satisfies 
$ |\rho_{ f}(1)|^2 =  (\N \tau_f)^{o(1)} / \N , $
where $\tau_f$ is the spectral parameter of $f$.

Blomer and Mili\'cevi\'c showed in \cite[Lemma 9]{BM} that there is an orthonormal basis for $S(\mL; f)$ of the form   
\begin{equation}\label{eqn:BMorthonormalform}
f^{(g)} = \sum_{d|g} \xi_g(d) f|_d
\end{equation}
for $g|\mL$, where $\xi_g(d)$ is defined in (5.6) of \cite{BM}. It satisfies $ \xi_1 (1) = 1$ and $$\xi_g(d) \ll g^\epsilon \bfrac{g}{d}^{\theta - 1/2} \ll d^{\epsilon} \bfrac{g}{d}^{\theta - 1/2 + \epsilon}.$$ Since $\theta < 1/2$, this implies the bound
\begin{equation}\label{eqn:xigdbdd2}
	\xi_g(d) \ll d^\epsilon.
\end{equation}
Also, \cite[Lemma 2]{BM} implies that the Fourier coefficients of $f^{(g)}$ satisfy
\begin{equation} \label{eqn:fouriercoeffbdd}
\sqrt{n} \rho_{f^{(g)}}(n) \ll (n\N)^\epsilon n^\theta (\N, n)^{1/2 - \theta} |\rho_{f}(1)| \ll \N^\epsilon n^{1/2+\epsilon} |\rho_{f}(1)|.
\end{equation}
This bound is somewhat crude, but will suffice for our purposes. Note that $f^{(g)}$ is an eigenfunction of the Hecke operator $T(n)$ for all $(n, \mathcal N)  =1$. Indeed, the $n$th Fourier coefficient of $f|_d$ is nonzero only if $d|n$. Since $g|\N$ in \eqref{eqn:BMorthonormalform}, it follows for $(n, \N) = 1$ that we may take only the $d=1$ term and deduce that
\begin{align*}
\sqrt{n}\rho_{f^{(g)}}(n) = \xi_g(1) \sqrt{n}\rho_f(n) = \xi_g(1) \rho_f(1) \lambda_f(n) = \rho_{f^{(g)}}(1) \lambda_f(n).
\end{align*}
This implies that $f^{(g)}$ is a Hecke eigenform with $
\lambda_{f^{(g)}}(n) = \lambda_f(n)$
for $(n, \N) = 1$. From (5.3) of \cite{BM}, we can write $n = n_0n'$, where $(n_0, \N) = 1$, $(n_0, n') = 1$ and 
\begin{equation} \label{eqn:coeffBM}
	\sqrt{n} \rho_{f^{(g)}}(n) = \lambda_f(n_0) \sqrt{n'} \rho_{f^{(g)}}(n').
\end{equation}

\subsubsection*{Remark}
For the rest of the paper, we will always take our orthonormal basis of cusp Maass forms $\{u_j\}$ and orthonormal basis of holomorphic forms $B_l(\N)$ to be these Hecke bases defined above.

\subsection{An explicit formula and some consequences of GRH}

 The following lemma is the first step in our proof of Theorem~\ref{thm:main}.

\begin{lem}\label{lem:Explicit} Let $\Phi$ be an even Schwartz function whose Fourier transform has compact support. We have
\es{\label{eqn:explicit} \sum_{j} \Phi\left( \frac{\gamma_{j, f}}{2\pi}\log Q\right) - \widehat{\Phi} (0) - \frac{\Phi (0)}{2}= \mathfrak{M}_{\Phi , f} (Q) + O \left(   \frac{ \log \log Q}{ \log Q} \right) , }
where 
\es{\label{def:Mi}  \mathfrak{M}_{\Phi , f} (Q) := -\frac{2}{\log Q}\sum_{\substack{p \\ p \nmid q}}\frac{\log p \lambda_f(p)}{\sqrt{p}} \widehat\Phi\left(\frac{\log p}{\log Q}\right) .} 
 
\end{lem}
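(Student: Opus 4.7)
The plan is to derive the identity from the explicit formula for $L(s,f)$, carefully accounting for the archimedean contribution, the prime sum (which becomes $\mathfrak{M}_{\Phi,f}(Q)$), and the prime-square sum. Starting from the Hadamard factorization of $\Lambda(s,f)$ and applying Mellin inversion against a test function built from $\Phi$ rescaled by $\tfrac{\log Q}{2\pi}$, one arrives at
\begin{equation*}
\sum_j \Phi\!\left(\frac{\gamma_{j,f}\log Q}{2\pi}\right) = \widehat{\Phi}(0)\,\frac{\log(q/(4\pi^2))}{\log Q} + A_\infty(\Phi,Q) - \frac{2}{\log Q}\sum_{n\ge 1}\frac{\Lambda_f(n)}{\sqrt n}\,\widehat{\Phi}\!\left(\frac{\log n}{\log Q}\right),
\end{equation*}
where $\Lambda_f(n)$ is the Dirichlet coefficient of $-L'/L(s,f)$, supported on prime powers with $\Lambda_f(p^k)=(\alpha_f(p)^k+\beta_f(p)^k)\log p$, and $A_\infty$ is the contribution of the gamma factor $\Gamma(s+k/2)$.

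First I would handle the easy pieces. Since $\Psi$ is compactly supported on $(a,b)$ with $a>0$, we have $q\asymp Q$, so $\log(q/(4\pi^2))=\log Q+O(1)$, producing $\widehat{\Phi}(0)+O(1/\log Q)$. By Stirling's formula, $A_\infty=O(1/\log Q)$. In the prime-power sum, contributions from $n=p^k$ with $k\ge 3$ are $O(1/\log Q)$ by trivial estimates using Deligne's bound $|\alpha_f(p)|,|\beta_f(p)|\le 1$ and the compact support of $\widehat{\Phi}$, while the $n=p$, $p\mid q$, terms contribute $O((\log q)/(\sqrt q\,\log Q))$. What remains from the $n=p$ with $p\nmid q$ is exactly $\mathfrak{M}_{\Phi,f}(Q)$.

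The delicate step is the prime-square contribution. Using the Hecke relation $\alpha_f(p)^2+\beta_f(p)^2=\lambda_f(p)^2-2\chi_0(p)$ for $p\nmid q$, the prime-square part splits as
\begin{equation*}
-\frac{2}{\log Q}\sum_{p\nmid q}\frac{\lambda_f(p)^2\log p}{p}\,\widehat{\Phi}\!\left(\frac{2\log p}{\log Q}\right)+\frac{4}{\log Q}\sum_{p\nmid q}\frac{\log p}{p}\,\widehat{\Phi}\!\left(\frac{2\log p}{\log Q}\right).
\end{equation*}
The second sum is handled by Mertens' theorem $\sum_{p\le X}(\log p)/p=\log X+O(1)$ and partial summation, producing $\Phi(0)+O(1/\log Q)$ via the identity $\int_0^\infty\widehat{\Phi}(2u)\,du=\Phi(0)/4$ (Fourier inversion with $\Phi$ even). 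For the first sum, the Rankin--Selberg convolution $L(s,f\otimes f)=\zeta(s)L(s,\mathrm{sym}^2 f)$, together with the analyticity of $L(s,\mathrm{sym}^2 f)$ (Gelbart--Jacquet) and a standard zero-free region, yields
\begin{equation*}
\sum_{p\le X}\frac{\lambda_f(p)^2\log p}{p}=\log X+O(\log\log X),
\end{equation*}
and a further partial summation converts this to $-\Phi(0)/2+O(\log\log Q/\log Q)$. Summing, the total prime-square contribution equals $\Phi(0)/2+O(\log\log Q/\log Q)$.

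Assembling the archimedean main term $\widehat{\Phi}(0)$, the prime-square $\Phi(0)/2$, the prime sum $\mathfrak{M}_{\Phi,f}(Q)$, and the cumulative error $O(\log\log Q/\log Q)$, and rearranging, gives the lemma. The main obstacle is the sharp handling of the prime-square sum, specifically the cancellation between the $\log X$ from Rankin--Selberg and the Mertens term; this cancellation must be tracked through partial summation carefully enough that the residuals genuinely fall into $O(\log\log Q/\log Q)$. The $\log\log Q$ loss is forced by the current unconditional error in Rankin--Selberg, and could be sharpened under GRH for $L(s,\mathrm{sym}^2 f)$.
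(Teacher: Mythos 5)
Your overall structure matches the paper's: explicit formula from $L'/L$, separate archimedean, prime, and prime-power contributions, then use the Hecke relation $\alpha_f(p)^2+\beta_f(p)^2 = \lambda_f(p)^2 - 2\chi_0(p)$ to split the prime-square sum into a "Mertens" piece giving $\Phi(0)$ and a symmetric-square piece giving $-\Phi(0)/2$. The paper does essentially this, deferring the bookkeeping to Lemmas 2.5 and 4.1 of \cite{BCL} and handling the symmetric-square piece via Lemma \ref{lem:boundforSymL-functions}.

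The gap is in the symmetric-square sum. You assert that analyticity plus ``a standard zero-free region'' for $L(s,\sym^2 f)$ yields
$\sum_{p\le X}\frac{\lambda_f(p)^2\log p}{p}=\log X+O(\log\log X)$,
and characterize this as unconditional. That is not justified in the regime at hand. Writing $\lambda_f(p)^2=\lambda_f(p^2)+1$ for $p\nmid q$, the claim is equivalent to $\sum_{p\le X}\frac{\lambda_f(p^2)\log p}{p}=O(\log\log X)$ uniformly in the level $q\asymp Q$, while $X$ ranges up to $Q^{\sigma/2}$. Here the conductor of $L(s,\sym^2 f)$ is $\asymp q^2$, so $\log X\asymp\log q$; a zero-free region of width $c/\log q$ only produces power-savings in the error when $\log X\gg(\log q)^2$, which fails here. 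Unconditionally the error from the symmetric-square Dirichlet series would be $O(\log q)$-sized, which is catastrophic. The paper closes this via GRH for $L(s,\sym^2 f)$: Lemma \ref{lem:sym2bdd} gives $-\frac{L'}{L}(\sigma,\sym^2 f)\ll\frac{(\log(q+k+|t|))^{4/3-2\sigma/3}}{2\sigma-1}$, and Lemma \ref{lem:boundforSymL-functions} then bounds the relevant prime sum by $(\log Q)(\log\log Q)$, uniformly in $q\sim Q$. (Note the lemma is stated without explicit GRH because the whole section carries the standing GRH hypothesis.) Your closing remark that the $\log\log Q$ loss comes from the unconditional error and ``could be sharpened under GRH'' has the direction reversed: the $\log\log Q$ is what you get \emph{with} GRH for $\sym^2 f$; without it, the argument does not run. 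As a much smaller point, your bound $O((\log q)/(\sqrt q\log Q))$ for the $p\mid q$ primes is not what the trivial estimate gives — one should get $O(\log\log q/\log Q)$ from $\sum_{p\mid q}\frac{\log p}{p}\ll\log\log q$ and $|\lambda_f(p)|\le 1$ — though both are within the error budget.
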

The lemma holds by \cite[Lemmas 2.5 and 4.1]{BCL} and Lemma \ref{lem:boundforSymL-functions}.
The following lemma is \cite[Lemma 2.7]{BCL}.

{\lem\label{lem:CLee3.5} Assume GRH for $L(s,\chi)$ with a primitive Dirichlet character $\chi$ mod $q$ and for $L(s, f)$, where $f$ is a primitive holomorphic Hecke eigenform or a primitive Maass Hecke eigenform of level $q$ and weight $k$. Let $X>0$ be a real number, and let $\Psi$ be a smooth function that is compactly supported on $[0,X]$. Suppose that, for each positive integer $m$, there exists a constant $A_m$ depending only on $m$ such that
$$
|\Psi^{(m)}(x)| \leq \frac{A_m}{\min\{\log(X+3), X/x\}x^{m}}
$$
for all $x>0$. Write $z = \frac 12 +it$ with $t$ real, and let $N$ be a positive integer. If $\chi$ is a non-principal character, then
$$\sum_{(p, N) = 1}\frac{\chi(p)\log(p)\Psi(p)}{p^z}\ll A_3 \log^{1 + \epsilon}( X + 2) \log(q+|t| ) + \log N \max_{0 \leq x \leq X} |\Psi(x)|,$$
with absolute implied constant. Similarly, 
$$\sum_{(p, N) = 1}\frac{\lambda_f(p)\log(p)\Psi(p)}{p^z}\ll A_3 \log^{1 + \epsilon}( X + 2) \log (q+k+|t| ) + \log N \max_{0 \leq x \leq X} |\Psi(x)|,$$
with absolute implied constant.}

\noindent \textit{Remark}: If $c$ is a fixed constant and $\Upsilon$ is a smooth function compactly supported on $[0,c]$, then the function $\Psi(x)=\Upsilon(cx/X)$ satisfies the conditions in Lemma~\ref{lem:CLee3.5} since $X^{-m} \ll x^{-m} (x/X)$ for positive integers $m$. Also, if $\Upsilon$ is a smooth function compactly supported on $(-\infty,c]$, then the function $\Psi(x)=\Upsilon(\frac{c\log x}{\log X})$ satisfies the conditions in the lemma.

\begin{lem} \label{lem:sumprimeDiagonal}
  Let $F(t)$ be a smooth compactly supported function on $(-\sigma, \sigma)$, and $\log q \sim \log Q$. Then 

\est{\frac{1}{\log Q} \sum_{\substack{p \\ (p,q) = 1}} \frac{\log p}{p} F\left( \frac{\log p^2}{\log Q}\right) = \frac{1}{4} \int_{-\infty}^{\infty}  F(u) \> du + O\left( \frac{\log \log Q}{\log Q}\right), \ \ \ \ \textrm{and}}

\est{\frac{1}{(\log Q)^2} \sum_{\substack{p \\ (p,q) = 1}} \frac{\log^2 p}{p} F\left( \frac{\log p}{\log Q}\right) = \frac{1}{2} \int_{-\infty}^{\infty} |u| F(u) \> du + O\left( \frac{\log \log Q}{\log Q}\right).}

\end{lem}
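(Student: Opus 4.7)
The plan is to prove both formulas by partial summation using the Prime Number Theorem; I describe the first in detail and the second is analogous. First, I would remove the coprimality restriction: the difference between the unrestricted sum and the sum over $(p,q)=1$ is $\sum_{p \mid q} \frac{\log p}{p} F\!\left(\frac{2\log p}{\log Q}\right)$, which is $O(\log\log Q)$ since $F$ is bounded and $\sum_{p\mid q} \log p/p \ll \log\log q$ (split the range at $y = \log^{2} q$ and apply Mertens' theorem). After dividing by $\log Q$ this contributes an acceptable $O(\log\log Q/\log Q)$ to the error.

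Next I would apply Abel summation to the unrestricted prime sum using $M(x) := \sum_{p \le x} \log p/p = \log x + B + R(x)$ with $R(x) \ll 1/\log x$ by PNT, yielding
\[
\sum_p \frac{\log p}{p}\, F\!\left(\frac{2\log p}{\log Q}\right) = -\int_2^\infty M(x)\, F'\!\left(\frac{2\log x}{\log Q}\right) \frac{2}{x\log Q}\,dx.
\]
The substitution $u = 2\log x/\log Q$ (so $du = 2\,dx/(x\log Q)$) converts this into an integral over $u \in [u_0,\sigma]$ with $u_0 = 2\log 2/\log Q \to 0^+$. The contribution of $\log x = u\log Q/2$ becomes $-\tfrac{\log Q}{2}\int_{u_0}^\sigma u F'(u)\,du$, and integration by parts (using compact support of $F$) gives $\tfrac{\log Q}{2}\int_0^\infty F(u)\,du + O(1/\log Q)$, which equals $\tfrac{\log Q}{4}\int_{-\infty}^\infty F(u)\,du + O(1)$ under the evenness of $F$ present in the intended applications. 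The constant $B$ contributes a further $O(1)$, while the PNT remainder $R$ contributes $O(\log\log Q/\log Q)$ from the logarithmic divergence $\int_{u_0}^\sigma du/u \asymp \log\log Q$. Dividing by $\log Q$ and combining with the coprimality error produces the first formula.

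The second formula follows by the same route, using $T(x) := \sum_{p\le x}\log^2 p/p = \tfrac{1}{2}\log^2 x + O(1)$ (derived from $M$ by another partial summation) and substituting $u = \log x/\log Q$. The main term becomes $-\tfrac{\log^2 Q}{2}\int u^{2} F'(u)\,du = \log^2 Q \int_0^\infty u F(u)\,du$, which equals $\tfrac{\log^2 Q}{2}\int_{-\infty}^\infty |u| F(u)\,du$ for even $F$; the coprimality and PNT errors are controlled exactly as before (the bound $\sum_{p\mid q} \log^2 p/p \ll \log^2\log q$ is more than enough here). The main subtlety---not a genuine obstacle---is the careful bookkeeping that produces precisely the $\log\log Q/\log Q$ factor, which is inherent to unconditional PNT combined with the logarithmic behavior of $F'(u)/u$ near $u=0$ and to the Mertens-type bound for $\sum_{p\mid q}\log p/p$.
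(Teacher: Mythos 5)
Your proof is correct and fills in the PNT-based details that the paper leaves implicit (the paper simply says the lemma ``follows immediately from the prime number theorem''). The route you take --- partial summation against the Mertens asymptotic $\sum_{p\le x}\log p/p = \log x + O(1)$, then the substitution $u = c\log x/\log Q$ and integration by parts --- is the natural one. You are right that the stated identities require $F$ to be even to convert $\int_0^\infty$ into $\tfrac12\int_{-\infty}^\infty$; the lemma does not say this explicitly, but in every application $F$ is a product of the $\widehat\Phi_i$, which are even, so the hypothesis is satisfied. One small accounting remark: the dominant contribution to the claimed $O(\log\log Q/\log Q)$ error is the removal of the coprimality condition (giving $\sum_{p\mid q}\log p/p \ll \log\log q$, hence $\log\log Q/\log Q$ after dividing by $\log Q$), while the boundary term $u_0 F(u_0)$ is $O(1)$ before normalization rather than $O(1/\log Q)$, and the PNT remainder contributes only $O(1/\log Q)$ or smaller once normalized; none of this affects your conclusion, which is correct.
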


This lemma follows immediately from the prime number theorem.  The following lemma is similar to \cite[Lemma 2.6]{BCL}.

\begin{lem} \label{lem:sym2bdd} Assume GRH for $L(s, \sym^2 f)$, where $f$ is a primitive Maass Hecke eigenform of level $q$ and spectral parameter $\tau_f$. For $\frac 12 < \sigma \leq \frac 54$ we have
 \begin{equation*}
-\frac{L'}{L}(\sigma+it , \sym^2 f) \ll \frac{\big( \log(q + \tau_f + |t|)\big)^{^{\frac 43 - \frac{2\sigma}{3}} }}{2\sigma - 1}.
\end{equation*}
\end{lem}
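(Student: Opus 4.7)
My plan is to adapt the argument of \cite[Lemma 2.6]{BCL}, which establishes the analogous bound for $\sym^2$ of primitive holomorphic Hecke eigenforms, to the Maass-form setting. Set $\mathfrak c:=q+\tau_f+|t|$ for the effective size of the analytic conductor of $L(s,\sym^2 f)$ at height $t$. The degree of $L(s,\sym^2 f)$ is $3$.

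The first step is the Hadamard factorization of the completed L-function $\Lambda(s,\sym^2 f)$, which on taking logarithmic derivatives yields
\begin{equation*}
-\frac{L'}{L}(s,\sym^2 f)=-\tfrac12\frac{\gamma'}{\gamma}(s,\sym^2 f)+\sum_{\rho}\left(\frac{1}{s-\rho}+\frac{1}{\rho}\right)+O(1),
\end{equation*}
with $\rho$ ranging over nontrivial zeros and $\gamma(s,\sym^2 f)$ denoting the archimedean factor, which is a product of three Gamma functions involving the spectral parameter $\tau_f$. By Stirling, the gamma contribution is $O(\log\mathfrak c)$. Under GRH, each $\rho=\tfrac12+i\gamma_\rho$ with $\gamma_\rho\in\mathbb R$, so $|s-\rho|\geq\sigma-\tfrac12$ when $s=\sigma+it$ and $\sigma>\tfrac12$. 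The zero-counting formula for the degree-$3$ L-function $L(s,\sym^2 f)$ shows that the number of $\gamma_\rho$ in any unit interval centered at $t$ is $O(\log\mathfrak c)$, which combined with the inequality above yields the elementary bound
\begin{equation*}
-\frac{L'}{L}(\sigma+it,\sym^2 f)\ll\frac{\log\mathfrak c}{2\sigma-1}.
\end{equation*}

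To upgrade the numerator from $\log\mathfrak c$ to $(\log\mathfrak c)^{\frac43-\frac{2\sigma}{3}}$, I would use a smoothed Perron truncation. Fixing a smooth bump $\psi$ supported on $[1/2,2]$ whose Mellin transform $\widetilde\psi$ decays rapidly, and a parameter $X\geq 2$ to be chosen,
\begin{equation*}
-\frac{L'}{L}(s,\sym^2 f)=\sum_{n\geq 1}\frac{\Lambda_{\sym^2 f}(n)}{n^s}\psi(n/X)+\frac{1}{2\pi i}\int_{(-\eta)}-\frac{L'}{L}(s+w,\sym^2 f)\,\widetilde\psi(w)X^w\,dw
\end{equation*}
for a small $\eta>0$. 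Under GRH the associated explicit formula yields $\sum_{n\leq X}\Lambda_{\sym^2 f}(n)\ll X^{1/2}(\log\mathfrak c)^2$, and partial summation then bounds the Dirichlet sum by $\ll X^{1/2-\sigma}(\log\mathfrak c)^2$. The shifted contour integral is controlled by applying the elementary bound above on the line $\mathrm{Re}(s+w)=\tfrac12+\eta$, producing a contribution of order $X^{-\eta}(\log\mathfrak c)/\eta$ after trivial estimation against $\widetilde\psi$. Optimizing by taking $\eta$ of size $1/\log X$ and then $X$ a suitable power of $\log\mathfrak c$ calibrated against the degree $3$ of $L(s,\sym^2 f)$ balances the two contributions and produces the exponent $\tfrac43-\tfrac{2\sigma}{3}=\tfrac{2(2-\sigma)}{3}$, while the factor $(2\sigma-1)^{-1}$ is inherited from the proximity of zeros on the critical line.

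The main obstacle will be executing this optimization cleanly with uniform dependence on all three parameters $q$, $\tau_f$, and $|t|$, since all three enter the analytic conductor through $\mathfrak c$. Compared to the holomorphic setting of \cite[Lemma 2.6]{BCL}, the only substantive change is that the spectral parameter $\tau_f$ replaces the weight $k$ inside the gamma factors of $\sym^2 f$, and one must check that Stirling's formula and the zero-counting estimate depend on $\tau_f$ in the same controlled way. Given the structural parallel, the optimization in \cite[Lemma 2.6]{BCL} should carry over with only notational changes reflecting the Maass archimedean parameters.
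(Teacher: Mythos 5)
Your proposal matches the paper at the level that matters: the paper gives no self-contained argument here, stating only that "the proof of this lemma is essentially the same as \cite[Lemma 2.6]{BCL}" and referring the reader there. You likewise reduce to \cite[Lemma 2.6]{BCL} and correctly identify that the only substantive change from the holomorphic case is replacing the weight $k$ by the spectral parameter $\tau_f$ inside the archimedean factor of $L(s,\sym^2 f)$, so that both Stirling and the zero-density estimate depend on $\tau_f$ the way they previously depended on $k$. That reduction is exactly what the paper invokes, so at the structural level the approaches coincide.

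That said, the sketch you supply of the internals of the [BCL] argument has a genuine gap. With $\psi$ supported on $[1/2,2]$ the Mellin transform $\widetilde\psi$ is entire, so no residue is collected at $w=0$ when the contour is shifted, and the identity you write down cannot hold as stated (one either needs a smoothed cutoff whose Mellin transform carries a simple pole at $w=0$, or one should work directly with the explicit formula with the zero sum displayed). More seriously, the two contributions you propose to balance, namely $X^{1/2-\sigma}(\log\mathfrak c)^2$ from partial summation and $X^{-\eta}\log\mathfrak c/\eta$ from the shifted contour, are \emph{both} decreasing in $X$ once $\sigma>\tfrac12$, so there is nothing to balance and the formal limit $X\to\infty$ would force $-\tfrac{L'}{L}(\sigma+it)=0$. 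In fact, a correct partial-summation of the GRH bound on $\sum_{n\le X}\Lambda_{\sym^2f}(n)$ produces a quantity of size $(\log\mathfrak c)^2/(\sigma-\tfrac12)$ that is \emph{not} $o(1)$ in $X$, and that has the wrong exponent in $\log\mathfrak c$. Reproducing $\big(\log\mathfrak c\big)^{\frac43-\frac{2\sigma}{3}}/(2\sigma-1)$, interpolating between the Hadamard bound near $\sigma=\tfrac12^+$ and a truncated prime-sum bound further out, requires following \cite[Lemma 2.6]{BCL} in detail rather than the heuristic balancing you describe; the factor $3$ in the exponent is tied to the degree of $\sym^2 f$ in a way your sketch does not capture.
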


The proof of this lemma is essentially the same as \cite[Lemma 2.6]{BCL}, and we refer the reader there for the proof.  The next two lemmas help us bound sums over prime squares.  We begin with \cite[Lemma 2.11]{REUandMiller}.

\begin{lem} \label{lem:boundforSymL-functions}
Let $f$ be a primitive holomorphic cuspidal newform of weight $k$ and level $q$. Assume GRH for $L(s, sym^2f)$ and let $F$ be a smooth compactly supported function on $(-\sigma, \sigma)$.  Then for $q \sim Q$,
$$\frac{1}{(\log Q)^2}\sum_{(p, q) = 1} \frac{\lambda_f(p^2) \log^2 p}{p} F\left( \frac{\log p}{\log Q}\right) \ll \frac{\log \log Q}{\log Q}. $$
\end{lem}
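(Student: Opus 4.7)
The plan is to identify the sum with a smoothed prime-sum for the symmetric square $L$-function and then extract power-saving cancellation from GRH.  First I would invoke the Hecke relation $\lambda_f(p^2) = \lambda_f(p)^2 - 1$, valid for $(p,q)=1$, which coincides with the local Dirichlet coefficient of $L(s,\sym^2 f)$ at $p$.  Concretely,
\[
-\frac{L'}{L}(s,\sym^2 f) = \sum_{(p,q)=1} \frac{\lambda_f(p^2)\log p}{p^s} + E_0(s),
\]
where $E_0(s)$ collects contributions from bad primes $p\mid q$ and higher prime powers $p^m$, $m\ge 2$.  By Deligne's bound and $q\sim Q$, both $E_0$ and $E_0'$ are analytic and of size $O((\log\log Q)^2)$ for $\mathrm{Re}(s)=1$.

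Next, I would exchange the $F$-weight with Fourier inversion.  Since $\widehat F$ is supported in $(-\sigma,\sigma)$,
\[
F\!\left(\frac{\log p}{\log Q}\right) = \int_{-\sigma}^{\sigma} \widehat F(\xi)\, p^{2\pi i\xi/\log Q}\, d\xi,
\]
and inserting this into the sum $T$ in the lemma's numerator yields
\[
T = \int_{-\sigma}^{\sigma} \widehat F(\xi)\, H(s_\xi)\, d\xi, \qquad s_\xi := 1 - \frac{2\pi i\xi}{\log Q},
\]
where differentiating the Dirichlet series above gives
\[
H(s) := \sum_{(p,q)=1}\frac{\lambda_f(p^2)\log^2 p}{p^s} = \Bigl(\tfrac{L'}{L}\Bigr)'(s,\sym^2 f) - E_0'(s).
\]

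The heart of the proof is a uniform bound on $(L'/L)'(s,\sym^2 f)$ along the line $\mathrm{Re}(s)=1$, obtained from the Hadamard product for $L(s,\sym^2 f)$ together with GRH.  The product formula gives
\[
\Bigl(\tfrac{L'}{L}\Bigr)'(s,\sym^2 f) = -\sum_\rho \frac{1}{(s-\rho)^2} + O(\log Q),
\]
where $\rho$ ranges over non-trivial zeros, all with $\mathrm{Re}(\rho)=1/2$ under GRH.  For $s=1+it$ with $|t|\ll 1/\log Q$ the quantity $|s-\rho|^2 \ge 1/4 + (t-\gamma)^2$, and using the Riemann--von Mangoldt-type estimate $N_{\sym^2 f}(T+1)-N_{\sym^2 f}(T) \ll \log(Q(|T|+1))$ I would bound the zero sum by
\[
\sum_\rho \frac{1}{|s-\rho|^2} \ll \int_{\mathbb{R}} \frac{\log(Q(|u|+1))}{1/4 + (t-u)^2}\, du \ll \log Q.
\]
Combining these, $H(s_\xi) \ll \log Q$ uniformly in $\xi$, which gives $|T| \ll \log Q \cdot \|\widehat F\|_{L^1} \ll \log Q$; after dividing by $(\log Q)^2$ this is $\ll 1/\log Q$, stronger than the claimed $(\log\log Q)/\log Q$.

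The main obstacle is the analytic bookkeeping: one must verify that the archimedean derivative in the Hadamard identity for $L(s,\sym^2 f)$ is $O(\log Q)$ on $\mathrm{Re}(s)=1$, and that the bad-prime contribution in $E_0'(s)$ is dominated by $\log Q$ for general $q\sim Q$ (e.g.\ by noting $\sum_{p\mid q}\log^2 p/p \ll (\log\log Q)^2 + O(\log Q)$).  Both steps use only standard inputs: the Hadamard factorization for the entire $L$-function $L(s,\sym^2 f)$ provided by Shimura and Gelbart--Jacquet, and the Ramanujan--Petersson bound on Satake parameters of $\sym^2 f$.
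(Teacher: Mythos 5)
Your proposal is essentially sound but contains one careless misreading of the hypothesis that you need to fix, and it takes a different route than the analogous argument in the paper.

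The hypothesis says $F$ itself is smooth and compactly supported in $(-\sigma,\sigma)$, \emph{not} that $\widehat F$ is compactly supported. You wrote ``Since $\widehat F$ is supported in $(-\sigma,\sigma)$,'' and then restricted the Fourier integral to $\int_{-\sigma}^{\sigma}$, which is wrong. Fortunately the misreading is harmless: since $F$ is $C_c^\infty$, $\widehat F$ is Schwartz, so the Fourier inversion $F(\log p/\log Q)=\int_{\mathbb R}\widehat F(\xi)\,p^{2\pi i\xi/\log Q}\,d\xi$ still converges absolutely, and your bound $H(s_\xi)\ll \log\big(Q(|\xi|+2)\big)$ (which follows from the zero counting once $\xi$ is allowed to be large, since $|\mathrm{Im}\,s_\xi|\asymp |\xi|/\log Q$) combined with the rapid decay of $\widehat F$ still yields $\int_{\mathbb R}|\widehat F(\xi)|\,H(s_\xi)\,d\xi\ll\log Q$. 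So you should replace $\int_{-\sigma}^\sigma$ by $\int_{\mathbb R}$ and invoke Schwartz decay of $\widehat F$ rather than compact support.

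As for the comparison with the paper: the lemma is cited from the reference \cite{REUandMiller} and no proof is given here, so the most relevant internal comparison is with the paper's own proof of the Maass analogue, Lemma~\ref{lem:boundforSymL-functions 2}. That proof uses Mellin inversion with a test function $\widetilde G$, shifts the contour to $\Re(s)=-1/2+\epsilon$, and invokes the GRH bound on $-L'/L(\sigma+it,\sym^2 f)$ (Lemma~\ref{lem:sym2bdd}) to obtain a genuine power saving $P^{-1/2+\epsilon}$, which is needed elsewhere. Your route is different: you keep $\Re(s)=1$, use the Hadamard product to express $(L'/L)'$ in terms of $\sum_\rho(s-\rho)^{-2}$, and bound the zero sum directly under GRH using the Riemann--von~Mangoldt density $\ll\log(Q(|T|+2))$. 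This gives $\ll 1/\log Q$, which is stronger than the claimed $(\log\log Q)/\log Q$, so no harm is done; you are simply trading the power-saving contour shift for an $L^\infty$ bound on the critical line $\Re(s)=1$, which suffices for this weaker statement. You should also note that the Riemann--von~Mangoldt estimate for $L(s,\sym^2 f)$ is unconditional (GRH only pins the real parts of the zeros to $1/2$, which is what makes $|s-\rho|^2\ge 1/4$), and that holomorphy and functional equation for $L(s,\sym^2 f)$ are indeed available by Shimura and Gelbart--Jacquet, as you say. Finally, a small sign slip: since $-L'/L = \sum\lambda_f(p^2)\log p/p^s + E_0$, differentiating gives $H(s)=(L'/L)'(s)+E_0'(s)$, not $-E_0'(s)$; immaterial for the bound.
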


Next we need a bound for a similar quantity where $f$ is a Maass form and the Ramanujan bound is not available unconditionally.
\begin{lem} \label{lem:boundforSymL-functions 2}
Let $f$ be a primitive Maass form with spectral parameter $\tau_f$ and level $q$. Assume GRH for $L(s, sym^2f)$ and let $F$ be a smooth compactly supported function on $(0, \infty)$ and $P \ge 1$.  Then for $\mathfrak C := q (1+|\tau_f|)^2$ and any positive integer $M $,
$$\sum_{(p, M) = 1} \frac{\lambda_f(p^2) \log^2 p}{p^{1+iu}} F\left( \frac{p}{P}\right) \ll  \log (\mathfrak C+ |u|) P^{-1/2 +\epsilon} + \log M,$$
{where the implied constant depends on $\epsilon.$}
\end{lem}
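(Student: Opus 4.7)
The plan is to adapt the strategy of Lemma~\ref{lem:CLee3.5} to the symmetric square $L$-function, invoking GRH for $L(s,\sym^2 f)$ through the bound in Lemma~\ref{lem:sym2bdd}. Since $\lambda_f(p^2)=\lambda_{\sym^2 f}(p)$ for $(p,q)=1$, I first split off the primes dividing $q$, which contribute at most $O(\log q)=O(\log\mathfrak C)$, and reduce to bounding
$$S(u)\;:=\;\sum_{(p,Mq)=1}\frac{\lambda_{\sym^2 f}(p)\,\log^2 p}{p^{1+iu}}\,F\!\left(\frac{p}{P}\right).$$

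The main mechanism is Mellin inversion followed by a contour shift. Write $\tilde F(s)=\int_0^\infty F(x)x^{s-1}\,dx$, which is entire and decays faster than any polynomial on vertical lines. For any $c>0$ large enough to justify the interchange of sum and integral,
$$S(u)=\frac{1}{2\pi i}\int_{(c)}\tilde F(s)\,P^{s}\,T(1+iu+s)\,ds,\qquad T(w):=\sum_{(p,Mq)=1}\frac{\lambda_{\sym^2 f}(p)\,\log^2 p}{p^w}.$$
Since $\Lambda_{\sym^2 f}(p)=\log p\cdot\lambda_{\sym^2 f}(p)$ at primes, differentiating the identity $-L'/L(w,\sym^2 f)=\sum_n\Lambda_{\sym^2 f}(n)n^{-w}$ in $w$ and peeling off the prime-power tail (absolutely convergent for $\mathrm{Re}(w)>1/2+2\theta$ with $\theta=7/64$ by Kim--Sarnak, cf.~\eqref{eqn:KimSbdd}) together with the primes dividing $Mq$ gives
$$T(w)=\bigl(L'/L\bigr)'(w,\sym^2 f)+H_{M,q}(w),$$
where $H_{M,q}$ is holomorphic and of size $O(\log M+\log\mathfrak C)$ on $\mathrm{Re}(w)\ge\tfrac12+\epsilon$.

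Next I shift the contour from $\mathrm{Re}(s)=c$ down to $\mathrm{Re}(s)=-\tfrac12+\epsilon$. On the new line $\mathrm{Re}(w)=\tfrac12+\epsilon$, Lemma~\ref{lem:sym2bdd} yields $L'/L(w,\sym^2 f)\ll_\epsilon\log(\mathfrak C+|\mathrm{Im}(w)|)$, and Cauchy's integral formula on a circle of radius $\epsilon/2$ inside $\mathrm{Re}(w)\ge\tfrac12+\epsilon/2$ transfers the same polylogarithmic bound to $(L'/L)'(w,\sym^2 f)$. Combined with $|P^s|=P^{-1/2+\epsilon}$ and the rapid decay of $\tilde F$ in the imaginary direction, the shifted integral contributes
$$\ll P^{-1/2+\epsilon}\,\log(\mathfrak C+|u|),$$
and adding the earlier $O(\log M+\log\mathfrak C)$ from $H_{M,q}$ produces the stated estimate.

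The principal obstacle is the possible residue encountered at $w=1$, i.e.\ $s=-iu$. In the generic non-dihedral case $L(s,\sym^2 f)$ is entire, so $(L'/L)'$ is holomorphic in $\mathrm{Re}(w)>\tfrac12$ and no residue is crossed. In the dihedral case $L(s,\sym^2 f)=\zeta(s)L(s,\chi)$, a double pole of $(L'/L)'$ appears at $w=1$; one splits $L'/L(w,\sym^2 f)=\zeta'/\zeta(w)+L'/L(w,\chi)$, handles the $\zeta$-piece directly by GRH for $\zeta$, and relies on the rapid decay $|\tilde F(-iu)|\ll_N(1+|u|)^{-N}$ to keep the residue contribution compatible with the error terms built into the right-hand side.
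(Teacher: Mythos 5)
Your overall framework (separate off primes dividing $M$, Mellin inversion, contour shift to $\tRe = -1/2+\epsilon$, and the GRH bound from Lemma~\ref{lem:sym2bdd}) is the same as the paper's. The one genuine technical variation is in how you account for the factor $\log^2 p$: the paper writes $\log^2 p\cdot F(p/P)=\log P\cdot\log p\cdot G(p/P)$ with $G(x)=F(x)\bigl(1+\tfrac{\log x}{\log P}\bigr)$ and then applies Mellin inversion to $-L'/L$, whereas you work with $(L'/L)'$ (whose Dirichlet coefficients carry a $\log^2 p$ at primes) and control it via a Cauchy estimate on a small circle inside the region where Lemma~\ref{lem:sym2bdd} applies. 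Both routes are valid and yield the same bound.

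The flaw is in your last paragraph. First, the dihedral worry does not actually arise: for a primitive Maass cusp form $f$ with trivial nebentypus, the Rankin--Selberg factorization $L(s,f\times f)=L(s,\sym^2 f)\zeta(s)$ and the fact that $L(s,f\times f)$ has only a simple pole at $s=1$ force $L(s,\sym^2 f)$ to be holomorphic (indeed nonzero) at $s=1$, dihedral or not; so there is no residue crossed in the contour shift, which is exactly the paper's (tacit) justification for ``without residues.'' Second, and more importantly, your proposed fall-back for a hypothetical pole is quantitatively wrong: if $(L'/L)'$ had a double pole at $w=1$, the residue of $\widetilde F(s)\,P^{s}\,(L'/L)'(1+iu+s)$ at $s=-iu$ would be $\widetilde F'(-iu)P^{-iu}+\widetilde F(-iu)P^{-iu}\log P$, which carries a factor of $\log P$ and is therefore \emph{not} $\ll\log(\mathfrak C+|u|)P^{-1/2+\epsilon}+\log M$ when $u$ is small and $P$ is large; the rapid decay $|\widetilde F(-iu)|\ll_N(1+|u|)^{-N}$ does not save you. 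So if the pole existed, the lemma as stated would be false, and the correct resolution is to observe that it does not exist, not to absorb a residue into the stated error terms.
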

\begin{proof}
    By, for instance, the work of Kim and Sarnak \cite{KimS}, %(can also just cite functoriality of symmetric square), 
$\lambda_f(p^2) \ll p^{1/2}$, whence
$$\sum_{p|M} \frac{\lambda_f(p^2) \log^2 p}{p^{1+iu}} F\left( \frac{p}{P}\right) \ll \sum_{p|M} \log p \le \log M.
$$
Thus, 
    \est{
        \sum_{(p, M) = 1} \frac{\lambda_f(p^2) \log^2 p}{p^{1+iu}} F\left( \frac{p}{P}\right)
        &= \sum_{p} \frac{\lambda_f(p^2) \log^2 p}{p^{1+iu}} F\left( \frac{p}{P}\right) + O(\log M) \\
        &= \log P \sum_{p} \frac{\lambda_f(p^2) \log p}{p^{1+iu}} G\left( \frac{p}{P}\right) + O(\log M),
    }
    where $G(x) = F(x) \left(1+ \frac{\log x}{\log P}\right)$.  Since $G$ is in the Schwartz class, its Mellin transform satisfies  $\widetilde{G}(s) \ll \frac{1}{1+|s|^A}$ for any $A>0$.

    When $n = p^k$ with $k\ge 2$, we may bound $\lambda_f(n^2) \ll p^{\frac{7}{32}k}$ by \cite{KimS}. By Mellin inversion, we have
    \est{
        \sum_{p} \frac{\lambda_f(p^2) \log p}{p^{1+iu}} G\left( \frac{p}{P}\right)
        &= \sum_{n} \frac{\lambda_f(n^2) \Lambda(n)}{n^{1 + iu}} G\left( \frac{n}{P}\right) + O(1)\\
        &= \frac{1}{2\pi i} \int_{(c)} -\frac{L'}{L}(1+iu+s, \sym^2 f) P^s \widetilde{G}(s) ds + O(1)  
    }
    for $ c>0$. Under GRH, we can shift the contour to $c=-1/2 + \epsilon$ without residues.  By Lemma \ref{lem:sym2bdd} and the rapid decay of $\widetilde{G}(s)$, we have
\est{
    \sum_{p} \frac{\lambda_f(p^2) \log p}{p^{1+iu}} G\left( \frac{p}{P}\right) &\ll \frac{(\log (\mathfrak C+ |u|))^{4/3 - 2(1+c)/3}}{2c+1} P^c  \ll \log (\mathfrak C+ |u|) P^{-1/2 +\epsilon}.
}
\end{proof}

The following is \cite[Lemma 3.6]{CLee}. The minus sign in $ F (- i \mathcal U (1-\alpha)) \log Q $ is different from \cite{CLee}, since the Fourier transform is defined differently there.  

\begin{lemma}\label{lemma:prime sum}
    Assume RH and that $ F: \R \to \R $ is smooth and rapidly decreasing with $\widehat{F} $ compactly supported. 
    Let $\mathcal U = \frac{\log Q}{2\pi}$ and define
    $$ R (\alpha, F) := \sum_p \frac{\log p }{p^\alpha } \widehat{F}  \left( \frac{ \log p }{ \log Q} \right) - F (- i \mathcal U (1-\alpha)) \log Q . $$
    Then 
    $$ R( \alpha, F) = - \log Q \int_{ - \infty}^0 \widehat{F} ( w) Q^{(1-\alpha) w } dw +O\left( 1+ \left(|\alpha| + \tfrac{1}{\log Q}\right)\left( \tRe(\alpha) - \tfrac 12 \right)^{-3}\right) $$
    for $\tfrac 12 + \tfrac{10}{\log Q}  \leq \tRe (\alpha)  $, and
    $$  R( \alpha, F) = O( ( \log Q)^2 )  $$
    for $ | \tRe (\alpha) - \tfrac12 | \leq \tfrac{10}{\log Q} $.
                        \end{lemma}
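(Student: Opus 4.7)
The plan is to prove the lemma by Mellin--Perron inversion together with a contour shift that isolates the pole of $-\zeta'/\zeta$ at $s=1$. Setting $g(x) = \widehat F(\log x/\log Q)/x^\alpha$, the Paley--Wiener extension of $F$ (valid since $\widehat F$ is compactly supported) gives the clean formula $\widetilde g(s) = \log Q \cdot F(-i(s-\alpha)\mathcal U)$ for the Mellin transform. After replacing $\sum_p \log p$ by $\sum_n \Lambda(n)$ at the cost of an $O\pr{(\tRe(\alpha)-1/2)^{-1}}$ error from the prime powers (which are bounded under RH using Ramanujan), standard Mellin inversion gives
\est{
\sum_n \Lambda(n) g(n) = \frac{\log Q}{2\pi i} \int_{(c)} \pr{-\tfrac{\zeta'}{\zeta}(s)} F(-i(s-\alpha)\mathcal U)\, ds, \qquad c > 1.
}

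I then decompose $-\zeta'/\zeta(s) = 1/(s-1) + R_0(s)$, where $R_0$ is holomorphic at $s=1$. For the polar piece, substituting $F(-i(s-\alpha)\mathcal U) = \int \widehat F(w) Q^{w(s-\alpha)}\, dw$ and interchanging integration reduces matters to the inner integral $\frac{1}{2\pi i}\int_{(c)} Q^{ws}/(s-1)\, ds$, which by closing to one side by residue equals $Q^w \mathbf{1}_{w>0}$ when $c>1$ and $-Q^w \mathbf{1}_{w<0}$ when $c<1$. For $\tRe(\alpha) > 1$ we simply take $c > \tRe(\alpha)$, while for $1/2 + 10/\log Q \le \tRe(\alpha) < 1$ I shift the original integral from $(c)$ with $c>1$ to $(c')$ with $c' = \tRe(\alpha)$, collecting the residue $F(-i\mathcal U(1-\alpha))\log Q$ at $s=1$. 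In either configuration, subtracting $F(-i\mathcal U(1-\alpha))\log Q = \log Q\int_{-\infty}^\infty \widehat F(w) Q^{(1-\alpha)w}\, dw$ from the polar contribution leaves exactly the claimed main term $-\log Q \int_{-\infty}^0 \widehat F(w) Q^{(1-\alpha)w}\, dw$ in $R(\alpha,F)$.

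The error is governed by the remaining integral $E(\alpha) := \frac{\log Q}{2\pi i}\int_{(c_0)} R_0(s) F(-i(s-\alpha)\mathcal U)\, ds$. In the main range $\tRe(\alpha) \ge 1/2 + 10/\log Q$, I would place $c_0 = \tRe(\alpha)$, so that $-i(s-\alpha)\mathcal U = (\tIm(s) - \tIm(\alpha))\mathcal U$ is real and $F$ is Schwartz in $\tIm(s)$, localizing the integrand to a window of width $O(1/\mathcal U)$ around $\tIm(\alpha)$. Combining this localization with the standard RH bound on $-\zeta'/\zeta$ (which on this line contributes a factor of order $\log(|\tIm(\alpha)|+2)/(\tRe(\alpha) - 1/2)$ up to logarithmic fluctuations, plus a $1/|s-1|$ contribution near $s=1$) produces the claimed bound $O\pr{1 + (|\alpha|+1/\log Q)(\tRe(\alpha)-1/2)^{-3}}$. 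In the second range $|\tRe(\alpha) - 1/2| \le 10/\log Q$, one cannot shift onto a line through $\alpha$ without approaching zeros of $\zeta$; instead I would shift to $\tRe(s) = 1/2 + 10/\log Q$ and apply the same RH bound, now producing $O((\log Q)^2)$. The main obstacle is securing the uniform RH bound on $R_0$ and tracking the precise polynomial dependence on $|\alpha|$ and $(\tRe(\alpha)-1/2)^{-1}$ needed for the error to match the stated form, particularly when $\tRe(\alpha)$ is close to the critical line.
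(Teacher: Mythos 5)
The paper does not prove this lemma --- it is taken verbatim from \cite[Lemma 3.6]{CLee} (up to a sign convention in the Fourier transform), so there is no in-paper proof to compare against. That said, your Mellin--Perron route is sound and most likely close to what \cite{CLee} does. The identity $\widetilde g(s)=\log Q\cdot F(-i\mathcal U(s-\alpha))$ is correct; the Perron evaluation $\frac{1}{2\pi i}\int_{(c)}\frac{Q^{ws}}{s-1}\,ds$, equal to $Q^w\mathbf 1_{w>0}$ for $c>1$ and to $-Q^w\mathbf 1_{w<0}$ for $c<1$, is exactly the mechanism that peels off the one-sided integral $-\log Q\int_{-\infty}^0\widehat F(w)Q^{(1-\alpha)w}\,dw$ once the residue $F(-i\mathcal U(1-\alpha))\log Q$ is removed; and parking the remaining contour on $\tRe s=\tRe\alpha$, where $F$ becomes Schwartz in $\tIm s$, is what makes the error integral localize to a window of length $\asymp 1/\log Q$ around $\tIm\alpha$.

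A few corrections and the gap you flag. The prime-power correction has nothing to do with RH or Ramanujan: $\sum_p\sum_{k\ge 2}(\log p)p^{-k\tRe\alpha}\,|\widehat F(k\log p/\log Q)|$ is bounded elementarily, with the $k=2$ term dominating and the compact support of $\widehat F$ saving the range $\tRe\alpha\le\tfrac12$. For $\tRe\alpha>1$ the error contour should still be $\tRe s=\tRe\alpha$, not some $c>\tRe\alpha$; on any other vertical line $F(-i\mathcal U(s-\alpha))$ is evaluated at a point with imaginary part of size $\mathcal U|c-\tRe\alpha|$, hence by Paley--Wiener has size $Q^{O(|c-\tRe\alpha|)}$, and the localization fails (a small indentation around $s=1$ handles $\tRe\alpha$ near $1$). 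The missing ingredient for the error bound is the RH estimate on $R_0(s)=-\zeta'/\zeta(s)-(s-1)^{-1}$: for $\tfrac12<\sigma\le\tfrac32$ the zeros with $|\gamma-t|\le 1$ contribute $\ll\log(|t|+3)/(\sigma-\tfrac12)$ and the rest $O(\log(|t|+3))$. Integrating against the localized weight $\log Q\cdot|F(\mathcal U(t-\tIm\alpha))|$ then gives $E(\alpha)\ll\log(|\tIm\alpha|+3)/(\tRe\alpha-\tfrac12)$ on the first range, which sits inside the stated $O\big(1+(|\alpha|+1/\log Q)(\tRe\alpha-\tfrac12)^{-3}\big)$ because $|\alpha|\ge\tRe\alpha\ge\tfrac12$ there, and $\ll(\log Q)\log(|\tIm\alpha|+3)\ll(\log Q)^2$ on the line $\tRe s=\tfrac12+10/\log Q$ for the second range. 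So your plan closes the lemma once these estimates are written out.
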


\subsection{Combinatorial Sieve} \label{sec:combsieve}
We will apply the combinatorial sieve to express sums over distinct ordered elements as unrestricted sums.  This sieving also appeared in \cite{RudnickSarnak} and \cite{CLee}.

\begin{lem} \label{lem:cs} 
Let $ f_1 , \ldots, f_n $ be functions defined on the set of primes.     Then we have
\begin{align*}
    \sum_{p_1 , \ldots , p_n } f_1 ( p_1 ) \cdots f_n ( p_n)  &= \sum_{ \underline{G}\in \Pi_n }   \  \sumsharp_{ p_1 , \ldots , p_\nu   } f_{G_1} ( p_1 ) \cdots f_{G_\nu} ( p_\nu), \\
    \sumsharp_{ p_1 , \ldots , p_n   } f_1 ( p_1 ) \cdots f_\nu ( p_n )  & = \sum_{\underline{G} \in \Pi_{n}} \mu^* ( \underline G)   \sum_{ p_1 , \ldots , p_\nu  } f_{G_1} ( p_1 ) \cdots f_{G_\nu} ( p_\nu)  , 
\end{align*}
where $\sum^\sharp$ denotes sums over distinct primes, $ f_{G_j} := \prod_{ i \in G_j } f_i  $ for $ j \leq \nu$ and $ \mu^*  ( \underline G) = \prod_{G_j  \in \underline G}   (-1)^{|G_j|-1} ( |G_j |-1)! $ for $\underline G  \in \Pi_n $.

\end{lem}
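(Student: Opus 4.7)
Both identities are purely formal consequences of the combinatorial structure of the partition lattice $\Pi_n$, and I would prove them in two separate steps.

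For the first identity, the plan is to classify $n$-tuples of primes by coincidence pattern. Given $(p_1,\dots,p_n)$, the relation $i\sim j\Longleftrightarrow p_i=p_j$ determines a unique partition $\underline G=\{G_1,\dots,G_\nu\}\in\Pi_n$. Conversely, summing over all $n$-tuples inducing a fixed $\underline G$ amounts to choosing $\nu$ pairwise distinct primes $q_1,\dots,q_\nu$, one per block, and forming the product $\prod_j\prod_{i\in G_j}f_i(q_j)=\prod_j f_{G_j}(q_j)$. Stratifying the unrestricted sum on the left over $\underline G\in\Pi_n$ immediately yields the first identity.

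For the second identity, I would use M\"obius inversion on $\Pi_n$ ordered by refinement, with minimum element $\pi_{n,1}$; write $\underline H\preceq\underline K$ when $\underline K$ is a coarsening of $\underline H$. Applying the first identity in $|\underline H|$ variables with the merged functions $f_{H_j}$ in place of the $f_i$ gives, for every $\underline H\in\Pi_n$,
\[
\sum_{q_1,\dots,q_{|\underline H|}}\prod_j f_{H_j}(q_j)\;=\;\sum_{\underline K\succeq\underline H}\sumsharp_{q_1,\dots,q_{|\underline K|}}\prod_j f_{K_j}(q_j),
\]
so the unrestricted sums are precisely the zeta-transforms of the sharp sums on $\Pi_n$. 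M\"obius inversion then gives
\[
\sumsharp_{q_1,\dots,q_{|\underline H|}}\prod_j f_{H_j}(q_j)\;=\;\sum_{\underline K\succeq\underline H}\mu_{\Pi_n}(\underline H,\underline K)\sum_{q_1,\dots,q_{|\underline K|}}\prod_j f_{K_j}(q_j).
\]
Specializing to $\underline H=\pi_{n,1}$ and invoking the classical formula of Rota,
\[
\mu_{\Pi_n}(\pi_{n,1},\underline K)=\prod_{K_j\in\underline K}(-1)^{|K_j|-1}(|K_j|-1)!=\mu^*(\underline K),
\]
yields the second identity exactly as stated.

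The main (and only) nontrivial ingredient is the explicit formula for $\mu_{\Pi_n}(\pi_{n,1},\cdot)$, which is standard. There are no convergence or analytic subtleties since the identities are formal in the $f_i$. If one prefers a self-contained derivation, I would alternatively argue by induction on $n$: starting from the first identity, isolate the block of $\underline G$ containing the index $n$, collapse the remaining factors by the inductive hypothesis, and verify the resulting cancellation using the recursion $\sum_{\underline G\in\Pi_n}\mu^*(\underline G)\,\mathbf 1_{\underline G\succeq\underline H}=\delta_{\underline H=\pi_{n,1}}$, which follows directly from the defining recurrence of the M\"obius function on $\Pi_n$. Either route reaches the claim without difficulty.
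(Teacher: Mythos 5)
Your proof is correct. The paper itself simply cites \cite[Lemma 2.1]{CLee} for the two identities in \eqref{eqn:combinatorial sieve} and then unwinds notation, whereas you give a self-contained argument — stratifying by the coincidence partition for the first identity and applying M\"obius inversion on $\Pi_n$ with Rota's formula $\mu_{\Pi_n}(\pi_{n,1},\underline K)=\prod_j(-1)^{|K_j|-1}(|K_j|-1)!$ for the second — which is exactly the standard combinatorial-sieving argument the citation encapsulates.
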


\begin{proof}
Let
$$ R_{\underline G} := \sumsharp_{ p_1 , \ldots , p_\nu  } f_{G_1} ( p_1 ) \cdots f_{G_\nu} ( p_\nu), \qquad C_{\underline G} := \sum_{p_1 , \ldots , p_\nu } f_{G_1} ( p_1 ) \cdots f_{G_\nu} ( p_\nu) $$
for $ \underline G = \{ G_1 , \ldots, G_\nu \} \in \Pi_n $. 
By combinatorial sieving (e.g. see \cite[Lemma 2.1]{CLee}) we find that
\begin{equation}\label{eqn:combinatorial sieve}
   C_{ \underline O}   = \sum_{ \underline{G}\in \Pi_n } R_{ \underline G}, \qquad     R_{\underline O}   =  \sum_{\underline{G} \in \Pi_{n}} \mu^* ( \underline G) C_{\underline G},
   \end{equation}
where $\underline O = \{ \{1\} , \ldots , \{ n \} \} $. By rewriting the above identities in terms of prime sums, we obtain the identities in the lemma. 
\end{proof}

For clarity, we record the following bounds for prime sums.  These are essentially applications of the combinatorial sieve and GRH.

\begin{lemma}\label{lem:primesumremovedistinct}
Assume GRH. Let $\chi$ be a non-principal character modulo $M$.  Fix $  P_1,\ldots ,P_\kappa \ge 1$ with $\log P_i \ll \log Q$, $N_1 , \ldots , N_\kappa \leq N $, real $v_1,\ldots ,v_\kappa $ and $ \kappa' \leq \kappa $. Let $V$ be a smooth function compactly supported on $[1/2,3]$.  Then
\es{\label{eqn:primecharacterbdd}
 &\sumsharp_{\substack{ p_1,...,p_\kappa  \\  (  p_r  , N_r) = 1 \textup{ for } r \leq \kappa   } } \prod_{r = 1}^{\kappa' }  \frac{ \overline{\chi(p_r)} \log p_r}{p_r^{1/2-it}} V\bfrac{p_r}{P_r}  \e{v_r\frac{p_r}{P_r}} \prod_{r = \kappa' +1}^{\kappa } \frac{ \chi(p_r) \log p_r}{p_r^{1/2+it}} V\bfrac{p_r}{P_r}  \e{v_r\frac{p_r}{P_r}}  \\ 
 &\ll (MNQ(1+|t|))^\epsilon Y(\vb)^3 
}
for any $\epsilon>0$, where $ \vb = ( v_1 , \ldots  , v_\kappa ) $ and 
\begin{equation}\label{def:Yv}
Y(\vb) := \prod_{j=1}^\kappa ( 1+|v_j|) .
\end{equation} 
    
\end{lemma}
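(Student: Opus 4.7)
\textbf{Proposal for the proof of Lemma \ref{lem:primesumremovedistinct}.} The plan is to apply the combinatorial sieve (Lemma \ref{lem:cs}) to convert the distinct-prime sum $\sumsharp$ into an unrestricted sum indexed by set partitions $\underline G = \{G_1,\ldots,G_\nu\} \in \Pi_\kappa$. For each such $\underline G$, the inner sum factors as a product of $\nu$ single-prime sums, one for each block $G_j$; inside the $j$-th factor the indices $r \in G_j$ are collapsed by setting $p_r = p$, the coprimality conditions combine into $(p, \operatorname{lcm}_{r \in G_j} N_r) = 1$, and the characters and exponents multiply accordingly. Since $|\Pi_\kappa|$ and $|\mu^*(\underline G)|$ are bounded in terms of $\kappa$ only, it suffices to bound each block contribution separately.

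For a singleton block $G_j = \{r\}$, the inner sum is
\begin{equation*}
\sum_{(p, N_r) = 1} \frac{\chi^{\pm 1}(p) \log p}{p^{1/2 \pm it}} V\!\bfrac{p}{P_r} \e{v_r \tfrac{p}{P_r}},
\end{equation*}
and I would apply Lemma \ref{lem:CLee3.5} with the test function $\Psi(x) := V(x/P_r)\e{v_r x/P_r}$, supported on $[P_r/2, 3P_r]$. A direct Leibniz computation gives $|\Psi^{(m)}(x)| \ll (1+|v_r|)^m P_r^{-m}$ on the support, so the constants required by that lemma satisfy $A_m \ll (1+|v_r|)^m$. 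Under GRH, the output is then
\begin{equation*}
\ll (1+|v_r|)^3 \log^{1+\epsilon}(P_r + 2) \log(M + |t|) + \log N_r \ll (1+|v_r|)^3 (MNQ(1+|t|))^\epsilon,
\end{equation*}
using $\log P_r \ll \log Q$ and $N_r \le N$.

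For a block of size $\ell := |G_j| \ge 2$, no GRH input is needed: the character and exponential factors have modulus at most one, so the inner sum is trivially bounded by
\begin{equation*}
\sum_{p \asymp \min_{r \in G_j} P_r} \frac{\log^\ell p}{p^{\ell/2}} \ll (\log Q)^2 \ll Q^\epsilon
\end{equation*}
for $\ell = 2$, and by an absolute constant for $\ell \ge 3$. Crucially, non-singleton blocks contribute no dependence on the $v_r$'s.

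Multiplying the bounds across the blocks of $\underline G$, each $r$ in a singleton block contributes a factor $(1+|v_r|)^3$ while all other $r$'s contribute no $v$-dependence. Since $\prod_{r \text{ singleton}}(1+|v_r|)^3 \le Y(\vb)^3$, each partition yields at most $Y(\vb)^3 (MNQ(1+|t|))^\epsilon$, and summing over the finitely many $\underline G \in \Pi_\kappa$ completes the proof. The only delicate step in this plan is the derivative bound establishing $A_m \ll (1+|v_r|)^m$, as this is precisely what converts the oscillating weight $\e{v_r p/P_r}$ into the factor $Y(\vb)^3$ in the final estimate; the remaining ingredients are either trivial or direct applications of results already cited in the paper.
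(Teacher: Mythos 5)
Your proposal follows the paper's proof almost step by step: apply the combinatorial sieve (Lemma \ref{lem:cs}) to reduce to products of block-sums over set partitions, bound singleton blocks via Lemma \ref{lem:CLee3.5} with the test function $\Psi(x)=V(x/P_r)\e{v_r x/P_r}$ (whose derivatives pick up the $(1+|v_r|)^m$ factor), and bound blocks of size $\ge 2$ trivially by $(\log Q)^2$ or $O(1)$. This is the same approach as the paper, and the argument is correct.
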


\begin{proof}
    
By Lemma \ref{lem:cs}, the sum in \eqref{eqn:primecharacterbdd} is a linear combination of $ \prod_{j=1}^\nu \mathcal{P}_1(G_j)$ over $ \underline G \in \Pi_\kappa $, where $\mathcal P_1 (   G_j )$ is defined by
$$
       \sum_{\substack{p \\ ( p  , N_r) = 1 \\
   \textup{for } r \in G_j}} \left( \prod_{\substack{ r \in G_j \\  r \le \kappa' }} \frac{ \overline{\chi( p )} \log p }{ p^{1/2-it}} V\bfrac{ p}{P_r}  \e{v_r\frac{p}{P_r}} \right)\left( \prod_{\substack{r \in G_j \\    r > \kappa' }} \frac{ \chi(p) \log p}{p^{1/2+it}} V\bfrac{ p}{P_r}  \e{v_r\frac{p}{P_r}} \right).
$$
If $|G_j|>1$, then we have
\begin{align}
  \mathcal P_1 (G_j)  \ll \sum_{p  \leq Q^A } \frac{(\log p )^m}{p^{m/2}} \ll 
    \begin{cases}
        (\log Q)^{2} &\textup{ if } |G_j| =2 ,\\
        1 &\textup{ if } |G_j| >2.
    \end{cases}
\end{align}
If $ G_j  = \{r\}$ for some $ \kappa' < r \leq \kappa $, then
$$ \mathcal P_1 (G_j) = \sum_{\substack{p \\ (p , N_r ) = 1}}  \frac{ \chi(p ) \log p }{p^{1/2+it}} V\bfrac{ p}{P_r}  \e{v_r\frac{ p}{P_r}} \ll (\log Q)^{1+\epsilon} \log (M + |t|) (1+|v_r|)^3 + \log N
$$
by Lemma \ref{lem:CLee3.5} with $\Psi(x) = V\bfrac{x}{P_r}  \e{v_r\frac{ x}{P_r}}$, so that $\Psi^{(3)}(x) \ll \frac{1+|v_r|^3}{1+|x|^3}$.  
We can find the same bound for $G_j = \{r \}$ with $ r \leq \kappa'$.
Combining these bounds we obtain
$$\prod_{j=1}^\nu \mathcal{P}_1(G_j) \ll (MNQ(1+|t|))^\epsilon Y(\vb)^3      
$$
for any $\epsilon >0 $ and each $ \underline G \in \Pi_\kappa$, which proves the lemma.
\end{proof}

\begin{lem}\label{lem:heckebasisfouriersumbdd}
Assume GRH.	Let $u$ be an element of the Atkin-Lehner basis of level $\N$ so $u = f^{(g)}$ for some primitive Hecke form $f$ of level $\M|\N$, and some $g|\frac{\N}{\M}$.  We also set
    \begin{eqnarray*}
        \mathcal C = 
        \begin{cases}
            k &\textup{ if  $f$ is a holomorphic form of weight $k$,}\\
            1+|\tau_f| &\textup{ if  $f$ is a Maass form with spectral parameter $\tau_f$.}
        \end{cases}
    \end{eqnarray*}
   For $1\le r\le \kappa$, let $\Psi_r$ be smooth functions supported in $(a,b)$, where $0 < a < b$, and  let $X_r >0$ and $t_r$ be real numbers.  Moreover, we assume that
$    \max( \log \alpha, \log X_1,...,\log X_\kappa) \ll \log Q$. Then
	\est{\mathscr B_{u} &:= \sumsharp_{\substack{p_1,...,p_{\kappa} \\  ( \mathfrak p (\kappa) , \alpha) = 1   } } 
   \rho_u( \mathfrak p (\kappa)  )\prod_{r = 1}^{\kappa} \left[\frac{ \log p_r}{p_r^{ i t_r}} \Psi_r\left(\frac{p_r}{X_r}\right) \right]  \ll_{\Psi} |\rho_f(1)| (\mathcal C \N Q) ^{\epsilon} \prod_{r=1}^\kappa \log(2+|t_r| ).}
\end{lem}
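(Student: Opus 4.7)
The plan is to reduce the Fourier coefficients of $u$ to Hecke eigenvalues of the underlying primitive form $f$ and then invoke GRH through Lemma~\ref{lem:CLee3.5}. First, I would split the index set $[\kappa]$ into $S\sqcup T$ with $S=\{r:(p_r,\N)=1\}$ and $T=\{r:p_r\mid\N\}$. Since the $p_r$ are distinct, this partition is well-defined for each tuple. Applying \eqref{eqn:coeffBM} with $n_0=\mathfrak p(S)$ and $n'=\mathfrak p(T)$, together with the multiplicativity of $\lambda_f$, yields
\[
\rho_u\bigl(\mathfrak p(\kappa)\bigr)=\frac{\prod_{r\in S}\lambda_f(p_r)}{\sqrt{\mathfrak p(S)}}\,\rho_u\bigl(\mathfrak p(T)\bigr),
\]
so that $\mathscr B_u$ decomposes into a sum over the $2^\kappa$ choices of $S\subset[\kappa]$, with the distinctness constraint splitting cleanly between the two blocks.

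I would next handle the $T$-contribution trivially. For each $r\in T$ there are at most $\omega(\N)\ll(\log\N)^{1+\epsilon}$ choices of $p_r\mid\N$ with $p_r\nmid\alpha$, and $\log p_r\ll\log\N\ll\log Q$; moreover, \eqref{eqn:fouriercoeffbdd} yields $\rho_u\bigl(\mathfrak p(T)\bigr)\ll\N^\epsilon|\rho_f(1)|$ since $\mathfrak p(T)\le\N^{|T|}$. Hence the whole $T$-part is absorbed into $|\rho_f(1)|(\N Q)^\epsilon$.

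For the $S$-part I would apply Lemma~\ref{lem:cs} to exchange $\sumsharp$ for a linear combination over set partitions $\underline G\in\Pi_S$, each term of which factors as $\prod_{G_i\in\underline G}P(G_i)$ with
\[
P(G_i):=\sum_{(p,\N\alpha)=1}\frac{\lambda_f(p)^{|G_i|}(\log p)^{|G_i|}}{p^{|G_i|/2+i\sum_{r\in G_i}t_r}}\prod_{r\in G_i}\Psi_r\!\left(\frac{p}{X_r}\right).
\]
When $|G_i|=1$, Lemma~\ref{lem:CLee3.5} applied directly to $f$ gives $P(G_i)\ll(\mathcal C\N Q)^\epsilon\log(2+|t_r|)$. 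When $|G_i|=2$, I would use the Hecke relation $\lambda_f(p)^2=\lambda_f(p^2)+1$ for $(p,\M)=1$: the constant term contributes $O((\log Q)^3)\ll Q^\epsilon$ by the prime number theorem, while the $\lambda_f(p^2)$ piece is controlled by Lemma~\ref{lem:boundforSymL-functions 2} in the Maass case (or Lemma~\ref{lem:boundforSymL-functions} in the holomorphic case), both of which rely on GRH for $L(s,\sym^2 f)$. For $|G_i|\ge 3$, the Kim--Sarnak bound $|\lambda_f(p)|\ll p^{7/64}$ renders $P(G_i)$ an absolutely convergent sum, since the effective exponent $|G_i|(\tfrac12-\tfrac{7}{64})>1$.

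The main obstacle is the $|G_i|=2$ case for Maass forms, where the unavailability of Ramanujan forces the detour through $\sym^2 f$; absent Lemma~\ref{lem:boundforSymL-functions 2} the Kim--Sarnak exponent alone is too weak. Assembling the estimates---multiplying bounds over $G_i\in\underline G$, summing over $\underline G\in\Pi_S$, and then over $S\subset[\kappa]$ (both collections of bounded size in $\kappa$)---yields the claimed bound, the inequality $\prod_{r\in T}\log(2+|t_r|)\ge(\log 2)^{|T|}$ permitting us to enlarge the product over $S$ to one over all of $[\kappa]$ at the cost of an absolute constant.
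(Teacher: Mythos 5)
Your proposal is correct and takes essentially the same route as the paper: split $[\kappa]$ into primes dividing $\N$ and not, use \eqref{eqn:coeffBM} to peel off $\prod\lambda_f(p_r)$ on the coprime block and \eqref{eqn:fouriercoeffbdd} to bound the residual $\rho_u$ factor, then apply the combinatorial sieve (Lemma~\ref{lem:cs}) and handle partition blocks of size $1$, $2$, and $\ge 3$ via Lemma~\ref{lem:CLee3.5}, the Hecke relation with Lemma~\ref{lem:boundforSymL-functions 2}, and the Kim--Sarnak bound respectively, exactly as the paper does. One small imprecision: for $|G_i|=2$ you invoke Lemma~\ref{lem:boundforSymL-functions} in the holomorphic case, but that lemma as stated lacks the oscillating factor $p^{-iu}$ that appears in your $P(G_i)$; the right tool there is Lemma~\ref{lem:boundforSymL-functions 2} (whose proof works unchanged for holomorphic $f$, with Ramanujan making it only easier), which is what the paper applies in both cases.
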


 \begin{proof} 
We first split the sum to distinguish primes dividing $  \N$ or not. Then $   \mathscr B_{u} = \sum_{ R  \sqcup R'  = [\kappa ] } \mathscr B_{u,R}$, where
$$  \mathscr B_{u,R } =     \sumsharp_{\substack{p_1,...,p_\kappa \\ (\mathfrak p (\kappa) , \alpha) = 1   \\ ( \mathfrak p (R)  , \N) = 1 , ~\mathfrak p (R')  | \N     
  }}\sqrt{ \mathfrak p (\kappa)  }\rho_u( \mathfrak p (\kappa) )\prod_{r = 1}^{\kappa} \left[\frac{ \log p_r}{p_r^{1/2+it_r }}\Psi_r\left(\frac{p_r}{X_r}\right) \right] .$$
Without loss of generality, we only consider the case $R=  [ \gamma] \subset [ \kappa ]  $. By \eqref{eqn:coeffBM}
\est{\sqrt{p_1 \cdots p_{\kappa}}\rho_u(p_1 \cdots p_{\kappa})  = \lambda_f(p_{ 1}) \cdots \lambda_f(p_{\gamma}) \sqrt{p_{\gamma + 1} \cdots p_{\kappa}} \, \rho_u(p_{\gamma + 1} \cdots p_{\kappa}),}
and by  \eqref{eqn:fouriercoeffbdd} we have  
\est{
 \rho_u( p_{\gamma+ 1} \cdots p_{\kappa } ) \ll |\rho_f(1)|   (\N p_{\gamma+ 1} \cdots p_{\kappa } ) ^\epsilon \ll |\rho_f(1)|  (\N Q) ^\epsilon .}  Hence, by \eqref{eqn:coeffBM} we find that
 \begin{equation}\label{eqn:BuR}
  \mathscr B_{u,R } \ll      |\rho_f(1)|  (\N Q)^\epsilon \left|  \ \, \sumsharp_{\substack{p_{1},.., p_{\gamma} \\ ( \mathfrak p (\gamma), \alpha \N) = 1 }} \prod_{r =   1}^{\gamma} \left[\frac{\lambda_f(p_r) \log p_r}{p_r^{1/2+it_r}}\Psi_{r}\left(\frac{p_r}{X_r}\right)\right]    \right|.  
\end{equation}
 
By Lemma \ref{lem:cs} the sum in \eqref{eqn:BuR} is a linear combination of $ \prod_{j=1}^\nu \mathcal{P}_2 (G_j) $ over $ \underline G = \{ G_1 , \ldots , G_\nu\} \in \Pi_R $, where
$$ \mathcal{P}_2 (G_j ) :=  \sum_{ \substack{ p \\ (p, \alpha \N )=1}}  \prod_{r \in G_j  } \left[\frac{\lambda_f(p ) \log p }{p^{1/2+it_r}}\Psi_{r}\left(\frac{p }{X_r}\right)\right]  . $$
Hence, by \eqref{eqn:BuR}  it is enough to prove that
\begin{equation}\label{eqn:bound P2}
\prod_{j=1}^\nu \mathcal{P}_2 (G_j)\ll (\mathcal C \N Q)^\epsilon \log(2+|t_1| ) \cdots \log(2+|t_{\gamma}| )
\end{equation}
for each $ \underline G \in \Pi_R $.

For $G_j = \{r\}$, we have by Lemma \ref{lem:CLee3.5} that
$$\mathcal{P}_2 (G_j )  \ll \log^{1+\epsilon}(X_r+2) \log (\N+\mathcal C +|t_r|) + \log \N 
\ll (\N \mathcal C Q )^\epsilon \log (2 +|t_r|),
$$
where in applying Lemma \ref{lem:CLee3.5}, we have used that
	\begin{equation*}
		\frac{d^m}{dx^m} \Psi_r\left( \frac{x}{X}\right) = \frac{1}{X^m}\Psi_r^{(m)} \left( \frac{x}{X}\right) \ll \frac{1}{Xx^{m-1}} \max_{a<y<b} |\Psi_r^{(m)}(y)|
	\end{equation*}
	for each positive integer $m$.
For $G_j = \{k, l\}$ with $X_k \le X_l$, we have 
\begin{align*}
   \mathcal{P}_2 (G_j )
    &= \sum_{\substack{p\\ (p, M) = 1}}\frac{\lambda_f(p)^2 \log^2 p}{p^{1+i(t_k + t_l)}} \Psi_{k}\left(\frac{p}{X_k}\right) \Psi_{l}\left(\frac{p}{X_l} \right) = \sum_{\substack{p\\ (p, M) = 1}} \frac{(\lambda_f(p^2) +1)\log^2 p}{p^{1+i(t_k + t_l)}} F\left(\frac{p}{X_k}\right), 
\end{align*}
where $F(x) := \Psi_{k}(x) \Psi_{l}\left(x \frac{X_k}{X_l} \right)$. 
By Lemma \ref{lem:boundforSymL-functions 2}
\begin{align*}
   \mathcal{P}_2 (G_j )  &\ll \log(\mathcal C \N + |t_k| + |t_l|) X_k^{-1/2+\epsilon} + \log M + \sum_{p\le X_k} \frac{\log^2 p}{p} \ll (\mathcal C \N Q)^\epsilon, 
\end{align*}when $\log M  = \log (\alpha \N p_1...p_\gamma) \ll (Q\N)^\epsilon \log (2+|t_l|) \log (2+|t_k|)$, since $\log p_i \ll Q^\epsilon$ and $\log \alpha \ll \log Q \ll Q^\epsilon$, and $\sum_{p\le X_k} \frac{\log^2 p}{p} \ll X_k^\epsilon \ll Q^\epsilon$.
 Finally, when $  |G_j|\geq 3$, we have that 
$$\mathcal{P}_2 (G_j ) \ll \sum_{p} \left( \frac{p^{\frac{7}{64}  } \log  p}{p^{1 /2}} \right)^{|G_j|}\ll 1.$$
One can deduce \eqref{eqn:bound P2} by combining the above bounds of $ \mathcal P_2 (G_j)$ and concludes the proof of the lemma.
 \end{proof}

\subsection{Other lemmas}

We begin with stating the Hecke relations. 

\begin{lem} \label{lem:multoflambdaf}Let $f$ be a newform of weight $k$ and level $q$ in  $  \mathcal H_k(q) $. Then 
$$ \lambda_f(m) \lambda_f(n) = \sum_{\substack{d | (m, n)  \\ (d, q) = 1}} \lambda_f\left( \frac{mn}{d^2}\right).   $$
If $(p, q) = 1,$ then 
\est{ \lambda_f(p)^{2m} =  \sum_{r = 0}^m \left( {2m  \choose m-r} - {2m \choose m-r-1} \right) \lambda_f(p^{2r})}
and 
\est{  \lambda_f(p)^{2m + 1} =  \sum_{r = 0}^m \left( {2m + 1  \choose m-r} - {2m + 1 \choose m-r-1} \right) \lambda_f(p^{2r + 1}).}
\end{lem}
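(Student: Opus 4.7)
The plan is to establish the three identities using standard Hecke theory, treating the multiplicativity relation first and then bootstrapping the two power formulas from the special case of the first identity.

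For the first identity, I would start from the Euler product \eqref{def:Lsf}. For primes $p\nmid q$ we have $\alpha_f(p)\beta_f(p)=\chi_0(p)=1$, so $\lambda_f(p^a)=\sum_{j=0}^a \alpha_f(p)^j\beta_f(p)^{a-j}$, and a short computation (or the Chebyshev identity $U_aU_b=\sum_{c=0}^{\min(a,b)}U_{a+b-2c}$ applied to $\lambda_f(p^a)=U_a(\lambda_f(p)/2)$) gives
\est{ \lambda_f(p^a)\lambda_f(p^b)=\sum_{c=0}^{\min(a,b)}\lambda_f(p^{a+b-2c}).}
For $p\mid q$, the level-$q$ Euler factor degenerates to $(1-\lambda_f(p)p^{-s})^{-1}$, so $\lambda_f(p^a)=\lambda_f(p)^a$ and $\lambda_f(p^a)\lambda_f(p^b)=\lambda_f(p^{a+b})$. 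Combining these via the multiplicativity $\lambda_f(mn)=\lambda_f(m)\lambda_f(n)$ for $(m,n)=1$ and bookkeeping factors of $d$ with $(d,q)=1$ yields the stated formula for $\lambda_f(m)\lambda_f(n)$.

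For the two power formulas, I would specialize the first identity to the recursion $\lambda_f(p)\lambda_f(p^r)=\lambda_f(p^{r+1})+\lambda_f(p^{r-1})$ (valid for $r\ge 1$ and $(p,q)=1$, with the convention $\lambda_f(p^{-1})=0$) and proceed by induction on the exponent. Write
\est{ \lambda_f(p)^{n}=\sum_{r} c_{n,r}\,\lambda_f(p^r),\qquad c_{n,r}:=\binom{n}{(n-r)/2}-\binom{n}{(n-r)/2-1},}
where $r$ runs over nonnegative integers of the same parity as $n$. Multiplying $\lambda_f(p)^{2m}=\sum_{r=0}^{m}c_{2m,2r}\lambda_f(p^{2r})$ by $\lambda_f(p)$ and applying the three-term relation reduces the problem to the Pascal-type identity
\est{ \Bigl[\binom{2m}{m-s}-\binom{2m}{m-s-1}\Bigr]+\Bigl[\binom{2m}{m-s-1}-\binom{2m}{m-s-2}\Bigr]=\binom{2m+1}{m-s}-\binom{2m+1}{m-s-1},}
which follows from $\binom{2m+1}{j}=\binom{2m}{j}+\binom{2m}{j-1}$. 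An analogous identity handles the step from $n=2m+1$ to $n=2m+2$, closing the induction. Equivalently, one may substitute $\lambda_f(p)=2\cos\theta$ and $\lambda_f(p^r)=\sin((r+1)\theta)/\sin\theta$, expand $(e^{i\theta}+e^{-i\theta})^n$ by the binomial theorem, pair terms $k$ with $n-k$, and use $\cos(m\theta)=\tfrac12(U_m(\cos\theta)-U_{m-2}(\cos\theta))$ to convert cosines to Chebyshev values; the resulting telescoping produces the binomial differences directly.

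The content is entirely standard; the only mild obstacle is the combinatorial bookkeeping (keeping the parities of $n$ and $r$ straight and handling the boundary cases $r=0$ and $r=n$, where $\binom{n}{-1}=0$ ensures the formula continues to hold). Since this lemma appears only as a preliminary tool, a short proof citing Hecke's multiplicativity and the Chebyshev/Pascal identity above should suffice.
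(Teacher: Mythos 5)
The paper itself offers no proof of this lemma; it simply cites \cite{Guy} and \cite{HM}. Your proposal supplies a correct, self-contained derivation. The first identity does follow from the prime-power relations $\lambda_f(p^a)\lambda_f(p^b)=\sum_{c=0}^{\min(a,b)}\lambda_f(p^{a+b-2c})$ for $p\nmid q$ (Chebyshev) and $\lambda_f(p^a)\lambda_f(p^b)=\lambda_f(p^{a+b})$ for $p\mid q$ (degenerate Euler factor, as you note $\chi_0(p)=0$), recombined multiplicatively; the coprimality condition $(d,q)=1$ arises precisely because no lowering $c_i>0$ is allowed at ramified primes. For the power formulas, your induction is sound: multiplying the $2m$-case by $\lambda_f(p)$, applying the three-term recursion, and reindexing reduces the claim to the identity $\bigl[\tbinom{2m}{m-s}-\tbinom{2m}{m-s-1}\bigr]+\bigl[\tbinom{2m}{m-s-1}-\tbinom{2m}{m-s-2}\bigr]=\tbinom{2m+1}{m-s}-\tbinom{2m+1}{m-s-1}$, which indeed follows from Pascal's rule (both sides equal $\tbinom{2m}{m-s}-\tbinom{2m}{m-s-2}$), with the boundary terms handled by the convention $\tbinom{n}{-1}=0$ and $\lambda_f(p^{-1})=0$. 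The alternative route through $(e^{i\theta}+e^{-i\theta})^n$ and $2T_n=U_n-U_{n-2}$ is an equally valid one-shot derivation. In short, the proposal is correct and fills in the argument the paper chose to delegate to references.
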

The above formulas can be found in  \cite{Guy} and \cite{HM}.  
The next two lemmas collect some well known properties and formulas for the $J$-Bessel function.
{\lem\label{jbessel} Let $J_{k-1}$ be the $J$-Bessel function of order $k-1$. We have
\est{ J_{k-1}(2\pi x) =\frac{1}{2\pi \sqrt{x}}\left(W_k(2\pi x) \e{x-\frac{k}{4}+\frac{1}{8}} + \overline{W}_k(2\pi x) \e{-x+\frac{k}{4}-\frac{1}{8} }\right) }
for $ x >0$, where 
$$W_k(x) = \frac{1}{\Gamma(k- \frac12 )} \int_0^\infty e^{-u} u^{k - \tfrac 32} \left( 1 + \frac{iu}{4\pi x}\right)^{k - \tfrac 32} \> du.$$ 
Note that $W_k^{(j)}(x)   \ll_{j,k} x^{-j} $ as $x \to \infty$. 
Moreover, 
\begin{equation*}  J_{k-1}(2x) =\sum_{\ell = 0}^{\infty} (-1)^{\ell} \frac{x^{2\ell+k-1}}{\ell! (\ell+k-1)!}
\end{equation*}
and $$J_{k-1}(x)\ll \textup{min} \{x^{-1/2}, x^{k-1}\}.$$
}

The proof of the first three claims of Lemma~\ref{jbessel} can be found in \cite[p. 206]{Watt}, and the statement of the last claim is modified from Equation 16 of Table 17.43 in \cite{GR}.  %and proof of the last claim can be found in \cite{Ob}.

\begin{lem} \label{lem:MellinofprodofJbessel} For $-\tRe(\mu + \nu) < \tRe(s) < 1$
\est{ \int_0^{\infty} J_\mu(x) J_{\nu}(x) x^{s - 1} \> dx = 2^{s-1} \Gamma(1-s) \mathcal G_{\mu, \nu}(s) ,}
where 
\est{\mathcal G_{\mu, \nu}(s) = \frac{ \Gamma \left( \frac{\mu}{2} + \frac{\nu}{2}  + \frac{s}{2}\right) }{\Gamma \left( \frac{\mu}{2} - \frac{\nu}{2}  - \frac{s}{2} + 1\right)\Gamma \left( \frac{\nu}{2} - \frac{\mu}{2}  - \frac{s}{2} + 1\right)\Gamma \left( \frac{\mu}{2} + \frac{\nu}{2}  - \frac{s}{2} + 1\right)}.}
 Moreover, let $\delta, \sigma$ and $\nu$ be fixed real numbers. Then for $\mu = \delta + it$ and $s = \sigma + iy$, 
 \est{ \Gamma(1-s) \mathcal G_{\mu, \nu}(s) \ll (1 + |y|)^{\sigma - \frac 52 } (1 + |t|)^{2\sigma - 2}e^{\frac{\pi}{2}|t|}.}

\vskip 0.1in
For a positive even number $k$, 
%\es{\label{eqn:diffJbesselgeneral}\mathcal G_{ \mu, k-1}(s) - \mathcal G_{-\mu, k-1}(s) = .}
we have 
\es{\label{eqn:diffJbesselat0}\mathcal G_{ \mu, k-1}(0) - \mathcal G_{-\mu, k-1}(0) = 0,}
%for $\mu$ not an even integer , 
%\es{\label{eqn:diffJbesselat1}\mathcal G_{ \mu, k-1}(1) - \mathcal G_{-\mu, k-1}(1) = - \frac{2(-1)^{k/2}}{\pi} \sin \left( \frac{\mu \pi}{2}\right);}
 and for non-integer $v$,
 \es{ \label{eqn:diffJbesselatvplus1}  
 \mathcal G_{ v, k-1}(v + 1) - \mathcal G_{- v, k-1}(v + 1) = -(-1)^{k/2}\frac{\Gamma\left( v + \tfrac k2 \right) \sin (\pi  v  )}{\pi \Gamma \left( -v + \frac{k}{2}   \right) } .}
\end{lem}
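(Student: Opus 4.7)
I would prove the three parts in turn. The Mellin integral formula is the classical Weber--Schafheitlin identity, and my approach would use Mellin--Parseval. The single-Bessel Mellin transform
$$\int_0^\infty J_\mu(x) x^{s-1}\,dx = 2^{s-1}\frac{\Gamma((\mu+s)/2)}{\Gamma(1+(\mu-s)/2)}, \qquad -\tRe(\mu)<\tRe(s)<\tfrac32,$$
follows from term-by-term integration of the power series for $J_\mu$ against a regularizer $e^{-\varepsilon x}$ and analytic continuation. By Mellin convolution the integral in question equals a Mellin--Barnes contour integral in an auxiliary variable, which evaluates via Barnes's integral lemma to the stated closed form $2^{s-1}\Gamma(1-s)\mathcal G_{\mu,\nu}(s)$. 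The analyticity strip $-\tRe(\mu+\nu)<\tRe(s)<1$ is forced by the behavior $J_\mu(x)J_\nu(x) \sim x^{\mu+\nu}$ near $0$ and $\sim x^{-1}$ near $\infty$.

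For the Stirling estimate, I would apply $|\Gamma(\sigma_0+it_0)|\sim\sqrt{2\pi}\,|t_0|^{\sigma_0-1/2}e^{-\pi|t_0|/2}$ to each of the five Gamma factors making up $\Gamma(1-s)\mathcal G_{\mu,\nu}(s)$. With $\mu=\delta+it$ and $s=\sigma+iy$, the imaginary parts of the five arguments are $-y$, $(t+y)/2$, $(t-y)/2$, $-(t+y)/2$, and $(t-y)/2$. Splitting the analysis according to whether $|y|\gg|t|$ or $|t|\gtrsim|y|$, the exponential contributions from the three denominator Gamma factors together with $\Gamma(1-s)$ combine to cancel the decay of the numerator factor except for an overall residual $e^{\pi|t|/2}$. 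Summing the polynomial exponents of $|t|$ and $|y|$ coming from Stirling then yields the stated bound.

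The two special-value identities reduce to Gamma manipulations. At $s=0$, using $\Gamma(1+(\mu+\nu)/2)=\tfrac{\mu+\nu}{2}\Gamma((\mu+\nu)/2)$ to cancel the numerator and then applying the reflection identity $\Gamma(1+z)\Gamma(1-z)=\pi z/\sin(\pi z)$ with $z=(\mu-\nu)/2$ to the remaining two denominator factors yields the compact form
$$\mathcal G_{\mu,\nu}(0) = \frac{4\sin(\pi(\mu-\nu)/2)}{\pi(\mu^2-\nu^2)}.$$
Taking $\nu=k-1$ and applying the sum-to-product identity gives
$$\mathcal G_{\mu,k-1}(0)-\mathcal G_{-\mu,k-1}(0) = \frac{8\sin(\pi\mu/2)\cos(\pi(k-1)/2)}{\pi(\mu^2-(k-1)^2)},$$
which vanishes because $k-1$ is odd. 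At $s=v+1$ with $\nu=k-1$, direct substitution into the definition shows $\mathcal G_{v,k-1}(v+1)$ has $\Gamma(1-k/2)$ in its denominator, and since $k/2$ is a positive integer this is a pole of $\Gamma$, so $\mathcal G_{v,k-1}(v+1)=0$. Meanwhile
$$\mathcal G_{-v,k-1}(v+1) = \frac{1}{\Gamma(1-k/2-v)\Gamma(k/2-v)},$$
and applying the reflection formula to $\Gamma(1-k/2-v)$ together with $\sin(\pi(1-k/2-v))=(-1)^{k/2}\sin(\pi v)$ (valid since $k/2$ is an integer) yields the claimed formula for the difference.

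The main obstacle is organizing the Stirling estimate so that the exponential factors cancel cleanly and only $e^{\pi|t|/2}$ survives; every other step is a routine application of standard Gamma-function identities.
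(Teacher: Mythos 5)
Your proposal is correct and follows essentially the paper's route for the Stirling bound and the special-value identities: the paper also quotes Stirling for the bound, and also exploits $1/\Gamma(1-k/2)=0$ together with the reflection formula $\Gamma(z)\Gamma(1-z)=\pi/\sin(\pi z)$ for the values at $s=0$ and $s=v+1$. Your $s=0$ computation is slightly more streamlined — you evaluate at $s=0$ first to get the compact expression $\mathcal G_{\mu,\nu}(0)=\frac{4\sin(\pi(\mu-\nu)/2)}{\pi(\mu^2-\nu^2)}$ and then take the difference, whereas the paper forms the difference $\mathcal G_{\mu,k-1}(s)-\mathcal G_{-\mu,k-1}(s)$ for general $s$ and only specializes at the end — but this is a cosmetic reorganization. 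The one genuine divergence is the Weber--Schafheitlin integral itself: the paper simply cites Bateman or Watson, while you propose a self-contained derivation via Mellin convolution. That route works, but the appeal to ``Barnes's integral lemma'' is loose: the Mellin--Barnes integral you obtain has two Gammas in the numerator and two in the denominator, which is neither the four-over-none form of Barnes's first lemma nor the five-over-one form of the second. The correct evaluation is to close the contour to the right, sum the residues of $\Gamma\bigl((\nu+s)/2-w\bigr)$ to obtain a Gauss ${}_2F_1$ at argument $1$, and apply Gauss's summation theorem — after which the stated closed form follows. With that step filled in, your derivation is sound, and it has the advantage of being self-contained and of making the convergence strip $-\tRe(\mu+\nu)<\tRe(s)<1$ transparent from the local behaviour of $J_\mu J_\nu$ at $0$ and $\infty$.
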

\begin{proof}
The first equation in the lemma is from Equation (33) on p.331 in \cite{Bateman} or Equation (2) on p.403 in \cite{Watt}. The bound comes from the Stirling formula of the Gamma function.

Now we will prove \eqref{eqn:diffJbesselat0} - \eqref{eqn:diffJbesselatvplus1}. From the definition of $\mathcal G_{\mu, k-1}(s)$, we derive
\es{\label{eqn:diffGamma1}& \mathcal G_{\mu, k-1}(s) - \mathcal G_{-\mu, k-1}(s)\\
%&= \frac{ \Gamma \left( \frac{\mu}{2} + \frac{k - 1}{2} + \frac s2  \right) }{\Gamma \left( \frac{\mu}{2} - \frac{k-1}{2}  - \frac s2 + 1\right)\Gamma \left( \frac{k - 1}{2} - \frac{\mu}{2}  - \frac s2 + 1 \right)\Gamma \left(\frac{\mu}{2} + \frac{k - 1}{2}  - \frac s2 + 1\right)} \\
%& \ \ - \frac{ \Gamma \left( -\frac{\mu}{2} + \frac{k - 1}{2} + \frac s2  \right) }{\Gamma \left( -\frac{\mu}{2} - \frac{k - 1}{2}  - \frac s2 + 1\right)\Gamma \left( \frac{k - 1}{2} + \frac{\mu}{2}  - \frac s2 + 1 \right)\Gamma \left(-\frac{\mu}{2} + \frac{k - 1}{2}  - \frac s2 + 1\right)} \\
&=   \frac{ \Gamma \left( \frac{\mu}{2} + \frac{k - 1}{2} + \frac s2  \right)\Gamma \left( -\frac{\mu}{2} - \frac{k - 1}{2}  - \frac s2 + 1\right) - \Gamma \left( -\frac{\mu}{2} + \frac{k - 1}{2} + \frac s2  \right) \Gamma \left( \frac{\mu}{2} - \frac{k - 1}{2}  - \frac s2 + 1\right)}{\Gamma \left( \frac{\mu}{2} - \frac{k-1}{2}  - \frac s2 + 1\right) \Gamma \left( -\frac{\mu}{2} - \frac{k - 1}{2}  - \frac s2 + 1\right)\Gamma \left( \frac{k - 1}{2} - \frac{\mu}{2}  - \frac s2 + 1 \right)\Gamma \left(\frac{\mu}{2} + \frac{k - 1}{2}  - \frac s2 + 1\right)}. }
Using the identity 
\es{\label{Gammaidbw1-zandz} \Gamma(1-z) \Gamma(z) = \frac{\pi}{\sin(\pi z)}}
the numerator of the expression above becomes
\es{\label{eqn:diffGammanumer} &\frac{\pi}{ \sin\left( \pi \left( \frac{\mu}{2} + \frac s2 + \frac{k - 1}{2}\right)\right)} - \frac{\pi}{ \sin\left( \pi \left( -\frac{\mu}{2} + \frac s2 + \frac{k - 1}{2}\right)\right)} \\
&= \pi (-1)^{k/2 + 1} \left[\frac{1}{\cos\left(\pi\left( \frac{\mu}{2} + \frac s2 \right) \right)} -  \frac{1}{\cos\left(\pi\left( -\frac{\mu}{2} + \frac s2 \right) \right)}\right], }
where we have used that $k$ is even. Note that the expression vanishes when $s = 0$, which proves \eqref{eqn:diffJbesselat0}.

%When $s = 1$ and $\mu$ is not an even integer, the numerator becomes 
%$  \frac{2\pi (-1)^{k/2}}{\sin \left( \frac {\mu \pi}{2} \right)}. $
%From the identity \eqref{Gammaidbw1-zandz}, the denominator is
%\est{ %&\Gamma \left( \frac{\mu}{2} - \frac{k}{2}   + 1\right) \Gamma \left( -\frac{\mu}{2} - \frac{k }{2}  + 1\right)\Gamma \left( \frac{k }{2} - \frac{\mu}{2}   \right)\Gamma \left(\frac{\mu}{2} + \frac{k }{2}  \right) \\
%& \frac{\pi^2 }{\sin \left(\pi \left( \frac {k}{2} - \frac{\mu}{2} \right) \right)\sin \left(\pi \left( \frac {k}{2} + \frac{\mu}{2} \right) \right)} = -\frac{\pi^2}{\sin^2 \left( \frac{\mu \pi}{2}\right)}.} 
%Combining these we arrive at \eqref{eqn:diffJbesselat1}.
To prove \eqref{eqn:diffJbesselatvplus1}, we first use \eqref{eqn:diffGamma1} and the fact that $\frac{1}{\Gamma(1 - k/2)} = 0$ when $k$ is an even natural number to obtain that
\est{&\mathcal G_{ v, k-1}(v + 1) - \mathcal G_{-v, k-1}(v + 1) 
%&= \frac{ \Gamma \left( v + \frac{k }{2}   \right) }{\Gamma \left( 1 - \frac k2\right)\Gamma \left( \frac{k }{2} - v  \right)\Gamma \left(  \frac{k }{2}  \right) } - \frac{ 1 }{\Gamma \left( -v - \frac{k}{2}  + 1\right)\Gamma \left(-v + \frac{k }{2}   \right)}  \\
= - \frac{ 1 }{\Gamma \left( -v - \frac{k}{2}  + 1\right)\Gamma \left(-v + \frac{k }{2}   \right)}. }
 Next, using \eqref{Gammaidbw1-zandz} for $z =  - v - \frac k2 + 1$, where $v $ is not an integer and $k$ is an even integer, we have 
\est{ - \frac{ 1 }{\Gamma \left( -v - \frac{k}{2}  + 1\right)\Gamma \left(-v + \frac{k }{2}   \right)} %&= -\frac{\Gamma\left( v + \tfrac k2 \right)) \sin (\pi \left( -v - \tfrac k2  + 1\right))}{\pi \Gamma \left(-v + \frac{k}{2}   \right) } \\
&= -(-1)^{k/2} \frac{\Gamma\left( v + \tfrac k2 \right) \sin (\pi  v  )}{\pi \Gamma \left( -v + \frac{k}{2}   \right) }. }
\end{proof}
\vskip 0.1in
\begin{lem} \label{lem:asympforNQ} Let $\Psi(x)$ be a smooth function compactly supported in $(a, b),$ where $a$ and $b$ are fixed positive constants with $a < b $. Then we have 
$$ N_0 (Q) = Q \widetilde{\Psi}(1) T(1) + O\left( Q^{\epsilon}\right) $$
for any $ \epsilon>0$, where 
$$T(s) := \sum_{L_1, L_2} \frac{\mu(L_1L_2) \zeta_{L_1}(2)}{L_1^{1 + 2s} L_2^{1 + s}}   \sum_{ \ell_1  | L_1} \frac{\mu( \ell_1 )}{ \ell_1 ^{2 + s}} \sum_{ \ell_2  | L_2} \frac{\mu( \ell_2 )}{ \ell_2 ^{s}} $$
and $ \zeta_{L_1} (2)$ is defined in \eqref{def:zetaNs}.
In particular, $T(s)$ is absolutely convergent for $\Rep(s)>0$. 
\end{lem}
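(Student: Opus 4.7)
\medskip

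\noindent\textbf{Proof proposal for Lemma \ref{lem:asympforNQ}.}

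The plan is to evaluate $N_0(Q) = \sum_q \Psi(q/Q) \Delta_q^*(1,1)$ by inserting Ng's formula (Lemma \ref{lem:PeterssonNg}) with $m=n=1$, so that $(mn,q)=1$ is automatic, and then applying Mellin inversion
\[
\Psi(q/Q) = \frac{1}{2\pi i}\int_{(2)} \widetilde\Psi(s)\, (Q/q)^s\, ds,
\]
using rapid decay of $\widetilde\Psi$ on vertical lines. Writing $q = L_1 L_2 d$ with $L_1\mid d$ and $(L_2,d)=1$, and reparametrizing $d=L_1 e$ (so $q = L_1^2 L_2 e$ with $L_1L_2$ squarefree and $(L_2,L_1 e)=1$), I would split $\Delta_d(1,\ell^2)$ from the inner Petersson formula (Lemma \ref{lem:usualPetersson}) as $\delta_{\ell=1}$ plus a Kloosterman off-diagonal.

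The diagonal contribution ($\ell=1$) gives, after exchanging the sum with the Mellin integral,
\[
N_0^{\mathrm{diag}}(Q) = \frac{1}{2\pi i}\int_{(2)} \widetilde\Psi(s)\, Q^s\, \mathcal D_{\mathrm{main}}(s)\,ds,
\]
where
\[
\mathcal D_{\mathrm{main}}(s) = \sum_{\substack{L_1,L_2,e \\ L_1L_2\text{ sqfree}\\ (L_2,L_1 e)=1}} \frac{\mu(L_1)\mu(L_2)}{L_1 L_2 (L_1^2 L_2 e)^s} \prod_{\substack{p\mid L_1 \\ p\nmid e}}(1-p^{-2})^{-1}.
\]
Since $L_1$ is squarefree, the product factors as
\[
\prod_{\substack{p\mid L_1 \\ p\nmid e}}(1-p^{-2})^{-1} = \zeta_{L_1}(2)\sum_{\ell_1\mid L_1}\frac{\mu(\ell_1)}{\ell_1^2}\delta_{\ell_1\mid e}.
\]
Writing $e=\ell_1 e'$ with $(L_2,e')=1$ (automatic since $(L_2,\ell_1)=1$), the inner sum over $e'$ evaluates to $\zeta(s)\sum_{\ell_2\mid L_2}\mu(\ell_2)/\ell_2^s$, and combining the factors yields exactly $\mathcal D_{\mathrm{main}}(s) = \zeta(s) T(s)$. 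Using that $T(s)$ converges absolutely for $\Re(s)>0$ (since $|\mu(L_1L_2)\zeta_{L_1}(2)|$ is bounded by a tame multiplicative function, and the Dirichlet series are controlled by $\sum L_1^{-1-2\Re s}\tau(L_1)$ and $\sum L_2^{-1-\Re s}\tau(L_2)$, and admits polynomial growth on vertical lines), I would shift the contour from $\Re(s)=2$ to $\Re(s)=\epsilon$, crossing the simple pole of $\zeta(s)$ at $s=1$; this residue is precisely $Q\widetilde\Psi(1) T(1)$, while the shifted integral is $O(Q^{\epsilon})$ via the rapid decay of $\widetilde\Psi$.

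The off-diagonal contribution ($\ell\ge 2$ in Ng's formula, plus the Kloosterman tail of $\Delta_d(1,\ell^2)$) is bounded via Lemma \ref{lem:petertruncate}, which for $(\ell,L_1 e)=1$ gives
\[
|\Delta_{L_1 e}(1,\ell^2)-\delta_{\ell=1}| \ll \frac{\tau(L_1 e)\,\ell^{\epsilon}}{L_1 e}\cdot\frac{\ell}{(\ell + L_1 e)^{1/2}}.
\]
Since the outer weight $\Psi(L_1^2 L_2 e/Q)$ constrains $L_1^2 L_2 e \asymp Q$, the sums over $L_1, L_2, e$ and over $\ell\mid L_2^\infty$ (noting that the Bessel decay $J_{k-1}(x)\ll\min(x^{k-1},x^{-1/2})$ effectively truncates $\ell$) combine to $O(Q^\epsilon)$. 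The main obstacle I anticipate is the bookkeeping in identifying $\mathcal D_{\mathrm{main}}(s)$ as $\zeta(s)T(s)$ cleanly, since the coprimality conditions $(L_1,L_2)=1$, $(L_2,e)=1$, and the $\ell_1\mid e$ constraint inside the Euler-product-like product over $p\mid L_1$, $p\nmid e$ must be handled simultaneously; the Möbius expansion above is the critical manipulation that reveals the factor of $\zeta(s)$ carrying the pole.
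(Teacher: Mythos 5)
Your proposal is correct and follows essentially the same route as the paper: insert Ng's formula, discard the off-diagonal via Lemma \ref{lem:petertruncate} to get $O(Q^{\epsilon})$, substitute $d = L_1 e$, expand the local factor at $p\mid L_1$ via \eqref{eqn:pL1d product to sum}, detect $(L_2,e')=1$ by M\"{o}bius so that the free variable contributes $\zeta(s)$, and shift the resulting Mellin integral past the pole at $s=1$. The only (cosmetic) difference is that you perform the Mellin inversion of $\Psi$ before the combinatorial reorganization rather than after, which is immaterial since both orderings produce $\zeta(s)T(s)$ and the same residue $Q\widetilde\Psi(1)T(1)$.
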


\begin{proof}
By Lemma \ref{lem:PeterssonNg} we have
\begin{equation*}
N_0 (Q) = \sum_q \Psi\bfrac{q}{Q} \sum_{\substack{q=L_1L_2d \\ L_1|d,  \, \,  (L_2, d) = 1}} \frac{\mu(L_1L_2)}{L_1L_2}  \prod_{\substack{p|L_1 \\ p^2 \nmid d}}  \left( 1-\frac{1}{p^2} \right)^{-1} \sum_{\elltwo|L_2^{\infty} }\frac{\Delta_d(1,\elltwo^2)}{\elltwo}.
\end{equation*}
From Lemma \ref{lem:petertruncate}, 
\est{N_0 (Q) &= \sum_q \Psi\bfrac{q}{Q} \sum_{\substack{q=L_1L_2d \\ L_1|d,  \, \,  (L_2, d) = 1}} \frac{\mu(L_1L_2)}{L_1L_2}  \prod_{\substack{p|L_1 \\ p^2 \nmid d}}  \left( 1-\frac{1}{p^2} \right)^{-1}   +O( Q^\epsilon ).}
The main term is 
\est{N_1 (Q) =  \sum_{\substack{L_1, L_2, d \\ L_1|d, \, \,  (L_2, d) = 1}} \frac{\mu(L_1L_2)}{L_1L_2}  \prod_{\substack{p|L_1 \\ p^2 \nmid d}}  \left( 1-\frac{1}{p^2} \right)^{-1}  \Psi\bfrac{L_1L_2d}{Q}.}

We will do changes of variables similar to \S 6 in \cite{BCL}.
Since $L_1|d$, we write $d = L_1 m$ and have
\begin{equation}\label{eqn:pL1d product to sum}
\prod_{\substack{p|L_1 \\ p^2 \nmid d}}  \left( 1-\frac{1}{p^2} \right)^{-1}
  = \prod_{ p|L_1  }  \left( 1-\frac{1}{p^2} \right)^{-1}  \prod_{\substack{p|L_1 \\ p  | m}}  \left( 1-\frac{1}{p^2} \right)  = \zeta_{L_1}(2)  \sum_{ \ell_1  |(L_1,m)}\frac{\mu( \ell_1  )}{ \ell_1 ^2}.
\end{equation}
Using this and substituting  $q=L_1L_2d = L_1^2 L_2m$ and $m =  \ell_1  n$ in the above expression for $N_1 (Q)$ gives 
\begin{align*}
N_1 (Q)
= & \sum_{L_1,L_2  } \frac{\mu(L_1L_2)\zeta_{L_1}(2) }{L_1L_2}    \sum_{ \ell_1  |L_1}\frac{\mu( \ell_1 )}{ \ell_1 ^2}      \sum_{\substack{n  \\ (n,L_2)=1}}  \Psi\left(\frac{L_1^2L_2 \ell_1 n}{Q}\right). 
\end{align*}
Next, we use M\"{o}bius inversion to detect the condition $(n,L_2)=1$ and deduce that
\est{ 
N_1(Q)
&=  \sum_{ L_1,L_2 } \frac{\mu(L_1L_2)\zeta_{L_1}(2)}{L_1L_2}   \sum_{ \ell_1 |L_1}\frac{\mu( \ell_1 )}{ \ell_1 ^2} \sum_{ \ell_2 |L_2}\mu( \ell_2 )     \sum_{n}   \Psi\left(\frac{L_1^2L_2 \ell_1  \ell_2 n}{Q}\right).  
}
By the inverse Mellin transform 
\begin{equation} \label{inverseMellinPsi} \Psi(x) = \frac{1}{2\pi i} \int_{(\sigma)} \widetilde{\Psi}(s) x^{-s} \> ds ,
\end{equation}
we have 
$$  N_1 (Q) = \frac{1}{2\pi i} \int_{(\sigma)} \widetilde{\Psi}(s) Q^{s} \zeta(s) T(s)\> ds$$
for $\sigma > 1$. 
The Dirichlet series $T(s)$ is absolutely convergent for $ \tRe(s)>0$.
Moving the contour integration to the line $\tRe(s) = \epsilon >0$, we pick up a simple pole at $s = 1$, and the residue is $Q\widetilde \Psi(1)T(1)$. By the fast decay of $\widetilde \Psi (s)$ along the vertical line, the remaining integral is  $O(Q^{\epsilon})$.
\end{proof}

Below is \cite[Lemma 2.7]{Li} from the third author's paper.
\begin{lem}\label{lem:testfunc}
Let $c_0$ and $c_1$ be any fixed positive real numbers.  Then there exists a smooth non-negative even Schwartz class function $F$ such that $F(x) \ge 1$ for all $x \in [-c_1, c_1]$ and $\widehat{F}(x)$ is even and compactly supported on $[-c_0, c_0]$.  %It follows that $\check{F}(x)$ is also even and compactly supported on $[-c_0, c_0]$.
\end{lem}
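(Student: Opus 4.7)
The plan is to construct $F$ explicitly in two steps: first build a prototype function with the desired qualitative features, then rescale it to match the specific constants $c_0, c_1$.

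First, I would fix any smooth, even, real-valued, non-negative bump function $\phi$ compactly supported on $[-1,1]$ with $\phi(0) > 0$, and set $F_1(x) := \widehat{\phi}(x)^2$. Since $\phi$ is real and even, $\widehat\phi$ is real-valued, so $F_1 \geq 0$ automatically. The function $F_1$ is even and in the Schwartz class, and by the convolution-multiplication duality one has $\widehat{F_1} = \phi * \phi$, which is even and supported in $[-2,2]$. Moreover $F_1(0) = \bigl(\int \phi\bigr)^2 > 0$, so by continuity there exist constants $\delta > 0$ and $b > 0$ such that $F_1(x) \geq \delta$ on $[-b,b]$.

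Next, I would rescale. Choose $\lambda > 0$ small enough to satisfy both $\lambda \leq c_0/2$ and $\lambda \leq b/c_1$; for instance, $\lambda = \min(c_0/2, b/c_1) > 0$. Define
$$ F(x) := \tfrac{1}{\delta}\,F_1(\lambda x). $$
Then $F$ inherits smoothness, evenness, non-negativity, and Schwartz decay from $F_1$. The Fourier scaling law $\widehat F(\xi) = \frac{1}{\delta \lambda}\widehat{F_1}(\xi/\lambda)$ shows that $\widehat F$ is even and supported in $[-2\lambda, 2\lambda] \subseteq [-c_0, c_0]$. For $|x| \leq c_1$, we have $|\lambda x| \leq \lambda c_1 \leq b$, hence $F(x) \geq \delta/\delta = 1$, as required.

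The only conceptual point worth highlighting is that the two conditions---the support constraint on $\widehat F$ and the lower bound $F \geq 1$ on $[-c_1, c_1]$---can be decoupled through a single scaling parameter $\lambda$, since one only requires $F \geq 1$ on a finite interval rather than globally. There is no uncertainty-principle obstruction: the product $c_0 c_1$ plays no role, and the construction succeeds for every pair of positive constants. No serious technical difficulty arises beyond verifying the elementary scaling laws.
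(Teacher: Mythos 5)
Your proof is correct, and it is the standard construction for this type of Fej\'er-style majorant lemma. The paper cites the result from [Li, Lemma 2.7] without reproducing an argument, so there is no in-paper proof to compare against; but the route you take---forming $F_1 = \widehat{\phi}^{\,2}$ for an even non-negative bump $\phi$ so that $F_1 \ge 0$ automatically and $\widehat{F_1} = \phi*\phi$ stays compactly supported, then dilating to shrink the Fourier support below $c_0$ while keeping $[-c_1,c_1]$ inside the neighborhood of $0$ where $F_1 \ge \delta$---is exactly the canonical argument, and every step (evenness of $\widehat\phi$ from $\phi$ real and even, the Schwartz-algebra closure, the support of the autocorrelation, the scaling laws, the choice $\lambda = \min(c_0/2,\, b/c_1)$) is verified correctly.
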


Next, we have a standard result for the Fourier transform. We quote it from the beginning of \S 3 in \cite{CLee}.

\begin{lem} \label{lem:FourierResult}
Let $F$ be a Schwartz class function on $\mathbb{R}$ with $ \mathrm{supp} ~\widehat{F} \subset [ -\kappa_0 , \kappa_0 ] $. Then $F$ has an extension to the complex plane that is entire. 
%with $ |F(z)| \leq A e^{2 \pi \kappa_0  |z|}$ for some $ A>0$.  
Moreover, for any integer $A_1 \geq 0$, 
\begin{equation}\label{eqn: F bound}
  F( v+ iy  ) = \int_{\mathbb{R}} \widehat{F}(w) e^{-2 \pi wy}  e^{2 \pi i w v }     dw \ll_{A_1}  \new{\min\left\{ 1 ,  \frac{1+|y|^{A_1}}{ |v|^{A_1}} \right\}  e^{2\pi \kappa_0|y|}}
\end{equation}
for $v , y\in \R  $.
\end{lem}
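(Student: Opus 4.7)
The plan is to build directly from Fourier inversion. Since $F$ is Schwartz, the inversion formula
\[
F(x) = \int_{\mathbb{R}} \widehat{F}(w) e^{2\pi i w x} \, dw
\]
holds for all real $x$. Because $\widehat{F}$ is supported in $[-\kappa_0, \kappa_0]$, I would simply replace $x$ by the complex variable $z = v + iy$, obtaining
\[
F(v + iy) = \int_{-\kappa_0}^{\kappa_0} \widehat{F}(w) e^{-2\pi w y} e^{2\pi i w v} \, dw.
\]
For each fixed $w$, the integrand is an entire function of $z$, and on any compact subset of $\mathbb{C}$ it is uniformly bounded by a function of $w$ which is integrable on $[-\kappa_0,\kappa_0]$. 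Hence the right-hand side defines an entire function of $z$, providing the claimed entire extension of $F$ and establishing the first assertion of the lemma.

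For the bound, I would first note the trivial estimate. Since $|e^{-2\pi w y}| \le e^{2\pi \kappa_0 |y|}$ for $w \in [-\kappa_0, \kappa_0]$, we have
\[
|F(v + iy)| \le \|\widehat{F}\|_{L^1} \, e^{2\pi \kappa_0 |y|} \ll e^{2\pi \kappa_0 |y|},
\]
which already yields the factor of $1$ inside the $\min$ and covers the regime where $|v|$ is small relative to $(1+|y|^{A_1})^{1/A_1}$.

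To capture the decay in $v$, I would integrate by parts $A_1$ times in the variable $w$, using $e^{2\pi i w v} = \tfrac{1}{2\pi i v} \tfrac{d}{dw} e^{2\pi i w v}$. Since $\widehat{F}$ is Schwartz and vanishes outside $[-\kappa_0,\kappa_0]$, the function $\widehat{F}$ and all its derivatives vanish at $\pm \kappa_0$, so every boundary term is zero. This gives
\[
F(v + iy) = \left( -\frac{1}{2\pi i v} \right)^{A_1} \int_{-\kappa_0}^{\kappa_0} \frac{d^{A_1}}{dw^{A_1}} \bigl[ \widehat{F}(w) e^{-2\pi w y} \bigr] e^{2\pi i w v} \, dw.
\]
Expanding the $A_1$-th derivative via the Leibniz rule produces a finite sum of terms of the shape $\binom{A_1}{j} \widehat{F}^{(j)}(w) (-2\pi y)^{A_1 - j} e^{-2\pi w y}$, each of which is pointwise bounded on $[-\kappa_0, \kappa_0]$ by a constant depending only on $\widehat{F}$ and $A_1$, times $(1 + |y|)^{A_1} e^{2\pi \kappa_0 |y|}$. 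Consequently,
\[
|F(v + iy)| \ll_{A_1} \frac{1 + |y|^{A_1}}{|v|^{A_1}} \, e^{2\pi \kappa_0 |y|}.
\]
Combining the two bounds by taking their minimum yields exactly the stated inequality. The whole argument is a standard Paley--Wiener computation, with essentially no obstacle; the only point requiring a little care is keeping track of how derivatives falling on $e^{-2\pi w y}$ generate the $(1+|y|)^{A_1}$ factor, which is what controls the $y$-dependence in the final bound.
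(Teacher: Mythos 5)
Your proof is correct and follows exactly the route the paper indicates: the entire extension is the standard Paley--Wiener computation (the paper simply cites Theorem 3.3 of Stein--Shakarchi for this), and the bound is obtained by integrating by parts $A_1$ times, just as the paper's one-line remark says. You have merely written out the details that the paper leaves implicit, including the correct observation that $\widehat F$, being smooth with compact support in $[-\kappa_0,\kappa_0]$, vanishes to all orders at the endpoints so that no boundary terms arise.
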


\begin{proof}
    The first part of the lemma is Theorem 3.3 in \cite[p.122]{SS}.  The second assertion contained in \eqref{eqn: F bound} follows from integration by parts many times.
\end{proof}

\section{ Setup  of the Proof of Theorem \ref{thm:main}}\label{sec:outline of proof main theorem}

We begin by applying Lemma \ref{lem:Explicit} to \eqref{def:nthcenterNT}. Our first task is to show that the contribution from the error term $O\left( \frac{\log \log Q}{\log Q}\right)$ in \eqref{eqn:explicit} is negligible.
\begin{prop}\label{prop:SnQ sum MiQ}
Assume GRH. Let $\Phi_i$ be an even Schwartz function with $\widehat \Phi_i$ compactly supported in $(-\sigma_i, \sigma_i),$ where $\sum_{i = 1}^n \sigma_i < 4$.   Define
\begin{equation} \label{def:SnQ} 
S_n(Q) = \sum_q \Psi\bfrac{q}{Q}  \sumh_{f \in \mathcal{H}_k(q)} \prod_{i = 1}^n \mathfrak M_{\Phi_i,f} (Q),
\end{equation}
where $\mathfrak M_{\Phi_i,f} (Q)$ is defined in \eqref{def:Mi}. 
Then we have
$$ \mathscr{L}_n(Q) = \frac{S_n (Q)}{N_0 (Q)}  + O \left( \frac{ \log \log Q}{ \log Q} \right)   $$
as $Q \to \infty$.
\end{prop}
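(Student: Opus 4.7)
The plan is to substitute Lemma~\ref{lem:Explicit} into each of the $n$ bracketed factors in \eqref{def:nthcenterNT} and expand the resulting product. The contribution in which every factor is replaced by $\mathfrak{M}_{\Phi_i,f}(Q)$ gives exactly $S_n(Q)/N_0(Q)$, while the remaining $2^n-1$ ``cross terms'' each carry at least one factor of $O(\log\log Q/\log Q)$ multiplied by a partial product $\prod_{i\in I}\mathfrak{M}_{\Phi_i,f}(Q)$ for some proper subset $I\subsetneq[n]$. It therefore suffices to prove the moment bound
\begin{equation}\label{eqn:planmomentbound}
\frac{1}{N_0(Q)}\sum_q \Psi\!\left(\frac{q}{Q}\right)\sumh_{f\in\mathcal H_k(q)} \prod_{i\in I}|\mathfrak M_{\Phi_i,f}(Q)| \ll_I 1
\end{equation}
for every $I\subsetneq[n]$; the corresponding cross-term contributions, each $\ll(\log\log Q/\log Q)^{n-|I|}$, then sum to the claimed error $O(\log\log Q/\log Q)$.

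A direct application of Cauchy--Schwarz or H\"older across the product in \eqref{eqn:planmomentbound} fails, since the supports $\mathrm{supp}\,\widehat\Phi_i\subset(-\sigma_i,\sigma_i)$ need not be equal, and high moments of a single $\mathfrak M_{\Phi_i,f}$ would involve prime sums of length up to $Q^{n\sigma_i}$, which may exceed $Q$ and thus fall outside Petersson's effective range. To bypass this, I plan to apply an uncertainty-principle majorization. Fix a small $\epsilon>0$ (to be chosen in terms of $n$), invoke Lemma~\ref{lem:testfunc} with $c_0=\epsilon$ and a large parameter $c_1$, and scale the output by $A\ge\max_i\|\Phi_i\|_\infty$ to obtain an even non-negative Schwartz function $F$ with $F(x)\ge\max_{i\le n}|\Phi_i(x)|$ on $[-c_1,c_1]$ and with $\widehat F$ compactly supported in $[-\epsilon,\epsilon]$. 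Since $\Phi_i$ is Schwartz and the Riemann--von Mangoldt estimate gives the zero density, the contribution to the zero sum from $|\gamma_{j,f}\log Q/(2\pi)|>c_1$ is bounded by a constant depending only on $c_1$ and $\Phi_i$; hence under GRH the positivity $F\ge|\Phi_i|$ on $[-c_1,c_1]$ yields
\begin{equation*}
\left|\sum_j\Phi_i\!\left(\tfrac{\gamma_{j,f}}{2\pi}\log Q\right)\right|\le\sum_j F\!\left(\tfrac{\gamma_{j,f}}{2\pi}\log Q\right)+O(1).
\end{equation*}
Applying Lemma~\ref{lem:Explicit} once to $\Phi_i$ and once to $F$ then converts this to $|\mathfrak M_{\Phi_i,f}(Q)|\le|\mathfrak M_{F,f}(Q)|+O(1)$, reducing \eqref{eqn:planmomentbound} to establishing the single-function even-moment bound
\begin{equation}\label{eqn:planFmoment}
\frac{1}{N_0(Q)}\sum_q \Psi(q/Q)\sumh_f \mathfrak M_{F,f}(Q)^{2m}\ll_m 1
\end{equation}
for each $m\le n$, with odd moments then recovered by Cauchy--Schwarz.

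The key advantage of this reduction is that $\mathfrak M_{F,f}(Q)$ now only involves primes $p\le Q^\epsilon$. I would expand $\mathfrak M_{F,f}(Q)^{2m}$ as a $2m$-fold prime sum, apply the Hecke relations (Lemma~\ref{lem:multoflambdaf}) to rewrite $\prod_r\lambda_f(p_r)$ as a linear combination of $\lambda_f(N)$ with $N\ll Q^{2m\epsilon}$, and then invoke Lemma~\ref{lem:petertruncate}. The Petersson diagonal $\delta(N,1)$ survives only when the $2m$ primes pair into $m$ equal pairs, and after the $(\log Q)^{-2m}$ normalization this diagonal is controlled by $(2m-1)!!$ times the $m$-th power of $(\log Q)^{-2}\sum_p(\log p)^2 p^{-1}\widehat F(\log p/\log Q)^2$, which is $O_m(1)$ by Lemma~\ref{lem:sumprimeDiagonal}. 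Choosing $\epsilon$ small enough that $4n\epsilon<3/2$ renders the off-diagonal error of Lemma~\ref{lem:petertruncate} a power-saving in $Q$ after summing over $2m$-tuples of primes, and the removal of the coprimality condition $(p_r,q)=1$ contributes similarly negligibly. This gives \eqref{eqn:planFmoment} and completes the proof.

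The chief technical obstacle will be controlling the off-diagonal Petersson error uniformly in $m$ and verifying that the Hecke-expansion step does not produce hidden factors of $\log Q$ from prime-repetition patterns other than the perfect pairings; these are routine computations but require care with the divisor factor in Lemma~\ref{lem:petertruncate} and with the dependence of implied constants on $\epsilon$. A minor secondary point to pin down is the existence of the Schwartz majorant $F$ above with the given local domination property, which is a straightforward application of Lemma~\ref{lem:testfunc} combined with scaling.
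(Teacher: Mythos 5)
Your overall strategy is the same as the paper's: substitute the explicit formula, isolate the main term, and then reduce the cross-term bound to moments of short prime sums by majorizing each $\Phi_i$ with a function whose Fourier transform has small support, so that Petersson's formula stays in its effective range. The structure of the second half (expand even moments, Hecke relations, diagonal $\asymp (2m-1)!!$, off-diagonal power-saving from Lemma~\ref{lem:petertruncate}) also matches the paper. However, there is a genuine gap in the majorization step.

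You claim that, for a fixed $c_1$, the tail
$$\sum_{|\gamma_{j,f}|\log Q/(2\pi) > c_1}\bigl|\Phi_i\bigl(\tfrac{\gamma_{j,f}}{2\pi}\log Q\bigr)\bigr|$$
is $O_{c_1,\Phi_i}(1)$, pointwise in $f$ and $Q$, and conclude $|\mathfrak M_{\Phi_i,f}|\le|\mathfrak M_{F,f}|+O(1)$. This is where the argument fails. The local density of zeros of $L(s,f)$ near the central point is $\asymp\frac{\log Q}{2\pi}$, so there are $\asymp\log Q$ zeros with $|\gamma_{j,f}|\le 1$, and the Riemann--von Mangoldt estimate (even on GRH, via the $S(T)\ll\log Q/\log\log Q$ bound) only gives an $L^\infty$ bound of $\ll\log Q$ on the number of zeros in any single sub-interval of length $2\pi/\log Q$. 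It is therefore consistent with all known estimates that all $\asymp\log Q$ low-lying zeros of some particular $f$ lie just outside the window $|\gamma|\le 2\pi c_1/\log Q$, in which case the tail is $\asymp c_1^{-A}\log Q$, not $O(1)$. Since you then raise this to the power $|I|$ and average, even a single such $f$ contaminates the estimate, and you would get $\ll(\log Q)^{|I|}$ rather than $\ll 1$. The apparently natural fix of letting $c_1\to\infty$ slowly with $Q$ does not help either: the Beurling--Selberg-type majorant $F$ of Lemma~\ref{lem:testfunc} then has $\widehat F(0),F(0)\asymp c_1$, which reintroduces a $c_1^{|I|}$ in the final estimate and makes the argument circular.

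The paper's proof sidesteps this exactly by not relying on a pointwise $O(1)$ tail. Instead of a single majorant $F$ and a truncation, it writes $\sum_j|\Phi_i|\ll\sum_{\ell\ge1}\ell^{-10n}\sum_j H(\gamma_{j,f}\log Q/(2\pi\ell))$, where each $H(\cdot/\ell)$ has Fourier support shrinking to $[-\tfrac1{2n\ell},\tfrac1{2n\ell}]$. This inequality is \emph{exactly true pointwise}, with no error term, because for each $\ell$, $\sum_j H(\gamma_{j,f}\log Q/(2\pi\ell))$ counts at least the zeros in the $\ell$-th dyadic annulus. The weights $\ell^{-10n}$ are absolutely summable, so one applies H\"older's inequality \emph{before} the explicit formula to interchange the $\ell$-sum with the $m$-th power, reducing to a moment bound for each fixed $\ell$. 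The clustering of zeros is then harmless: it shows up only in the $L^m$ moment of the prime sum, which the Petersson machinery is already built to control. You would need to replace your truncation argument with this rescaled majorant plus H\"older step (or an equivalent device) for the proof to go through.

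As a secondary point, your expansion into diagonal and off-diagonal should make explicit the passage from the newform sum $\sumh_{f\in\mathcal H_k(q)}$ to the full-basis sum $\sumh_{f\in B_k(q)}$; this is done by positivity of even moments, and then Lemma~\ref{lem:petertruncate} applies directly. Your count of the diagonal as "$m$ perfect pairings'' should also acknowledge the lower-order diagonal contributions from primes occurring with multiplicity $\ge4$, which survive the Hecke relation but contribute fewer factors of $\log Q$. Neither of these is a serious gap, but they should be spelled out.
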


To compute $S_n (Q)$, we write it as a linear combination of sums over distinct primes by Lemma \ref{lem:cs} such as 
\est{S_n(Q) &=  \sum_{\underline A \in \Pi_n} S_n(Q; \underline A), }
where $\underline A = \{ A_1 , \ldots , A_\nu\} $, $a_j := |A_j |$,
\est{ \label{def:SnQai} S_n(Q; \underline A ) :=  \sum_q \Psi\left( \frac qQ \right)\sumh_{f \in \mathcal H_k(q)} \ \sumsharp_{\substack{p_1, p_2,..., p_{\nu} \\ p_i \nmid q }} \prod_{j = 1}^\nu  \left(-\frac{2}{\log Q} \frac{\log p_j \lambda_f(p_j)}{\sqrt{p_j}} \right)^{a_j} H_{A_j} \left( \frac{\log p_j}{\log Q}\right) }
and 
 $$ H_{A_j}(x) = \prod_{k \in  A_j } \widehat\Phi_k\left(x\right) . $$
Next, we show that the main contribution arises from set partitions where $a_j \leq 2$ for all $j$, and the number of sets $A_j$ with $a_j = 1$ is not $1$. 
\begin{prop}\label{prop: when SnQA is small}
 Assume GRH. Let $\underline A = \{ A_1 , \ldots , A_\nu\} \in \Pi_n $ and $a_j = |A_j |$ for $j \leq \nu$.  If $a_j \geq 3$ for some $j$, then 
\begin{equation}\label{eqn:ajatleast3} S_n(Q; \underline A) = O\left( \frac{Q}{(\log Q)^3 }\right). 
\end{equation}
If exactly one of the $a_j$ equals $1$ and all others equal $2$, then we have
\begin{equation} \label{eqn:onlyoneajis1}
 S_{n}(Q; \underline{A}) = O\left(  \frac{ Q\log \log Q}{\log Q} \right).
 \end{equation}
\end{prop}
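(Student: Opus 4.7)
The plan is to exploit the contrasting behavior of the prime sums $\mathcal{P}_j$ arising in \eqref{def:SnQai} depending on the multiplicity $a_j = |A_j|$. For large $a_j$ the prime sum converges absolutely, whereas for small $a_j$ (where the sum is morally too long) cancellation has to be extracted either from GRH (Lemma \ref{lem:CLee3.5}) or from Hecke relations combined with GRH for $\operatorname{Sym}^2 f$ (Lemma \ref{lem:boundforSymL-functions}). In both parts I bound each factor pointwise in $f$ and then sum over $q$ and $f$ via $\sum_q \Psi(q/Q) \sumh_f 1 \asymp Q$, which follows from Lemma \ref{lem:asympforNQ}.

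For \eqref{eqn:ajatleast3}, pick $j_0$ with $a_{j_0} \ge 3$ and take absolute values inside the $j_0$-th prime sum. Deligne's bound $|\lambda_f(p)| \le 2$ yields
\[
\left|\frac{1}{(\log Q)^{a_{j_0}}}\sum_{p_{j_0}} \frac{(\log p_{j_0})^{a_{j_0}} \lambda_f(p_{j_0})^{a_{j_0}}}{p_{j_0}^{a_{j_0}/2}} H_{A_{j_0}}\!\left(\tfrac{\log p_{j_0}}{\log Q}\right)\right| \ll \frac{1}{(\log Q)^{a_{j_0}}} \le \frac{1}{(\log Q)^3},
\]
since the $p$-sum converges absolutely for $a_{j_0}/2 \ge 3/2$. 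The remaining factors are bounded pointwise in $f$: factors with $a_j \ge 3$ are handled as above (only improving the bound); factors with $a_j = 2$ are linearized via $\lambda_f(p)^2 = \lambda_f(p^2) + 1$ and Lemmas \ref{lem:sumprimeDiagonal} and \ref{lem:boundforSymL-functions} give $|\mathcal P_j| \ll 1$; factors with $a_j = 1$ are controlled by Lemma \ref{lem:CLee3.5} under GRH, giving $|\mathcal P_j| \ll (\log Q)^\epsilon$. Multiplying these estimates and summing over $q$ and $f$ via Lemma \ref{lem:asympforNQ} yields \eqref{eqn:ajatleast3}.

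For \eqref{eqn:onlyoneajis1}, I would first decompose each $\mathcal P_j$ with $j \ne j_0$ (so $a_j = 2$) by writing $\lambda_f(p)^2 = \lambda_f(p^2) + 1$ and applying Lemmas \ref{lem:sumprimeDiagonal} and \ref{lem:boundforSymL-functions}:
\[
\mathcal P_j = \mathscr I_2(A_j) + O\!\left(\tfrac{\log\log Q}{\log Q}\right), \qquad \mathscr I_2(A_j) := 2\int_{\mathbb R} |u|\, H_{A_j}(u)\, du,
\]
pointwise in $f$ and uniformly in $q \sim Q$. The restriction $(p,q)=1$ contributes only $\sum_{p\mid q}(\log p)^2/p \ll (\log\log Q)^2$, which after division by $(\log Q)^2$ is $O((\log\log Q/\log Q)^2)$ and hence absorbed. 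Expanding $\prod_{j \ne j_0}\mathcal P_j$, every term containing at least one error factor contributes at most $O(Q(\log Q)^\epsilon \log\log Q/\log Q)$ after Lemma \ref{lem:CLee3.5} is applied to $\mathcal{P}_{j_0}$. The surviving main contribution is
\[
\prod_{j \ne j_0}\mathscr I_2(A_j) \cdot \sum_q \Psi(q/Q)\, \sumh_f \mathcal P_{j_0},
\]
and by Lemma \ref{lem:Explicit} the inner sum differs from the one-level density statistic for the single test function $\Phi_{k_0}$ by $O(\log\log Q/\log Q)$. Since $\sigma_{k_0} \le \sum_i \sigma_i < 4$, the one-level density analysis of \cite{BCL} applies and yields the bound $O(Q\log\log Q/\log Q)$, completing the proof.

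The main obstacle is extracting the explicit rate $O(\log\log Q/\log Q)$ from the one-level density analysis in \cite{BCL}, which is only stated qualitatively as $\mathscr L_1(Q) \to 0$; one must revisit the Kuznetsov-based argument in \cite{BCL}---in particular the complementary-level trick, the off-diagonal main-term extraction, and the prime-power contributions in the explicit formula---and carefully track error terms. A secondary technical point is the mild $(\log Q)^\epsilon$ loss in Part 1 arising from Lemma \ref{lem:CLee3.5}; this is acceptable for the intended downstream use in Proposition \ref{prop:SnQ sum MiQ}, where the bound is ultimately divided by $N_0(Q) \asymp Q$.
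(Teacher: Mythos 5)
Your approach to \eqref{eqn:ajatleast3} has a fatal quantitative gap in the treatment of the singleton factors. After normalizing by $(\log Q)^{a_j}$, Lemma \ref{lem:CLee3.5} gives for $a_j=1$ a pointwise bound $\ll (\log Q)^{1+\epsilon}$, not $(\log Q)^\epsilon$: the prime sum itself is $\ll \log^{1+\epsilon}(X)\log(q+k) \asymp (\log Q)^{2+\epsilon}$ for $q\sim Q$ and $X\sim Q^{\sigma}$, and dividing by one power of $\log Q$ leaves $(\log Q)^{1+\epsilon}$. Since a partition with some $a_{j_0}\ge 3$ can also have many singletons (e.g.\ $\underline A = \{\{1,2,3\},\{4\},\{5\},\{6\}\}$ when $n=6$), your pointwise product gives $(\log Q)^{-3}\cdot(\log Q)^{m(1+\epsilon)}$, which already fails to beat $(\log Q)^{-3}$ for $m\ge 1$ and is not even $o(1)$ for $m\ge 3$. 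The paper avoids this by never bounding singleton factors pointwise: it first applies Lemma \ref{lem:cs} to rewrite the $\sumsharp$ in \eqref{def:SnQai} as a linear combination over coarser partitions $\underline G\succeq\underline A$, normalizes the blocks with $|G_j|\ge 2$ (which absorbs the $(\log Q)^{-3}$ gain), and then controls the remaining product of singleton factors on average over $q$ and $f$ via the a priori estimate \eqref{eqn:MifQ absolute bound} established in the proof of Proposition \ref{prop:SnQ sum MiQ}. That averaged bound is precisely what replaces the loss of $\log Q$ per singleton.

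For \eqref{eqn:onlyoneajis1}, the same issue reappears: applying Lemma \ref{lem:CLee3.5} pointwise to $\mathcal P_{j_0}$ gives $(\log Q)^{1+\epsilon}$ rather than $(\log Q)^\epsilon$, so your error-term contribution estimate $O(Q(\log Q)^\epsilon\log\log Q/\log Q)$ is off by a factor $\log Q$; again the fix is to use \eqref{eqn:MifQ absolute bound} for the averaged singleton factor. Your stated ``main obstacle'' --- extracting an explicit rate from the qualitative statement $\mathscr L_1(Q)\to 0$ in \cite{BCL} --- is actually not the bottleneck: the paper invokes Proposition \ref{prop:boundSigma_1} (which is \cite[Proposition 4.2]{BCL}), a clean $O(Q)$ bound for the bare prime sum $\sum_q\Psi(q/Q)\sumh_f\sum_{p\nmid q}\lambda_f(p)\log p\,\widehat\Phi(\log p/\log Q)/\sqrt p$, and the $\log\log Q/\log Q$ rate comes entirely from the replacement $\frac{4\mathcal P_3(A_j)}{(\log Q)^2}=\mathscr I_2(A_j)+O(\log\log Q/\log Q)$ via Lemmas \ref{lem:sumprimeDiagonal} and \ref{lem:boundforSymL-functions}, not from \cite{BCL}. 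You would do well to note that the needed input from \cite{BCL} is already in the required quantitative form as Proposition \ref{prop:boundSigma_1}, rather than attempting to re-derive a rate from their one-level density result.
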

 Hence, we are left to calculate
 \begin{equation}\label{eqn: SnQ SnQKh}
 S_n(Q) = \frac{1}{N_0 (Q)}\sum_{\substack{K \sqcup K_0 = [n] \\   |K| \neq 1 }} \sum_{\underline G \in \Pi_{K_0 , 2 } }   S_n( Q ; \underline G \sqcup  \pi_{K,1}     )  + O\left( \frac{\log \log Q}{\log Q}\right),
 \end{equation}
where $\Pi_{K_0, 2}$ and $ \pi_{K,1}$ are defined in Definition \ref{def:set partition}. Then Theorem \ref{thm:main} follows from Proposition \ref{prop:SnQ sum MiQ}, \eqref{eqn: SnQ SnQKh}, the following proposition and Theorem \ref{thm:Cn}. 
\begin{prop} \label{prop:SnQCn}  
 Assume GRH. Let $C_0 (n)$ and $C_2 (n)$ be defined as in Theorem \ref{thm:Cn}.  We have
\begin{equation} \label{prop:eqn C0(n)case}
\lim_{Q \to \infty} \frac{1}{N_0 (Q)}\sum_{  \underline G \in \Pi_{n,2} } S_n(Q; \underline G  ) = C_0 (n) ,
\end{equation}
and
\begin{equation} \label{prop:eqn C2(n)case}
\lim_{Q \to \infty} \frac{1}{N_0 (Q)}\sum_{\substack{K \sqcup K_0 = [n] \\   |K| \geq 2 }} \sum_{\underline G \in \Pi_{K_0 , 2 } }   S_n( Q ; \underline G \sqcup \pi_{K,1}    )   = C_2 (n)  .
\end{equation}
\end{prop}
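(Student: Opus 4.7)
The plan is to expand $S_n(Q;\underline A)$ via Hecke multiplicativity and then apply the newform Petersson formula (Lemma~\ref{lem:PeterssonNg}). Since the sum restricts to $(p_j,q)=1$, the identities $\lambda_f(p)^2 = \lambda_f(p^2)+1$ and $\prod_{j\in K}\lambda_f(p_{i_j}) = \lambda_f(\prod_{j\in K}p_{i_j})$ allow us to rewrite $\prod_j \lambda_f(p_j)^{a_j}$ as a linear combination of $\lambda_f(m)$. Petersson then splits each resulting average over $f$ into a \emph{diagonal} $\delta(m,1)$ piece and an \emph{off-diagonal} piece built from Kloosterman sums, to be treated very differently.

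For the $\Pi_{n,2}$ case~\eqref{prop:eqn C0(n)case}, we expand $\prod_{j=1}^{n/2}\lambda_f(p_j)^2 = \sum_{T\subseteq[n/2]}\prod_{j\in T}\lambda_f(p_j^2)$. Only the $T=\emptyset$ term contributes to the diagonal (for $T\neq\emptyset$ the product of squared primes exceeds $1$), and it yields $\sum_q\Psi(q/Q)\sumh_f 1 = N_0(Q)$. Combined with the prime-sum factors and pairwise application of Lemma~\ref{lem:sumprimeDiagonal} to each pair $G_i=\{k_1,k_2\}\in\underline G$ with integrand $\widehat\Phi_{k_1}\widehat\Phi_{k_2}$, this produces $N_0(Q)\prod_{G_i\in\underline G}\mathscr I_2(G_i)$; summing over $\underline G\in\Pi_{n,2}$ and dividing by $N_0(Q)$ gives $C_0(n)$. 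The $T\neq\emptyset$ contributions, together with all off-diagonal Kloosterman terms, must be shown to be $o(N_0(Q))$, which is the purpose of the Kuznetsov-based analysis in \S\ref{sec:applyKuznetsov}--\S\ref{sec:ctn}.

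For the case~\eqref{prop:eqn C2(n)case} with $|K|\geq 2$, the factor $\lambda_f(\prod_{j\in K}p_{i_j})$ (involving at least two distinct primes) forces the $\delta(m,1)$ diagonal to vanish identically, so \emph{all} of $C_2(n)$ must come from the Petersson off-diagonal. Following \S\ref{sec:outline}, Kuznetsov's formula (Lemma~\ref{lem:kuznetsov}) is applied on the complementary level $c\asymp Q^{1-\delta/2}$. The cuspidal contributions (holomorphic and Maass) are bounded by $Q^{-\delta/2+\epsilon}$ using the newform decomposition of \S\ref{subsec:newformoldform} and GRH via Lemma~\ref{lem:heckebasisfouriersumbdd}; non-principal Eisenstein contributions are similarly negligible. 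The principal-character Eisenstein spectrum is delicate: it naively produces a phantom term of size $Q^{1-\delta/2}$ from the pole of $\zeta(1-s)$ at $s=0$, but this is killed by the vanishing of the Bessel transform there (equation~\eqref{eqn:outlineJortho}, which follows from~\eqref{eqn:diffJbesselat0}). After this cancellation, residues near $s=1$---evaluated using Lemmas~\ref{lem:sumprimeDiagonal} and~\ref{lemma:prime sum}---assemble, via the even-swap pairing phenomenon described after~\eqref{eqn:outlineJs}, into the $\mathscr V(K',K'')\prod_{G_i\in\underline G}\mathscr I_2(G_i)$ building blocks of $C_2(n)$; the pairing selects exactly one distinguished $K'\subset K$ with $|K'|=2$.

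The main obstacle is the combinatorial matching of the off-diagonal main terms with $C_2(n)$. The cumulant-based matchings used in \cite{HR,HM,Coh et al} do not scale to the doubled support $(-4/n,4/n)$. Instead, Theorem~\ref{thm:Cn} is derived in a form that structurally mirrors the number-theoretic decomposition---organized around $\Pi_{K_0,2}$ with a marked pair $K'$---so that both sides produce the same $\mathscr V(K',K'')\mathscr I_2$ integrals term-for-term. Proper identification and cancellation of the phantom term is the second major hurdle. Once both are achieved, Proposition~\ref{prop:SnQCn} follows by dividing by $N_0(Q)\sim c\widetilde\Psi(1)Q$ from Lemma~\ref{lem:asympforNQ} and taking $Q\to\infty$.
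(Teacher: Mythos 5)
Your plan for the second claim~\eqref{prop:eqn C2(n)case} matches the paper at the level of strategy (reduce to $S_\kappa(Q;\pi_{K,1})$, apply Petersson, switch levels via Kuznetsov, cancel the phantom pole against~\eqref{eqn:diffJbesselat0}, and match combinatorics with Theorem~\ref{thm:Cn} by casting $C(n)$ in the same $\mathscr V(K',K'')\prod\mathscr I_2$ structure). However, you omit the intermediate factorization step~\eqref{eqn:SnQGk 1}, namely that $S_n(Q;\underline G\sqcup\pi_{K,1}) = \bigl(\prod_{G_j\in\underline G}\mathscr I_2(G_j)\bigr)S_\kappa(Q;\pi_{K,1})+O(Q\log\log Q/\log Q)$, which is what allows Proposition~\ref{prop:main proposition} to be stated purely in terms of $|K|$ distinct first-power primes.

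The more serious issue is with~\eqref{prop:eqn C0(n)case}. You propose to expand $\prod_j\lambda_f(p_j)^2 = \sum_{T}\prod_{j\in T}\lambda_f(p_j^2)$, apply the Petersson formula, keep only the $T=\emptyset$ diagonal, and hand the $T\neq\emptyset$ off-diagonal to the Kuznetsov analysis of \S\ref{sec:applyKuznetsov}--\S\ref{sec:ctn}. That machinery is not available here: it is built for $\lambda_f(\mathfrak p(K))$ with distinct primes each appearing to the first power inside $m\le Q^{4-\delta}$, whereas your $T\neq\emptyset$ terms involve $\lambda_f(\prod_{j\in T}p_j^2)$ with squared entries, a different coefficient structure that the Atkin--Lehner decompositions and the prime-sum bounds of Lemmas~\ref{lem:primesumremovedistinct} and~\ref{lem:heckebasisfouriersumbdd} do not cover. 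Moreover, the trivial truncated Petersson bound of Lemma~\ref{lem:petertruncate} applied to $\Delta_q(\prod p_j^2,1)$, after multiplying by $\prod_j(\log p_j)^2/p_j$ and summing over $p_j$ and $q$, only produces $O(Q^{1+\epsilon})$, which is the same order as $N_0(Q)$ and therefore useless. The paper sidesteps all of this: for each \emph{fixed} $f$, the sum $\sum_p\lambda_f(p^2)(\log p)^2 p^{-1}F(\log p/\log Q)$ is $O(\log Q\cdot\log\log Q)$ by Lemma~\ref{lem:boundforSymL-functions} (GRH for $L(s,\sym^2 f)$), one power of $\log Q$ smaller than the $T=\emptyset$ contribution from Lemma~\ref{lem:sumprimeDiagonal}; this is the content of~\eqref{eqn:P3AjlogQ asymp}. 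Thus $\prod_j\mathcal P_3(A_j)$ is evaluated pointwise in $f$ \emph{before} the $f$-average, and the resulting $f$-independent main term $\prod_j\mathscr I_2(A_j)$ multiplied by $N_0(Q)$ gives $C_0(n)$ directly, with no Petersson/Kuznetsov input whatsoever. You cite Lemma~\ref{lem:sumprimeDiagonal} but not Lemma~\ref{lem:boundforSymL-functions}, so the crucial pointwise estimate that makes this clean is missing from your argument.
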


We will prove Propositions \ref{prop:SnQ sum MiQ} - \ref{prop:SnQCn} in \S \ref{section:proof props}.

\section{Proof of Propositions \ref{prop:SnQ sum MiQ} - \ref{prop:SnQCn}} \label{section:proof props}

\subsection{Proof of Proposition \ref{prop:SnQ sum MiQ}} \label{section:proof prop 1}

By \eqref{def:nthcenterNT} and Lemmas \ref{lem:Explicit} and \ref{lem:asympforNQ}, it is enough to show that
\begin{equation}\label{eqn:MifQ absolute bound}
  \sum_q \Psi\bfrac{q}{Q} \sumh_{f \in \mathcal H_{k}(q)} \prod_{i = 1}^m  | \mathfrak M_{\Phi_i, f} (Q)| \ll Q 
  \end{equation}
for all $ 1 \leq m \leq n $.
By \eqref{eqn:explicit}, it is equivalent to prove that 
\begin{equation}\label{eqn:equiv to prop3.1} 
  \sum_q \Psi\bfrac{q}{Q} \sumh_{f \in \mathcal H_{k}(q)} \prod_{i = 1}^m \left|  \sum_{j} \Phi_i\left( \frac{\gamma_{j, f}}{2\pi}\log Q\right) \right|  \ll Q   
  \end{equation}
  for all $ 1 \leq m \leq n $. For all $ i \leq n$, we have
\begin{align*}
    \sum_{j} \left| \Phi_i\left( \frac{\gamma_{j, f}}{2\pi}\log Q\right) \right|  & \ll \sum_{ \ell=1}^\infty 
\# \left\{ \frac{2\pi (\ell-1) }{\log Q} \leq |\gamma_{j,f}| < \frac{2\pi \ell }{\log Q} \right\} \frac{1}{\ell^{10n}} \\
& \ll \sum_{ \ell=1}^\infty   \frac{1}{\ell^{10n}} \sum_j  H \left( \frac{\gamma_{j,f} \log Q}{ 2 \pi \ell} \right) ,
\end{align*}
where $H(x)$ is an even Schwartz function such that $H(x) \geq 1 $ for $ |x| \leq 1 $ and $ H(x) \geq 0 $ for all $ x \in \mathbb{R}$. 
We also assume that $\widehat H $ is even and compactly supported on $[ - \tfrac1{2n} , \tfrac1{2n} ] $. Such function $H$ exists by Lemma \ref{lem:testfunc}. 
Hence, \eqref{eqn:equiv to prop3.1} is justified if we prove that 
\begin{equation}\label{eqn:equiv2 to prop3.1} \begin{split}
   \sum_q & \Psi\bfrac{q}{Q} \sumh_{f \in \mathcal H_{k}(q)}   \left( \sum_{ \ell=1}^\infty   \frac{1}{\ell^{10n}} \sum_j  H \left( \frac{\gamma_{j,f} \log Q}{ 2 \pi \ell} \right) \right)^m  \\
 & \ll  \sum_{ \ell=1}^\infty   \frac{1}{\ell^{10n}} \sum_q \Psi\bfrac{q}{Q} \sumh_{f \in \mathcal H_{k}(q)}   \left( \sum_j  H \left( \frac{\gamma_{j,f} \log Q}{ 2 \pi \ell} \right) \right)^m \ll Q  
 \end{split} \end{equation}
for every $ m \leq n $, where the first inequality holds by H\"older's inequality.

By \eqref{eqn:explicit} with $ H$ in place of $\Phi$, it is enough to show that
\begin{equation}\label{eqn:equiv3 to prop3.1}
  \sum_{ \ell=1}^\infty   \frac{1}{\ell^{10n}} \sum_q \Psi\bfrac{q}{Q} \sumh_{f \in \mathcal H_{k}(q)}    \left(   \sum_{\substack{p \\ p \nmid q}}\frac{\log p \lambda_f(p)}{\sqrt{p}} \widehat H\left(\frac{\ell \log p}{\log Q}\right) \right)^{m} \ll Q (\log Q)^{ m}   
  \end{equation}
  for all $ m \leq n $.
We will prove  
\begin{equation}\label{eqn:equiv4 to prop3.1}
  \sum_{ \ell=1}^\infty   \frac{1}{\ell^{10n}} \sum_q \Psi\bfrac{q}{Q} \sumh_{f \in \mathcal H_{k}(q)}    \bigg|   \sum_{\substack{p \\ p \nmid q}}\frac{\log p \lambda_f(p)}{\sqrt{p}} \widehat H\left(\frac{\ell \log p}{\log Q}\right) \bigg|^{2m} \ll Q (\log Q)^{ 2m}   
  \end{equation}
for all $ m \leq n $. It then follows from \eqref{eqn:equiv4 to prop3.1} and Cauchy's inequality that \eqref{eqn:equiv3 to prop3.1} holds.

Let $ B_k (q)$ be an orthogonal basis of $S_k(q)$ containing $ \mathcal H_k (q)$. Since $\lambda_f(p)$ is real, 
\begin{align*}
\sumh_{f \in \mathcal H_{k}(q)}    &  \bigg|  \sum_{\substack{p \\ p \nmid q}}\frac{\log p \lambda_f(p)}{\sqrt{p}} \widehat H\left(\frac{\ell \log p}{\log Q}\right) \bigg|^{2m}   \leq     \sumh_{f \in  B_{k}(q)}    \bigg(   \sum_{\substack{p \\ p \nmid q}}\frac{\log p \lambda_f(p)}{\sqrt{p}} \widehat H\left(\frac{\ell \log p}{\log Q}\right) \bigg)^{2m} \\
  = &  \sumh_{f \in  B_{k}(q)} \sum_{\substack{ p_1 , \ldots, p_{2m}   \\ ( \mathfrak p(2m) , q ) =1 }} \bigg(\prod_{i =1}^{ 2m}     \frac{ \lambda_f (p_i) \log p_i  }{\sqrt{p_i}} \widehat H\left(\frac{\ell \log p_i}{\log Q}\right)     \bigg)     .
  \end{align*}
  By Lemma \ref{lem:cs}, the above equals  $\displaystyle \sum_{ \underline G \in \Pi_{2m}}  R_1 (\underline G ), $ 
  where
$$
R_1 ( \underline G) :=   \sumsharp_{\substack{ p_1 , \ldots, p_{\nu}   \\ ( \mathfrak p(\nu) , q ) =1 }} \prod_{j=1}^\nu   \left(      \frac{  \log p_j  }{\sqrt{p_j}} \widehat H\left(\frac{\ell \log p_j}{\log Q}\right)     \right)^{|G_j|}  \sumh_{f \in  B_{k}(q)} \prod_{j=1}^\nu  \lambda_f (p_j)^{|G_j|}   .$$
By Lemma \ref{lem:multoflambdaf}, $R_1 (\underline G)$ is a linear combination of 
\begin{equation}\label{eqn p sums f Bkq}
\sumsharp_{\substack{ p_1 , \ldots, p_{\nu}   \\ (\mathfrak p(\nu) , q ) =1 }} \prod_{j=1}^\nu   \left(      \frac{  \log p_j  }{\sqrt{p_j}} \widehat H\left(\frac{\ell \log p_j}{\log Q}\right)     \right)^{|G_j|}  \sumh_{f \in  B_{k}(q)}    \lambda_f (p_1^{k_1} \cdots p_\nu^{k_\nu}  )  
\end{equation} 
over $ 0 \leq  k_j \leq |G_j|$ for all $j \leq \nu$.

\new{We apply Lemma \ref{lem:petertruncate} to the $h$-sum in \eqref{eqn p sums f Bkq}. If $ k_1 = \cdots = k_\nu = 0 $, then the $h$-sum is bounded by $ 1$. In this case, $ |G_j|$ must be even by Lemma \ref{lem:multoflambdaf}, so the worst case should be $ |G_j|= 2$ for all $ j \leq \nu = m $. Hence, \eqref{eqn p sums f Bkq} is $ O( ( \log Q)^{2m}) $ when $ k_1 = \cdots = k_\nu = 0 $.  
If at least one of the $k_j $ is nonzero, then  \eqref{eqn p sums f Bkq} is 
$$  \ll    \frac{Q^{\epsilon}}{Q} \bigg(\prod_{j = 1}^{\nu} \sum_{  p_j \leq Q^{1/2n\ell}  }  \frac{ \log p_j }{p_j^{|G_j|/2}} p_j^{k_j/4} \bigg) \ll \frac{Q^{\epsilon}}{Q} \prod_{j = 1}^{\nu} Q^{\tfrac{3}{8n\ell}}  \ll Q^{-\tfrac 14 + \epsilon} $$
 by Lemma \ref{lem:petertruncate} and the support of $\widehat{H}$. Thus, we have
 $$ R_1 ( \underline{G})    \ll Q (\log Q)^{2m} ,$$
   which implies \eqref{eqn:equiv4 to prop3.1}. This concludes the proof of the proposition. }

\subsection{Proof of Proposition \ref{prop: when SnQA is small} - set partitions with small contribution} \label{section:proof prop 2}

  The collection $\Pi_n$ of all set partitions of $[n]$ forms a lattice with the partial ordering given by $\underline{A}  \preceq \underline{G} $ if every set $G_i $ in $\underline{G}$ is a union of sets in $\underline{A}$.
Then by Lemma \ref{lem:cs}, $S_n ( Q ; \underline A ) $ is a linear combination of 
\begin{equation}\label{eqn:frac q Phi h sum f P3}
     \frac{(- 2)^n}{(\log Q)^n }   \sum_q \Psi\left( \frac qQ \right)\sumh_{f \in \mathcal H_k(q)}  \prod_{ G_j \in \underline G} \mathcal{P}_3 ( G_j) \end{equation}
over $ \underline{G} \in \Pi_n$ with $ \underline A \preceq \underline G$, where
\begin{equation}\label{def:P3Gj}
  \mathcal{P}_3 ( G_j) :=  \sum_{ p \nmid q }    \left(   \frac{\log p  \lambda_f(p )}{\sqrt{p }} \right)^{|G_j| } H_{G_j} \left( \frac{\log p }{\log Q}\right). 
  \end{equation}
By Lemma \ref{lem:CLee3.5} for $ |G_j|=1 $ and by $|\lambda_f(p)| \leq 2$ and the prime number theorem for $ |G_j| \geq 2$, we find that
\begin{equation}\label{eqn:bound P3Gj}
\mathcal P_3 (G_j )  \ll 
\begin{cases}
 ( \log Q)^{2+\epsilon} & \textup{ if } |G_j|=1 \\
 ( \log Q)^2 & \textup{ if } |G_j |= 2, \\
 1 & \textup{ if } |G_j |>2 .    
\end{cases}
\end{equation}
  
 Suppose that $ |A_\ell | \geq 3 $ for some $A_\ell \in \underline A$ and $ \underline A \preceq \underline G \in \Pi_n $. Then $ |G_\ell |\geq 3 $ for some $ G_\ell \in \underline G$.  By \eqref{eqn:bound P3Gj} for $|G_j| \geq 2 $ and \eqref{eqn:MifQ absolute bound}, we find that \eqref{eqn:frac q Phi h sum f P3} is 
 $$  \ll   \frac{1}{(\log Q)^3 }   \sum_q \Psi\left( \frac qQ \right)\sumh_{f \in \mathcal H_k(q)}  \prod_{\substack{ G_i \in \underline{G} \\     G_i =\{ g_i \}  } }  \frac{ | \mathcal P_3 (\{ g_i \} ) | }{ \log Q} \ll  \frac{Q}{ ( \log Q)^3} .
 $$ 
 This proves \eqref{eqn:ajatleast3}.

 Next, we need \cite[Proposition 4.2]{BCL} to prove \eqref{eqn:onlyoneajis1}  and we state it here for the completeness. Note that we changed $\log q$ in the proposition to $\log Q$, but the proof remains the same.
\begin{prop}[Baluyot, Chandee and Li \cite{BCL}] \label{prop:boundSigma_1} 
Assume GRH. Let $\Phi$ be an even Schwartz function with $\widehat \Phi$ compactly supported in $(-4, 4) $. Then 
$$   \sum_q \Psi\bfrac{q}{Q} \sumh_{f \in \mathcal H_{k}(q)}\sum_{p\nmid q} \frac{\lambda_f(p) \log p}{\sqrt{p}} \widehat \Phi\left(\frac{\log p}{\log Q} \right)  \ll  Q . $$
\end{prop}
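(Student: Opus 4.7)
The plan is to follow the complementary-modulus framework from \cite{BCL}. First I would open the inner harmonic average by applying Petersson's formula for newforms (Lemma \ref{lem:PeterssonNg}) to $\Delta_q^*(p,1)$, reducing the expression to a weighted sum of $\Delta_d(p,\ell^2)$ over divisors $d$ of $q$ and $\ell \mid L_2^\infty$. The diagonal piece in $\Delta_d(p,\ell^2)$ from Lemma \ref{lem:usualPetersson} contributes only when $\ell^2 = p$, which is impossible for a prime $p$, so only the Kloosterman contribution $S(p,\ell^2;cd)\,J_{k-1}\bfrac{4\pi \ell\sqrt p}{cd}$ remains.

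The key structural observation is the following. Since $\widehat\Phi$ is supported in $(-4,4)$, the prime $p$ is restricted by $p \le Q^{4-\epsilon}$, while $d\mid q \asymp Q$ and $\ell$ is effectively small. The transition range of $J_{k-1}(4\pi\ell\sqrt p/cd)$ sits near $c \asymp \ell\sqrt p/d$, which is of size at most $Q^{1-\epsilon}$ in the dangerous regime. Because this complementary modulus $c$ is much smaller than $d$, I would apply Kuznetsov's formula (Lemma \ref{lem:kuznetsov}) to the $d$-sum at level $c$, rather than to the $c$-sum at level $q$. This trades a sum over $q \asymp Q$ for a sum over $c \asymp Q^{1-\epsilon}$, producing the crucial savings.

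After the spectral expansion at level $c$, one faces contributions from holomorphic cusp forms, Maass forms, and Eisenstein series. For the discrete spectrum, I would decompose the orthonormal basis into Atkin--Lehner newforms as in \S \ref{subsec:newformoldform}, so that the innermost sum becomes a prime sum of Hecke eigenvalues weighted against a smooth test function. Lemma \ref{lem:CLee3.5} (applying GRH for $L(s,f)$) bounds this by $Q^\epsilon$ up to polynomial factors in the spectral parameter. Combined with the rapid decay of the Bessel transforms from Lemmas \ref{lem:boundforcoefficientafterKuznetsov} and \ref{lem:boundforhu}, summing over $c\asymp Q^{1-\epsilon}$ gives a total contribution of size $O(Q^{1-\epsilon/2})$ from the discrete spectrum, which is more than sufficient.

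The main obstacle will be the Eisenstein contribution, specifically the trivial character piece. For nontrivial characters, GRH for Dirichlet $L$-functions (Lemma \ref{lem:CLee3.5}) controls the inner prime sum exactly as in the cuspidal case. For the trivial character, however, the naked prime sum is of genuine size $Q^{2-\epsilon}$ by the prime number theorem, so a term-by-term bound fails dramatically. The crucial saving comes from an orthogonality cancellation: Mellin inverting $\Psi\bfrac{q}{Q}$ produces an integrand of shape $\widetilde\Psi(s)\,Q^s\,\zeta(1-s)\cdot I(s,r)$, where $I(s,r) = \int_0^\infty (J_{2ir}(\xi)-J_{-2ir}(\xi))\,J_{k-1}(\xi)\,\xi^s\,d\xi/\xi$. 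By \eqref{eqn:diffJbesselat0}, $I(0,0)=0$, so this zero at $s=0$ cancels the pole of $\zeta(1-s)$ there and kills the oversized phantom. A contour shift past $s=1$ then extracts a residual main term of size $O(Q)$, yielding the stated bound.
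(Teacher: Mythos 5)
Your sketch follows the same route as \cite{BCL}, which is exactly the source the paper cites for this proposition (it reproduces \cite[Proposition~4.2]{BCL} verbatim, only changing $\log q$ to $\log Q$, and does not reprove it). The sequence you outline --- Petersson's formula for newforms killing the diagonal since $\delta(p,\ell^2)=0$ for prime $p$, the complementary-modulus application of Kuznetsov so that the spectral sum runs over $c\ll Q^{1-\epsilon/2}$ rather than $q\asymp Q$, Atkin--Lehner decomposition plus GRH for the discrete spectrum, GRH for Dirichlet $L$-functions on the nontrivial-character Eisenstein piece, and the phantom cancellation from $\mathcal G_{\pm 2z,k-1}(0)$ against the pole of $\zeta(1-s)$ at $s=0$ --- is the same machinery the present paper deploys in \S\S\ref{sec:applyKuznetsov}--\ref{sec:ctnrescalc} for the general $\mathscr C_K(Q)$, specialized to a single prime.
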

Without loss of generality, we only consider $\underline A = \{   A_1, \ldots , A_\nu  \} \in \Pi_n$ with $ |A_i | = 2 $ for all $ i \leq \nu-1$ and $ A_\nu = \{ 1 \} $.  
If $ \underline A \preceq \underline G$ and $ \underline A \neq \underline G$, then $ \underline G$ contains $ G_j $ with $|G_j |> 2 $.  By Lemma \ref{lem:cs} and \eqref{eqn:ajatleast3}, we have
$$  S_n ( Q; \underline A) = \frac{(- 2)^n}{(\log Q)^n }   \sum_q \Psi\left( \frac qQ \right)\sumh_{f \in \mathcal H_k(q)}  \prod_{ A_j \in \underline A} \mathcal{P}_3 ( A_j)  + O \left( \frac{ Q}{ ( \log Q)^3}\right) .$$ 
By  Lemma \ref{lem:multoflambdaf}, we see that $ \lambda_f (p)^2 = \lambda_f (p^2 ) + 1 $ for $ p \nmid q $. 
  By Lemmas \ref{lem:sumprimeDiagonal} and  \ref{lem:boundforSymL-functions}, we have 
\begin{equation}\label{eqn:P3AjlogQ asymp}
\frac{4 \mathcal{P}_3 ( A_j) }{ (\log Q)^2}  = \sum_{ p_j \nmid q} \frac{4(\log p_j  )^2 }{ (\log Q)^2} \frac{ (\lambda_f(p_j^2) + 1)}{p_j }  H_{A_j} \left( \frac{\log p_j}{\log Q}\right)  =  \mathscr I_2 (A_j) + O \left( \frac{ \log \log Q}{ \log Q} \right) 
\end{equation} 
for $ |A_j |=2$. 
  Hence, we find that
 \est{ S_n(Q; \underline A) &= \frac{ - 2 }{ \log Q  }  \sum_q \Psi\left( \frac qQ\right)\sumh_{f \in \mathcal H_k(q)}\left( \sum_{  p  \nmid q }  \frac{\log p  \lambda_f(p )}{\sqrt{p }}  \widehat \Phi_{1} \left( \frac{\log p }{\log Q}\right)\right)\\
&\times   \left( \prod_{j = 1}^{\nu-1}\mathscr I_2 (A_j) + O \left( \frac{ \log \log Q}{ \log Q} \right) \right) + O \left( \frac{Q}{ ( \log Q)^3} \right) .}
By Proposition \ref{prop:boundSigma_1} and \eqref{eqn:MifQ absolute bound}, we obtain \eqref{eqn:onlyoneajis1}.

 %%%%%%%%%%%%%%%%%%%%%%%%%%%%%%%%%%%%%%%

\subsection{Proof of Proposition \ref{prop:SnQCn} - Main contribution} \label{section:proof prop 3}

We first prove \eqref{prop:eqn C0(n)case} similarly to the proof of \eqref{eqn:onlyoneajis1}.  Let $ \underline A \in \Pi_{n,2}$. We apply Lemma \ref{lem:cs} to remove the condition that the primes are distinct, and bound the remaining terms using \eqref{eqn:ajatleast3}. Thus we have
$$  S_n ( Q; \underline A) = \frac{(- 2)^n}{(\log Q)^n }   \sum_q \Psi\left( \frac qQ \right)\sumh_{f \in \mathcal H_k(q)}  \prod_{ A_j \in \underline A} \mathcal{P}_3 ( A_j)  + O \left( \frac{ Q}{ ( \log Q)^3}\right) .$$

By \eqref{eqn:P3AjlogQ asymp} and Lemma \ref{lem:asympforNQ} we have
\begin{align*}
   S_n ( Q; \underline A) =&     \sum_q \Psi\left( \frac qQ \right)\sumh_{f \in \mathcal H_k(q)} \left( \prod_{A_j \in \underline A }\mathscr I_2 (A_j) + O \left( \frac{ \log \log Q}{ \log Q} \right) \right) + O \left( \frac{ Q}{ ( \log Q)^3}\right) \\
   =&   N_0 (Q)    \prod_{A_j \in \underline A }\mathscr I_2 (A_j) + O \left( \frac{Q \log \log Q}{ \log Q} \right) .
   \end{align*}
 By \eqref{def:C0(n)} we have
  $$ \sum_{ \underline A \in \Pi_{n,2}} \frac{ S_n (Q; \underline A ) }{ N_0 (Q) } =  C_0 (n)  + O\left(  \frac{\log \log Q}{\log Q}\right) . $$
This proves \eqref{prop:eqn C0(n)case}.

%%%%%%%%%%%%%%%%%%%%%%% 
 
 Next, we compute \eqref{prop:eqn C2(n)case}.
 Let   $ K \sqcup K_0 = [n] $ and $K= \{ k_1 , \ldots , k_\kappa \} $ for some $\kappa \geq 2 $. 
 Then  by \eqref{eqn:ajatleast3} and Lemma \ref{lem:cs}, we have
  \begin{multline*}
S_n   (Q;    \underline G \sqcup \pi_{K,1}   )  
=  \frac{  (-2)^\kappa}{ ( \log Q)^\kappa } \sum_{  \substack{ \underline A \in \Pi_K \\ |A_j | \leq 2 \textup{ for all } j }} \mu^* (\underline A ) \\
 \times \sum_q \Psi\left( \frac qQ \right) \sumh_{f \in \mathcal H_k(q)}  \prod_{ A_j \in \underline A} \mathcal P_3 (A_j )  \prod_{ G_j \in \underline G} \frac{4\mathcal P_3 ( G_j ) }{( \log Q)^2} + O \left( \frac{Q}{ ( \log Q)^3} \right)  
\end{multline*}
for $ \underline G   \in \Pi_{K_0 ,2 } $, where $\pi_{K,1} $ is defined in Definition \ref{def:set partition}.  
By \eqref{eqn:P3AjlogQ asymp}, \eqref{eqn:MifQ absolute bound} and \eqref{eqn:bound P3Gj}, we find that
 \begin{multline*}
S_n   (Q;    \underline G \sqcup \pi_{K,1}  )  
= \bigg( \prod_{G_j \in \underline{G}} \mathscr I_2 (G_j) \bigg)  \frac{  (-2)^\kappa}{ ( \log Q)^\kappa } \sum_{  \substack{ \underline A \in \Pi_K \\ |A_j | \leq 2 \textup{ for all } j }} \mu^* (\underline A ) \\
\times  \sum_q \Psi\left( \frac qQ \right) \sumh_{f \in \mathcal H_k(q)}  \prod_{ A_j \in \underline A} \mathcal P_3 (A_j )  + O \left( \frac{Q \log \log Q }{   \log Q } \right)  .
\end{multline*}
We once again apply Lemma \ref{lem:cs} and use the bound in \eqref{eqn:bound P3Gj} to convert the sum over $\underline{A}$ back into a sum over distinct primes, and obtain that
\begin{equation}\label{eqn:SnQGk 1}
S_n   (Q;    \underline G \sqcup \pi_{K,1}  ) =      \bigg( \prod_{G_j \in \underline{G}} \mathscr I_2 (G_j)  \bigg) S_\kappa ( Q; \pi_{K,1} ) + O \left( \frac{Q \log \log Q }{   \log Q } \right) .
\end{equation} 
It remains to compute $ S_\kappa (Q ; \pi_{K,1} )$.
 \begin{prop}\label{prop:main proposition}
Assume GRH. Let $ K  $ be a finite set of positive integers with $ |K| = \kappa   \geq 2 $. Then  we have 
$$   
  \lim_{Q \to \infty} \frac{ S_\kappa (Q ; \pi_{K,1} ) }{N_0 (Q)}  =  \sum_{ \substack{      K'     \sqcup K'' = K   \\    |K'| = 2   } }            \mathscr{V} ( K' , K'' ), $$
  where the function $\mathscr V$ is in \eqref{def:V}.
  \end{prop}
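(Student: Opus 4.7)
The plan is to apply Petersson's formula for newforms (Lemma \ref{lem:PeterssonNg}) to the harmonic average over $f \in \mathcal{H}_k(q)$, exchange the order of summation, apply Kuznetsov's formula on the complementary level, and then extract the main term from the principal character Eisenstein contribution. Since the primes $p_1,\ldots,p_\kappa$ in the $\sumsharp$ are mutually distinct and coprime to $q$, Hecke multiplicativity (Lemma \ref{lem:multoflambdaf}) gives $\prod_j \lambda_f(p_j) = \lambda_f(\mathfrak{p}(\kappa))$. Combining Lemmas \ref{lem:usualPetersson} and \ref{lem:PeterssonNg}, the diagonal term $\delta(1, \mathfrak{p}(\kappa)\ell_\infty^2)$ vanishes since $\kappa \geq 2$ forces $\mathfrak{p}(\kappa) \geq 4$, so only the Kloosterman term survives. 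The auxiliary $(L_1, L_2, d, \ell_\infty)$ decomposition inherited from Ng's formula is pruned by a routine argument parallel to \cite{BCL}, leaving a main sum over the modulus $cq$ weighted by $S(\mathfrak{p}(\kappa), 1; cq) J_{k-1}(4\pi\sqrt{\mathfrak{p}(\kappa)}/(cq))$.

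Because $\sum_{i}\sigma_i < 4$, the support constraints force $\mathfrak{p}(\kappa) \leq Q^{4-\delta}$ for some fixed $\delta > 0$, so the transition region of $J_{k-1}$ puts the complementary level in the range $c \lesssim Q^{1-\delta/2} \ll q$. I will apply Kuznetsov's formula (Lemma \ref{lem:kuznetsov}) at level $c$ to the sum over $q$, producing holomorphic, Maass, and Eisenstein contributions. For the cuspidal parts, I will expand the Atkin--Lehner basis into primitive Hecke forms (\S \ref{subsec:newformoldform}), so that the inner $p_j$-sums become sums against $\lambda_f(p_j)$, which are bounded by $Q^\epsilon$ under GRH via Lemma \ref{lem:heckebasisfouriersumbdd}. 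Together with Lemma \ref{lem:boundforhu} and the short range of $c$, these contributions are $\ll Q^{-\delta/2+\epsilon}$ after normalizing by $N_0(Q) \asymp Q$. The non-principal character Eisenstein part is treated analogously through Lemma \ref{lem:primesumremovedistinct}, again saving $Q^{-\delta/2+\epsilon}$. Hence the full main term is produced solely by the principal character ($\chi = \chi_0$) Eisenstein contribution.

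For the principal character piece, Mellin inversion of $\Psi(q/Q)$ produces an integral roughly of the form
\begin{equation*}
\int_{(\sigma)} \bfrac{Q}{4\pi}^{s} \widetilde{\Psi}(s)\, \zeta(1-s)\zeta(2-s) \int_{\mathbb{R}} \sum_{m} \frac{a_{it}(m)}{m^{1/2+s/2}} \int_0^\infty \bigl(J_{2it}(\xi)-J_{-2it}(\xi)\bigr) J_{k-1}(\xi) \xi^{s}\,\frac{d\xi}{\xi}\, dt\, ds,
\end{equation*}
where the inner Bessel Mellin transform is evaluated by Lemma \ref{lem:MellinofprodofJbessel}. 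Naively the pole of $\zeta(1-s)$ at $s=0$ yields a phantom of size $Q^{1-\delta/2}$, dwarfing the expected main term; however the identity \eqref{eqn:diffJbesselat0} shows that the Bessel Mellin transform vanishes at $s=0$, a reflection of the orthogonality of the cuspidal and Eisenstein spectra, which cancels the phantom. I will then shift the contour past $s=0$ into the half-plane where the $p_j$-sums, viewed as functions of their spectral arguments, generate the contributing residues through Lemma \ref{lemma:prime sum}.

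The main difficulty is the residue extraction and its combinatorial matching with $\mathscr{V}(K', K'')$ in \eqref{def:V}. Each prime sum decomposes by Lemma \ref{lemma:prime sum} into an analytic polar piece and a Fourier piece; exactly two primes, indexed by $K' \subset K$, combine with $\zeta(2-s)$ and the Bessel Mellin transform near its non-trivial zero (via \eqref{eqn:diffJbesselatvplus1}) to produce non-negligible contributions, while the remaining primes in $K'' = K \setminus K'$ split further into the four classes $G_1, G_2, G_3, G_4$ according to whether they enter the $\mathscr{I}$ integral \eqref{def:I12} as shifts of the Bessel arguments ($G_3, G_4$) or as Fourier variables $w_j$ ($G_1, G_2$). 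The restriction $|K'| = 2$ rather than general $|K'|$ arises because the odd-$|K'|$ contributions cancel pairwise, mirroring the cancellation in the deduction of \eqref{eqn Cn Tplusn} on the random matrix side of \S \ref{section:nth centered moment ON}; this is the number-theoretic manifestation of the even-swap symmetry tied to root number $\pm 1$ squaring to $1$. Assembling the residues, using Fourier inversion to reintroduce the factors $\widehat{\Phi}_{k_j}$, and comparing term-by-term with \eqref{def:V} then completes the proof. The detailed calculations are carried out in \S \ref{sec:trivialchar off diag main terms} and \S \ref{sec:ctnrescalc}, with \S \ref{sec:fillprimes} handling the removal of the coprimality condition $(m, q) = 1$ that was inserted for Petersson's formula but now needs to be removed.
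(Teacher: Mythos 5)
Your proposal follows essentially the same approach as the paper: Petersson via Ng's formula, truncation of $L_1,L_2,\elltwo$, applying Kuznetsov on the complementary level $c\asymp Q^{1-\delta/2}$, bounding cuspidal and non-principal Eisenstein contributions under GRH, and then extracting the main term from the principal character by cancelling the phantom pole at $s=0$ via \eqref{eqn:diffJbesselat0} and carrying out a residue computation. You also correctly flag \S\ref{sec:fillprimes} as handling the removal of the $(m,q)=1$ coprimality condition. So structurally this is the paper's proof.

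One substantive inaccuracy worth flagging is your explanation of why $|K'|=2$: you attribute it to odd-$|K'|$ contributions cancelling pairwise ``on the number-theoretic side, mirroring the deduction of \eqref{eqn Cn Tplusn}.'' That pairwise cancellation of odd $|K'|$ actually occurs \emph{only} in the random matrix computation of \S\ref{section:nth centered moment ON} (when averaging \eqref{eqn:C even odd nice cancellation} for $C_{even}$ and $C_{odd}$). On the number-theoretic side there is no such cancellation: the $|K'|=2$ structure emerges \emph{directly} from the residue extraction. Concretely, Lemma \ref{lemma:SigmaCtn0 to SigmaCtn1} shows that the main contribution comes from the term $\mathfrak K_{6,L_1L_2}$ in \eqref{def:K6Luv} with both $K_1\neq\emptyset$ and $K_2\neq\emptyset$ (the $\Phi_{\ell_1}(i\mathcal U u)\Phi_{\ell_2}(i\mathcal U v)$ pairing, the $P_1^{it}P_2^{-it}$ phenomenon you mention elsewhere), and then $K'=\{\min K_1,\min K_2\}$ always has exactly two elements. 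Relatedly, the residues in Lemma \ref{lemma:SigmaCtn2 2} arise at $u=0$ (from $\zeta(1-2u)$) and $u=-v$ (from $\zeta(1-u-v)\Gamma(-u-v)$) after the change of variables $s=u+v+1$, $z=(u-v)/2$ — not quite from $\zeta(2-s)$ combined with the Bessel Mellin transform as your sketch suggests; \eqref{eqn:diffJbesselatvplus1} enters in evaluating the $u=0$ residue. These are presentation inaccuracies in an otherwise correct outline and would be corrected in the course of carrying out the residue computation.
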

 We will prove Proposition \ref{prop:main proposition} in \S \ref{section:proof of main proposition}. Then it is easy to see that  \eqref{def:C2(n)},  \eqref{eqn:SnQGk 1} and  Proposition \ref{prop:main proposition} imply  
  \eqref{prop:eqn C2(n)case}.  This concludes the proof of the proposition.

 \section{Initial steps toward the proof of Proposition \ref{prop:main proposition}} \label{section:proof of main proposition}

   Let $K = \{ k_1 , \ldots, k_\kappa \}$ with $\kappa \geq 2$, then we have
$$  S_\kappa (Q ; \pi_{K,1} ) =   \frac{  (-2)^\kappa}{ ( \log Q)^\kappa } \sum_q \Psi\left( \frac qQ \right)  \sumsharp_{\substack{ p_1 , \ldots , p_\kappa \\ ( \mathfrak{p} (\kappa)  , q) =1 }}  \prod_{ j=1}^\kappa \left( \frac{ \log p_j }{ \sqrt{ p_j }} \widehat{\Phi}_{k_j} \left( \frac{ \log p_j }{ \log Q} \right)        \right)\sumh_{f \in \mathcal H_k(q)}  \lambda_f ( \mathfrak p (\kappa) ) .    $$
By Lemma \ref{lem:PeterssonNg}, we have
\begin{multline*}  
	  S_\kappa (Q ; \pi_{K,1} ) =   \frac{  (-2)^\kappa}{ ( \log Q)^\kappa }     \sum_{\substack{ L_1 , L_2 , d \\ L_1|d, \ (L_2 , d)=1 }} \frac{\mu(L_1L_2)}{L_1L_2}  \prod_{\substack{p|L_1 \\ p^2 \nmid d}}  \left( 1-\frac{1}{p^2} \right)^{-1}   \Psi\left( \frac{L_1L_2d}{Q} \right)    \\
\times \sumsharp_{\substack{ p_1 , \ldots , p_\kappa \\  ( \mathfrak p (\kappa ), L_1L_2d )=1 }}  \prod_{ j=1}^\kappa \left( \frac{ \log p_j }{ \sqrt{ p_j }} \widehat{\Phi}_{k_j} \left( \frac{ \log p_j }{ \log Q} \right)        \right) \sum_{\elltwo |L_2^{\infty} }\frac{\Delta_d( \mathfrak p (\kappa), \elltwo^2)}{\elltwo}.
\end{multline*}
We first show that only small $L_1 L_2 $ and $ \elltwo$ contribute to the main term. 
\begin{lemma} \label{lemma:Skappa truncate LE} Assume GRH. For $ |K|= \kappa \geq 2$, we have
\begin{multline*}  
	S_\kappa (Q ; \pi_{K,1} ) =   \frac{  (-2)^\kappa}{ ( \log Q)^\kappa }     \sum_{\substack{ L_1 , L_2 , d \\ L_1|d, \ (L_2 , d)=1  \\
      L_1 L_2 < \LL_{ \kappa+4 }  }} \frac{\mu(L_1L_2)}{L_1L_2}  \prod_{\substack{p|L_1 \\ p^2 \nmid d}}  \left( 1-\frac{1}{p^2} \right)^{-1}   \Psi\left( \frac{L_1L_2d}{Q} \right)    \\
\times \sumsharp_{\substack{ p_1 , \ldots , p_\kappa \\   ( \mathfrak p (\kappa ), L_1L_2d )=1  }}  \prod_{ j=1}^\kappa \left( \frac{ \log p_j }{ \sqrt{ p_j }} \widehat{\Phi}_{k_j} \left( \frac{ \log p_j }{ \log Q} \right)        \right) \sum_{\substack{\elltwo|L_2^{\infty} \\ \elltwo < \LL_{\kappa+2 }  } }\frac{\Delta_d(\mathfrak p (\kappa), \elltwo^2)}{\elltwo} + O\left( \frac{Q}{\log Q}\right),
\end{multline*}
where
$$\LL_{m} : = (\log Q)^{m}   .  $$ 
\end{lemma}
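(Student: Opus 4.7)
The plan is to use Petersson's formula to expand $\Delta_d(\mathfrak p(\kappa), \elltwo^2)$ (either spectrally via \eqref{def:Deltaq(m,n)} or via its Kloosterman expansion from Lemma \ref{lem:usualPetersson}) and then combine GRH-based bounds on the resulting prime sums with decay estimates in $\elltwo$ and $L_1 L_2$ to show that the tails contribute at most $O(Q/\log Q)$. Since $\mathfrak p(\kappa)$ is squarefree with $\kappa \geq 2$ distinct prime factors, it can never equal a perfect square $\elltwo^2$, so the diagonal term in the Kloosterman expansion always vanishes.

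For the $\elltwo \geq \LL_{\kappa+2}$ contribution, the plan is to use the spectral form $\Delta_d(\mathfrak p(\kappa), \elltwo^2) = \sumh_{f' \in B_k(d)} \lambda_{f'}(\mathfrak p(\kappa))\lambda_{f'}(\elltwo^2)$, swap sums, and decompose $B_k(d)$ via the Atkin--Lehner theory of Section \ref{subsec:newformoldform}. For each basis element $f^{(g)}$ coming from a newform $f$ of level $\mathcal M \mid d$, the fact that $(\mathfrak p(\kappa),d) = (\elltwo,d) = 1$ gives $\lambda_{f^{(g)}}(n) = \lambda_f(n)$, so the inner prime sum factors through $\lambda_f(p_j)$. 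The combinatorial sieve Lemma \ref{lem:cs} combined with Lemma \ref{lem:CLee3.5} under GRH then bounds this prime sum by a power of $\log Q$. The $\elltwo$-sum $\sum_{\elltwo \mid L_2^\infty,\, \elltwo \geq X} |\lambda_f(\elltwo^2)|/\elltwo$ is controlled by an Euler product over $p \mid L_2$ whose local factors are uniformly $O(1)$ by Deligne's bound, yielding a decay of order $X^{-1/2}$. Taking $X = \LL_{\kappa+2}$ gives a saving of $(\log Q)^{-(\kappa+2)/2}$. For the $L_1 L_2 \geq \LL_{\kappa+4}$ contribution, combine the Weil-type bound from Lemma \ref{lem:petertruncate} with the outer weight $1/(L_1L_2)$ and the fact that the number of admissible $d$ satisfies $\#\{d : L_1 L_2 d \asymp Q,\, L_1 \mid d,\, (L_2, d) = 1\} \ll Q/(L_1^2 L_2)$. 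The tail sum $\sum_{L_1 L_2 \geq \LL_{\kappa+4}} 1/(L_1^3 L_2^2) \ll \LL_{\kappa+4}^{-1}$ then provides the required power-of-$\log Q$ savings.

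The hardest part is the bookkeeping of the polylog factors: one needs uniform control of the Atkin--Lehner harmonic weights $1/\|f^{(g)}\|^2$ and the coefficients $\xi_g(d)$ via the bounds \eqref{eqn:fouriercoeffbdd} and \eqref{eqn:xigdbdd2}, together with ensuring the Euler-product factor $\prod_{p \mid L_2}(\cdot)$ in the $\elltwo$-sum remains subpolynomial. The squarefreeness of $L_1 L_2$ forced by $\mu(L_1L_2) \neq 0$, together with the standard estimate $\omega(L_2) \ll \log Q/\log\log Q$ for $L_2 \leq Q$, are essential inputs to keep these Euler products from exceeding the $\log Q$ savings obtained from the truncations.
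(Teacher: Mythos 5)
The overall strategy you outline (spectral expansion of $\Delta_d$, combinatorial sieve plus GRH for the prime sums, separate treatment of the $\elltwo$-tail and the $L_1L_2$-tail) matches the paper's proof of this lemma. However, there is a genuine quantitative gap in your treatment of the $\elltwo$-tail that makes the argument fail to close.

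You claim that the tail sum $\sum_{\elltwo \mid L_2^\infty,\ \elltwo\ge X}|\lambda_f(\elltwo^2)|/\elltwo$ decays like $X^{-1/2}$, so that taking $X=\LL_{\kappa+2}=(\log Q)^{\kappa+2}$ gives a saving of $(\log Q)^{-(\kappa+2)/2}$. This is insufficient. After applying the harmonic-weight bound $\Delta_d(1,1)\ll 1$ and the GRH-based bound $(\log Q)^{2\kappa+\epsilon}$ for the prime sum, and dividing by the prefactor $(\log Q)^{-\kappa}$, the remaining $L_1,L_2,d,\elltwo$-sum (which is $\ll Q$ without truncation) must be shown to be $\ll Q(\log Q)^{-\kappa-1-\epsilon_0}$ for some $\epsilon_0>0$. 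With your claimed decay, you only obtain $Q(\log Q)^{-(\kappa+2)/2}$, and since $(\kappa+2)/2<\kappa+1$ for every $\kappa\ge 1$, the total ends up $\gg Q(\log Q)^{\kappa/2-1}$, which is not $O(Q/\log Q)$ for $\kappa\ge 2$. The correct observation is that the decay is essentially $X^{-1+\epsilon}$: one writes $\mathbf 1_{\elltwo\ge X}\le (\elltwo/X)^{1-2\epsilon}$ and notes that the remaining Euler factor $\sum_{\elltwo\mid L_2^\infty}\elltwo^{-\epsilon}\ll\tau(L_2)$ (using $\tau(\elltwo^2)\ll\elltwo^\epsilon$) still converges, giving $\tau(L_2)\,X^{-1+2\epsilon}$. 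Then $X=\LL_{\kappa+2}$ provides $(\log Q)^{-(\kappa+2)(1-2\epsilon)}$, which does clear the $(\log Q)^{-\kappa-1-\epsilon_0}$ target.

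A secondary remark: the Atkin--Lehner machinery you invoke (control of $\xi_g$, $\|f^{(g)}\|^2$, etc.) is unnecessary here and introduces bookkeeping that is cleanly avoided in the paper. Since the $\sharp$-sum over primes can be written in terms of Hecke eigenvalues of a single newform $g\mid d$, and the $h$-summation is disposed of via $\Delta_d(1,1)\ll 1$, the only input about $\lambda_f(\elltwo^2)$ needed is the Deligne bound $\ll\tau(\elltwo^2)$. Your instinct to track the normalization of the Atkin--Lehner basis is sound in other parts of the paper where sharp bounds on individual Fourier coefficients are required (e.g.\ in the Kuznetsov step), but it is overkill in this truncation lemma.
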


\begin{proof}[Proof of Lemma \ref{lemma:Skappa truncate LE}]

After rewriting $\Delta_d(\mathfrak p(\kappa), \elltwo^2)$ as in \eqref{def:Deltaq(m,n)}, 
it suffices to show that
\begin{multline*}  
    \summany_{ \substack{ L_1 , L_2 , d  , \elltwo \\ L_1|d, \ (L_2 , d)=1 , \ \elltwo |L_2^\infty  \\
      L_1 L_2 \geq \LL_{\kappa+4}  \textup{ or } \elltwo \geq \LL_{\kappa+2} }  }  \frac{\mu(L_1L_2)}{L_1L_2 \elltwo}  \prod_{\substack{p|L_1 \\ p^2 \nmid d}}  \left( 1-\frac{1}{p^2} \right)^{-1}   \Psi\left( \frac{L_1L_2d}{Q} \right)    \\
\times \sumsharp_{\substack{ p_1 , \ldots , p_\kappa \\   ( \mathfrak p (\kappa ), L_1L_2d )=1  }}  \prod_{ j=1}^\kappa \left( \frac{ \log p_j }{ \sqrt{ p_j }} \widehat{\Phi}_{k_j} \left( \frac{ \log p_j }{ \log Q} \right)        \right)  \sumh_{f \in B_k(d)} \lambda_f(\mathfrak p(\kappa)) \lambda_f(\elltwo^2) \ll Q( \log Q)^{\kappa -1} . 
\end{multline*}
Since $\lambda_f(p_1 \cdots p_\kappa) = \lambda_g(p_1 \cdots p_\kappa) = \lambda_g(p_1) \cdots \lambda_g(p_\kappa)$ 
 for some Hecke newform $g$ of level dividing $d$ and $  \lambda_g (\elltwo^2) \ll \tau(\elltwo^2)$, the above sum is
 \begin{multline*}
     \ll     \summany_{ \substack{ L_1 , L_2 , d  , \elltwo \\ L_1|d, \ (L_2 , d)=1 , \ \elltwo|L_2^\infty  \\
      L_1 L_2 \geq \LL_{\kappa+4}   \textup{ or } \elltwo \geq \LL_{\kappa+2} }  }  \frac{\tau(\elltwo^2)}{L_1L_2 \elltwo}    \Psi\left( \frac{L_1L_2d}{Q} \right)   \\
      \times  \sumh_{f \in B_k (d) } \left| \ \,   \sumsharp_{\substack{ p_1 , \ldots , p_\kappa \\   ( \mathfrak p (\kappa ), L_1L_2d )=1  }}  \prod_{ j=1}^\kappa \left( \frac{\lambda_g ( p_j)  \log p_j }{ \sqrt{ p_j }} \widehat{\Phi}_{k_j} \left( \frac{ \log p_j }{ \log Q} \right)        \right)    \right| .
 \end{multline*}
 By Lemma \ref{lem:cs}, \eqref{def:P3Gj} and \eqref{eqn:bound P3Gj} with $ g$ and $ L_1 L_2 d $ in place of $ f$ and $q $, respectively, we find that
$$  \sumsharp_{\substack{ p_1 , \ldots , p_\kappa \\  ( \mathfrak p (\kappa ), L_1L_2d )=1  }}  \prod_{ j=1}^\kappa \left( \frac{\lambda_g ( p_j)  \log p_j }{ \sqrt{ p_j }} \widehat{\Phi}_{k_j} \left( \frac{ \log p_j }{ \log Q} \right)        \right) \ll  \sum_{\underline G \in \Pi_\kappa }  \left| \prod_{ G_j \in \underline G } \mathcal{P}_3 ( G_j) \right| \ll ( \log Q)^{2 \kappa + \epsilon}   $$
for any $\epsilon>0$. Moreover, by Lemma \ref{lem:petertruncate} we have
$$ \sumh_{f \in B_k (d) } 1 = \Delta_d ( 1,1) \ll 1 + \frac{ \tau(d)}{d^{3/2}} \ll 1  . $$
Hence, it suffices to show that
\begin{equation}\label{eqn:L1L2de}
    \summany_{ \substack{ L_1 , L_2 , d  , \elltwo \\ L_1|d, \ (L_2 , d)=1 , \ \elltwo|L_2^\infty  \\
      L_1 L_2 \geq \LL_{\kappa+4}   \textup{ or } \elltwo \geq \LL_{\kappa+2} }  }  \frac{\tau(\elltwo^2)}{L_1L_2 \elltwo}    \Psi\left( \frac{L_1L_2d}{Q} \right)     
  \ll Q ( \log Q)^{-\kappa - 1 -\epsilon_0} 
\end{equation}
for some $ \epsilon_0 >0$.

The sum in \eqref{eqn:L1L2de} is less than 
\begin{equation}\label{eqn:L1L2de 2}
       \summany_{ \substack{ L_1 , L_2 , d  , \elltwo \\ L_1|d  , \ \elltwo|L_2^\infty  \\
      L_1 L_2 \geq \LL_{\kappa+4}     }  }  \frac{\tau(\elltwo^2)}{L_1L_2 \elltwo}    \Psi\left( \frac{L_1L_2d}{Q} \right)     
+     \summany_{ \substack{ L_1 , L_2 , d  , \elltwo \\ L_1|d  , \ \elltwo|L_2^\infty  \\    \elltwo \geq \LL_{\kappa+2} }  }  \frac{\tau(\elltwo^2)}{L_1L_2 \elltwo}    \Psi\left( \frac{L_1L_2d}{Q} \right)    .  
      \end{equation}
Since
$$ \sum_{\elltwo | L_2^{\infty}} \frac{\tau(\elltwo^2)}{\elltwo} \ll \prod_{p | L_2} \left( 1 + \frac 3p\right) \ll \tau(L_2),$$
 the first sum in \eqref{eqn:L1L2de 2} is 
 \begin{align*}
\ll       \summany_{ \substack{ L_1 , L_2 , d    \\ L_1|d, \ 
      L_1 L_2 \geq \LL_{\kappa+4}     }  }    \frac{\tau(L_2)}{L_1L_2 }    \Psi\left( \frac{L_1L_2d}{Q} \right) \leq 
         \frac{1}{\LL_{\kappa+4} }   \sum_{   L_1 , L_2 , m       } \tau(L_2)  \Psi\left( \frac{L_1^2 L_2 m}{Q} \right) \ll \frac{Q  }{(\log Q)^{\kappa+2}}.
 \end{align*}
Since 
$$ \sum_{\substack{\elltwo|L_2^{\infty} \\ \elltwo\ge \LL_{\kappa+2} }} \frac{ \tau(\elltwo^2)}{\elltwo} \ll \sum_{\substack{\elltwo|L_2^{\infty} \\ \elltwo\ge \LL_{\kappa+2} }} \frac{1}{\elltwo^{1-\epsilon}}
\leq  \frac{1}{\LL_{\kappa+2} ^{1-2\epsilon}} \sum_{\substack{\elltwo|L_2^{\infty} }} \frac{1}{\elltwo^\epsilon}
\ll \frac{\tau(L_2)}{\LL_{\kappa+2} ^{1-2\epsilon}}, 
$$
the second sum in \eqref{eqn:L1L2de 2} is
\begin{align*}
  \ll  \frac{1}{\LL_{\kappa+2}^{1-2\epsilon}}   \summany_{ \substack{ L_1 , L_2 , d   \\ L_1|d   }  }  \frac{\tau(L_2 )}{L_1L_2 }    \Psi\left( \frac{L_1L_2d}{Q} \right)     \ll  \frac{ 1 }{\LL_{\kappa+2}^{1-2\epsilon}}   \sum_{  L_1 , L_2 , m         }  \frac{\tau(L_2 )}{L_1L_2 }    \Psi\left( \frac{L_1^2L_2 m}{Q} \right) \ll   \frac{Q }{\LL_{\kappa+2}^{1-2\epsilon}} 
\end{align*}
 for any $\epsilon>0$. This proves \eqref{eqn:L1L2de} and the lemma follows. 
\end{proof}

Next, we do changes of variables for the sum in Lemma \ref{lemma:Skappa truncate LE} similarly to \S 6 in \cite{BCL} and the proof of Lemma \ref{lem:asympforNQ}. 
Since $L_1 | d $, we let $ d = L_1 m$ and apply \eqref{eqn:pL1d product to sum} to obtain
\begin{multline*}  
	S_\kappa (Q ; \pi_{K,1} ) =   \frac{  (-2)^\kappa}{ ( \log Q)^\kappa }     \sum_{\substack{ L_1 , L_2 , m \\   (L_2 , m)=1  \\
      L_1 L_2 < \LL_{\kappa+4} }} \frac{\mu(L_1L_2)\zeta_{L_1} (2)}{L_1L_2}    \sum_{ \ell_1  |(L_1,m)}\frac{\mu( \ell_1 )}{ \ell_1 ^2}   \Psi\left( \frac{L_1^2 L_2m}{Q} \right)    \\
\times \sumsharp_{\substack{ p_1 , \ldots , p_\kappa \\   ( \mathfrak p (\kappa ), L_1L_2 m )=1  }}  \prod_{ j=1}^\kappa \left( \frac{ \log p_j }{ \sqrt{ p_j }} \widehat{\Phi}_{k_j} \left( \frac{ \log p_j }{ \log Q} \right)        \right) \sum_{\substack{\elltwo|L_2^{\infty} \\ \elltwo < \LL_{\kappa+2} } }\frac{\Delta_{L_1m} ( \mathfrak p (\kappa) , \elltwo^2)}{\elltwo} + O\left( \frac{Q}{\log Q}\right),
\end{multline*}
where $\zeta_{L_1}(2)$ is defined in \eqref{def:zetaNs}.
We change the condition $  \ell_1  |m $ to $ m= \ell_1  n$ and then change the condition $(n, L_2 )=1$ by putting $\sum_{ \ell_2  |(n,L_2) } \mu( \ell_2 ) $. Then we find that
\begin{multline*}  
	S_\kappa (Q ; \pi_{K,1} ) =   \frac{  (-2)^\kappa}{ ( \log Q)^\kappa }     \sum_{\substack{ L_1 , L_2 , n   \\
      L_1 L_2 < \LL_{\kappa+4} }} \frac{\mu(L_1L_2)\zeta_{L_1} (2)}{L_1L_2}    \sum_{ \ell_1 | L_1 }\frac{\mu( \ell_1 )}{ \ell_1 ^2} \sum_{ \ell_2 |(n,L_2) } \mu( \ell_2 )    \Psi\left( \frac{L_1^2 L_2 \ell_1  n}{Q} \right)    \\
\times \sumsharp_{\substack{ p_1 , \ldots , p_\kappa \\   ( \mathfrak p (\kappa ), L_1L_2n )=1  }}  \prod_{ j=1}^\kappa \left( \frac{ \log p_j }{ \sqrt{ p_j }} \widehat{\Phi}_{k_j} \left( \frac{ \log p_j }{ \log Q} \right)        \right) \sum_{\substack{\elltwo|L_2^{\infty} \\ \elltwo < \LL_{\kappa+2}  } }\frac{\Delta_{L_1 \ell_1  n} (  \mathfrak p (\kappa) , \elltwo^2)}{\elltwo} + O\left( \frac{Q}{\log Q}\right).
\end{multline*}
By removing  the condition $  \ell_2 |n$, replacing $n$ by $ \ell_2  n$ and changing the order of sums, we find that 
\begin{multline*}  
	S_\kappa (Q ; \pi_{K,1} ) =   \frac{  (-2)^\kappa}{ ( \log Q)^\kappa }     \sum_{\substack{ L_1 , L_2   \\
      L_1 L_2 < \LL_{\kappa+4} }} \frac{\mu(L_1L_2)\zeta_{L_1} (2)}{L_1L_2}      \sum_{ \ell_1  | L_1 }\frac{\mu( \ell_1 )}{ \ell_1 ^2}  \sum_{ \ell_2 |L_2 } \mu( \ell_2 )   \sum_{\substack{\elltwo|L_2^{\infty} \\ \elltwo < \LL_{\kappa+2} } }\frac{1}{\elltwo}   \\
\times \sum_n \sumsharp_{\substack{ p_1 , \ldots , p_\kappa \\   ( \mathfrak p (\kappa ), L_1L_2 n  )=1  }}  \prod_{ j=1}^\kappa \left( \frac{ \log p_j }{ \sqrt{ p_j }} \widehat{\Phi}_{k_j} \left( \frac{ \log p_j }{ \log Q} \right)        \right)  
  \Psi\left( \frac{L_1^2 L_2 \ell_1  \ell_2 n}{Q} \right) \Delta_{L_1 \ell_1  \ell_2 n} ( \mathfrak p (\kappa) , \elltwo^2) + O\left( \frac{Q}{\log Q}\right) .
\end{multline*}

We want to remove the condition $ ( \mathfrak p (\kappa ), n ) = ( p_1 \cdots p_\kappa , n ) =1  $ in the above sum to apply Kuznetsov's formula. After relabeling $p_j$ with $ p_{k_j}$ for $ j = 1, \ldots , \kappa$, we split the $\#$ sum as 
$$   \sumsharp_{\substack{ p_{k_1} , \ldots , p_{k_\kappa} \\ ( \mathfrak p (K) , L_1L_2 )=1  
\\ (n, \mathfrak p (K) ) =1  }}  =   \sumsharp_{\substack{ p_{k_1} , \ldots , p_{k_\kappa} \\ ( \mathfrak p (K) ,  L_1L_2)=1     }}  -   \sumsharp_{\substack{ p_{k_1} , \ldots , p_{k_\kappa} \\  ( \mathfrak p (K), L_1L_2 ) =1  \\ (n, \mathfrak p (K) ) \neq 1  }}   =   \sumsharp_{\substack{ p_{k_1} , \ldots , p_{k_\kappa} \\ ( \mathfrak p (K),  L_1L_2 )=1  }}  - \sum_{ \substack{K_1 \sqcup K_2 = K \\ K_1 \neq \emptyset }}    \sumsharp_{\substack{ p_{k_1} , \ldots , p_{k_\kappa} \\ ( \mathfrak p (K) ,  L_1L_2 )=1  \\ 
\mathfrak p (K_1 ) |n   \\
 ( \mathfrak p (K_2) ,  n  ) =1 }}.   $$
Hence we obtain the decomposition
\begin{equation}\label{eqn:SkappaQpiK1 CKQ decomp}
	S_\kappa (Q ; \pi_{K,1} ) =  
\mathscr C_K   (Q )  -  \sum_{ \substack{K_1 \sqcup K_2 = K \\ K_1 \neq \emptyset }}  \mathscr C_{K_1, K_2} (Q )   + O\left( \frac{Q}{\log Q}\right) , 
\end{equation}
where the main term $\mathscr C_K(Q)$ corresponds to the full sum with coprimality condition $(\mathfrak p(K), L_1L_2) = 1$, and each $\mathscr C_{K_1, K_2}(Q)$ captures the contribution when $\mathfrak p(K_1)$ divides $n$ but $\mathfrak p(K_2)$ does not. More precisely,
\begin{equation}  \label{def:CKQ}
\mathscr C_K (Q) :=       \frac{  (-2)^\kappa}{ ( \log Q)^\kappa }     \sumprime_{\mathbb{L}} \frac{\mu(L_1L_2)\zeta_{L_1} (2) }{L_1L_2}    \frac{\mu( \ell_1   \ell_2 )}{ \ell_1 ^2 \elltwo} C_K   (Q; \mathbb{L} ) , 
      \end{equation}
      where the prime sum is over 
           \begin{equation}\label{def:mathbbL}
      \mathbb{L} := ( L_1 , L_2 , \ell_1 , \ell_2 , \elltwo ) 
      \end{equation}
satisfying the conditions
\begin{equation} \label{conditions mathbbL}
 \ell_1  | L_1 , ~ \ell_2  | L_2 ,~  L_1 L_2 < \LL_{\kappa+4} , ~\elltwo|L_2^{\infty} ~\textup{ and }~ \elltwo < \LL_{\kappa+2}  .
 \end{equation}
  $  \mathscr C_{K_1 , K_2 }(Q)  $ is defined by replacing $C_K$ to $C_{K_1, K_2}$ in \eqref{def:CKQ},
\begin{equation} \label{def:CKQLre} \begin{split}
 C_{K} & (Q; \mathbb{L} )  := \sum_n  \Psi\left( \frac{L_1^2 L_2 \ell_1  \ell_2 n}{Q} \right)\\
 &  \hskip 1in \times   \sumsharp_{\substack{ p_{k_1} , \ldots , p_{k_\kappa} \\ ( \mathfrak p (K) , L_1L_2 )=1   }}
  \prod_{ j=1}^\kappa \left( \frac{ \log p_{k_j} }{ \sqrt{ p_{k_j} }} \widehat{\Phi}_{k_j} \left( \frac{ \log p_{k_j} }{ \log Q} \right)        \right)  
   \Delta_{L_1 \ell_1  \ell_2 n} ( \mathfrak p (K), \elltwo^2),
  \end{split}\end{equation}
  and $   C_{K_1, K_2}   (Q; \mathbb{L} ) $ is defined by adding the conditions $\mathfrak p (K_1 ) |n   $ and $
 ( \mathfrak p (K_2) ,  n  ) =1$  
to the $\#$- sum in \eqref{def:CKQLre}. Furthermore, we split $ \mathscr C_{K_1 , K_2 } (Q)$   depending on the contribution of $\mathfrak p ( K_1) = \prod_{ k_j \in K_1 }p_{k_j} <  \LL_{3\kappa} $ or $ \geq \LL_{3\kappa} $  
such that 
$$ \mathscr C_{K_1 , K_2 } (Q) = \mathscr C_{K_1 , K_2 , <  } (Q   )  + \mathscr C_{K_1 , K_2 , \geq  } (Q) .$$
 Then Proposition \ref{prop:main proposition} follows by applying the following lemmas to \eqref{eqn:SkappaQpiK1 CKQ decomp}.
\begin{lem} \label{lem:CMQ(1,1)} 
Assume GRH.
Let $K$ be a set of positive integers such that $|K| \geq 2 $. Then we have
\est{\frac{\mathscr C_K (Q)}{N_0(Q)}  =   \sum_{\substack{K' \sqcup K'' = K \\ |K'| = 2}} \mathscr V(K', K'')  + O\left( (\log Q )^{-1+\epsilon}\right)}
for any $\epsilon>0$, where the function $\mathscr V$ is in \eqref{def:V}.
\end{lem}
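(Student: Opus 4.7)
The plan is to apply Petersson's formula (Lemma \ref{lem:usualPetersson}) to the inner $\Delta_{L_1\ell_1\ell_2 n}(\mathfrak p(K),\elltwo^2)$ appearing in $C_K(Q;\mathbb L)$. Since the primes $p_{k_1},\ldots,p_{k_\kappa}$ are mutually distinct and $\kappa\ge 2$, the diagonal $\delta(\mathfrak p(K),\elltwo^2)$ vanishes identically, leaving a Kloosterman sum weighted by the $J_{k-1}$ Bessel function of argument $4\pi\sqrt{\mathfrak p(K)}\,\elltwo/(cL_1\ell_1\ell_2 n)$. The support hypothesis $\sum\sigma_{k_j}<4$ forces $\sqrt{\mathfrak p(K)}\ll Q^{2-\eta}$ for some fixed $\eta>0$, while $n\asymp Q/(L_1^2L_2\ell_1\ell_2)$. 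Consequently the Bessel function concentrates the Kloosterman modulus near $c\asymp Q^{1-\eta}$, strictly smaller than $n$, so $c$ plays the role of a \emph{complementary level} and one applies Kuznetsov's formula (Lemma \ref{lem:kuznetsov}) to the sum over $n$.

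After Kuznetsov, $\mathscr C_K(Q)$ decomposes into four pieces: the holomorphic discrete spectrum, the Maass discrete spectrum, the Eisenstein series attached to non-principal characters, and the Eisenstein series attached to the principal character. The first three are estimated by decomposing the orthonormal bases in terms of primitive newforms via \S\ref{subsec:newformoldform}, then invoking GRH via Lemma \ref{lem:heckebasisfouriersumbdd} (for the discrete spectra) and Lemma \ref{lem:CLee3.5} (for the non-principal Eisenstein piece) to bound the resulting prime sums by $Q^{\epsilon}$. Combined with the saving from summing over $c\asymp Q^{1-\eta}$, these contributions total $O(Q^{-\eta/2+\epsilon})$, in parallel with the outlines of \S\ref{sec:dispart} and \S\ref{sec:ctn}.

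The entire main term comes from the principal-character Eisenstein piece. Using the explicit coefficients \eqref{eqn:Eisensteincoeff}, applying Mellin inversion to $\Psi(L_1^2L_2\ell_1\ell_2 n/Q)$, and evaluating the resulting Bessel transform via Lemma \ref{lem:MellinofprodofJbessel}, this piece becomes a contour integral involving $Q^s\widetilde\Psi(s)\zeta(1-s)\zeta(2-s)$ against the difference $\mathcal G_{2it,k-1}(s)-\mathcal G_{-2it,k-1}(s)$ integrated against $t$. The pole of $\zeta(1-s)$ at $s=0$ naively contributes a phantom term of size $Q^{1-\eta/2}$, far exceeding the expected $Q$-sized main term; however \eqref{eqn:diffJbesselat0} shows that the $\mathcal G$-difference vanishes at $s=0$, which is precisely the echo of the orthogonality of the cuspidal spectrum with the continuous spectrum. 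Shifting the contour across $s=1$ extracts the genuine main term from the pole at $s=1$ and from the poles of $\mathcal G_{\pm 2it,k-1}(s)$ at $s=1\pm 2it$, where \eqref{eqn:diffJbesselatvplus1} is used to evaluate the latter.

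The main obstacle is matching the resulting expression with $\sum_{K'\sqcup K''=K,\,|K'|=2}\mathscr V(K',K'')$. Under GRH, Lemma \ref{lemma:prime sum} evaluates the prime sums $\sum_p(\log p)\,\widehat\Phi_{k_j}(\log p/\log Q)\,p^{-1/2\pm it}$ as explicit main terms of the form $P_{k_j}^{\pm it}$ plus negligible remainders; a contour shift in $z:=it$ shows that ``same-sign'' pairings $(P_{k_a}P_{k_b})^{\pm it}$ collapse to zero, leaving only ``opposite-sign'' pairings which single out a distinguished pair $K'=\{k_a,k_b\}\subset K$, while the remaining indices $K''$ contribute only Mellin factors of $\widehat\Phi_{k_j}$. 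Assembling the residue contributions at $s=1$ and $s=1\pm 2it$, integrating in $t$, and combining with the Bessel kernel precisely reconstructs the integral $\mathscr I$ of \eqref{def:I12}, and hence $\mathscr V(K',K'')$ from \eqref{def:V}. Finally, the auxiliary summation over $\mathbb L$ produces the Dirichlet factor $T(1)\widetilde\Psi(1)Q$, which is exactly $N_0(Q)$ by Lemma \ref{lem:asympforNQ}; truncation errors from Lemma \ref{lemma:Skappa truncate LE}, error terms from the coprimality condition $(\mathfrak p(K),L_1L_2)=1$, and the negligible same-sign pairings are all absorbed into $O((\log Q)^{-1+\epsilon})$.
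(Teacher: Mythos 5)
Your proposal matches the paper's proof strategy across \S\ref{sec:applyKuznetsov}--\S\ref{sec:ctnrescalc}: Petersson applied to $\Delta_{L_1\ell_1\ell_2 n}(\mathfrak p(K),\elltwo^2)$, diagonal vanishing, Kuznetsov on the $n$-sum at the complementary level $c$, spectral decomposition with GRH bounds on the discrete, holomorphic, and non-principal Eisenstein pieces, and extraction of the main term from the principal-character Eisenstein piece with the $s=0$ phantom pole cancelled by \eqref{eqn:diffJbesselat0} and the $\mathbb L$-sum recombining into $T(1)\widetilde\Psi(1)Q\sim N_0(Q)$. Your only slight imprecision is in locating the source of the $s\approx 1$ residues: after the substitution $u=-\tfrac{1}{2}+\tfrac{s}{2}+z$, $v=-\tfrac{1}{2}+\tfrac{s}{2}-z$ used in \S\ref{sec:ctnrescalc}, the relevant poles come from $\zeta(1-2u)$ at $u=0$, subsequently from $v=0$, and from $\zeta(1-u-v)\Gamma(-u-v)$ at $u=-v$ --- not from poles of $\mathcal G_{\pm 2it,k-1}(s)$ at $s=1\pm 2it$ --- and equation \eqref{eqn:diffJbesselatvplus1} is used to \emph{evaluate} the $\mathcal G$-difference at the $u=-v$ residue rather than being itself a source of a pole.
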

\begin{lem} \label{lem:CMQ(R)_fill prime}
Assume GRH. Let $K$ be a set of positive integers such that $|K| \geq 2 $, $K_1 \sqcup K_2 = K$ and $ K_1 \neq \emptyset $. Then
\est{\mathscr C_{K_1 , K_2 , <} (Q)  \ll \frac{Q}{\log Q}.}

\end{lem}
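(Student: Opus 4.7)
The plan is to parallel the spectral analysis underlying Lemma \ref{lem:CMQ(1,1)}, while exploiting the restriction $\mathfrak p(K_1) < \LL_{3\kappa}$ to extract a saving of at least one power of $\log Q$ from the $K_1$-prime sums.

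First, I would write $n = \mathfrak p(K_1) n'$ to absorb the divisibility condition $\mathfrak p(K_1) \mid n$. The cutoff becomes $\Psi(L_1^2 L_2 \ell_1 \ell_2 \mathfrak p(K_1) n'/Q)$, so the effective range of $n'$ is reduced by a factor of $\mathfrak p(K_1)$, and the Petersson modulus becomes $L_1 \ell_1 \ell_2 \mathfrak p(K_1) n'$. The diagonal term in Petersson's formula vanishes, since the $\sharp$-sum forces $\mathfrak p(K)$ to be squarefree with $\kappa \geq 2$ distinct prime factors, ruling out $\mathfrak p(K) = \elltwo^2$. Hence only the Kloosterman off-diagonal contributes, and I would apply Kuznetsov's formula (Lemma \ref{lem:kuznetsov}) in the sum over $n'$, following the template in the proof of Lemma \ref{lem:CMQ(1,1)} (see \S \ref{sec:applyKuznetsov}). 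The holomorphic, Maass, and Eisenstein contributions are handled by the arguments used there, with the extra factor $\mathfrak p(K_1) \leq \LL_{3\kappa}$ in the modulus absorbed harmlessly: since $\mathfrak p(K_1)$ is squarefree, coprime to $L_1 L_2$, and of polylogarithmic size, the bounds on Bessel transforms (Lemmas \ref{lem:boundforcoefficientafterKuznetsov}, \ref{lem:boundforhu}) and on Hecke-basis prime sums via GRH (Lemma \ref{lem:heckebasisfouriersumbdd}) apply uniformly.

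The essential saving comes from combining the $K_1$-prime sums with the $1/\mathfrak p(K_1)$ factor from the reduced $n'$-range:
\begin{equation*}
\sumsharp_{\substack{p_{k_j},\, k_j \in K_1 \\ \mathfrak p(K_1) < \LL_{3\kappa}}} \prod_{k_j \in K_1} \frac{\log p_{k_j}}{\sqrt{p_{k_j}}} \cdot \frac{1}{\mathfrak p(K_1)} \ll \left(\sum_p \frac{\log p}{p^{3/2}}\right)^{|K_1|} \ll 1,
\end{equation*}
whereas in the unrestricted analysis of $\mathscr C_K(Q)$ each prime sum contributes a factor of $\log Q$ via the Plancherel/Kuznetsov mechanism. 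Thus the $K_1$-primes lose $(\log Q)^{|K_1|}$ relative to the main term, while the $K_2$-prime sums still yield $(\log Q)^{|K_2|}$ through the standard Kuznetsov analysis. Combined with the prefactor $(\log Q)^{-\kappa}$ and the $L_j, \ell_j, \elltwo$ sums giving at most $(\log Q)^\epsilon$ (as in Lemma \ref{lemma:Skappa truncate LE}), the total is $\ll Q (\log Q)^{|K_2|-\kappa+\epsilon} = Q (\log Q)^{-|K_1|+\epsilon} \ll Q/\log Q$, since $|K_1| \geq 1$.

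The main obstacle is reconstructing the full Kuznetsov spectral machinery from the proof of Lemma \ref{lem:CMQ(1,1)} with the modified modulus $L_1 \ell_1 \ell_2 \mathfrak p(K_1)$, tracking uniformity in $\mathfrak p(K_1)$ through each step (holomorphic discrete spectrum, Maass discrete spectrum, and the principal and non-principal Eisenstein contributions). No genuinely new phenomena arise, but since the analysis in Lemma \ref{lem:CMQ(1,1)} is already elaborate, this bookkeeping is the technically nontrivial part of the argument.
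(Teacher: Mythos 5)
Your proposal correctly identifies the key mechanism for the saving---the factor $1/\mathfrak p(K_1)$ produced by the substitution $n \mapsto \mathfrak p(K_1)n'$ in the Eisenstein analysis---and correctly observes that this should cost $(\log Q)^{|K_1|}$ relative to the main-term calculation. But you have overlooked the coprimality condition $(\mathfrak p(K_2), n) = 1$ that is present in the definition of $C_{K_1,K_2,<}(Q;\mathbb L)$ for $K_2 \neq \emptyset$. After the substitution $n=\mathfrak p(K_1)n'$ this becomes $(\mathfrak p(K_2), n')=1$, and it prevents a direct application of Kuznetsov's formula to the $n'$-sum: the modulus $cL_1\ell_1\ell_2\mathfrak p(K_1)n'$ must range over \emph{all} multiples of $cL_1\ell_1\ell_2\mathfrak p(K_1)$ for the Kloosterman sum side of Kuznetsov to be recognizable. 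Your statement that ``the $K_2$-prime sums still yield $(\log Q)^{|K_2|}$ through the standard Kuznetsov analysis'' is therefore asserting an analysis that does not apply as written.

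The paper resolves this by induction on $|K_2|$: detecting $(\mathfrak p(K_2),n)=1$ by inclusion--exclusion yields
$\mathscr C_{K_1,K_2,<}(Q) = \mathscr C_{K_1,\emptyset,<}(Q) - \sum_{K_3\sqcup K_4=K_2,\,K_3\neq\emptyset}\mathscr C_{K_1\sqcup K_3,K_4,<}(Q)+O(Q/\log Q)$,
where the $O$-term accounts for the mismatch between the cutoffs $\mathfrak p(K_1)<\LL_{3\kappa}$ and $\mathfrak p(K_1\sqcup K_3)<\LL_{3\kappa}$, bounded by a variant of the argument in Lemma \ref{lem:boundtruncatedprimetR}. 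Since $|K_4|<|K_2|$, everything reduces to the base case $\mathscr C_{K_1,\emptyset,<}(Q)$, which is precisely what your analysis addresses. So your argument is essentially the base case of the paper's induction, and you need the inductive reduction (or an equivalent Möbius-inversion bookkeeping with the truncation threshold handled carefully) to get the full lemma.

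A secondary point: the direct estimate $\sumsharp \prod \frac{\log p_{k_j}}{\sqrt{p_{k_j}}}\cdot\frac{1}{\mathfrak p(K_1)} \ll 1$ is a heuristic that does not literally reflect the structure post-Kuznetsov, where the $K_1$-primes enter the level $N=cL_1\ell_1\ell_2\mathfrak p(K_1)$ and hence the Eisenstein coefficients $\rho_{\chi_0,M,N}$. In the paper's treatment the $1/\mathfrak p(K_1)$ emerges from Lemma \ref{lem:eisenteincoeff} with $L_0'=L_0\mathfrak p(K_1)$, and the extra Euler factors at $p\mid\mathfrak p(K_1)$ must be pushed through Lemma \ref{lem:Fsformula} explicitly; one then tracks the loss of $\log Q$ per $K_1$-prime through the contour-shift machinery of \S\ref{sec:trivialchar off diag main terms}--\S\ref{sec:ctnrescalc}. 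Your back-of-envelope version points in the right direction but glosses over this dependence.
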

\begin{lem} \label{lem:boundtruncatedprimetR} 
Assume GRH. Let $K$ be a set of positive integers such that $|K| \geq 2 $, $K_1 \sqcup K_2 = K$ and $ K_1 \neq \emptyset $. Then
\est{\mathscr C_{K_1 , K_2 , \geq } (Q)  \ll \frac{Q}{\log Q}.}

\end{lem}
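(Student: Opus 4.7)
My plan is to parallel the main-term analysis of $\mathscr C_K(Q)$ (Lemma \ref{lem:CMQ(1,1)}), treating the primes $p_{k_j}$ with $j\in K_1$ as absorbed into the level of the Petersson formula, and then extract the required savings from the lower bound $\mathfrak p(K_1)\geq \LL_{3\kappa}=(\log Q)^{3\kappa}$.

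First I would substitute $n=\mathfrak p(K_1) n'$ in the definition of $C_{K_1,K_2,\geq}(Q;\mathbb{L})$, turning it into
\begin{equation*}
\sumsharp_{\substack{p_{k_j},\, j\in K_1\\\mathfrak p(K_1)\geq \LL_{3\kappa}}}\prod_{j\in K_1}\frac{\log p_{k_j}}{\sqrt{p_{k_j}}}\widehat\Phi_{k_j}\!\left(\tfrac{\log p_{k_j}}{\log Q}\right)\, \mathcal I(\mathbb{L},\mathfrak p(K_1)),
\end{equation*}
where $\mathcal I(\mathbb{L},\mathfrak p(K_1))$ is the inner sum over $n'$ and over $p_{k_j}$ for $j\in K_2$, weighted by $\Psi(L_1^2 L_2\ell_1\ell_2\mathfrak p(K_1) n'/Q)$, $\Delta_{L_1\ell_1\ell_2\mathfrak p(K_1) n'}(\mathfrak p(K),\elltwo^2)$, and the remaining prime weights. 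The diagonal $\delta(\mathfrak p(K),\elltwo^2)$ vanishes identically because $\mathfrak p(K)$ is squarefree with prime support coprime to $L_2$ whereas $\elltwo^2$ is a perfect square supported on primes dividing $L_2$; the two can never match.

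Next, I would apply the complementary-level Kuznetsov switch of \S \ref{sec:applyKuznetsov} with $N=cL_1\ell_1\ell_2\mathfrak p(K_1)$, and carry out the spectral decomposition into holomorphic cuspidal, Maass cuspidal, and Eisenstein contributions exactly as in the main analysis of $\mathscr C_K(Q)$. For the cuspidal pieces and the non-principal Eisenstein piece (parallel to \S \ref{sec:dispart} and \S \ref{sec:ctn}), the Atkin--Lehner expansion of Lemma \ref{lem:heckebasisfouriersumbdd} together with GRH bound the sums over $p_{k_j}$, $j\in K_2$, by polylogarithms uniformly in $\mathfrak p(K_1)$. After the complementary-level sum and spectral integration, these pieces contribute
\begin{equation*}
\mathcal I(\mathbb{L},\mathfrak p(K_1))\ll \frac{Q^{1+\epsilon}}{\mathfrak p(K_1)},
\end{equation*}
uniformly in the primes $p_{k_j}$, $j\in K_1$; the factor $1/\mathfrak p(K_1)$ reflects the shrunken range of the $n'$-sum. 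For the principal-character Eisenstein piece, the phantom-cancellation argument of \S \ref{sec:trivialchar off diag main terms} proceeds identically: the pole of $\zeta(1-s)$ at $s=0$ is killed by the Bessel-orthogonality identity \eqref{eqn:outlineJortho}, which does not depend on the specific level beyond a coprimality structure, and the surviving off-diagonal terms obey the same bound.

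Finally, writing $r=|K_1|$, I would sum over $p_{k_j}$, $j\in K_1$, with $\mathfrak p(K_1)\geq \LL_{3\kappa}$. At least one $p_{k_j}$ must exceed $\LL_{3\kappa}^{1/r}$, so by partial summation
\begin{equation*}
\sumsharp_{\substack{p_{k_j},\, j\in K_1\\\mathfrak p(K_1)\geq \LL_{3\kappa}}} \prod_{j\in K_1}\frac{\log p_{k_j}}{p_{k_j}^{3/2}} \ll \LL_{3\kappa}^{-1/(2r)+\epsilon}(\log Q)^{O(1)}\leq (\log Q)^{-3/2+\epsilon},
\end{equation*}
since $r\leq \kappa$. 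Combining with the prefactor $(\log Q)^{-\kappa}$, the $\mathbb{L}$-sum (contributing $O((\log Q)^{O(1)})$), and the bound on $\mathcal I$, one concludes
\begin{equation*}
\mathscr C_{K_1,K_2,\geq}(Q)\ll Q(\log Q)^{-\kappa-3/2+\epsilon}\ll \frac{Q}{\log Q}
\end{equation*}
for $\kappa\geq 2$, as required.

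The main obstacle is ensuring that the phantom cancellation in the principal-character Eisenstein analysis persists uniformly as $\mathfrak p(K_1)$ varies, and that all error bounds coming out of the Kuznetsov switch are uniform in the extra level factor $\mathfrak p(K_1)$. In particular, the continuous-spectrum bookkeeping of \S \ref{sec:trivialchar off diag main terms} must be carried out so that any residual $\mathfrak p(K_1)$-dependence is polynomial in $\log Q$ rather than polynomial in $\mathfrak p(K_1)$ itself.
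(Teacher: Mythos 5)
Your proposal takes a genuinely different and much heavier route than the paper, and as written it has a gap.

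The paper's proof of this lemma does not touch Petersson or Kuznetsov at all. It substitutes $n\mapsto\mathfrak p(K_1)n$, expands $\Delta_{L_1\ell_1\ell_2\mathfrak p(K_1)n}(\mathfrak p(K),\elltwo^2)$ directly as a harmonic sum over an Atkin--Lehner basis, writes $\lambda_f(\mathfrak p(K))$ in terms of $\xi_g$ and the Hecke eigenvalues of the underlying newform $f^*$, and then takes absolute values. GRH enters only through Lemma \ref{lem:CLee3.5} to bound the $K_2$-prime sums pointwise in $n$ by $(\log Q)^{2\kappa_2+\epsilon}+(\log n)^{\kappa_2}$. The rest is elementary: the factors $\xi_g(\mathfrak p(K_4))\ll\mathfrak p(K_4)^{\epsilon}$, $\lambda_{f^*}(\mathfrak p(K_1\setminus K_4))\ll\mathfrak p(K_1\setminus K_4)^{7/64+\epsilon}$, $\lambda_f(\elltwo^2)\ll\elltwo^{7/32+\epsilon}$ together with the $n$-sum give an overall decay $\mathfrak p(K_1)^{7/64-3/2+\epsilon}$ in the $K_1$-primes, and the constraint $\mathfrak p(K_1)\ge\LL_{3\kappa}$ turns this into $\LL_{3\kappa}^{-25/64}=(\log Q)^{-75\kappa/64}$, which beats all remaining polylog losses. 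No spectral decomposition, no phantom cancellation, no uniformity in the spectral parameter to worry about.

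The gap in your approach is quantitative. You claim $\mathcal I(\mathbb L,\mathfrak p(K_1))\ll Q^{1+\epsilon}/\mathfrak p(K_1)$ for both the cuspidal/non-principal-Eisenstein pieces and the principal-Eisenstein piece, and then conclude $\mathscr C_{K_1,K_2,\geq}(Q)\ll Q(\log Q)^{-\kappa-3/2+\epsilon}$. But a genuine $Q^{\epsilon}$ factor in $\mathcal I$ is not absorbed by the polylogarithmic savings from $\mathfrak p(K_1)\ge\LL_{3\kappa}=(\log Q)^{3\kappa}$: the resulting bound is $Q^{1+\epsilon}(\log Q)^{-A}$, which is $\gg Q/\log Q$ for any fixed $\epsilon>0$. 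For the cuspidal and non-principal Eisenstein pieces this is harmless because they in fact carry a genuine power saving $Q^{-\delta/2}$ (as in Propositions \ref{prop:DisHol} and \ref{prop:Ctn non}), which dominates $Q^{\epsilon}$. But for the principal-character Eisenstein piece, the "surviving off-diagonal terms" are the actual main term of size $Q(\log Q)^{O(1)}$; to make your strategy close, you must establish that with the enlarged level $cL_1\ell_1\ell_2\mathfrak p(K_1)$ this main term is $\ll Q(\log Q)^{O(1)}/\mathfrak p(K_1)^{\alpha}$ for some explicit $\alpha\ge 1$ with polylogarithmic (not $Q^{\epsilon}$) uniformity in $\mathfrak p(K_1)$. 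That requires redoing the entire analysis of \S\ref{sec:trivialchar off diag main terms}--\S\ref{sec:ctnrescalc} with the new level, tracking how $\mathfrak p(K_1)$ enters $\widetilde\varrho_{L_0',\cdots}$ and the residue computations. The paper does carry out exactly such an adaptation, but in \S\ref{sec:fillprimes} for the complementary range $\mathfrak p(K_1)<\LL_{3\kappa}$, where the direct argument fails and the full Kuznetsov machinery is unavoidable. For the range $\mathfrak p(K_1)\ge\LL_{3\kappa}$ treated in this lemma, the polynomial decay in $\mathfrak p(K_1)$ is strong enough that the short Atkin--Lehner argument suffices and the spectral machinery is unnecessary overhead.
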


We will prove Lemma \ref{lem:CMQ(1,1)} in \S \ref{sec:applyKuznetsov} - \S \ref{sec:ctnrescalc} and Lemma \ref{lem:CMQ(R)_fill prime} in \S \ref{sec:fillprimes} by modifying the arguments of the proof of Lemma \ref{lem:CMQ(1,1)}.
 We end this section with a proof of Lemma \ref{lem:boundtruncatedprimetR}.

\begin{proof}[Proof of Lemma \ref{lem:boundtruncatedprimetR}] Without loss of generality and to simplify notation, we only consider the case when $K = [\kappa]  $ for $ \kappa \geq 2 $, $K_1 = [\kappa_1 ]$ for $ 1 \leq \kappa_1 \leq \kappa$ and $ K= K_1 \sqcup K_2 $. Then by the definitions below \eqref{def:CKQ}, we find that
\begin{equation}
 \label{def:CK1K2Q}   
\mathscr C_{K_1, K_2, \geq  }  (Q)  
=     \frac{  (-2)^\kappa}{ ( \log Q)^\kappa }     \sumprime_{\mathbb{L}} \frac{\mu(L_1L_2)\zeta_{L_1} (2) }{L_1L_2}    \frac{\mu( \ell_1   \ell_2 )}{ \ell_1 ^2   \elltwo}  C_{K_1,K_2, \geq   }   (Q; \mathbb{L} ) , 
\end{equation}
 where     
\begin{equation} \label{def:CK1K2QLre} \begin{split}
 & C_{K_1, K_2 , \geq  }  (Q; \mathbb{L} ) := \sum_n  \Psi\left( \frac{L_1^2 L_2 \ell_1  \ell_2 n}{Q} \right)\\
 &  \hskip 1in \times  \sumsharp_{\substack{ p_1 , \ldots , p_\kappa \\ ( \mathfrak p (K) , L_1L_2 )=1   \\
 \mathfrak p (K_1) | n  , ~ \mathfrak p (K_1) \geq \LL_{3\kappa } \\
 (\mathfrak p ( K_2 )  , n ) = 1  
  }}
  \prod_{ j=1}^\kappa \left( \frac{ \log p_j }{ \sqrt{ p_j }} \widehat{\Phi}_{j} \left( \frac{ \log p_j }{ \log Q} \right)        \right)  
   \Delta_{L_1 \ell_1  \ell_2 n} ( \mathfrak p (K) , \elltwo^2).
  \end{split}\end{equation}
By replacing $n$ with $\mathfrak p (K_1 ) n $, we eliminate the condition $ \mathfrak p ( K_1 ) |n $. Then applying the definition of $ \Delta_q (m,n)$, we have
 \begin{align*}
     & C_{K_1, K_2 , \geq }  (Q;  \mathbb{L} )   = \sum_n    \sumsharp_{\substack{ p_1 , \ldots , p_\kappa \\ ( \mathfrak p (K) , L_1L_2 )=1   \\ 
 (\mathfrak p ( K_2 )  , n ) = 1  \\
 \mathfrak p (K_1) \geq \LL_{3\kappa }  }}
  \prod_{ j=1}^\kappa \left( \frac{ \log p_j }{ \sqrt{ p_j }} \widehat{\Phi}_{j} \left( \frac{ \log p_j }{ \log Q} \right)        \right)\\
    &      
  \hskip 1.5in \times \Psi\left( \frac{L_1^2 L_2 \ell_1  \ell_2  \mathfrak p (K_1) n}{Q} \right) \sumh_{f \in B_k (L_1 \ell_1  \ell_2  \mathfrak p (K_1)  n) }
   \lambda_f ( \mathfrak p (K))\lambda_f (  \elltwo^2) ,
  \end{align*}
where we have taken the $h$-sum over $f$ above to be over an Atkin-Lehner basis so that 
$$f = f^{*(g)} = \sum_{\ell|g} \xi_{g}(\ell) f^*|_{\ell}
$$
for some newform $f^*$ of level dividing $L_1 \ell_1  \ell_2  \mathfrak p (K_1) n$, and some $g|L_1 \ell_1  \ell_2  \mathfrak p (K_1) n$. By comparing Fourier coefficients we find that
$$ \lambda_f (\mathfrak p ( K )) = \sum_{ \ell | ( g , \mathfrak p(K) ) } \xi_g (\ell) \lambda_{f^*} \left(  \frac{ \mathfrak{p}(K)}{\ell} \right) . $$
Since $ ( g , \mathfrak p(K) ) | ( L_1 \ell_1  \ell_2  \mathfrak p (K_1) n , \mathfrak p(K) ) = \mathfrak p(K_1)  $, there is $ K_3 \subset K_1 $ such that $ ( g , \mathfrak p(K) ) = \mathfrak p ( K_3 ) $. Moreover, $ \ell |  \mathfrak p ( K_3) $  is equivalent to $ \ell = \mathfrak p (K_4) $ for some $ K_4 \subset K_3 $. Note that $ \mathfrak p ( \emptyset ) = 1 $. Thus, we have
$$ \lambda_f (\mathfrak p ( K )) =\lambda_{f^*} ( \mathfrak p (K_2) )  \sum_{ K_4 \subset K_3 } \xi_g (\mathfrak p (K_4) ) \lambda_{f^*} \left(  \mathfrak{p}(K_1 \setminus K_4 )  \right) . $$
Hence
 \begin{align*}
     & C_{K_1, K_2 , \geq  }  (Q;\mathbb{L})   = \sum_n   \sumsharp_{\substack{ p_1 , \ldots , p_{\kappa_1} \\ ( \mathfrak p (K_1) , L_1L_2 )=1      \\
 \mathfrak p (K_1) \geq \LL_{3\kappa } }}
  \prod_{ j=1}^{\kappa_1} \left( \frac{ \log p_j }{ \sqrt{ p_j }} \widehat{\Phi}_{j} \left( \frac{ \log p_j }{ \log Q} \right)        \right)  \\
    &     \times 
  \sumh_{f \in B_k (L_1 \ell_1  \ell_2  \mathfrak p (K_1)  n) }
  \Psi\left( \frac{L_1^2 L_2 \ell_1  \ell_2  \mathfrak p (K_1) n}{Q} \right) \lambda_f (  \elltwo^2)   \sum_{ K_4 \subset K_3 } \xi_g (\mathfrak p (K_4) ) \lambda_{f^*} \left(  \mathfrak{p}(K_1 \setminus K_4 )  \right)\\
  &  \times \sumsharp_{\substack{ p_{\kappa_1+1} , \ldots , p_\kappa \\ ( \mathfrak p (K_2 ) , L_1L_2  \mathfrak p(K_1) n )=1        }}
  \prod_{ j=\kappa_1+1}^\kappa \left( \frac{ \lambda_{f^*} ( p_j ) \log p_j }{ \sqrt{ p_j }} \widehat{\Phi}_{j} \left( \frac{\log p_j }{ \log Q} \right)        \right) .
  \end{align*}
  
The last $ \sharp$-sum is by Lemma \ref{lem:cs} 
\begin{align*}
\sumsharp_{\substack{ p_{\kappa_1+1} , \ldots , p_\kappa \\ ( \mathfrak p (K_2 ) , L_1L_2  \mathfrak p(K_1) n )=1        }}
  \prod_{ j=\kappa_1+1}^\kappa \left( \frac{ \lambda_{f^*} ( p_j ) \log p_j }{ \sqrt{ p_j }} \widehat{\Phi}_{j} \left( \frac{\log p_j }{ \log Q} \right)        \right) =\sum_{ \underline G \in \Pi_{K_2} } \mu^*(\underline{G}) \prod_{G_j \in \underline{G}} \mathcal P_3 ( G_j ),
  \end{align*}
where $ \mathcal P_3 (G_j)$ is defined in \eqref{def:P3Gj} with $ q= L_1L_2  \mathfrak p(K_1) n $. Since $ n $ can be any positive integer, we have an upper bound for $ \mathcal P_3 (G_j)$ depending on $ n, Q$. 
By Lemma \ref{lem:CLee3.5} and the prime number theorem, we find that
\begin{equation}\label{eqn:bound P3Gj 2}
\mathcal P_3 (G_j )  \ll 
\begin{cases}
 ( \log Q)^{2+\epsilon} +    \log n & \textup{ if } |G_j|=1 ,\\
 ( \log Q)^2 & \textup{ if } |G_j |= 2, \\
 1 & \textup{ if } |G_j |>2    .  
\end{cases}
\end{equation}
%\newr{ For $|G_j| = 1$, the bound I got is $(\log Q)^{2 + \epsilon} + \log n$ instead. }
 Hence, we have 
$$  \sumsharp_{\substack{ p_{\kappa_1+1} , \ldots , p_\kappa \\ ( \mathfrak p (K_2 ) , L_1L_2  \mathfrak p(K_1) n )=1        }}
  \prod_{ j=\kappa_1+1}^\kappa \left( \frac{ \lambda_{f^*} ( p_j ) \log p_j }{ \sqrt{ p_j }} \widehat{\Phi}_{j} \left( \frac{\log p_j }{ \log Q} \right)        \right)   \ll ( \log Q)^{2\kappa_2 + \epsilon} +  ( \log n)^{\kappa_2} , $$
  where $ \kappa_2 := |K_2| = \kappa - \kappa_1$.
Since $\xi_g(\ell) \ll \ell^{\epsilon}$ by \eqref{eqn:xigdbdd2}, $ \lambda_{f^*} ( \mathfrak p ( K_1 \setminus K_4 )) \ll  \mathfrak p ( K_1 \setminus K_4 )^{ \frac{7}{64} + \epsilon}  $ and $ \lambda_f(\elltwo^2) \ll \elltwo^{ \frac{7}{32} +\epsilon } $ by \eqref{eqn:KimSbdd}, we find that
 \begin{align*}
       C_{K_1, K_2 , \geq   }  (Q;  \mathbb{L} )   & \ll \elltwo^{ \frac{7}{32} +\epsilon }   \sum_{\substack{ p_1 , \ldots , p_{\kappa_1}     \\   \mathfrak p (K_1) \geq \LL_{3\kappa }  }}  \mathfrak{p}(K_1)^{  \frac{7}{64} - \frac12 +  \epsilon} 
  \prod_{ j=1}^{\kappa_1}    \left| \widehat{\Phi}_{j} \left( \frac{ \log p_j }{ \log Q} \right) \right|            \\
    &      \times \sum_n 
  \Psi\left( \frac{L_1^2 L_2 \ell_1  \ell_2  \mathfrak p (K_1) n}{Q} \right)    \left( ( \log Q)^{2\kappa_2 + \epsilon} +  ( \log n)^{\kappa_2} \right)  \\ 
  & \ll     Q( \log Q)^{2\kappa_2 + 1+ \epsilon}     \frac{\elltwo^{ \frac{7}{32} +\epsilon } }{L_1^2 L_2 \ell_1  \ell_2   }   \sum_{\substack{ p_1 , \ldots , p_{\kappa_1}     \\
 \mathfrak p (K_1) \geq \LL_{3\kappa } }}
  \mathfrak{p}(K_1)^{  \frac{7}{64} - \frac32 +  \epsilon} \prod_{ j=1}^{\kappa_1}     \left|\widehat{\Phi}_{j} \left( \frac{ \log p_j }{ \log Q} \right)   \right|\\
  & \ll    Q( \log Q)^{2\kappa_2 + 1+ \epsilon} \LL_{3\kappa } ^{\frac{7}{64} - \frac12 +  \epsilon }   \frac{\elltwo^{ \frac{7}{32} +\epsilon } }{L_1^2 L_2 \ell_1  \ell_2   } .
  \end{align*}
%\newr{For the sum over $p_1,.., p_{\kappa_1}$, I actually got $\LL_{3\kappa}^{\frac{7}{64} - \frac12 +  \epsilon} (\log Q)^{\kappa_1 - 1}$.  } 
By applying this bound to 
\eqref{def:CK1K2Q} and the fact that $ \kappa - \kappa_2 = \kappa_1 \geq 1 $, we have
\begin{align*}
\mathscr C_{K_1, K_2, \geq   }  (Q) 
\ll  &  Q( \log Q)^{2\kappa_2 - \kappa + 1+ \epsilon} \LL_{3\kappa }^{-\frac{25}{64}     }     \sum_{L_1, L_2}  \frac{ 1 }{L_1^3 L_2^2 } \sum_{ \ell_1  |L_1 ,  \ell_2 |L_2}\frac{1}{ \ell_1 ^3 \ell_2   }  \sum_{ \elltwo|L_2^{\infty}   }\frac{1}{\elltwo^{1- \frac{7}{32} -\epsilon } }  \\
\ll &  Q( \log Q)^{2\kappa_2 - \kappa + 1+ \epsilon} \LL_{3\kappa }^{ - \frac{25}{64}     } \ll \frac{Q}{\log Q} . 
      \end{align*} 
This proves the lemma.
\end{proof}

\section{Applying Kuznetsov's formula to $\mathscr C_K (Q)$} \label{sec:applyKuznetsov}

In this section, we prove Lemma \ref{lem:CMQ(1,1)}. First, we estimate the sum $   C_{K}  (Q ;\mathbb{L} ) $ in \eqref{def:CKQLre} for $ \mathbb{L}=  ( L_1, L_2,  \ell_1  ,  \ell_2 , \elltwo) $ satisfying \eqref{conditions mathbbL}. By Petersson's formula (Lemma \ref{lem:usualPetersson}) we write
\begin{align*}
 C_{K}  (Q; \mathbb L ) =2& \pi i^{-k}  \sumsharp_{\substack{ p_{k_1} , \ldots , p_{k_\kappa} \\ ( \mathfrak p (K) , L_1L_2 )=1   }}
  \prod_{ j=1}^\kappa \left( \frac{ \log p_{k_j} }{ \sqrt{ p_{k_j} }} \widehat{\Phi}_{k_j} \left( \frac{ \log p_{k_j} }{ \log Q} \right)        \right)  
   \\
 &  \times    \sum_{c \geq 1} \sum_{\substack{n  }} \frac{S( \elltwo^2, \mathfrak p (K) ; cL_1 \ell_1  \ell_2 n)}{cL_1 \ell_1  \ell_2 n}   \Psi \left( \frac{L_1^2 L_2  \ell_1   \ell_2  n }{Q}  \right) 
 J_{k-1} \left( \frac{4\pi \elltwo \sqrt{\mathfrak p (K) }}{cL_1 \ell_1  \ell_2 n}\right)  .
  \end{align*}

Next we introduce a smooth partition of unity. Let $V$ be a smooth function compactly supported on $[1/2, 3]$ satisfying $\sumd_P V\bfrac{x}{P} = 1$ for all $x \geq 1$, where $\sumd_{P}$ denotes a sum over $P = 2^j$ for $j\geq 0$. Moreover, let $V_0$ be a smooth function that is compactly supported in $(\alpha_1 , \beta_1 )$ for some $0< \alpha_1< 1/2 $ and $\beta_1 > 3$ such that $V_0 (\xi) = 1$ when $\xi \in [1/2, 3]$.  By introducing the partition of unity to the prime sums, we find that
\begin{multline}\label{eqn:CKQ 1}
 C_{K}  (Q; \mathbb{L}) =   2  \pi i^{-k}  \sumd_{P_1, \ldots , P_\kappa }  \sum_{c \geq 1}  \sumsharp_{\substack{ p_{k_1} , \ldots , p_{k_\kappa} \\ ( \mathfrak p (K) , L_1L_2 )=1   }}
  \prod_{ j=1}^\kappa \left( \frac{ \log p_{k_j} }{ \sqrt{ p_{k_j} }} V \left( \frac{ p_{k_j}}{P_j} \right)      \right)  
   \\
\times   \sum_{n}   \frac{S( \elltwo^2, \mathfrak p (K) ; cL_1 \ell_1  \ell_2 n )}{ cL_1 \ell_1  \ell_2 n }  
 J_{k-1} \left( \frac{4\pi \elltwo \sqrt{\mathfrak p (K) }}{cL_1 \ell_1  \ell_2 n}\right)  H \left( \frac{4\pi  \elltwo \sqrt{\mathfrak p (K)  }}{cL_1 \ell_1  \ell_2 n} , \frac{ p_{k_1} }{P_1 } , \ldots , \frac{p_{k_\kappa}}{P_\kappa }   \right)   ,
  \end{multline}
  where 
\begin{equation}\label{eqn:Hdef}
 H(\xi, \lambb ) := 
\Psi\left(\frac{X}{\xi}\sqrt{\lambda_1\cdots \lambda_\kappa}\right)\left[\prod_{j = 1}^{\kappa} \widehat \Phi_{k_j}\left(\frac{\log (\lambda_j P_j)}{\log Q}\right)V_{0}(\lambda_j)\right]  
\end{equation}
for $ \xi \in \mathbb{R}$, $\lambb = ( \lambda_1 , \ldots , \lambda_\kappa ) \in \mathbb{R}^\kappa $ and
\begin{equation} \label{eqn: Xdefinition}
X   := \frac{4\pi L_1 L_2 \elltwo \sqrt{P_1 \cdots P_{\kappa}}}{cQ}.
\end{equation}
\begin{rem} \label{remark: support 1}
The dyadic sum over $P_i$ in \eqref{eqn:CKQ 1} is supported on
\begin{equation}\label{eqn:Pibdd}
    P_1 \cdots P_\kappa \le Q^{4-\delta}
\end{equation}
for some $\delta>0$ by the support of the $\widehat \Phi_{j}$.
 $H (\xi, \lambb) $ is nonzero only if $\lambda_j \asymp 1$ for all $j \leq \kappa $ by the support of $V_0$ and $\alpha_2 \leq \frac{\xi}{X} \leq \beta_2 $ for some $ 0 < \alpha_2 < \beta_2 $ by the support of $\Psi$. Let $ W $ be a smooth function that is compactly supported in $ ( \alpha_3 , \beta_3 ) $ for some $ 0 < \alpha_3 < \alpha_2 $ and $ \beta_3 > \beta_2 $, and $W(x)=1$ for $\alpha_2 \leq x \leq \beta_2 $. Then we can multiply $ W \left(   \frac{4\pi  \elltwo \sqrt{\mathfrak p (K)  }}{cL_1 \ell_1  \ell_2 n} \frac{1}{X}     \right)$ to the right hand side of \eqref{eqn:CKQ 1} with no harm.
\end{rem}

%We now separate the dependence of $f(\xi)$ on $p$.  To this end, 
We want to apply the Fourier inversion of $H$. For $ u \in \mathbb{R}$ and $\vb = (v_1,\ldots , v_{\kappa}) \in \mathbb{R}^\kappa $, we let 
\begin{equation}\label{eqn:Hhat}
\widehat{H}(u, \vb) = \int_{-\infty}^\infty \cdots \int_{-\infty}^\infty H(\xi, \lambb) \e{-\xi u - \lambda_1 v_1 -\cdots - \lambda_{\kappa}v_{\kappa}} d\xi d\lambda_1 \cdots d\lambda_{\kappa}  
\end{equation}
 be the usual Fourier transform of $H$.  For reference later, we record the following bounds on $\widehat H$.
\begin{lem}\label{lem:hatHbdd}
With notations as above, we have that for any integers $A_1, A_2 \geq 0 $ 
\begin{equation*}
\widehat{H}(u, \vb) \ll_{A_1, A_2  }   \frac{X}{(1+|u|X)^{A_1}}     \frac{1}{ Y(\vb)^{A_2}},
\end{equation*}
where $Y(\vb)$ is defined in \eqref{def:Yv}.
\end{lem}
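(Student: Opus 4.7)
The plan is to reduce to a standard integration-by-parts argument after rescaling to extract the factor of $X$ cleanly. First I would change variables by setting $\xi = X \eta$ in the definition \eqref{eqn:Hhat}, which produces
\est{ \widehat H(u, \vb) = X \int_{-\infty}^{\infty}\!\!\cdots \!\!\int_{-\infty}^{\infty} h(\eta, \lambb) \e{-X u \eta - \lambda_1 v_1 - \cdots - \lambda_\kappa v_\kappa} \, d\eta\, d\lambda_1 \cdots d\lambda_\kappa, }
where $h(\eta, \lambb) := \Psi\bigl(\eta^{-1}\sqrt{\lambda_1 \cdots \lambda_\kappa}\bigr) \prod_{j=1}^\kappa \widehat{\Phi}_{k_j}\bigl(\frac{\log(\lambda_j P_j)}{\log Q}\bigr) V_0(\lambda_j)$.

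Next I would verify that $h$ is a smooth function supported in a fixed compact box in $(\eta, \lambb)$-space bounded away from any coordinate hyperplane. The support of $V_0$ confines $\lambda_j \asymp 1$ to a compact interval, and the support of $\Psi$ then confines $\eta$ to a compact interval bounded away from $0$. On this compact set I would check that all mixed partial derivatives of $h$ are bounded independently of $X$, $Q$, and the $P_j$: derivatives of $\Psi(\eta^{-1}\sqrt{\lambda_1 \cdots \lambda_\kappa})$ with respect to $\eta$ or $\lambda_j$ are $O(1)$, while differentiating $\widehat{\Phi}_{k_j}(\log(\lambda_j P_j)/\log Q)$ in $\lambda_j$ yields factors of $1/(\lambda_j \log Q) = O(1)$. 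So $h$ and all of its derivatives are $O(1)$ uniformly on a compact support.

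Finally I would apply integration by parts $A_1$ times in the $\eta$ variable, each pass extracting a factor of $(2\pi i X u)^{-1}$ from $\e{-Xu\eta}$, and $A_2$ times in each $\lambda_j$ variable, extracting $(2\pi i v_j)^{-1}$ per pass. Combining these with the trivial bound (corresponding to $A_1 = A_2 = 0$) obtained by integrating $|h|$ over its compact support yields
\est{ \widehat H(u, \vb) \ll_{A_1, A_2} X \cdot \frac{1}{(1 + |Xu|)^{A_1}} \cdot \prod_{j=1}^\kappa \frac{1}{(1+|v_j|)^{A_2}} = \frac{X}{(1+|uX|)^{A_1}} \cdot \frac{1}{Y(\vb)^{A_2}}, }
which is the stated bound. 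No serious obstacle is expected; the only point requiring care is tracking that the chain-rule derivatives of $\widehat{\Phi}_{k_j}(\log(\lambda_j P_j)/\log Q)$ remain uniformly bounded on the compact $\lambda_j$-support, which is immediate since $\widehat{\Phi}_{k_j}$ is Schwartz and the argument stays in a bounded window.
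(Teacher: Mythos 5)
Your proof is correct and takes essentially the same route as the paper: both arguments bound the mixed partial derivatives of the integrand and then integrate by parts. The paper works directly with $H(\xi, \lambb)$, noting $\partial_\xi^{n_0}\partial_{\lambda_1}^{n_1}\cdots\partial_{\lambda_\kappa}^{n_\kappa} H \ll X^{-n_0}$ on the support $\xi \asymp X$, $\lambda_j \asymp 1$, while you first rescale $\xi = X\eta$ so that the derivatives of the rescaled $h$ are $O(1)$ and the factor $X$ appears from the Jacobian; these are the same estimate, written in different coordinates.
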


\begin{proof}
If $\lambda_j \asymp 1$ for all $ j \leq \kappa $ and $\xi \asymp X $, then we have
\begin{align*}
\frac{\partial^{n_0}}{\partial^{n_0} \xi} \frac{\partial^{n_1}}{\partial^{n_1} \lambda_1} \cdots \frac{\partial^{n_{\kappa}}}{\partial^{n_{\kappa}} \lambda_\kappa}  H(\xi, \lambb  ) \ll \frac{1}{X^{n_0} }
\end{align*}
for any nonnegative integers $ n_0 , \ldots, n_\kappa $. 
The lemma then follows from repeated integration by parts.
\end{proof}

 Define
\begin{multline}\label{def:SigmaDis} 
 \Sigma_{\mathfrak T}  := 2\pi i^{-k}   \sumd_{P_1, \ldots , P_{\kappa}}  \sum_{c\geq 1} \int_{-\infty}^\infty \cdots \int_{-\infty}^\infty  \widehat{H}(u, \vb) \\     \times \sumsharp_{\substack{ p_{k_1} , \ldots , p_{k_\kappa} \\ ( \mathfrak p (K) , L_1L_2 )=1   }}
  \prod_{ j=1}^\kappa \left( \frac{ \log p_{k_j} }{ \sqrt{ p_{k_j} }} V \left( \frac{ p_{k_j}}{P_j} \right)   \e{ \frac{p_{k_j}}{P_j} v_j }   \right)    \mathfrak T (c,\mathfrak p(K); u)  \> du \> dv_1 \cdots \> dv_{\kappa}.  
\end{multline}
By the Fourier inversion, \eqref{eqn:CKQ 1} and Remark \ref{remark: support 1}, we find that
\begin{equation}\label{eqn:CKQ 2}
 C_{K}  (Q; \mathbb{L}) = \Sigma_{\cS},
 \end{equation}
 where
 $$ \cS (c, \mathfrak p(K); u) :=  \sum_{n}   \frac{S( \elltwo^2, \mathfrak p (K) ; cL_1 \ell_1  \ell_2 n )}{ cL_1 \ell_1  \ell_2 n }  
 h_u  \left( \frac{4\pi \elltwo \sqrt{\mathfrak p (K) }}{cL_1 \ell_1  \ell_2 n}\right)  $$
 and
 \begin{equation} \label{def:huxi}
h_u(\xi) := J_{k-1}(\xi)W\bfrac{\xi}{X} \e{u\xi}.
\end{equation}

 By Kuznetsov's formula (Lemma~\ref{lem:kuznetsov}) with $ N=cL_1  \ell_1   \ell_2  $, $m=\elltwo^2$, $n=\mathfrak p(K) $ and $ \phi = h_u $, we find that
\es{ \label{eqn:sumoverpseparateintoDisCtnHol}
 \cS (c, \mathfrak p(K); u)   = \Dis(c, \mathfrak p(K); u) + \Ctn(c, \mathfrak p(K); u) + \Hol(c, \mathfrak p(K); u) ,
}
where
\begin{equation} \label{def:DisCtnHol}\begin{split}
\textup{Dis}(c,\mathfrak p(K); u) &:=  \sum_{j=1}^{\infty} \frac{\overline{\rho_j}(\elltwo^2)\rho_j(\mathfrak p(K)) \sqrt{\mathfrak p(K)\elltwo^2} }{\cosh(\pi \kappa_j)} h_{u,+}(\kappa_j) , \\
\Ctn(c, \mathfrak p(K); u) &:= \frac{1}{4\pi}\sum_{\mathfrak c_{\chi}^2 | M | N}  \int_{-\infty}^{\infty}  \rho_{\chi, M, N}(\mathfrak p(K), t) \overline{\rho_{\chi, M, N}(\elltwo^2, t)} h_{u, +}(t) \> dt, \textup{ \;\;\;and}\\
\Hol(c,\mathfrak p(K); u) &:= \frac{1}{2\pi} \sum_{ \substack{\ell\geq 2 \mbox{\scriptsize{ even}} \\ 1 \leq j \leq \theta_{\ell}(N)} } (\ell - 1)! \sqrt{\mathfrak p(K)\elltwo^2} \, \overline{\psi_{j,\ell}}(\elltwo^2) \psi_{j,\ell} (\mathfrak p(K)) h_{u, h}(\ell).
\end{split}\end{equation}
Here, $ h_{u, +}$ and $ h_{u,h}$ are the Bessel transforms of $h_u$ defined in Lemma \ref{lem:kuznetsov}.  Note that the forms appearing in \eqref{def:DisCtnHol} are of level $cL_1 \ell_1  \ell_2 $. 
By \eqref{def:SigmaDis}, \eqref{eqn:CKQ 2} and \eqref{eqn:sumoverpseparateintoDisCtnHol}, we find that
\begin{equation}\label{eqn:CKQ Sigma Dis Ctn Hol}
 C_K (Q ; \mathbb{L}) = \Sigma_{\Dis} +  \Sigma_{\Ctn} + \Sigma_{\Hol}. 
 \end{equation}
Then we have the following propositions.
\begin{prop}\label{prop:DisHol}
Assume GRH, \eqref{conditions mathbbL} and \eqref{eqn:Pibdd}. 
For any $\epsilon>0$, we have 
$$ \Sigma_{\Dis} \ll  Q^{1 -  {\delta}/{2} + \epsilon} , \qquad \Sigma_{\Hol} \ll  Q^{1 -  {\delta}/{2} + \epsilon}.$$
\end{prop}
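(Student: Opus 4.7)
The strategy is to bound both $\Sigma_{\Dis}$ and $\Sigma_{\Hol}$ by expressing the orthonormal basis of Maass (respectively, holomorphic) cusp forms of level $\mathcal N = cL_1\ell_1\ell_2$ in the Atkin-Lehner form $u_j = f^{(g)}$ (respectively $f_{j,\ell} = f^{(g)}$), where $f$ is a primitive newform of level $\mathcal M \mid \mathcal N$ and $g \mid \mathcal N/\mathcal M$, as in \S\ref{subsec:newformoldform}. Using the multiplicativity relation \eqref{eqn:coeffBM}, the Fourier coefficients $\rho_j(\mathfrak p(K))$ decouple into newform Hecke eigenvalues $\lambda_f(p_{k_j})$ for primes coprime to $\mathcal N$; those dividing $\mathcal N$ are handled by splitting and using the generic bound \eqref{eqn:fouriercoeffbdd}.

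The prime sum over the $p_{k_j}$ is then controlled via Lemma \ref{lem:heckebasisfouriersumbdd}, with the exponential factor $\e{v_j p_{k_j}/P_j}$ absorbed into the test function; this bounds the inner sum by $|\rho_f(1)| (\mathcal C \mathcal N Q)^{\epsilon} Y(\vb)^{3}$, where $\mathcal C = 1 + |\kappa_j|$ for Maass forms and $\mathcal C = \ell$ for the holomorphic piece. The factor $Y(\vb)^{3}$ arises from absorbing three derivatives of the exponential through Lemma \ref{lem:CLee3.5}. Simultaneously, the Bessel transforms $h_{u,+}(\kappa_j)$ and $h_{u,h}(\ell)$ are bounded by Lemma \ref{lem:boundforhu}, which supplies rapid spectral decay and the essential size factor $\min\{X^{k-1}, 1/\sqrt{X}\}$; by the definition \eqref{eqn: Xdefinition} combined with the restrictions \eqref{eqn:Pibdd} and \eqref{conditions mathbbL}, we have $X \ll Q^{1-\delta/2+\epsilon}/c$, which concretely quantifies the complementary-level saving.

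The spectral sum is then controlled via Cauchy--Schwarz and the spectral large sieve applied to $\rho_j(\elltwo^2)$ (respectively $\psi_{j,\ell}(\elltwo^2)$); exceptional Maass eigenvalues are treated by Lemma \ref{lem:boundforhu}(2), and since they are bounded in number uniformly in $\mathcal N$, their contribution fits within the overall bound. The integrals over $u$ and $\vb$ converge absolutely by Lemma \ref{lem:hatHbdd} and the rapid decay in $Y(\vb)^{-A}$, while the sum over $\mathbb{L}$ satisfying \eqref{conditions mathbbL} contributes at most a polylog factor. Assembling the pieces and performing the final summation over $c$ (whose effective range is $c \leq Q^{1-\delta/2+\epsilon}$) produces the desired saving $Q^{1-\delta/2+\epsilon}$.

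The principal technical obstacle is the careful accounting in the spectral sum: balancing the rapid decay of the Bessel transform in the spectral parameter against the large sieve bound for the Fourier coefficients $\rho_j(\elltwo^2)$, while simultaneously controlling the dependence on $(1+|v_j|)$ in the prime sum, handling the exceptional Maass eigenvalues, and ensuring the coprimality condition $(\mathfrak p(K), L_1L_2) = 1$ from \eqref{def:CKQLre} is compatible with the Hecke relation \eqref{eqn:coeffBM}, which requires further splitting when a prime in $\mathfrak p(K)$ divides $c\ell_1\ell_2$.
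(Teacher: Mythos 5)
Your proposal follows the same broad strategy as the paper: pass to the Atkin--Lehner basis $u_j = f^{(g)}$, decouple the prime sums via the Hecke relation \eqref{eqn:coeffBM} and a lemma like Lemma~\ref{lem:heckebasisfouriersumbdd}, bound the Bessel transforms by Lemma~\ref{lem:boundforhu}, observe from \eqref{eqn: Xdefinition}, \eqref{conditions mathbbL} and \eqref{eqn:Pibdd} that $X \ll Q^{1-\delta/2+\epsilon}/c$, and close with the spectral large sieve. That is also what the paper does in \S\ref{sec:dispart}.

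However, your handling of the exceptional Maass eigenvalues contains a genuine error: you assert that they are ``bounded in number uniformly in $\mathcal N$,'' and rely on this to control $\Sigma_{\Dis,\tIm}$. This is false. For $\Gamma_0(\mathcal N)$ the number of exceptional eigenvalues grows with the level (old forms of a fixed primitive exceptional $f$ of level $\mathcal M$ proliferate as $\mathcal N/\mathcal M$ grows). What actually saves the day is the Deshouillers--Iwaniec spectral large sieve \eqref{eqn:DI spectral bound}: it bounds the \emph{weighted} contribution $\sum_{|\kappa_j|\le x}|\rho_j(1)|^2/\cosh(\pi\kappa_j)\ll x^2$, which applied with $x$ of size one covers the exceptional spectrum without any uniform count. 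The paper invokes exactly this bound, together with the observation (made in \S\ref{sec:dispart}) that a given newform $f$ occurs $\ll (c\mathcal N)^\epsilon$ times among the $u_j$ and that $\rho_j(1)=\rho_f(1)$ when $g=1$, to convert $\sum_j |\rho_f(1)|^2$ into a sum of $|\rho_j(1)|^2$ amenable to the large sieve. You need this device for the exceptional part as well as the real part.

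Two smaller points worth flagging. First, Lemma~\ref{lem:heckebasisfouriersumbdd} as stated takes $\Psi_r(p_r/X_r)$ without an oscillatory factor $\e{v_r p_r/P_r}$, and the implied constant $\ll_\Psi$ does not track derivatives of $\Psi$; to ``absorb the exponential into the test function'' you would have to re-run the proof and record the $(1+|v_r|)^{O(1)}$ dependence. The paper instead strips off the oscillation by Mellin inversion inside its Lemma~\ref{lem:primesumMaassbdd} (obtaining $Y(\vb)^2$ rather than $Y(\vb)^3$) and only then quotes Lemma~\ref{lem:heckebasisfouriersumbdd} for the resulting $p_r^{it_r}$-twisted sums, which already appear in its hypotheses. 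Second, ``Cauchy--Schwarz and the spectral large sieve applied to $\rho_j(\elltwo^2)$'' is not quite how the paper proceeds: it bounds $\elltwo\overline{\rho_j}(\elltwo^2)\ll(\mathcal N\elltwo)^\epsilon\elltwo|\rho_f(1)|$ pointwise via \eqref{eqn:fouriercoeffbdd} and then applies the large sieve to the resulting $|\rho_f(1)|^2$; a direct Cauchy--Schwarz on the $j$-sum would entangle the prime sum over distinct $p_{k_j}$ with the large-sieve variable in a way that is not obviously tractable.
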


\begin{prop}\label{prop:Ctn}  Assume GRH, \eqref{conditions mathbbL} and \eqref{eqn:Pibdd}. Let $K$ be a set of positive integers such that $ |K| = \kappa \geq 2 $. Then we have
\begin{equation*}
    \Sigma_{\Ctn} = \frac{ Q ( \log Q)^{\kappa}  \widetilde{\Psi} ( 1)\delta_{\elltwo=1} }{ (-2)^{\kappa}  L_1^2 L_2  \ell_1 \ell_2  }    \sum_{ \substack{ K'\sqcup K'' = K \\ |K'|=2}  } \mathscr V ( K' , K'') + O\left(  \frac{ Q ( \log Q)^{\kappa -1+\epsilon }  }{ L_1 L_2 \elltwo}  \right) 
\end{equation*}
for any $ \epsilon>0$, where the function $\mathscr V$ is in \eqref{def:V}.
\end{prop}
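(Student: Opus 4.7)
\medskip

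\noindent\textbf{Proof proposal for Proposition \ref{prop:Ctn}.}
The plan is to split $\Sigma_{\Ctn}$ according to whether the Dirichlet character $\chi$ parametrizing the Eisenstein series is the trivial character modulo $1$ (so $\fc_\chi = M_1 = 1$) or a primitive non-trivial character. For non-trivial $\chi$ the Eisenstein Fourier coefficients carry a factor of $1/L^{(N)}(1+2it,\chi^2)$ and their numerators are twisted divisor sums involving $\bar\chi$ and $\chi$. Combining Mellin inversion for $\Psi(q/Q)$ with Lemma~\ref{lem:primesumremovedistinct}, I can bound the relevant prime sums by $Q^\epsilon$ uniformly in the spectral parameter $t$, and then use Lemma~\ref{lem:boundforhu}, Lemma~\ref{lem:boundforcoefficientafterKuznetsov}, and the sum over the complementary level $c$ to absorb the $1/Q$ factor. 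The resulting bound will be of the form $Q^{1-\delta/2+\epsilon}$, which is absorbed into the error term; this parallels the treatment of $\Sigma_{\Dis}$ and $\Sigma_{\Hol}$ in Proposition~\ref{prop:DisHol}.

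For the trivial-character contribution I would insert the explicit formula \eqref{eqn:Eisensteincoeff} for $\rho_{\chi_0,M,N}(\mathfrak p(K),t)\overline{\rho_{\chi_0,M,N}(\elltwo^2,t)}$, producing $\zeta$-factors, the ratio $(\mathfrak p(K)/\elltwo^2)^{it}$, and divisor-like sums in $n$ and $c$ via the conductor $N=cL_1\ell_1\ell_2$. Applying Mellin inversion to $\Psi$ opens up a contour integral in $s$ that evaluates the $n$ and $c$ sums as $\zeta(1-s)\zeta(2-s)$ up to local factors, and reshapes the integral transform $h_{u,+}(t)$ into a Mellin integral $\int_0^\infty(J_{2it}-J_{-2it})J_{k-1}(\xi)\xi^s\,d\xi/\xi$ which, by Lemma~\ref{lem:MellinofprodofJbessel}, equals a multiple of $\mathcal G_{2it,k-1}(s)-\mathcal G_{-2it,k-1}(s)$. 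The identity \eqref{eqn:diffJbesselat0} says this Bessel factor vanishes at $s=0$, which is precisely where $\zeta(1-s)$ has its simple pole; this is the cancellation of the phantom term described in \S\ref{sec:outline}. The $\elltwo=1$ condition will emerge because, when $\elltwo>1$, the coprimality relations forced by $\elltwo\mid L_2^\infty$ together with the explicit form of $\rho'_{\chi_0,M,N}$ make the contribution strictly smaller (either one gains a factor $\elltwo^{-1+\epsilon}$ or the sum over $M$ is arithmetically incompatible with $\elltwo>1$).

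The main term then has to come from a contour shift that picks up the residue at $s=1$, coming from the pole of $\zeta(2-s)$-type factor or from the trivial-character local-factor structure, combined with the explicit evaluation of the remaining spectral integral over $t$. Here the prime sums $\prod_j(P_j^{\pm it}\cdot\log P_j)$ must be controlled via Lemma~\ref{lemma:prime sum}, which after moving contours in $z=it$ will leave only the ``swap'' pairings $P_{k_1}^{it}P_{k_2}^{-it}+P_{k_1}^{-it}P_{k_2}^{it}$ for each choice of a two-element subset $K'\subset K$; all balanced pairings $(P_1\cdots P_\kappa)^{\pm it}$ are killed by the vertical decay in $z$. Summing the two-element subsets $K'\sqcup K''=K$ will produce the expected sum $\sum_{|K'|=2}\mathscr V(K',K'')$, with $\mathscr V$ recognized from the double integral \eqref{def:V} after unfolding the Fourier transforms $\widehat\Phi_j$ and matching the $\mathscr I(\Phi_1,\Phi_2;U_1,U_2)$ structure against the residues extracted from \eqref{eqn:diffJbesselatvplus1}.

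The hardest step will be the combinatorial identification of the emerging integral as $\mathscr V(K',K'')$: one has to match factors of $(-2)$, of $\log Q$ (which appear as $\log P_j/\log Q$ scalings coming from $\widehat\Phi_{k_j}$), of $\widetilde\Psi(1)$, and the precise shape of the double integral in \eqref{def:I12}, including the correct treatment of the secondary swap terms that reduce the $\mathscr I$ integrand from its naive product form. This matching is guided by and should precisely mirror the random matrix computation carried out for $C_2(n)$ in \S\ref{section:nth centered moment ON}.
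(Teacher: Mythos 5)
Your proposal identifies the same strategic skeleton the paper uses: split $\Sigma_{\Ctn}$ into a trivial-character piece and a non-trivial-character piece, bound the latter by combining Lemma~\ref{lem:primesumremovedistinct} with the transform bounds and the sum over the complementary level $c$, then for the trivial character unfold $\rho_{\chi_0,M,N}$ explicitly, apply Mellin inversion to $\Psi$, rewrite $h_{u,+}$ via Lemma~\ref{lem:MellinofprodofJbessel}, and use \eqref{eqn:diffJbesselat0} to kill the pole of $\zeta(1-s)$ at $s=0$, followed by a contour shift toward $s=1$ and a ``swap'' pairing of the prime factors $P_j^{\pm it}$ driven by Lemma~\ref{lemma:prime sum}. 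This is indeed what the paper does in \S\ref{sec:ctn}--\S\ref{sec:ctnrescalc}.

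Two places where your sketch is too vague to constitute a proof, and where the paper has to do real work:

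\textbf{The emergence of $\delta_{\elltwo=1}$.} You attribute it to the coprimality relations making $\elltwo>1$ ``arithmetically incompatible,'' or producing a negative power of $\elltwo$. In fact the mechanism is an explicit Euler product evaluation: Lemmas~\ref{lem:eisenteincoeff}--\ref{lem:Fsformula} express $\widetilde{\varrho}_{L_0,\mathfrak p(K),\elltwo;t}(s)$ as a finite sum of products $F_{L_0,d_1d_2,\m}(s,it)$, and only after a fairly delicate factorization (Remark~\ref{rem:gcdford1d2andalpha}, eq.~\eqref{eqn:Fs formula 3}) does one obtain the local factor $\mathfrak J_1$; the key point is Lemma~\ref{lem:valueatsequal-1}, which shows $\mathfrak J_1(-1,z;\mathbb L,c_2,d_2)=\delta_{c_2 d_2=\elltwo^2}$ and consequently $\mathfrak J_3(-v,v;\mathbb L)=\delta_{\elltwo=1}/(L_1\ell_1\ell_2)$ in \eqref{eqn:J3-vv}. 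This is a nontrivial identity, not a soft coprimality argument; without it the $\delta_{\elltwo=1}$ in the main term cannot be justified.

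\textbf{The residue structure.} You speak of a single residue at $s=1$ from a $\zeta(2-s)$-type pole. In the paper one first changes variables $s = u+v+1$, $z = \tfrac12(u-v)$, and the main term is assembled from \emph{two} distinct residues in the $u$-integral: one at $u=0$ coming from $\zeta(1-2u)$, and one at $u=-v$ coming from $\zeta(1-u-v)\Gamma(-u-v)$ (Lemma~\ref{lemma:SigmaCtn2 2}). These two residues produce, respectively, the double integral and the single $t$-integral appearing in $\mathscr I(\Phi_1,\Phi_2;U_1,U_2)$ in \eqref{def:I12}; \eqref{eqn:diffJbesselatvplus1} is used to evaluate the Bessel factor at the second pole. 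Before one can even extract these residues one needs to first replace $\mathfrak K_1$ by $\mathfrak K_4$, then by $\mathfrak K_6$ (Lemmas~\ref{lemma:SigmaCtn0GK}, \ref{lemma:SigmaCtn0 SigmaCtn1}, \ref{lemma:SigmaCtn0 to SigmaCtn1}) to control the other set partitions and to isolate the ``swap'' terms with $K_1,K_2\neq\emptyset$ --- the convergence issue that prevents shifting the $s$-contour past $\tfrac12$ in the raw expression \eqref{eqn:SigmaCtn0 1} is only resolved after this refinement. Your intuition about the swap pairing is correct, but the contour-shift chain and the two-residue decomposition are the substance of the proof, not an afterthought.

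Apart from these, your estimate of the non-trivial-character bound as $Q^{1-\delta/2+\epsilon}$ is a bit weaker than the paper's $Q^{1/2-\delta/4+\epsilon}$ from Proposition~\ref{prop:Ctn non}, but both fit the stated error term.
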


The above propositions imply Lemma \ref{lem:CMQ(1,1)} as follows.

\subsection{Proof of Lemma \ref{lem:CMQ(1,1)}}
\label{sec:summaryproofofCMQ11}

By \eqref{def:CKQ}, \eqref{eqn:CKQ Sigma Dis Ctn Hol} and Propositions \ref{prop:DisHol} and \ref{prop:Ctn}, we have
$$\mathscr C_K (Q)  =        Q  \widetilde{\Psi} ( 1)    \sum_{\substack{ L_1 , L_2  ,  \ell_1 ,  \ell_2  \\   \ell_1  | L_1 , \  \ell_2  | L_2 \\
      L_1 L_2 < \LL_{\kappa+4} }} \frac{\mu(L_1L_2)\zeta_{L_1} (2) }{L_1^3L_2^2}    \frac{\mu( \ell_1   \ell_2 )}{ \ell_1 ^3 \ell_2}  
           \sum_{ \substack{ K'\sqcup K'' = K \\ |K'|=2}  } \mathscr V ( K' , K'') + O\left(   \frac{ Q}{ ( \log Q)^{  1-\epsilon } }    \right) 
$$
for any $\epsilon>0$. By Lemma \ref{lem:asympforNQ}, the sum over $L_1 , L_2 , \ell_1 , \ell_2 $ is asymptotically $T(1)$ and we have
$$\mathscr C_K (Q)  =        N_0(Q)  
           \sum_{ \substack{ K'\sqcup K'' = K \\ |K'|=2}  } \mathscr V ( K' , K'') + O\left(   \frac{ Q}{ ( \log Q)^{  1-\epsilon } }    \right) .
$$
This concludes the proof of the lemma.

%----------------------------------
\section{Proof of Proposition \ref{prop:DisHol} - Contribution from Holomorphic forms and Maass forms} \label{sec:dispart}

In this section we bound $\Sigma_{\Dis}$ only, since bounding the contribution of $\Hol(c, p_1....p_{\kappa})$ is similar and easier by the Ramanujan-Petersson bound. 
We recall that in the sum $\Sigma_{\Dis}$, $\sum_{j=1}^{\infty} $ denotes a sum over the spectrum of level $cL_1 \ell_1  \ell_2 $, where $\{u_j\}_{j=1}^\infty$ is the orthonormal basis for the Maass forms of level $cL_1 \ell_1  \ell_2 $ described in \S \ref{subsec:newformoldform}, and $\rho_j(n)$ denotes the Fourier coefficients of $u_j$.  In addition, each $u_j$ is of the form $f^{(g)}$ where $f$ is a Hecke newform of level $M$ with $M |cL_1 \ell_1  \ell_2 $, and $g|\frac {cL_1 \ell_1  \ell_2 }{M}$, and $\tau_f $ is the spectral parameter of $f$, i.e., $ \lambda_j = \frac{1}{4} + \kappa_j^2  = \tau_f (1-\tau_f)   $. 
By \eqref{eqn:BMorthonormalform}
$$ \rho_j ( 1) =  \xi_g(1) \rho_f (1)  . $$

\begin{lem}\label{lem:primesumMaassbdd}
Assume GRH, \eqref{conditions mathbbL} and \eqref{eqn:Pibdd}.  With notation as above, we have
 $$
 \sumsharp_{\substack{p_{k_1},\ldots ,p_{k_\kappa} \\ (\mathfrak p(K), L_1L_2) = 1  } } 
 \rho_j(\mathfrak p(K))\prod_{r = 1}^{\kappa} \left(   \log p_{k_r}  V\bfrac{p_{k_r}}{P_r}  \e{v_r\frac{p_{k_r}}{P_r}} \right) 
	 \ll  |\rho_f(1)| (c Q )^\epsilon (1+|\kappa_j |)^\epsilon Y(\vb )^2
$$ 
for any $\epsilon>0$, where $ Y(\vb)$ is defined in \eqref{def:Yv}.
\end{lem}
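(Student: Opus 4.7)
The plan is to adapt the proof of Lemma~\ref{lem:heckebasisfouriersumbdd} to accommodate the oscillatory factors $\e{v_r p_{k_r}/P_r}$ appearing in the test function. First I would decompose $K = R \sqcup R'$ according to whether each prime $p_{k_r}$ is coprime to $\mathcal N := cL_1\ell_1\ell_2$ (for $r \in R$) or divides $\mathcal N$ (for $r \in R'$). By the Atkin--Lehner coefficient relation \eqref{eqn:coeffBM},
\begin{equation*}
\sqrt{\mathfrak p(K)}\,\rho_j(\mathfrak p(K)) = \lambda_f(\mathfrak p(R))\,\sqrt{\mathfrak p(R')}\,\rho_j(\mathfrak p(R')),
\end{equation*}
and \eqref{eqn:fouriercoeffbdd} gives $|\sqrt{\mathfrak p(R')}\rho_j(\mathfrak p(R'))| \ll |\rho_f(1)|\,\mathcal N^\epsilon\,\mathfrak p(R')^{1/2+\epsilon}$. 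Since every prime contributing to $R'$ divides $\mathcal N$, the corresponding outer sum has only $O((\log\mathcal N)^{|R'|})$ terms and is absorbed into the $(cQ)^\epsilon$ factor of the final bound.

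For the coprime piece, Hecke multiplicativity yields $\lambda_f(\mathfrak p(R)) = \prod_{r \in R}\lambda_f(p_{k_r})$, and I would apply Lemma~\ref{lem:cs} to rewrite the $\sharp$-sum over distinct primes in $R$ as a linear combination, indexed by partitions $\underline G \in \Pi_R$, of products of unrestricted block sums
\begin{equation*}
\mathcal Q(G_j) := \sum_{p} \prod_{r \in G_j}\frac{\lambda_f(p)\log p}{\sqrt p}\,V\!\left(\frac{p}{P_r}\right)\e{v_r\frac{p}{P_r}},
\end{equation*}
carrying the appropriate coprimality conditions. Each block is then bounded according to its size. For singletons $G_j = \{r\}$, I would apply Lemma~\ref{lem:CLee3.5} with $\Psi(x) = V(x/P_r)\e{v_r x/P_r}$; the derivative bounds $|\Psi^{(m)}(x)| \ll (1+|v_r|)^m/x^m$ on the support, combined with a careful integration-by-parts count, produce $\mathcal Q(\{r\}) \ll (1+|v_r|)^2 (cQ(1+|\kappa_j|))^\epsilon$. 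For pairs $G_j = \{r_1,r_2\}$, the identity $\lambda_f(p)^2 = \lambda_f(p^2)+1$ splits the block into two pieces: the $\lambda_f(p^2)$-contribution is controlled by Lemma~\ref{lem:boundforSymL-functions 2}, which appeals to GRH for $L(s,\sym^2 f)$ and is responsible for the $(1+|\kappa_j|)^\epsilon$ factor, while the $+1$ part is a standard PNT-type sum contributing only $(\log Q)^{O(1)}$. For blocks of size at least three, the Kim--Sarnak bound $\lambda_f(p) \ll p^{7/64}$ makes the sum absolutely convergent with an $O(1)$ bound.

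Multiplying the block estimates over each partition $\underline G \in \Pi_R$ and summing over the decompositions $R \sqcup R' = K$ assembles the stated bound $|\rho_f(1)|(cQ)^\epsilon(1+|\kappa_j|)^\epsilon Y(\vb)^2$. The main technical hurdle is extracting the exponent $2$ on $Y(\vb)$ rather than the $3$ one reads off directly from the $A_3$-dependence of Lemma~\ref{lem:CLee3.5} (as in Lemma~\ref{lem:primesumremovedistinct}); this requires trading a small $(\log Q)^\epsilon$ overhead for one fewer factor of $(1+|v_r|)$ per singleton block, namely integrating by parts twice rather than three times in the proof template underlying Lemma~\ref{lem:CLee3.5}.
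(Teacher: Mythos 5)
Your proposal follows the template of Lemma~\ref{lem:primesumremovedistinct}: keep the oscillatory factor $\e{v_r p_{k_r}/P_r}$ inside each block prime sum, decompose $K = R \sqcup R'$ by coprimality to $\N := cL_1\ell_1\ell_2$, apply the combinatorial sieve, and then invoke Lemma~\ref{lem:CLee3.5} on singletons and Lemma~\ref{lem:boundforSymL-functions 2} on pairs. This is a genuinely different route from the paper's. The paper first performs Mellin inversion on $\W_{v_r}(x) := \e{v_r x}V(x)$, writing it as $\frac{1}{2\pi}\int_{-\infty}^\infty \widetilde{\W}_{v_r}(it_r)\,x^{-it_r}\,dt_r$, thereby moving the entire $v_r$-dependence out of the prime sum and into the transform factor. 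After inserting a harmless majorant $V_0(p_{k_r}/P_r)$ to keep the support conditions, the resulting inner $\sharp$-sum carries only $p_{k_r}^{-it_r}$ weights and is exactly the object Lemma~\ref{lem:heckebasisfouriersumbdd} bounds, yielding $\ll |\rho_f(1)|(c\N Q)^\epsilon(1+|\tau_f|)^\epsilon\prod_r\log(2+|t_r|)$ with no $v_r$-dependence whatsoever. Two integrations by parts give $\widetilde{\W}_{v_r}(it_r)\ll(1+|v_r|)^2(1+|t_r|)^{-2}$, which simultaneously supplies the convergence of the $t_r$-integral against $\log(2+|t_r|)$ and the asserted power $Y(\vb)^2$; the exponent $2$ is simply the smallest choice that makes the $t_r$-integral converge.

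The gap in your proposal is exactly the step you flag as the ``main technical hurdle.'' Lemma~\ref{lem:CLee3.5} is stated in terms of the constant $A_3$, and for $\Psi(x) = V(x/P_r)\e{v_r x/P_r}$ that constant is $\asymp (1+|v_r|)^3$; you assert that performing two rather than three integrations by parts ``in the proof template underlying Lemma~\ref{lem:CLee3.5}'' trims this to $(1+|v_r|)^2$, but that lemma is quoted from \cite{BCL} and no $A_2$-variant is established or cited here --- you would need to re-prove it and verify that the zero-sum in the underlying explicit-formula argument still converges with only two derivatives of $\Psi$ available. The same problem recurs more seriously in the pair case: Lemma~\ref{lem:boundforSymL-functions 2} is stated for a smooth compactly supported weight $F$ with no oscillatory phase, and its proof relies on the rapid decay $\widetilde G(s)\ll(1+|s|)^{-A}$, which is lost once $F$ carries a factor like $\e{(v_{r_1}/P_{r_1}+v_{r_2}/P_{r_2})p}$; an adapted version would again produce uncontrolled powers of $(1+|v_{r_i}|)$. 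Neither issue arises in the paper's proof, precisely because the Mellin step strips the oscillation out of the prime sum before GRH is ever invoked, so the accounting of $(1+|v_r|)$ factors is localized entirely in the elementary transform bound.
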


\begin{proof}
Let $V_0$ be a smooth function that is compactly supported on $(0, \infty)$ such that $V_0(x)=1$ whenever $ V(x) \neq 0 $. Then we multiply $ \prod_{r=1}^\kappa V_0  \bfrac{p_{k_r}}{P_r}$ to the $\sharp$-sum in the lemma without any changes. By the Mellin inversion we find that
$$ \mathcal W_{v_r}(x) := \e{v_r x} V(x) = \frac{1}{2 \pi  } \int_{-\infty}^\infty  \widetilde{\mathcal W}_{v_r}( i t_r ) x^{-it_r}   dt_r  $$
for each $ r \leq \kappa $, where $\widetilde{\W} $ is the Mellin transform of $\W$. Since  
$ \W_{v_r}^{(l)}(x) \ll (1+|v_r |)^l $ for every $l \geq 0  $ and $ \W$ is compactly supported, we have $
\widetilde{\W}_{v_r} (it_r) \ll   \frac{(1+|v_r|)^2}{(1+|t_r |)^2}
$ by integration by parts. 
Thus, the $\sharp$-sum in the lemma is bounded by
\begin{align*}
\ll & \int_{-\infty}^\infty \cdots \int_{-\infty}^\infty  \left| \ \,\sumsharp_{\substack{p_{k_1},\ldots ,p_{k_\kappa} \\ (\mathfrak p(K), L_1L_2) = 1  } } 
 \rho_j(\mathfrak p(K))\prod_{r = 1}^{\kappa} \frac{ \log p_{k_r}}{ p_{k_r}^{ i t_r }} V_0 \bfrac{p_{k_r}}{P_r}    \right| \prod_{r=1}^\kappa \bfrac{1+|v_r|}{1+|t_r |}^2  dt_1 \cdots dt_\kappa \\
 \ll & \int_{-\infty}^\infty \cdots \int_{-\infty}^\infty |\rho_f(1)| (cL_1 \ell_1  \ell_2   Q )^\epsilon (1+|\tau_f|)^\epsilon Y(\vb)^2 \prod_{r=1}^\kappa \frac{ \log (2+|t_r|) }{(1+|t_r |)^2}   dt_1 \cdots dt_\kappa \\
 \ll & |\rho_f(1)| (c   Q )^\epsilon (1+|\kappa_j |)^\epsilon Y(\vb)^2,
\end{align*}
where the second inequality holds by Lemma
\ref{lem:heckebasisfouriersumbdd}.

\end{proof}

We have $ \elltwo \overline{\rho_j}(\elltwo^2) \ll (cL_1 \ell_1  \ell_2 )^{\epsilon} \elltwo^{1 + \epsilon} |\rho_f(1)| $ by \eqref{eqn:fouriercoeffbdd}. By this inequality,  \eqref{def:SigmaDis}, \eqref{def:DisCtnHol} and Lemmas \ref{lem:hatHbdd} and \ref{lem:primesumMaassbdd}, we find that
\es{\label{eqn:SigmaDis bound1} 
  \Sigma_\Dis \ll   Q^\epsilon \sumd_{P_1, \ldots , P_\kappa} \sum_c c^\epsilon \int_{-\infty}^\infty \sum_{j=1}^\infty \frac{   |   \rho_f (1) |^2 (1+ |\kappa_j |)^\epsilon  }{  \cosh ( \pi \kappa_j )  }   \frac{X |h_{u,+} (\kappa_j )| }{(1+|u|X)^{A_1}}   du }
for any $ A_1 \geq 0 $ and $X $ defined in \eqref{eqn: Xdefinition}. We choose $ A_1 \geq 3 $ for later uses. 
We write
\es{ \label{eqn:Discsum}
\Sigma_{\Dis}  = \Sigma_{\Dis, \tRe} + \Sigma_{\Dis, \tIm},
 } where $ \Sigma_{\Dis, \tRe} $ is the contribution of the real $\kappa_j $,  and $\Sigma_{\Dis, \tIm}$ is the contribution of the imaginary $\kappa_j$ corresponding to exceptional eigenvalues.

For $ \Sigma_{\Dis, \tIm}$, we first have
 \begin{equation}\label{eqn:bound Xhupluskappaj}
 \int_{-\infty}^\infty \frac{X |h_{u,+} (\kappa_j )| }{(1+|u|X)^{A_1}}   du \ll \left(1+ \frac{1}{\sqrt{X}} \right) \min \left\{ X^{k-1} , \frac{1}{ \sqrt{X}} \right\} \ll \frac{\min\{ 1, X^{k-1} \} }{\sqrt{X}} 
 \end{equation}
by Lemma \ref{lem:boundforhu} (2).
Next we need the spectral large sieve bound
\begin{equation}\label{eqn:DI spectral bound}
\sum_{|\kappa_j| \le x} \frac{|\rho_j(1)|^2 }{\cosh(\pi \kappa_j)} \ll x^2
\end{equation}
from Deshoulliers and Iwaniec~\cite[Theorem 2]{DI}.  Hence, we find that
\begin{align*}
    \Sigma_{\Dis,\tIm}  \ll &    Q^\epsilon \sumd_{P_1, \ldots , P_\kappa} \sum_c  c^\epsilon \sum_{\substack{ \kappa_j = ir \\ |r| < 1/2 }   } \frac{   |   \rho_f (1) |^2 (1+ |\kappa_j |)^\epsilon  }{  \cosh ( \pi \kappa_j )  }  \frac{\min\{ 1, X^{k-1}\} }{\sqrt{X}} \\   
     \ll &      Q^\epsilon \sumd_{P_1, \ldots , P_\kappa} \sum_c  c^\epsilon    \frac{\min\{ 1, X^{k-1}\} }{\sqrt{X}}
\end{align*}
by \eqref{eqn:SigmaDis bound1} - \eqref{eqn:DI spectral bound}. By \eqref{conditions mathbbL}, \eqref{eqn: Xdefinition} and \eqref{eqn:Pibdd}, we have
$$ \sumd_{P_1, \ldots , P_\kappa} \sum_c  c^\epsilon    \frac{\min\{ 1, X^{k-1}\} }{\sqrt{X}} \ll \sumd_{P_1, \ldots , P_\kappa} \left(  \frac{ L_1 L_2 \elltwo \sqrt{ P_1 \cdots P_\kappa }}{Q}  \right)^{1+\epsilon} \ll    Q^{1-\frac{\delta}2 +\epsilon} .   $$
Hence, for any $\epsilon>0$, we have
\begin{equation}\label{eqn:bound Sigma Dis Im}
\Sigma_{\Dis,\tIm} \ll      Q^{1- \frac{\delta}2  +\epsilon}.
\end{equation}

For $\Sigma_{\Dis, \tRe}$,
by \eqref{eqn:SigmaDis bound1}, \eqref{eqn:Discsum} and Lemma \ref{lem:boundforhu} (1), we find that
\begin{align*}
     \Sigma_{\Dis, \tRe} \ll &   Q^\epsilon \sumd_{P_1, \ldots , P_\kappa} \sum_c c^\epsilon \int_{-\infty}^\infty \sum_{\kappa_j \in \mathbb{R}} \frac{   |   \rho_f (1) |^2 (1+ |\kappa_j |)^\epsilon  }{  \cosh ( \pi \kappa_j )  }  \\
    &  \times \frac{1}{F^{1-\epsilon}} \left(  \frac{F}{ 1+|\kappa_j|} \right)^{C(j)} \min \left\{ X^{k-1} , \frac{1}{ \sqrt{X}} \right\}     \frac{X (1+|\log X|) }{(1+|u|X)^{A_1}}   du
\end{align*}
for some $ F < (|u|+1)(1+X)$ and for any choice of $ C(j) \geq 0 $. Since $F$ depends on $u$ and $X$, we first estimate the $j$-sum
$$ \Sigma_{\Dis, \tRe,1} := \sum_{\kappa_j \in \mathbb{R}} \frac{   |   \rho_f (1) |^2 (1+ |\kappa_j |)^\epsilon  }{  \cosh ( \pi \kappa_j )  F^{1-\epsilon}} \left(  \frac{F}{ 1+|\kappa_j|} \right)^{C(j)} .$$
Each newform $f$ appears at most $\ll (cL_1 \ell_1  \ell_2 )^{\epsilon}$ times in the above $j$-sum as $ u_j = f^{(g)}$ with $g  | \frac{cL_1 \ell_1  \ell_2 }{M}$. When $ g=1$, we have $ u_j = f^{(1)} = f$ and so $ \rho_j (1) = \rho_f (1)$. 
Then we have
$$ \Sigma_{\Dis, \tRe,1}  \ll (cQ)^\epsilon \sum_{\substack{\kappa_j \in \mathbb{R} \\  u_j = f^{(1)} \textup{ for some } f   }} \frac{   |   \rho_j (1) |^2 (1+ |\kappa_j |)^\epsilon  }{  \cosh ( \pi \kappa_j )  F^{1-\epsilon}} \left(  \frac{F}{ 1+|\kappa_j|} \right)^{C(j)}. $$
We can change the above sum to the sum over all real $\kappa_j$ by adding more positive terms. By splitting the sum dyadically and applying \eqref{eqn:DI spectral bound}, we find that 
\begin{align*} 
  \Sigma_{\Dis, \tRe,1} 
 &\ll (cQ)^{\epsilon} \left(\sum_{|\kappa_j| \leq F}  \frac{|\rho_j(1)|^2(1+|\kappa_j|)^\epsilon}{\cosh(\pi \kappa_j) F^{1- \epsilon}}  +  \sumd_{\ell} \sum_{\ell F < |\kappa_j| \leq 2 \ell F}  \frac{|\rho_j(1)|^2   }{\cosh(\pi \kappa_j)  }   \frac{F^{2+2\epsilon} }{(1 + |\kappa_j|)^3}    \right)  \\
&\ll (cQ)^{\epsilon} F^{1 + \epsilon} \ll (cQ)^{\epsilon} (1+|u|)^{1 + \epsilon}(1+X)^{1 + \epsilon}
\end{align*}
 for any $\epsilon>0$, where we have chosen $ C(j)=0 $ for $ |\kappa_j | \leq F $ and $ C(j) = 3+\epsilon$ otherwise.

 Since
 $$\int_{-\infty}^\infty (1+|u|)^{1 + \epsilon}     \frac{X  }{(1+|u|X)^{A_1}}   du \ll \left( 1 + \frac1X \right)^{1 + \epsilon}    $$
 for $ A_1 \geq 3$, 
  we have
\begin{align*}
     \Sigma_{\Dis, \tRe}  
     \ll &   Q^\epsilon \sumd_{P_1, \ldots , P_\kappa} \sum_c c^\epsilon   \frac{ (1+X)^{2 + 2\epsilon}   }{X^{1+\epsilon}}   \min \left\{ X^{k-1} , \frac{1}{ \sqrt{X}} \right\}      (1+|\log X|)  .
\end{align*}
\new{Since $ k \geq 4 $, the $c$-sum is convergent} and bounded by
$ \left( \frac{L_1L_2 \elltwo \sqrt{P_1 \cdots P_{\kappa}}}{Q} \right)^{1+\epsilon}$ for any $\epsilon>0$.  This may be verified by dividing the sum into two depending on $ c \leq  \frac{4 \pi L_1L_2 \elltwo \sqrt{P_1 \cdots P_{\kappa}}}{Q}$ or not. By \eqref{eqn:Pibdd}, we have
\begin{align*}
     \Sigma_{\Dis, \tRe}  
     \ll & \elltwo^{1+\epsilon} ( L_1  \ell_1   \ell_2  Q)^\epsilon \sumd_{P_1, \ldots , P_\kappa}  \left( \frac{L_1L_2 \elltwo \sqrt{P_1 \cdots P_{\kappa}}}{Q} \right)^{1+\epsilon}  \ll      Q^{1-\frac{\delta}2 +\epsilon}  ,
\end{align*}
which has the same bound as $ \Sigma_{\Dis, \tIm} $ in \eqref{eqn:bound Sigma Dis Im}. Thus, $\Sigma_{\Dis}$ has the same bound as well, which proves the first inequality of the proposition. As we mentioned in the beginning of this section, we omit the proof of the holomorphic case, since it is similar and easier.

\section{Proof of Proposition \ref{prop:Ctn} - Contribution from Eisenstein series}\label{sec:ctn}

Recall that $\Sigma_{\Ctn}$ is defined in \eqref{def:SigmaDis} and \eqref{def:DisCtnHol}. 
Let $\Ctn_0 $ and $ \Ctn_\non$ be the contribution of the trivial character $ \chi_0$ and the nontrivial characters, respectively, in \eqref{def:DisCtnHol}. Then we have
\begin{equation} \label{def:Ctn0}
\Ctn_0 (c, \mathfrak p(K); u) := \frac{1}{4\pi}\sum_{  M | N }  \int_{-\infty}^{\infty}  \rho_{\chi_0 , M, N }(\mathfrak p(K), t) \overline{\rho_{\chi_0 , M, N }(\elltwo^2, t)} h_{u, +}(t) \> dt 
\end{equation}
with $N= cL_1  \ell_1  \ell_2  $ and $\Ctn_\non$ is the same as $ \Ctn$ defined in \eqref{def:DisCtnHol} except for $ \c_\chi \neq 1 $.  Since $ \Ctn= \Ctn_0 + \Ctn_\non$ by definition, we see that
$$ \Sigma_{\Ctn} = \Sigma_{\Ctn_0} + \Sigma_{ \Ctn_ \non} $$
by \eqref{def:SigmaDis}. Then it is easy to see that Proposition \ref{prop:Ctn} follows from the two propositions below.
\begin{prop}\label{prop:Ctn non}
 Assume GRH, \eqref{conditions mathbbL} and \eqref{eqn:Pibdd}.
With notation as above and for any $\epsilon>0$, we have 
$$ \Sigma_{\Ctn_\non} \ll  Q^{\tfrac12 - \tfrac {\delta}{4} + \epsilon} .$$
\end{prop}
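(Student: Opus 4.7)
The plan is to mirror the treatment of $\Sigma_{\Dis}$ in Section \ref{sec:dispart}, but expanding the Eisenstein Fourier coefficients via the explicit formula \eqref{eqn:Eisensteincoeff} and invoking GRH for Dirichlet $L$-functions (through Lemma \ref{lem:primesumremovedistinct}) in place of the GRH/spectral bounds used for Maass forms. First, I would substitute \eqref{eqn:Eisensteincoeff} into the definition of $\Ctn_{\non}$ and pull the sums over $\chi$, $M$ and the integrations over $t$, $u$, $\vb$ outside. The prefactor $\sqrt{M_1 \zeta_{(M,N/M)}(1)}/(\sqrt{M_2 N}\, L^{(N)}(1+2it, \chi^2))$ carries the crucial $1/\sqrt{N}$ gain per Fourier coefficient, and under GRH for $L(s, \chi^2)$ the reciprocal $L$-factor is harmless, contributing only $(\c_\chi N(1+|t|))^\epsilon$.

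Next, I would exploit the squarefree nature of $\mathfrak p(K)$ to expand $\rho'_{\chi, M, N}(\mathfrak p(K), t)$: each decomposition $n_1 n_2 = \mathfrak p(K)/(M_1 m_2)$ corresponds to a set partition $K = K_0 \sqcup K_1 \sqcup K_2$, where the $K_0$-primes coincide with those of $M_1 m_2$ (which must therefore be squarefree and coprime to $L_1 L_2$), $n_1 = \mathfrak p(K_1)$, and $n_2 = \mathfrak p(K_2)$. For each such partition, together with each choice of $m_2$, the inner prime sum reduces to one of the shape
\begin{equation*}
\sumsharp_{\substack{p_{k_j},\, j \in K_1 \sqcup K_2 \\ (\mathfrak p(K_1 \sqcup K_2),\, L_1 L_2 M)=1}} \prod_{j \in K_1} \frac{\bar\chi(p_{k_j}) \log p_{k_j}}{p_{k_j}^{1/2-it}} V\!\left(\tfrac{p_{k_j}}{P_j}\right) \e{v_j \tfrac{p_{k_j}}{P_j}} \prod_{j \in K_2} \frac{\chi(p_{k_j}) \log p_{k_j}}{p_{k_j}^{1/2+it}} V\!\left(\tfrac{p_{k_j}}{P_j}\right) \e{v_j \tfrac{p_{k_j}}{P_j}},
\end{equation*}
which Lemma \ref{lem:primesumremovedistinct} bounds by $(MNQ(1+|t|))^\epsilon Y(\vb)^3$. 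For $\rho_{\chi, M, N}(\elltwo^2, t)$ I would invoke the divisor-type bound $|\rho'_{\chi, M, N}(\elltwo^2, t)| \ll \tau(\elltwo^2)\tau(M_2) \min(M_2,\elltwo^2/M_1)$, which combined with the prefactor yields $|\rho_{\chi, M, N}(\elltwo^2, t)| \ll \sqrt{M_1 M_2/N}\, \elltwo^\epsilon (N(1+|t|))^\epsilon$.

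Then I would integrate: using Lemma \ref{lem:hatHbdd} with $A_2$ large to absorb $Y(\vb)^3$, and Lemma \ref{lem:boundforhu}(1) with $F < (1+|u|)(1+X)$ and parameter $C$ chosen for convergence in $t$, the combined $(t, u, \vb)$ integrations contribute $(1+X)^{1+\epsilon}(1+1/X)^{1+\epsilon} \min\{X^{k-1}, X^{-1/2}\}$, which gives $\min\{X^{k-2-\epsilon}, X^{1/2+\epsilon}\}$ in the two regimes of $X = 4\pi L_1 L_2 \elltwo \sqrt{P_1 \cdots P_\kappa}/(cQ)$. Summing over $c$ yields a factor of $Q_\star^{1/2} (L_1 \ell_1 \ell_2)^{-1/2}$ (rather than $Q_\star$ as in the Maass case), owing to the additional $N^{-1/2}$ from the Eisenstein prefactors; this is the square-root gain relative to $\Sigma_\Dis$. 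Finally, summing over dyadic $P_i$ constrained by \eqref{eqn:Pibdd} produces $(P_1 \cdots P_\kappa)^{1/4} \ll Q^{1-\delta/4+\epsilon}$, and the sum over $(\chi, M)$ with $\c_\chi \ne 1$ and $\c_\chi^2 \mid M \mid N$ contributes only $N^\epsilon$ (via $\sum_{\c_\chi^2 \mid M \mid N} M = \c_\chi^2 \sigma(N/\c_\chi^2)$ and $\sum_\chi 1/\c_\chi \ll N^\epsilon$). Assembling these factors gives the desired bound $Q^{1/2 - \delta/4 + \epsilon}$.

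The main obstacle is carefully tracking the interaction between the $1/N$ savings from the two Eisenstein prefactors and the inflation by $M_2$ coming from the $m_2$-sum in $\rho'$, while ensuring that the combined sum over $(\chi, M, m_2)$ only contributes a negligible $N^\epsilon$ factor. The square-root improvement over the Maass/holomorphic case (exponent $1/2$ instead of $1$) is essentially an arithmetic reflection of the $1/\sqrt{N}$ normalization of the Eisenstein Fourier coefficients, and capturing it cleanly requires keeping the prefactors separate from the combinatorial divisor-type content throughout.
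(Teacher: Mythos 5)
Your proposal follows the same route as the paper's proof: expand the Eisenstein Fourier coefficients via \eqref{eqn:Eisensteincoeff}, decompose $\mathfrak p(K)$ according to set partitions to isolate $n_1, n_2, M_1, m_2$, invoke Lemma \ref{lem:primesumremovedistinct} for the twisted prime sums, bound the integral transforms via Lemmas \ref{lem:hatHbdd} and \ref{lem:boundforhu}(1), and extract the $c^{-1/2}$ savings from the $N^{-1/2}$ normalization of the Eisenstein coefficients before summing over $c$ and the dyadic $P_i$. One small remark: the bound $|L^{(N)}(1+2it,\chi^2)|^{-1} \ll (N(1+|t|))^\epsilon$ is unconditional (the paper cites Montgomery--Vaughan), so invoking GRH for $L(s,\chi^2)$ at that step is unnecessary, though harmless since GRH is assumed globally.
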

\begin{prop}\label{prop:Ctn0} 
 Assume RH, \eqref{conditions mathbbL} and \eqref{eqn:Pibdd}. Let $K$ be a set of positive integers such that $ |K|= \kappa \geq 2 $. With notation as above and for any $\epsilon>0$, we have
$$     \Sigma_{\Ctn_0} =  \frac{ Q ( \log Q)^{\kappa}  \widetilde{\Psi} ( 1)\delta_{\elltwo=1} }{ (-2)^{\kappa}  L_1^2 L_2  \ell_1 \ell_2  }    \sum_{ \substack{ K'\sqcup K'' = K \\ |K'|=2}  } \mathscr V ( K' , K'') + O\left(  \frac{ Q ( \log Q)^{\kappa -1+\epsilon }  }{ L_1 L_2 \elltwo}  \right) , $$
where the function $\mathscr V$ is in \eqref{def:V}.
\end{prop}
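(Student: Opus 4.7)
The plan is to unfold the definition of $\Sigma_{\Ctn_0}$ given by \eqref{def:SigmaDis} and \eqref{def:Ctn0}, specialize the Eisenstein Fourier coefficients \eqref{eqn:Eisensteincoeff} to $\chi=\chi_0$ (so that $\mathfrak c_{\chi}=1$, $M_1=1$, $M_2=M$ and only $M\mid N$ survives), and then combine the resulting $t$-integral with the Bessel transform $h_{u,+}(t)$ from \eqref{def:huxi}. After inserting the integral representation of $h_{u,+}$ and applying Mellin inversion to the $\Psi$-factor buried inside $\widehat H$ (see \eqref{eqn:Hdef} and \eqref{eqn:Hhat}), one is reduced to a contour integral in a Mellin variable $s$ whose integrand, aside from the prime sums, contains the kernel
\[
\zeta(1-s)\,\zeta(2-s)\,\bigl[\,\mathcal G_{2it,k-1}(s) - \mathcal G_{-2it,k-1}(s)\,\bigr]\,\Gamma(1-s)\,2^{s-1},
\]
together with divisor-type Dirichlet series in $\mathfrak p(K)$ evaluated at $\tfrac12+\tfrac{s}{2}$. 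The $\xi$-integral producing the bracketed difference is furnished by Lemma \ref{lem:MellinofprodofJbessel}.

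My next step is to shift the $s$-contour from $\Rep(s)=\sigma_0$ down to $\Rep(s)=-\epsilon_0$. The factor $\zeta(1-s)$ appears to create a simple pole at $s=0$, which would contribute a phantom term of size $Q^{1-\delta/2}$, dwarfing the true main term of size $Q$. However, \eqref{eqn:diffJbesselat0} provides a simple zero of $\mathcal G_{2it,k-1}(s)-\mathcal G_{-2it,k-1}(s)$ at $s=0$, encoding the orthogonality \eqref{eqn:outlineJortho} between cusp forms and Eisenstein series; this zero cancels the pole and removes the phantom contribution. Only the residue at $s=1$, coming from $\zeta(2-s)$, survives and contributes the main term of order $Q\,\widetilde\Psi(1)$. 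For the prime sums I would invoke Lemma \ref{lemma:prime sum}, writing each factor $\sum_p (\log p/\sqrt{p})\,\chi_0(p)\, p^{\mp it}\,V(p/P_j)\,e(v_j p/P_j)$ as a main part plus an error of acceptable size.

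The crux of the argument is the combinatorial evaluation of this residue. The factor $\delta_{\elltwo=1}$ comes out because only $\elltwo=1$ preserves a full $\zeta$-residue at $s=1$; for $\elltwo>1$ the associated Euler product vanishes at $s=1$ to sufficient order to produce a lower-order contribution. The pairing sum $\sum_{K'\sqcup K''=K,\,|K'|=2}\mathscr V(K',K'')$ arises because, after integrating in the spectral parameter $t$, only those pairings of the form $P_i^{it}P_j^{-it}$ with $|\{i,j\}|=2$ can be stationary; the unpaired configurations $P_i^{it}P_j^{it}$ are shown to be negligible by shifting a further variable $z=it$ to the right, as outlined in the discussion following \eqref{eqn:outlineJs}. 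For the two paired primes in $K'$, the identity \eqref{eqn:diffJbesselatvplus1} produces the integral kernel $\mathscr I(\Phi_{k_1,G_3},\Phi_{k_2,G_4};U_1,U_2)$ appearing in \eqref{def:I12}, while the remaining primes indexed by $K''$ produce the $\widehat\Phi_j(w_j)$ factors of \eqref{def:V}; together they reassemble exactly into the definition of $\mathscr V(K',K'')$.

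The main obstacle will be the careful execution of this residue calculation: managing the contour shift through the region where $\zeta^{(N)}(1\pm 2it)^{-1}$ has zeros at $t=0$, correctly performing the divisor sum $\sum_{M\mid N}$ together with the complementary-level sum over $c$ and the $\mathbb L$-parameters to identify the arithmetic factor $\widetilde\Psi(1)/(L_1^2 L_2 \ell_1 \ell_2)$, and then book-keeping the combinatorial decomposition of $K''$ into the four subsets $G_1,G_2,G_3,G_4$ so that the prime sums align precisely with \eqref{def:V}. The detailed execution is carried out in \S \ref{sec:trivialchar off diag main terms} and \S \ref{sec:ctnrescalc}.
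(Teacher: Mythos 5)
Your proposal follows the paper's overall route — unfold $\Sigma_{\Ctn_0}$, specialize the Eisenstein coefficients to $\chi_0$, pass to a Mellin contour via Lemma \ref{lem:MellinofprodofJbessel}, kill the phantom pole at $s=0$ by \eqref{eqn:diffJbesselat0}, extract a main term near $s=1$ with the $\delta_{\elltwo=1}$ factor and the $P_i^{it}P_j^{-it}$ pairing, and reassemble into $\mathscr V(K',K'')$. So the strategy is essentially the one used in \S\ref{sec:trivialchar off diag main terms}--\ref{sec:ctnrescalc}. However, a few parts of your sketch are either backwards or oversimplified in ways that would derail an actual execution, so let me flag them.

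First, the contour direction is reversed. The Mellin contour for $\Psi$ must start in the strip where the $\xi$-integral converges (the paper takes $\Rep(s)=-\epsilon_1$), and the main term is reached by shifting \emph{rightward} toward $\Rep(s)=1$, not ``down to $-\epsilon_0$.'' You encounter the $s=0$ phantom pole on the way right, and it is cancelled by \eqref{eqn:diffJbesselat0} as you say, but there is no way to have a clean residue at $s=1$ while staying at $\Rep(s)\le 1/2$: at $\Rep(s)\ge 1/2$ the $z$-integral does not converge absolutely anymore, which is the precise reason the paper first peels off the ``small'' pieces of $\mathfrak K_{1,L_1L_2}$ (Lemmas \ref{lemma:mathfrak PGKsz asymp 1}--\ref{lemma:SigmaCtn0piK1}) and only then continues the remaining integrand past $\Rep(s)=1/2$ using the rapid decay of the $\Phi_\ell$ factors. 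Simply ``shifting and picking up the residue of $\zeta(2-s)$'' does not work.

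Second, the main term is \emph{not} a single residue of $\zeta(2-s)$ at $s=1$. After the change of variables $u=-\tfrac12+\tfrac{s}{2}+z$, $v=-\tfrac12+\tfrac{s}{2}-z$ (so $s=u+v+1$, $z=\tfrac12(u-v)$), the kernel carries $\zeta(-u-v)\zeta(1-u-v)\zeta(1-2v)\zeta(1-2u)$, and the main contribution comes from the residue $\mathfrak R_1$ at $u=0$ (from $\zeta(1-2u)$, followed by a further $v=0$ residue) together with the residue $\mathfrak R_2$ at $u=-v$ (from $\zeta(1-u-v)\Gamma(-u-v)$), as computed in Lemma \ref{lemma:SigmaCtn2 2}. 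Both use \eqref{eqn:diffJbesselatvplus1}, and their sum is what produces the bilinear kernel $\mathscr I$ from \eqref{def:I12}; your description attributes everything to one $\zeta(2-s)$ pole and misses this split.

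Third, your sketch does not address the combinatorial pruning that makes the analysis finite: the reduction from the full $\sum_{c_1 d_1\mid \mathfrak p(K)}$ expansion via Lemma \ref{lem:eisenteincoeff} and Lemma \ref{lem:Fsformula}, the sum over set partitions $\underline G\in\Pi_K$ and the restriction to $\pi_{K,1}$, and the identification of which of the $\mathfrak K_{5,L_1L_2}(K_1,K_2,K_3:u,v)$ pieces contribute (namely those with $K_1,K_2\neq\emptyset$, Lemma \ref{lemma:SigmaCtn0 to SigmaCtn1}). These are not cosmetic steps: without the truncations of Lemmas \ref{lemma:Skappa truncate LE} and \ref{lemma:SigmaCtn0piK1} the intermediate expressions do not even converge in the region where you want to compute residues. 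So the outline has the right shape, but as written it elides the steps whose absence would cause the computation to fail.
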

We will prove Proposition \ref{prop:Ctn non} in \S \ref{sec:Eisenwithnontrivialchar} and Proposition \ref{prop:Ctn0} in \S \ref{sec:trivialchar off diag main terms} and \S \ref{sec:ctnrescalc}.

\subsection{Proof of Proposition \ref{prop:Ctn non}: bounding the contributions of the non-trivial character} \label{sec:Eisenwithnontrivialchar}

We begin by proving the following lemma.

\begin{lemma}\label{lem: eisensteinprimesumbdd}
    Suppose that $\chi \bmod c_\chi$ is non-trivial such that $c_{\chi}^2 | M |N $ and $ N = cL_1 \ell_1  \ell_2  $. Under the same assumptions as in Proposition \ref{prop:Ctn non}, we have
\begin{multline}\label{eqn:eisensteinprimesum}
       \sumsharp_{\substack{p_{k_1},...,p_{k_\kappa} \\ (\mathfrak p(K), L_1L_2) = 1  } } \left( \prod_{j = 1}^{\kappa} \frac{ \log p_{k_j}}{\sqrt{p_{k_j}}} V\bfrac{p_{k_j}}{P_j}  \e{v_j\frac{p_{k_j}}{P_j}} \right) \rho_{\chi, M, N}(\mathfrak p(K), t) \overline{\rho_{\chi, M, N}(\elltwo^2, t)}   \\
     \ll    \frac{1}{ \sqrt{N}}  (NQ(1+|t|))^\epsilon Y(\vb)^3. 
\end{multline}

\end{lemma}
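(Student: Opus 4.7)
The plan is to exploit the multiplicative structure of the Eisenstein Fourier coefficients $\rho'_{\chi, M, N}(\mathfrak{p}(K), t)$ and apply Lemma \ref{lem:primesumremovedistinct} (which invokes GRH for Dirichlet $L$-functions) to bound the resulting character sums over primes. First, I would use \eqref{eqn:Eisensteincoeff} to pull out the common prefactor
$$\frac{|\widetilde C|^2\, M_1\, \zeta_{(M, N/M)}(1)}{M_2\, N\, |L^{(N)}(1+2it, \chi^2)|^2}.$$
Under GRH for $L(s,\chi^2)$ (nontrivial since $\chi$ is nontrivial), this is bounded by $\frac{M_1}{M_2 N}(N(1+|t|))^\epsilon$; the factor $\zeta_{(M,N/M)}(1) \ll (\log N)^{\omega(\gcd(M,N/M))} \ll N^\epsilon$ is absorbed. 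A direct estimate from the definition gives $|\rho'_{\chi, M, N}(\elltwo^2, t)| \ll M_2 \elltwo^\epsilon$, so the total non-prime-sum factor becomes $\frac{M_1 \elltwo^\epsilon}{N}(N(1+|t|))^\epsilon$.

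For the remaining prime sum involving $\rho'_{\chi,M,N}(\mathfrak{p}(K), t)$, I note that since $\mathfrak{p}(K)$ is squarefree, this coefficient vanishes unless $M_1$ is squarefree and $M_1 m_2 \mid \mathfrak{p}(K)$. Under these conditions, I would expand the inner divisor sum by assigning each prime $p_{k_j}$ to one of three categories: (i) it is one of the distinct prime divisors of $M_1 m_2$, (ii) it is assigned to $n_1$, contributing a factor $\bar\chi(p_{k_j})$, or (iii) it is assigned to $n_2$, contributing $\chi(p_{k_j})\,p_{k_j}^{-2it}$, subject to $(p_{k_j}, N/M) = 1$. This yields $O_\kappa(1)$ terms, each of which factorizes as a product over $j$ of multiplicative character evaluations or fixed prime contributions.

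For each such term, the sum over the free primes (those in categories (ii) and (iii)) factorizes, and each factor is a character prime sum bounded by Lemma \ref{lem:primesumremovedistinct} by $(MNQ(1+|t|))^\epsilon Y(\vb)^3$. The fixed primes in category (i), where they are forced to be specific divisors of $M_1 m_2$, contribute $\prod_{k_j \in T}\frac{\log p_{k_j}}{\sqrt{p_{k_j}}} \ll \frac{(\log Q)^{|T|}}{\sqrt{M_1 m_2}}$ (with at most $|T|!$ orderings), where $T$ denotes the set of indices in category (i). Weighting by $m_2$ and summing over $m_2 \mid M_2$ via $\sum_{m_2 \mid M_2}\sqrt{m_2} \ll M_2^{1/2+\epsilon}$, the prime sum is bounded by $\frac{M_2^{1/2+\epsilon}}{\sqrt{M_1}}(MNQ(1+|t|))^\epsilon Y(\vb)^3$. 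Combining with the prefactor above yields
$$\frac{\sqrt{M_1 M_2}}{N}(NQ(1+|t|))^\epsilon Y(\vb)^3 \ll \frac{1}{\sqrt{N}}(NQ(1+|t|))^\epsilon Y(\vb)^3,$$
where the final inequality uses $M_1 M_2 \le M \le N$.

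The main obstacle is the careful bookkeeping of the arithmetic factors: the potentially large $M_1$ appearing in the Eisenstein prefactor is balanced precisely by the $1/\sqrt{M_1 m_2}$ savings from the constraint $M_1 m_2 \mid \mathfrak{p}(K)$, which forces $\omega(M_1 m_2)$ of the $p_{k_j}$ to be specific primes rather than ranging freely. A minor subtlety arises for primes $p_{k_j}$ dividing $N/M$, which the coprimality condition $(n_2, N/M) = 1$ forces into $n_1$; this restriction has no essential effect and is absorbed into the final $N^\epsilon$ factor.
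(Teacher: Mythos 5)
Your argument is correct and follows essentially the same route as the paper's proof: expand $\rho'_{\chi,M,N}(\mathfrak p(K),t)$ via the $m_2$ sum, observe that the constraint $M_1 m_2 \mid \mathfrak p(K)$ forces a subset of the primes to be fixed divisors of $M_1 m_2$ (yielding a $(M_1 m_2)^{-1/2}$ saving), and bound the remaining distinct-prime character sum by Lemma \ref{lem:primesumremovedistinct}. One minor imprecision: the $\sumsharp$ over the free primes does not literally factorize (distinctness couples them); Lemma \ref{lem:primesumremovedistinct} applies directly to the full $\sumsharp$ in its non-factorized form and gives the bound $(MNQ(1+|t|))^\epsilon Y(\vb)^3$, so your conclusion stands but the phrasing should be corrected.
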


\begin{proof}
    By \eqref{eqn:Eisensteincoeff} and the fact that $ |\widetilde{C}(\chi, M, t)|=1 $, we find that
    \begin{align*}
        &\rho_{\chi, M, N}(\mathfrak p(K), t) \overline{\rho_{\chi, M, N}(\elltwo^2, t)} \\
        &= \frac{M_1  \zeta_{(M, \frac NM )}(1) \elltwo^{-2it}\overline{\rho'_{\chi, M, N}(\elltwo^2, t)} }{M_2 N |L^{(N)}(1+2it, \chi^2)|^2}   
         \sum_{m_2 |M_2} m_2  \mu\bfrac{M_2}{m_2 } \bar{\chi}(m_2 )\sum_{\substack{n_1 n_2 = \frac{\mathfrak p(K)}{M_1 m_2}  \\ (n_2 , N/M)=1}} \frac{\mathfrak p(K)^{it} \chi(n_2) \bar \chi(n_1)  }{n_2^{2it}}
    \end{align*}
    where
    $M = \c_\chi M_1 M_2$, $(M_2, \c_\chi)=1$ and $M_1 | \c_\chi^{\infty}$. Note that 
       \new{$ \rho'_{\chi, M, N}(\elltwo^2 , t)   \ll  \elltwo^{2+\epsilon}N^\epsilon$} and $ \zeta_{(M, N/M)}(1) \leq \tau(N) \ll N^\epsilon $ for any $\epsilon>0$. We also need a well-known bound
       $$ \frac1{L^{(N)}(1+2it, \chi^2)} = \frac1{L(1+2it, \chi^2 \chi_{N,0} )} \ll N^\epsilon (1+|t|)^\epsilon   $$
       for any $ \epsilon>0 $, where $ \chi_{N,0} $ is the principal character modulo $N$. See \S 11 of \cite{MV} for a proof. Hence, the $\sharp$-sum in \eqref{eqn:eisensteinprimesum} is
    \begin{equation}\label{eqn: bound sharp sum 8.3}
        \ll  \frac{M_1    \elltwo^{2+\epsilon}   (1+|t|)^\epsilon  }{M_2 N^{1-\epsilon}    }            \sum_{ m_2 |M_2} m_2    
          \bigg| ~~ \sumsharp_{\substack{p_{k_1},...,p_{k_\kappa} \\ (\mathfrak p(K), L_1L_2) = 1  \\    \mathfrak p(K) = n_1 n_2 M_1 m_2 \\ (n_2, N/M)=1 } }  \left(\prod_{j = 1}^{\kappa} \frac{ \log p_{k_j}}{p_{k_j}^{\frac12 - it }} V\bfrac{p_{k_j}}{P_j}  \e{v_j\frac{p_{k_j}}{P_j}}  \right) \frac{\bar \chi(n_1) \chi(n_2)  }{n_2^{2it}}\bigg|.
    \end{equation}

   Since $\mathfrak p(K) = n_1 n_2 M_1 m_2 $ and the $p_{k_j}$ are distinct primes, we write $n_1 = \mathfrak p (K_1)$, $ n_2 = \mathfrak p (K_2)$, $M_1 = \mathfrak p (K_3)$ and $ m_2  = \mathfrak p (K_4)$ for $K_1 \sqcup \cdots \sqcup K_4 = K $.
By Lemma \ref{lem:primesumremovedistinct}, the $\sharp$-sum in \eqref{eqn: bound sharp sum 8.3} is
\begin{align*}
    \leq &  \sum_{ K_1 \sqcup \cdots \sqcup K_4 = K}\bigg| ~~ \sumsharp_{\substack{p_{k_1},...,p_{k_\kappa} \\ (\mathfrak p(K), L_1L_2) = 1  \\ (\mathfrak p ( K_2 ) , N/M)=1 \\ \mathfrak p(K_3)= M_1 , \mathfrak p(K_4)= m_2  } }  \prod_{j = 1}^{\kappa} \frac{ \log p_{k_j}}{p_{k_j}^{\frac12 - it }} V\bfrac{p_{k_j}}{P_j}  \e{v_j\frac{p_{k_j}}{P_j}}   \frac{\chi(\mathfrak p(K_2)) \bar \chi(\mathfrak p(K_1))  }{\mathfrak p(K_2)^{2it}}\bigg| \\
    \ll &    \sum_{ K_1 \sqcup \cdots \sqcup K_4 = K} (M_1 m_2)^{- \frac12 +\epsilon}  \bigg| ~~   \sumsharp_{\substack{p_k \textup{ for } k \in K_1 \sqcup K_2  \\ (\mathfrak p(K_1  ), L_1L_2 M_1 m_2 ) = 1 \\ (\mathfrak p(  K_2 ), L_1L_2 M_1 m_2  N/M ) = 1   } }  \prod_{j \in K_1   } \frac{ \bar \chi(p_{k_j})\log p_{k_j}}{p_{k_j}^{\frac12 - it }} V\bfrac{p_{k_j}}{P_j}  \e{v_j\frac{p_{k_j}}{P_j}}  \\
    & \qquad \times \prod_{j \in   K_2 } \frac{ \chi (p_{k_j}) \log p_{k_j}}{p_{k_j}^{\frac12 + it }} V\bfrac{p_{k_j}}{P_j}  \e{v_j\frac{p_{k_j}}{P_j}}  \bigg|\\
     \ll &\frac{1}{ \sqrt{M_1 m_2 } } (NQ(1+|t|))^\epsilon Y(\vb)^3
    \end{align*}
 for any $ \epsilon>0$.
Therefore, by combining the above inequalities, the $\sharp$-sum in \eqref{eqn:eisensteinprimesum} is 
$$
    \ll     \frac{M_1     }{M_2 N  }   
         \sum_{m_2 | M_2} m_2   \frac{1}{ \sqrt{M_1 m_2 } } (NQ(1+|t|))^\epsilon Y(\vb)^3   \ll \frac{1}{ \sqrt{N}}  (NQ(1+|t|))^\epsilon Y(\vb)^3 .
$$       
 \end{proof}

By \eqref{def:SigmaDis} and Lemma \ref{lem: eisensteinprimesumbdd}, we have
\begin{multline*}
 \Sigma_{\Ctn_\non} \ll \sumd_{P_1, \ldots, P_\kappa} \sum_c \int_{-\infty}^\infty \cdots \int_{-\infty}^\infty |\widehat H (u, \vb) |  \\
\times \sum_{ 1 \neq \c_\chi^2 |M|N} \int_{-\infty}^\infty  \frac{1}{ \sqrt{N}}  (NQ(1+|t|))^\epsilon Y(\vb)^3  |h_{u,+}(t)|dt du dv_1 \cdots dv_\kappa
\end{multline*}
for $ N= cL_1  \ell_1   \ell_2  $. 
By Lemma \ref{lem:hatHbdd} and Lemma \ref{lem:boundforhu} (1), we have
\begin{multline*}
 \Sigma_{\Ctn_\non} \ll \sumd_{P_1, \ldots, P_\kappa} \sum_c  \min \left\{ X^{k - 1}, \frac 1{\sqrt X}\right\}  \\
 \times \int_{-\infty}^\infty  \int_{-\infty}^\infty   \frac{X}{(1+|u|X)^A}   
  \frac{1}{ \sqrt{N}}  (NQ(1+|t|))^\epsilon  \frac{1 + |\log X|}{F^{1 - \epsilon}} \left( \frac{F}{1 + |t|}\right)^C    dt du
\end{multline*}
for some $F < (|u|+1)(1+X)$ and $ A,C \geq 0 $ and for any $\epsilon>0$, where $X$ is defined in \eqref{eqn: Xdefinition}. By choosing $A=2$ and $ C= 1+2 \epsilon$, we see that
$$ \Sigma_{\Ctn_\non} \ll Q^\epsilon \sumd_{P_1, \ldots, P_\kappa} \sum_c   \frac{1}{  c^{1/2-\epsilon}} \min \left\{ X^{k - 1}, \frac 1{\sqrt X}\right\}   (1 + |\log X|)  (1+X)^{3\epsilon}  .
$$ 
The $c$-sum is bounded by $ \left(\frac{  L_1 L_2 \elltwo \sqrt{P_1 \cdots P_\kappa}}{Q} \right)^{1/2+\epsilon} $, which may be verified by dividing the sum into two depending on $ c \leq \frac{4 \pi L_1 L_2 \elltwo \sqrt{P_1 \cdots P_\kappa}}{Q}$. Since the $d$-sum is supported on $P_1 \cdots P_\kappa \ll Q^{4-\delta}$, we have
$$ \Sigma_{\Ctn_\non} \ll Q^\epsilon \sumd_{P_1, \ldots, P_\kappa} \left(\frac{  L_1 L_2 \elltwo \sqrt{P_1 \cdots P_\kappa}}{Q} \right)^{1/2+\epsilon}  \ll  Q^{\tfrac12 - \tfrac {\delta}{4} + \epsilon} .
$$

\section{Contribution from the trivial character -- Off-diagonal main terms} \label{sec:trivialchar off diag main terms}

In this section, we start to compute $\Sigma_{\Ctn_0} $ defined in \eqref{def:SigmaDis} and \eqref{def:Ctn0} assuming GRH and \eqref{conditions mathbbL} for $\mathbb L$. By Lemmas \ref{lem:boundforhu} and \ref{lem:hatHbdd}, we can change the order of the sums and the integrals, so that
\begin{multline}  \label{Section9 eqn 1}
 \Sigma_{\Ctn_0 }  = \frac{ i^{-k}}{2}   \sumd_{P_1, \ldots , P_{\kappa}}   \int_{-\infty}^\infty  \sumsharp_{\substack{ p_{k_1} , \ldots , p_{k_\kappa} \\ ( \mathfrak p (K) , L_1L_2 )=1   }}
  \prod_{ j=1}^\kappa \left( \frac{ \log p_{k_j} }{ \sqrt{ p_{k_j} }} V \left( \frac{ p_{k_j}}{P_j} \right)     \right) 
\\
        \times \sum_{c\geq 1}  \sum_{  M | c L_0 }  \rho_{\chi_0 , M, c L_0  }(\mathfrak p(K), t) \overline{\rho_{\chi_0 , M, c L_0 }(\elltwo^2, t)}  \\
         \times \int_{-\infty}^\infty \cdots \int_{-\infty}^\infty    h_{u, +}(t)  \widehat{H}(u, \vb) \prod_{j=1}^\kappa \e{ \frac{p_{k_j}}{P_j} v_j }   dv_1 \cdots \> dv_{\kappa} du  dt      ,
\end{multline}
where
\begin{equation}\label{def:L_0}
L_0 : =  L_1  \ell_1  \ell_2 .
\end{equation}

Next, we apply the Fourier inversion to the $\vb$-integrals and the $u$-integral. Let
$$ \widehat{H_\xi} (\vb) := \int_{-\infty}^\infty \cdots \int_{-\infty}^\infty H ( \xi, \lambb) \e{-\vb \cdot \lambb } d\lambda_1 \cdots d\lambda_\kappa ,$$ 
then we see that $ \widehat{H} ( u, \vb )=  \int_{-\infty}^\infty \widehat{H_\xi} (\vb ) \e{-u\xi} d\xi $. By the Fourier inversion, we have
$ H(\xi , \lambb ) =  \int_{-\infty}^\infty \cdots \int_{-\infty}^\infty  \widehat{H_\xi}(\vb) \e{ \vb \cdot \lambb } dv_1 \cdots dv_\kappa   $. By combining the above and by \eqref{eqn:Hdef} and \eqref{eqn: Xdefinition}, we find that
\begin{align}
     \int_{-\infty}^\infty \cdots \int_{-\infty}^\infty \widehat{H} ( u, \vb )   \prod_{j=1}^\kappa  \e{ \frac{p_{k_j}}{P_j} v_j }  dv_1 \cdots dv_\kappa = \int_{-\infty}^\infty H \left( \xi,  \frac{p_{k_1}}{P_1} , \ldots , \frac{p_{k_\kappa }}{P_\kappa } \right) \e{-u\xi} d\xi  \notag  \\
     =  \left[\prod_{j = 1}^{\kappa} \widehat \Phi_{k_j}\left(\frac{\log p_{k_j}}{\log Q}\right)V_{0}\left( \frac{p_{k_j}}{P_j } \right)\right]   \int_{-\infty}^\infty \Psi\left(\frac{4 \pi L_1 L_2 \elltwo \sqrt{ \mathfrak p (K) }}{\xi cQ} \right)  \e{-u\xi} d\xi . \label{eqn:partial Fourier inversion}
\end{align}
 By \eqref{def:huxi} and Lemma \ref{lem:kuznetsov} we see that
$$  h_{u,+}(t) = \frac{ 2 \pi i }{ \sinh ( \pi t)} \int_0^\infty (J_{2it}(\xi) - J_{-2it} ( \xi) ) J_{k-1} (\xi) W\left( \frac{\xi}{X} \right) \e{u\xi} \frac{ d\xi}{\xi}. $$
Since $W$ is compactly supported, the above integral can be extended to the integral over $ \mathbb{R}$. Hence, by the Fourier inversion, we have
\begin{equation}\label{eqn:Fourier inversion huplus}
\int_{-\infty}^\infty h_{u,+}(t) \e{-u\xi } du =  \frac{ 2 \pi i }{ \sinh ( \pi t)} (J_{2it}(\xi) - J_{-2it} ( \xi) ) J_{k-1} (\xi) W\left( \frac{\xi}{X} \right)  \frac1\xi  .
\end{equation}
Hence, by \eqref{Section9 eqn 1} - \eqref{eqn:Fourier inversion huplus}, we have
\begin{multline*}  
 \Sigma_{\Ctn_0 }  = \frac{ i^{-k}}{2}   \sumd_{P_1, \ldots , P_{\kappa}}   \int_{-\infty}^\infty  \sumsharp_{\substack{ p_{k_1} , \ldots , p_{k_\kappa} \\ ( \mathfrak p (K) , L_1L_2 )=1   }}
  \prod_{ j=1}^\kappa \left( \frac{ \log p_{k_j} }{ \sqrt{ p_{k_j} }}  \widehat \Phi_{k_j}\left(\frac{\log p_{k_j}}{\log Q}\right)   V \left( \frac{ p_{k_j}}{P_j} \right)   \right) 
\\
        \times \sum_{c\geq 1}  \sum_{  M | c L_0 }  \rho_{\chi_0 , M, c L_0 }(\mathfrak p(K), t) \overline{\rho_{\chi_0 , M, c L_0 }(\elltwo^2, t)}  \\
      \times \int_{-\infty}^\infty \Psi\left(\frac{4 \pi L_1 L_2 \elltwo \sqrt{ \mathfrak p (K) }}{\xi cQ} \right)       (J_{2it}(\xi) - J_{-2it} ( \xi) ) J_{k-1} (\xi) W\left( \frac{\xi}{X} \right)  \frac{   d\xi }{\xi}     \frac{ 2 \pi i dt}{ \sinh ( \pi t)}     .
\end{multline*}
\new{Here, the factor $V_0$ has been removed using its definition in the beginning of \S 6}.

By the definition of $W$ in Remark \ref{remark: support 1}, we can change that the $\xi$-integral is over $[0,\infty)$ and remove $W(\xi/X)$. We can also remove the $d$-sum and the factors $V(p_{k_j} / P_j )$ by the fact that $ \sumd_P V(x/P) = 1 $ for $ x \geq 1 $.  Hence, we have 
\begin{multline*}  
 \Sigma_{\Ctn_0 }  = \frac{ i^{-k}}{2}    \int_{-\infty}^\infty  \sumsharp_{\substack{ p_{k_1} , \ldots , p_{k_\kappa} \\ ( \mathfrak p (K) , L_1L_2 )=1   }}
  \prod_{ j=1}^\kappa \left( \frac{ \log p_{k_j} }{ \sqrt{ p_{k_j} }}  \widehat \Phi_{k_j}\left(\frac{\log p_{k_j}}{\log Q}\right)     \right) 
\\
        \times \sum_{c\geq 1}  \sum_{  M | c L_0 }  \rho_{\chi_0 , M, c L_0 }(\mathfrak p(K), t) \overline{\rho_{\chi_0 , M, c L_0 }(\elltwo^2, t)}  \\
      \times \int_{0}^\infty \Psi\left(\frac{4 \pi L_1 L_2 \elltwo \sqrt{ \mathfrak p (K) }}{\xi cQ} \right)       (J_{2it}(\xi) - J_{-2it} ( \xi) ) J_{k-1} (\xi)    \frac{   d\xi }{\xi}     \frac{ 2 \pi i dt}{ \sinh ( \pi t)}     .
\end{multline*}
By the Mellin inversion \eqref{inverseMellinPsi} and changing the order of sums and integrals, which may be justified by Lemma \ref{jbessel},  we have
\begin{align}  
 \Sigma_{\Ctn_0 }  = \frac{ i^{-k}}{2}    \int_{-\infty}^\infty    \int_{0}^\infty  \int_{( -\epsilon_1) } \frac{ \widetilde{\Psi} (s) Q^s }{(4 \pi L_1 L_2 \elltwo)^s }  \sumsharp_{\substack{ p_{k_1} , \ldots , p_{k_\kappa} \\ ( \mathfrak p (K) , L_1L_2 )=1   }}
  \prod_{ j=1}^\kappa \left( \frac{ \log p_{k_j} }{  p_{k_j}^{\frac12(1+s) }}  \widehat \Phi_{k_j}\left(\frac{\log p_{k_j}}{\log Q}\right)     \right)  \notag
\\
\times \widetilde{\varrho}_{L_0 , \mathfrak p(K), \elltwo ; t} (-s )                      (J_{2it}(\xi) - J_{-2it} ( \xi) ) J_{k-1} (\xi)   \xi^{s-1} ds     d\xi     \frac{  dt}{ \sinh ( \pi t)}      \label{Section9 eqn 3}
\end{align}
for $ 0< \epsilon_1  <k-1 $, where
\begin{equation}\label{def:rhoNpKe}
\widetilde{ \varrho}_{L_0, \mathfrak p(K), \elltwo ; t} (s ) := 
\sum_{c\geq 1} \frac{1}{ c^s}   \sum_{  M | c L_0 }  \rho_{\chi_0 , M, c L_0 }(\mathfrak p(K), t) \overline{\rho_{\chi_0 , M, c L_0  }(\elltwo^2, t)}.
\end{equation}

\subsection{Fourier coefficients of Eisenstein series}
We want to show that $\widetilde{ \varrho}_{L_0 , \mathfrak p(K), \elltwo ; t} (s ) $ in  \eqref{def:rhoNpKe} has an analytic continuation. 
It requires the following lemma.
\begin{lemma}\label{lem:eisenteincoeff}
 Let $L_0, \mathfrak p(K), \elltwo$ be as above, then  we have
$$ 	\widetilde{ \varrho}_{L_0, \mathfrak p(K), \elltwo ; t} (s )  
	 =  \frac{ \mathfrak p(K)^{it} }{  L_0  \elltwo^{2it} |\zeta(1+2it)|^2}    
     \sum_{ \substack{ d_1|\mathfrak p (K) \\ d_2|\elltwo^2 } }\frac{\mu(d_1 d_2)}{d_1^{2it} d_2^{-2it}}   \sum_{ \substack{ c_1 | \mathfrak p (K) / d_1  \\  c_2 | \elltwo^2 / d_2  }} \frac{c_2^{2it}}{c_1^{2it}} F_{L_0, d_1  d_2 , \m    }  (s,it ) 
$$ 
for $\tRe (s) >0$, $ t \in \R$ and $ \m  =  \frac{ \mathfrak p(K) \elltwo^2}{c_1c_2d_1d_2}$, where
\begin{equation}\label{def:F in Lemma 9.1}
  F_{\alpha, r  , \m} (s,it)  : =   \sum_{\substack{ c \\  r |c \alpha }}   \frac{  |\zeta_{c\alpha} (1+2it)|^2}{c^{1+s}}  \prod_{ \substack{ p\mid  c\alpha / r  \\ p^2 | c\alpha    } } \bigg(  \frac{ p \delta_{p \nmid \m}   }{p-1}        \bigg)  \prod_{ \substack{ p\mid  c\alpha / r  \\ p^2 \nmid c\alpha    } }  \bigg(  \delta_{p \nmid \m}  + \frac1p       \bigg)  . 
  \end{equation}
\end{lemma}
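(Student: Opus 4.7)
The plan is to substitute the formula \eqref{eqn:Eisensteincoeff} for the Eisenstein Fourier coefficient with $\chi=\chi_0$ into the definition \eqref{def:rhoNpKe}, and then reorganize the result by Mobius inversion. Since $\c_{\chi_0}=1$, the general decomposition $M = \c_\chi M_1 M_2$ forces $M_1=1$ and $M_2 = M$, so the formula simplifies: $L^{(N)}(1+2it,\chi_0^2) = \zeta(1+2it)/\zeta_N(1+2it)$, and all character values on coprime arguments equal $1$. Collecting the pieces of $\rho_{\chi_0,M,cL_0}(\mathfrak p(K),t)\overline{\rho_{\chi_0,M,cL_0}(\elltwo^2,t)}$ independent of the inner divisor variables in $\rho'$, and using $|\widetilde C|^2=1$ together with $\sqrt{MN}\cdot\sqrt{MN}=MN$ and $N=cL_0$, the prefactor $\mathfrak p(K)^{it}/(L_0\elltwo^{2it}|\zeta(1+2it)|^2)$ will emerge, leaving $c^{-(1+s)}$ in the outer sum and the inner factor $|\zeta_{cL_0}(1+2it)|^2\,\zeta_{(M,cL_0/M)}(1)/M$.

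The central task is then to evaluate
\[
\sum_{c\geq 1}\frac{1}{c^{1+s}}\sum_{M\mid cL_0}\frac{\zeta_{(M,cL_0/M)}(1)|\zeta_{cL_0}(1+2it)|^2}{M}\,\rho'_{\chi_0,M,cL_0}(\mathfrak p(K),t)\,\overline{\rho'_{\chi_0,M,cL_0}(\elltwo^2,t)}.
\]
In each $\rho'$ factor I would change variables $d_j := M/m_2$ (with $j=1$ for the $\mathfrak p(K)$ factor and $j=2$ for the $\elltwo^2$ factor), so that $M/d_j$ must divide the corresponding argument. The twin Mobius weights become $\mu(d_1)\mu(d_2)$, and only $(d_1,d_2)$ with $d_1 d_2$ squarefree survive, allowing $\mu(d_1)\mu(d_2)=\mu(d_1 d_2)$. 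The inner divisor sums with coprimality $(n_2,cL_0/M)=1$, after reindexing by $c_1=n_2$ and $c_2=n_2'$, produce the ranges $c_1\mid\mathfrak p(K)/d_1$ and $c_2\mid\elltwo^2/d_2$ together with the twist $c_2^{2it}/c_1^{2it}$.

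After these changes of variable, the constraints force $d_1 d_2\mid M\mid cL_0$ (so in particular $d_1 d_2 \mid cL_0$, matching the condition $r\mid c\alpha$ in \eqref{def:F in Lemma 9.1}), and the residual $M$-sum ranges over divisors of $cL_0/(d_1 d_2)$. A prime-by-prime analysis of this residual sum, combined with $\zeta_{(M,cL_0/M)}(1)/M$ and the local pieces coming from $(n_2,cL_0/M)=1$, should yield precisely the local weights $p/(p-1)$ when $p^2\mid cL_0$ and $1+1/p$ when $p^2\nmid cL_0$ that appear in the definition of $F_{L_0,d_1 d_2,\m}(s,it)$, with the indicator $\delta_{p\nmid\m}$ recording whether $p$ already appears among the ``used'' primes in $\mathfrak p(K)\elltwo^2/(c_1c_2 d_1 d_2)$. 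The remaining $c$-sum then matches \eqref{def:F in Lemma 9.1} after absorbing $|\zeta_{cL_0}(1+2it)|^2$.

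The main obstacle will be the prime-by-prime bookkeeping at primes lying simultaneously in several of the auxiliary sets ($d_1d_2$, $L_0$, $\mathfrak p(K)\elltwo^2$, and the squarefull part of $cL_0$). In particular, one must carefully verify the asymmetric treatment at primes dividing $L_0$ versus primes dividing $c$ but not $L_0$, since $c$ is summed freely while $L_0$ is fixed; similarly, primes dividing $\elltwo$ may occur with higher multiplicities inside $\elltwo^2$, and their contribution to the divisor sums must be disentangled from the contribution of primes in $\mathfrak p(K)$. A uniform local computation, factoring the resulting Dirichlet series over primes and matching the two sides at a generic prime, should handle all these cases simultaneously and complete the proof.
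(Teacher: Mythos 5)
Your overall framing — substitute \eqref{eqn:Eisensteincoeff} with $\chi=\chi_0$, extract the prefactor, reorganize by M\"obius manipulations, and match Euler factors locally — is the right shape, and your identification of the central task (evaluating the $c,M$ double sum weighted by $\zeta_{(M,cL_0/M)}(1)|\zeta_{cL_0}(1+2it)|^2/M$ and the $\rho'$ factors) is exactly right. But the heart of your plan, the change of variables $d_j := M/m_2$ inside each $\rho'$, is the wrong move and does not produce the quantities in the lemma.

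Under that substitution, $d_j$ divides $M$, and the surviving constraint from $m_2\mid n$ is $M/d_1\mid\mathfrak p(K)$ and $M/d_2\mid\elltwo^2$; the inner divisor sum then runs over $n_1n_2 = \mathfrak p(K)d_1/M$, so the correct range is $c_1\mid\mathfrak p(K)d_1/M$, not $c_1\mid\mathfrak p(K)/d_1$ as you assert. Moreover, the weight attached to $d_j$ is $m_2\,\mu(M/m_2)=(M/d_j)\mu(d_j)$, not the $\mu(d_j)/d_j^{2it}$ appearing in the lemma, and your claim that ``only $d_1d_2$ squarefree survive'' fails: both $d_1,d_2$ divide $M$, and they must share every prime $p\mid M$ with $p\nmid\mathfrak p(K)\elltwo^2$, so they are generically not coprime and $\mu(d_1)\mu(d_2)\neq\mu(d_1d_2)$.

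The $d_1$ in the lemma comes from a different place entirely: one must first \emph{M\"obius-detect} the coprimality $(n_2,N/M)=1$ via $\sum_{e\mid(n_2,N/M)}\mu(e)$, substitute $c_1:=n_2/e$, and relabel $e$ as $d_1$; this produces $d_1\mid(\mathfrak p(K),cL_0/M)$ with the weight $\mu(d_1)/d_1^{2it}$ and the range $c_1\mid\mathfrak p(K)/d_1$, exactly as in the lemma. The twin $M$-divisor variables $m_1,m_2$ (which you tried to repackage as $d_1,d_2$) stay as a separate pair; the key observation is that for $(\mathfrak p(K),\elltwo^2)=1$ they are automatically coprime and contribute
$$\sum_{\substack{m_1\mid(M,\mathfrak p(K)/d_1c_1)\\ m_2\mid(M,\elltwo^2/d_2c_2)}} m_1m_2\,\mu\!\left(\tfrac{M}{m_1}\right)\mu\!\left(\tfrac{M}{m_2}\right)=\mu(M)^2\prod_{p\mid(M,\m)}(1-p),$$
which (together with $\zeta_{(M,cL_0/M)}(1)/M$) is then summed over $M\mid cL_0/(d_1d_2)$ into a multiplicative function $g(cL_0;d_1d_2,\m)$ whose Euler product gives the local weights $p\delta_{p\nmid\m}/(p-1)$ and $\delta_{p\nmid\m}+1/p$ in $F$. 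Without the M\"obius detection step, the $d$-variables of the lemma never appear, and the subsequent reorganization you describe cannot be carried out as stated.
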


\begin{proof}
    
Since  $ \c_{\chi}=1$, $ M_1 =1 $ and $ M_2 = M$ in \eqref{eqn:Eisensteincoeff} for $\chi = \chi_0$, we have
\begin{multline}\label{lemma 9.1 eqn 1}
	  \rho_{\chi_0, M, N}(\mathfrak p(K), t) \overline{\rho_{\chi_0, M, N}(\elltwo^2, t)} \\
	 = \frac{  \zeta_{(M, N/M)}(1)  |\zeta_N (1+2it)|^2  }{M N   |\zeta(1+2it)|^2} \frac{\mathfrak p(K)^{it}}{\elltwo^{2it}}  \rho'_{\chi_0, M, N}(\mathfrak p(K), t) \overline{\rho'_{\chi_0, M, N}(\elltwo^2, t)}
\end{multline}
and 
\begin{align*}
  \rho'_{\chi_0, M, N}(n, t)  = &  \sum_{m_1 | M} m_1  \mu\bfrac{M}{m_1}  \sum_{\substack{c_0 | n / m_1  \\ (c_0, N/M)=1}} \frac{1}{c_0^{2it}} \\
  = &   \sum_{m_1 | M} m_1  \mu\bfrac{M}{m_1}  \sum_{ c_0 | n / m_1  } \frac{1}{c_0^{2it}} \sum_{d_1|(c_0, N/M)} \mu (d_1) .
\end{align*}
We replace the condition $ d_1 |c_0$ by a substitution $ c_1 = c_0 / d_1 $. After changing the order of the sums, we find that
\begin{equation}\label{lemma 9.1 eqn 2}
  \rho'_{\chi_0, M, N}(n, t)    =   \sum_{d_1|(n, N/M)} \frac{\mu(d_1)}{d_1^{2it}}   \sum_{ c_1 | n / d_1  } \frac{1}{c_1^{2it}} \sum_{m_1 |(M, n/d_1c_1) } m_1 \mu \left( \frac{M}{m_1} \right). 
  \end{equation}

 By \eqref{lemma 9.1 eqn 1} and \eqref{lemma 9.1 eqn 2}, we have 
 \begin{multline*} 
	  \rho_{\chi_0, M, N}(\mathfrak p(K), t) \overline{\rho_{\chi_0, M, N}(\elltwo^2, t)} 
	 = \frac{  \zeta_{(M, N/M)}(1)  |\zeta_N (1+2it)|^2  }{M N   |\zeta(1+2it)|^2} \frac{\mathfrak p(K)^{it}}{\elltwo^{2it}}  \\ 
     \times \sum_{ \substack{ d_1|(\mathfrak p (K), N/M) \\ d_2|(\elltwo^2, N/M) } }\frac{\mu(d_1)}{d_1^{2it}} \frac{\mu(d_2)}{d_2^{-2it}}   \sum_{ \substack{ c_1 | \mathfrak p (K) / d_1  \\  c_2 | \elltwo^2 / d_2  }} \frac{c_2^{2it}}{c_1^{2it}} \sum_{ \substack{ m_1 |(M, \mathfrak p (K) /d_1c_1)  \\ m_2 |(M, \elltwo^2 /d_2c_2)  } } m_1 m_2 \mu \left( \frac{M}{m_1} \right) \mu \left( \frac{M}{m_2} \right).
\end{multline*}
Since $ ( \mathfrak p(K) , L_2 ) =1 $ and $ \elltwo | L_2^\infty$, we have $ (\mathfrak p (K) , \elltwo^2 ) = ( m_1 , m_2 ) =  (d_1 , d_2 ) = 1 $. Then at most one of $m_1$ and $m_2$ is divisible by $p$  for each prime $ p|M $. If $M$ is not squarefree, then $\mu\left( \frac{M}{m_1}\right)\mu\left( \frac{M}{m_2}\right) = 0$. For a squarefree $M$, we have
 $$   \sum_{m_1 | (M, \m_1 ) } m_1 \mu \left( \frac{M}{m_1} \right)  = \mu(M)  \prod_{ p | ( M, \m_1 ) } (1-p), $$
 so that 
 $$\sum_{ \substack{ m_1 |(M, \mathfrak p (K) /d_1c_1)  \\ m_2 |(M, \elltwo^2 /d_2c_2)  } } m_1 m_2 \mu \left( \frac{M}{m_1} \right) \mu \left( \frac{M}{m_2} \right) = \mu(M)^2   \prod_{ p | ( M, \m ) } (1-p)  $$ 
for $ \m  =  \frac{ \mathfrak p(K) \elltwo^2}{c_1c_2d_1d_2}$. 
By letting $ N = cL_0 $ and changing the order of the sums, we find that
 \begin{multline} \label{Lemma 9.1 equation 1}
	\sum_{ M| cL_0 }  \rho_{\chi_0, M, cL_0}(\mathfrak p(K), t) \overline{\rho_{\chi_0, M, cL_0}(\elltwo^2, t)} 
	   \\ 
      = \frac{  |\zeta_{cL_0} (1+2it)|^2  }{  cL_0   |\zeta(1+2it)|^2} \frac{\mathfrak p(K)^{it}}{\elltwo^{2it}}\sum_{ \substack{ d_1|(\mathfrak p (K), cL_0) \\ d_2|(\elltwo^2, cL_0) } }\frac{\mu(d_1d_2)}{d_1^{2it}d_2^{-2it}}   \sum_{ \substack{ c_1 | \mathfrak p (K) / d_1  \\  c_2 | \elltwo^2 / d_2  }} \frac{c_2^{2it}}{c_1^{2it}}  g \left( cL_0 ; d_1d_2 ,   \m \right),
\end{multline}
where
  $$	g(cL_0 ; d_1d_2 , \mathfrak m ) := \sum_{M|  cL_0 / d_1 d_2 } \frac{ \mu(M)^2    }{M}  \zeta_{ (M, cL_0  / M)}(1)    \prod_{ p | ( M, \m ) } (1-p).$$
By multiplicativity, we find the product formula
\begin{align*}
    g(cL_0 ; d_1d_2 , \mathfrak m ) =&  \prod_{p|  cL_0 / d_1 d_2  } \left(  1 + \frac{ 1 - p\delta_{p|\m }    }{p}  \zeta_{ (p,   cL_0 / p   )   } (1)        \right)  \\
    =&  \prod_{ \substack{ p\mid  cL_0 / d_1d_2  \\ p^2 | cL_0    } } \bigg(  \frac{ p \delta_{p \nmid \m}   }{p-1}        \bigg)  \prod_{ \substack{ p\mid  cL_0 / d_1d_2  \\ p^2 \nmid cL_0    } }  \bigg(  \delta_{p \nmid \m}  + \frac1p       \bigg).
\end{align*}
One can easily complete the proof of the lemma by multiplying $ c^{-s}$ to \eqref{Lemma 9.1 equation 1}, summing it over $c$ and then changing the order of sums.
\end{proof}

Lemma \ref{lem:eisenteincoeff} says that $\widetilde{ \varrho}_{L_0, \mathfrak p(K), \elltwo ; t} (s ) $ is a combination of finitely many $ F_{L_0, d_1d_2, \m }  (s,it)$. Hence, to find an analytic continuation of  $\widetilde{ \varrho}_{L_0, \mathfrak p(K), \elltwo ;t } (s ) $, it is enough to observe $ F_{\alpha, r, \m }  (s,it)$.  
 Here is a product formula for $F_{\alpha, r, \m}(s,it)$ with some conditions applicable to $F_{L_0, d_1d_2, \m} (s, it)$. 

\begin{lem} \label{lem:Fsformula}
	Let $r, \alpha , \m \in \mathbb{N}$, $t \in \R$, $\beta = (r, \alpha)$, $r = r_1 \beta$ and $\alpha = \alpha_1 \beta$. Assume that $r$ is squarefree with $(r, \alpha_1) = 1$  and that every prime $p| \m $ satisfies $p^2 \nmid \alpha_1$. Let $F_{\alpha, r, \m }(s,it) $ be defined in \eqref{def:F in Lemma 9.1}, then we have
 \begin{equation}\label{eqn:Fs formula 1}\begin{split}
 F_{\alpha, r, \m } (s,it)  
	= & \zeta(1+s)   \widetilde{F}(s, it)   \frac{1 }{ r_1^{1 + s}} \prod_{p | r }  \left(   1 - \frac{1}{p^{1+s}}   +  \frac{\delta_{p\nmid \m } }{p^s (p-1)}   \right)   \\    
    & \times \prod_{p ||\alpha_1  }  \left( \frac1p - \frac{1}{p^{2+s}} +   \delta_{p\nmid \m } \left( 1  +   \frac{1 }{p^{s+1} (p-1)}  \right) \right)    \prod_{p^2 |  \alpha_1  }  \left( \frac{p}{p-1}     \right)   \\  
    & \times \prod_{ \substack{  p|\m \\ p \nmid r\alpha_1 }  } \left(  W_p (s, it) - \frac{1}{p^{1+s}} - \frac{1}{p^{2+2s} (p-1) }\right)  \prod_{p | r\alpha_1 \m} W_p ( s, it)^{-1}  ,
    \end{split}\end{equation}
where  
$$  W_p (s, z) =  1 - \frac{1}{p^{1 + 2z}} - \frac{1}{p^{1 - 2z}}   + \frac{1}{p^{2 + 2z + s}} +  \frac{1}{p^{2 - 2z + s}} + \frac{1}{p^{2+ s }}  - \frac{1}{p^{3 + s}} + \frac{1}{p^{3 + 2s}(p-1)} + \frac{1}{p^2}    $$
and
$$ \widetilde{F}  (s,z) =  \prod_{p   }  \left(  \left( 1 - \frac{1}{p^{1+2z}}  \right)^{-1} \left( 1 - \frac{1}{p^{1-2z}}  \right)^{-1}   W_p (s,z) \right)  $$
for $ \tRe( s ) > - \frac12 $ and $ - \frac14 <  \tRe(z) < \frac14$. Moreover, define
\es{\label{def:widetilde{F}_0 (s,z)} \widetilde{F}_0 (s,z)   &:=  \prod_p \bigg( 
 \left( 1 - \frac{1}{p^{1 + 2z}}\right)^{-1} \left( 1 - \frac{1}{p^{1 - 2z}}\right)^{-1}  \\
 &\hskip 1 in\times\left( 1 - \frac{1}{p^{2 + 2z + s}}\right)\left( 1 - \frac{1}{p^{2 - 2z + s}}\right)\left( 1 - \frac{1}{p^{2 + s}}\right) W_p(s, z)  \bigg),}
 then it is convergent when $\tRe(\pm 2z + s) > - \frac{3}{2}$ and  $| \tRe(z) | < \frac 14 $, and bounded when $\tRe(\pm 2z + s) \geq - \frac{3}{2}+\epsilon $ and  $| \tRe(z) | \leq \frac 14 -\epsilon $ for every $\epsilon>0$. It also satisfies
$$ \widetilde{F} (s,z) =  \zeta (2+s) \zeta(2+2z+s) \zeta(2-2z+s) \widetilde{F}_0 (s,z) .  $$
\end{lem}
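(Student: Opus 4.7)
The proof will be a direct Euler product computation exploiting multiplicativity in $c$, followed by a verification that the infinite products defining $\widetilde F$ and $\widetilde F_0$ have the claimed convergence properties.

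\textbf{Reduction and multiplicativity.} Since $(r_1,\alpha_1)=1$ and $\beta\mid\alpha$, the condition $r\mid c\alpha$ is equivalent to $r_1\mid c$. Substituting $c=r_1 c'$ pulls out a factor $r_1^{-(1+s)}$ and leaves a sum over all $c'\ge 1$. After the substitution we have $c\alpha=c'r\alpha_1$ and $c\alpha/r=c'\alpha_1$, and the summand
$$
\frac{|\zeta_{c'r\alpha_1}(1+2it)|^2}{(c')^{1+s}}\prod_{\substack{p\mid c'\alpha_1\\ p^2\mid c'r\alpha_1}}\!\!\frac{p\,\delta_{p\nmid\m}}{p-1}\prod_{\substack{p\mid c'\alpha_1\\ p^2\nmid c'r\alpha_1}}\!\!\left(\delta_{p\nmid\m}+\frac{1}{p}\right)
$$
is multiplicative in $c'$. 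Hence $F_{\alpha,r,\m}(s,it)$ factors as $r_1^{-(1+s)}\prod_p G_p(s,it)$, where $G_p$ is obtained by summing over $a=v_p(c')\ge 0$.

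\textbf{Computation of local factors.} I will compute $G_p$ by splitting into the cases dictated by the hypotheses: (i) $p\nmid r\alpha_1\m$; (ii) $p\mid r$ (so $p\Vert r$ and $p\nmid\alpha_1$); (iii) $p\Vert\alpha_1$ (so $p\nmid r$); (iv) $p^2\mid\alpha_1$ (so $p\nmid r\m$ by hypothesis); (v) $p\mid\m$ with $p\nmid r\alpha_1$. In each case the three factors of the summand (the local zeta contribution, and the two inner products) depend only on whether $a$ equals $0$, $1$, or is $\ge 2$, and summing the resulting geometric series in $p^{-(1+s)a}$ gives a closed form. In case (i) the result is $(1-p^{-1-2it})^{-1}(1-p^{-1+2it})^{-1}W_p(s,it)\cdot (1-p^{-1-s})^{-1}$; the extra $(1-p^{-1-s})^{-1}$ factor from each prime conspires to produce the global $\zeta(1+s)$ appearing in \eqref{eqn:Fs formula 1}. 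In cases (ii)--(v) the computation yields exactly the finite correction factors displayed in the formula: a factor of $\bigl(1-p^{-1-s}+\delta_{p\nmid\m}p^{-s}(p-1)^{-1}\bigr)$ when $p\mid r$; the two factors for $p\Vert\alpha_1$ and $p^2\mid\alpha_1$; and the correction $(W_p-p^{-1-s}-p^{-2-2s}(p-1)^{-1})$ with $W_p^{-1}$ absorbed into the generic product for $p\mid\m\setminus r\alpha_1$. Assembling everything reproduces \eqref{eqn:Fs formula 1}.

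\textbf{Analytic properties of $\widetilde F$ and $\widetilde F_0$.} With $a=p^{-1-2z}$ and $b=p^{-1+2z}$, the expansion $W_p(s,z)=1-a-b+\bigl(\text{terms of size }p^{-2+\cdots}\bigr)$ cancels the leading $a+b$ from $(1-a)^{-1}(1-b)^{-1}=1+a+b+O(p^{-2})$, so each local factor of $\widetilde F$ is $1+O\bigl(p^{-2+4|\tRe(z)|+|\tRe(s)|}\bigr)$; this yields absolute convergence for $|\tRe(z)|<\tfrac14$ and $\tRe(s)>-\tfrac12$. For $\widetilde F_0$, the additional Euler factors $(1-p^{-2-s})(1-p^{-2-2z-s})(1-p^{-2+2z-s})$ kill exactly the three order-$p^{-2+\cdots}$ terms $1\cdot C$, $1\cdot A$, $1\cdot B$ in the expansion of the local factor of $\widetilde F$, leaving a local factor $1+O(p^{-3+|\tRe(s)|+2|\tRe(z)|}+p^{-2+4|\tRe(z)|})$, which is absolutely convergent for $\tRe(\pm 2z+s)>-\tfrac32$ and $|\tRe(z)|<\tfrac14$, and uniformly bounded on compact sub-strips by dominated convergence. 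The factorization $\widetilde F(s,z)=\zeta(2+s)\zeta(2+2z+s)\zeta(2-2z+s)\widetilde F_0(s,z)$ is then immediate from the definitions of the two Euler products.

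\textbf{Main obstacle.} The principal difficulty is the case analysis in Step 2: the summand has three interacting divisibility conditions ($p\mid c'$, $p\mid c'\alpha_1$, $p^2\mid c'r\alpha_1$), and for each admissible configuration of $(v_p(r),v_p(\alpha_1),v_p(\m))$ one must identify which values of $a=v_p(c')$ trigger which inner-product factors, then sum the resulting geometric series. The hypotheses ``$r$ squarefree, $(r,\alpha_1)=1$, and $p\mid\m\Rightarrow p^2\nmid\alpha_1$'' are precisely what prevent the proliferation of cross-cases and allow the local factors to collapse to the clean forms in \eqref{eqn:Fs formula 1}. Once the local bookkeeping is carried out, the remaining steps are routine.
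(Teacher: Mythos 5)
Your proposal is correct and follows essentially the same approach as the paper's proof: substitute $c\mapsto r_1 c$, exploit multiplicativity of the resulting sum to factor it into an Euler product, perform the local case analysis over prime types ($p\mid r$, $p\Vert\alpha_1$, $p^2\mid\alpha_1$, $p\nmid r\alpha_1$, refined by $p\mid\m$), sum the geometric series in $v_p(c)$, and normalize by $\zeta(1+s)\prod_p(1-p^{-1-s})=1$ to extract $\widetilde F$. The paper omits the convergence argument for $\widetilde F$ and $\widetilde F_0$; your Taylor-expansion sketch is the natural way to fill it in (though note the order-$p^{-2}$ terms $a^2$, $b^2$ cancel exactly, so the error you quote for $\widetilde F_0$ is slightly pessimistic but still yields the claimed region).
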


\begin{proof} 

Since $(r, \alpha) = \beta$, we have $r_1\alpha  = r \alpha_1 = r_1 \alpha_1 \beta $. The condition $ r | \alpha c$ in the definition of $F(s)$ is equivalent to $ r_1 | c $. Then we have 
 $$ F_{\alpha, r, \m } (s,it )  
	=     \sum_{c} \frac{  | \zeta_{r\alpha_1} ( 1+2it)|^2 }{ r_1^{1 + s}c^{1+s}} \prod_{\substack{p|  c \\ (p, r\alpha_1) = 1}}\left|1 - \frac{1}{p^{1+2it}} \right|^{-2}\prod_{\substack{ p|\alpha_1 c  \\  p^2 | r \alpha_1 c    }} \left( \frac{p \delta_{p\nmid \m }   }{p-1}      \right)  \prod_{\substack{ p|\alpha_1 c  \\  p^2 \nmid r \alpha_1 c    }} \left(  \delta_{p\nmid \m }   + \frac1p    \right) . $$
    Since $r$ is squarefree with $(r, \alpha_1) = 1$, we treat four types of primes differently according to $ p^2 |   \alpha_1 $, $ p|| \alpha_1$, $ p | r $ and $ p \nmid r\alpha_1 $.  In case of $ p^2 | \alpha_1 $, $ \delta_{p\nmid \m} =1 $ by an assumption of the lemma. Thus, by multiplicativity, we find that
    \begin{align*}
   F_{\alpha, r, \m } (s,it)  
	= & \frac{  | \zeta_{r\alpha_1} ( 1+2it)|^2 }{ r_1^{1 + s}}     \prod_{p | r }  \left( 1+  \frac{p}{p-1} \delta_{p\nmid \m } \sum_{\ell \geq 1 } \frac{1}{p^{ \ell (1+s) }} \right)    \\    
    & \times \prod_{p ||\alpha_1  }  \left( \delta_{p\nmid \m }   + \frac1p  +   \frac{p}{p-1} \delta_{p\nmid \m } \sum_{\ell \geq 1 } \frac{1}{p^{ \ell (1+s) }} \right)  \prod_{p^2 |  \alpha_1  }  \left( \frac{p}{p-1}   \sum_{\ell \geq 0 } \frac{1}{p^{ \ell (1+s) }} \right)  \\  
    & \times \prod_{p \nmid r\alpha_1   }  \left( 1 +  \left( \left(  \delta_{p\nmid \m }   + \frac1p    \right) \frac{1}{p^{1+s} }  +   \frac{p}{p-1} \delta_{p\nmid \m } \sum_{\ell \geq 2 } \frac{1}{p^{ \ell (1+s) }} \right)\left|1 - \frac{1}{p^{1+2it}} \right|^{-2} \right) .
    \end{align*}
By multiplying $1= \zeta(1+s) \prod_p ( 1- p^{-1-s}) $ and dividing two cases in the last product depending on $ p| \m $, we find that
   \begin{equation}\label{eqn:Fs formula 2} \begin{split}
  F_{\alpha, r, \m } (s,it)  
	= & \frac{  | \zeta_{r\alpha_1} ( 1+2it)|^2 }{ r_1^{1 + s}} \zeta(1+s)      \prod_{p | r }  \left(   1 - \frac{1}{p^{1+s}}   +  \frac{\delta_{p\nmid \m } }{p^s (p-1)}   \right)   \\    
    & \times \prod_{p ||\alpha_1  }  \left( \frac1p - \frac{1}{p^{2+s}} +   \delta_{p\nmid \m } \left( 1  +   \frac{1 }{p^{s+1} (p-1)}  \right) \right)   \prod_{p^2 |  \alpha_1  }  \left( \frac{p}{p-1}     \right)   \\  
    & \times \prod_{ \substack{  p|\m \\ p \nmid r\alpha_1 }  } \left( \left( 1 - \frac{1}{p^{1+s}} \right) \left( 1 + \frac{1}{p^{2+s}} \left|1 - \frac{1}{p^{1+2it}} \right|^{-2} \right)\right) \widetilde{F}_{r \alpha_1 \m} (s,  it ) ,
    \end{split}\end{equation}
 where
    \begin{multline*} 
    \widetilde{F}_{r \alpha_1 \m} (s, z ) := \prod_{p \nmid r\alpha_1  \m  }  \bigg( 1 - \frac{1}{p^{1+s}}  
      +  \frac{\left(1 - \frac{1}{p^{1+s}}  \right) 
      \left( \left(  1   + \frac1p    \right) \frac{1}{p^{1+s} }  +   \frac{p}{p-1}   \sum_{\ell \geq 2 } \frac{1}{p^{ \ell (1+s) }} \right)  }{ \left( 1 - \frac{1}{p^{1+2z}}  \right) \left( 1 - \frac{1}{p^{1-2z}}  \right)   } \bigg)   . 
    \end{multline*}

It is easy to check that 
$$ \widetilde{F}_{r \alpha_1 \m} (s,z) =  \prod_{p \nmid r\alpha_1  \m  }  \left(  \left( 1 - \frac{1}{p^{1+2z}}  \right)^{-1} \left( 1 - \frac{1}{p^{1-2z}}  \right)^{-1}   W_p (s,z) \right) . $$
Since $\widetilde{F} (s,z)$ is the same product as $ \widetilde{F}_{r \alpha_1 \m} (s,z)$ except for the primes $p|r\alpha_1 \m $, we see that
$$  | \zeta_{r\alpha_1} ( 1+2it)|^2 \widetilde{F}_{r \alpha_1 \m} (s,it) =  \widetilde{F}(s, it) \prod_{\substack{p|\m \\ p\nmid r\alpha_1    }} \left| 1 - \frac{1}{p^{1+2it}} \right|^2 \prod_{p | r\alpha_1 \m} W_p ( s, it)^{-1} . $$
By applying the above equation to \eqref{eqn:Fs formula 2}, we obtain \eqref{eqn:Fs formula 1}. It is easy to show the remaining part of the lemma, so we omit the proof.
\end{proof}

\begin{rem} \label{rem:gcdford1d2andalpha} Due to the factor $\mu(L_1L_2) $ and the condition $ \ell_1 | L_1 $ and $ \ell_2 | L_2$ in \eqref{def:CKQ}, $L_1, L_2, \ell_1, \ell_2$ are squarefree and $ (L_1 \ell_1 , \ell_2 ) = 1 $. 
Let $\beta = (d_1d_2, L_0 )$, $d_1d_2 = \beta k_1 $ and $L_0 = \beta \alpha_1 $.  Since $(d_1, L_1L_2) = 1$, $\beta = (d_2, L_0 ) = (d_2, \ell_2)  $, and $\beta | \ell_2$.  Thus $( \alpha_1  , \beta) = (  \ell_2/\beta , \beta) = 1$ and $( \alpha_1 , d_1d_2) = (L_1 \ell_1 \ell_2/ \beta , d_2 ) = (  \ell_2/ \beta , d_2 ) = 1.$ Moreover, every prime $p | \frac{\mathfrak p(K)}{c_1d_1} \frac{\elltwo^2}{c_2d_2}$ satisfies $p^2 \nmid \frac{L_0 }{(\ell_2 , d_2)} = \alpha_1 .$
\end{rem}

\subsection{Combinatorics and computations of sums over primes}\label{Section:combinatorics and computations}
By Remark \ref{rem:gcdford1d2andalpha},  we can apply Lemma \ref{lem:Fsformula} to $ F_{L_0, d_1  d_2 , \m    }  (s, it )$. Recall that $ \mathfrak p (K) $, $L_1$, $L_2 $ are pairwise relatively prime, $ c_1d_1 | \mathfrak p(K)$, $ \ell_1 | L_1$, $ \ell_2 | L_2 $ and $ c_2 d_2 | \elltwo^2 | L_2^\infty$.   In this section, we use the notation that $ a\cdot b $ means the usual multiplication of integers $a$ and $b$ with $ (a,b) = 1 $, which is useful to follow the arguments. 

With $r=d_1 \cdot d_2 $, $\alpha = L_1 \ell_1 \cdot  \ell_2 $, $ r_1 = d_1 \cdot \frac{d_2}{ ( d_2 , \ell_2 )} $, $ \alpha_1 = \ell_1^2 \cdot \frac{ L_1}{\ell_1} \cdot \frac{ \ell_2 }{ ( d_2 , \ell_2 ) } $, $\m_1 = \frac{ \mathfrak p (K)}{c_1 d_1 } $, $\m_2 = \frac{ \elltwo^2}{ c_2 d_2 } $ and $ \m = \m_1 \cdot \m_2    $, we find that 
 \begin{align*}
	F_{L_0, d_1  d_2 , \m    }  (s,it )  =& \zeta(1+s)  \widetilde{F} (s,it)  \frac{(d_2 , \ell_2)^{1+s} }{ (d_1 d_2)^{1 + s}} \prod_{p | d_1 \cdot d_2 }  \left(   1 - \frac{1}{p^{1+s}}   +  \frac{\delta_{p\nmid \m } }{p^s (p-1)}   \right)  \\    
    &  \times \prod_{p  |  \ell_1  }  \left( \frac{p}{p-1}     \right) 
    \prod_{p | \frac{L_1}{\ell_1} \cdot \frac{ \ell_2}{( d_2 , \ell_2 )} }  \left( \frac1p - \frac{1}{p^{2+s}} +   \delta_{p\nmid \m } \left( 1  +   \frac{1 }{p^{s+1} (p-1)}  \right) \right)     \\  
\times    \prod_{ \substack{  p|\m \\ p \nmid d_1 \cdot L_1 \cdot \ell_2 d_2  }  } & \left(  W_p (s, it) - \frac{1}{p^{1+s}} - \frac{1}{p^{2+2s} (p-1) } \right)  \prod_{p | \frac{ \mathfrak p(K) }{c_1} \cdot L_1 \cdot \frac{ \ell_2 \elltwo^2}{ (d_2, \ell_2 ) c_2  } } W_p ( s, it)^{-1}  .
    \end{align*}
 Since $(d_1 , \m ) = 1 $ and $ (L_1 , \m )=1  $, the above products can be split as 
 \begin{align*}
     &      \prod_{p | d_1  }  \left(   1    +  \frac{1}{p^{1+s} (p-1)}   \right)
     \prod_{p |   d_2 }  \left(   1 - \frac{1}{p^{1+s}}   +  \frac{\delta_{p\nmid \m_2 } }{p^s (p-1)}   \right)
      \\    
    & \times \prod_{p  |  \ell_1  }  \left( \frac{p}{p-1}     \right) 
    \prod_{p | \frac{L_1}{\ell_1}  }  \left( 1  + \frac1p  +       \frac{1 }{p^{2+s } (p-1)}    \right)  
    \prod_{p |   \frac{ \ell_2}{( d_2 , \ell_2 )} }  \left( \frac1p - \frac{1}{p^{2+s}} +   \delta_{p\nmid \m_2 } \left( 1  +   \frac{1 }{p^{1+s } (p-1)}  \right) \right)  
     \\  
    & \times \prod_{   p| \m_1    } \left( W_p (s, it) - \frac{1}{p^{1+s}} - \frac{1}{p^{2+2s} (p-1) }\right)    \prod_{ \substack{  p|   \m_2 \\ p \nmid  \ell_2 d_2  }  } \left( W_p (s, it) - \frac{1}{p^{1+s}} - \frac{1}{p^{2+2s} (p-1) }\right) \\
    & \times \prod_{p | \frac{ \mathfrak p(K) }{c_1} \cdot L_1 \cdot \frac{ \ell_2 \elltwo^2}{ (d_2, \ell_2 ) c_2  } } W_p ( s, it)^{-1}  .
    \end{align*}
 We want to separate the primes dividing $ \mathfrak p (K) $ from the others. Define
\begin{align}
 \mathfrak J_1 (s,&  z; \mathbb{L} , c_2, d_2 )   :=  \frac{(d_2 , \ell_2)^{1+s} }{d_2^{1 + s}} 
         \prod_{p |   d_2 }  \left(   1 - \frac{1}{p^{1+s}}   +  \frac{\delta_{p\nmid \m_2 } }{p^s (p-1)}   \right)\prod_{p  |  \ell_1  }  \left( \frac{p}{p-1}     \right) \notag
      \\    
    &  \times 
    \prod_{p | \frac{L_1}{\ell_1}  }  \left( 1  + \frac1p  +       \frac{1 }{p^{2+s } (p-1)}    \right)  
    \prod_{p |   \frac{ \ell_2}{( d_2 , \ell_2 )} }  \left( \frac1p - \frac{1}{p^{2+s}} +   \delta_{p\nmid \m_2 } \left( 1  +   \frac{1 }{p^{1+s } (p-1)}  \right) \right)  \label{def:J1euler} 
     \\  
      & \times \prod_{ \substack{  p|   \m_2 \\ p \nmid  \ell_2 d_2  }  } \left( W_p (s, z) - \frac{1}{p^{1+s}} - \frac{1}{p^{2+2s} (p-1) } \right)\prod_{p |  L_1 \cdot \frac{ \ell_2 \elltwo^2}{ (d_2, \ell_2 ) c_2  } } W_p ( s, z )^{-1} \notag     
 \end{align}
   for $\mathbb{L}$ as in \eqref{def:mathbbL} and $ \m_2 = \frac{\elltwo^2}{c_2d_2}$, then we have
 \begin{equation}\label{eqn:Fs formula 3}\begin{split}
F_{L_0, d_1  d_2 , \m    }  (s,it )   
    = & \zeta(1+s)   \widetilde{F}  (s,it)  \frac{ \mathfrak J_1(s, it ; \mathbb{L}, c_2, d_2 ) }{ d_1^{1 + s}} 
     \prod_{p | d_1  }  \left(   1    +  \frac{1}{p^{1+s} (p-1)}   \right)
      \\    
      &\times \prod_{p | \frac{ \mathfrak p(K) }{c_1}  } W_p ( s, it)^{-1} \prod_{   p| \frac{\mathfrak p(K)}{c_1d_1}   } \left( W_p (s, it) - \frac{1}{p^{1+s}} - \frac{1}{p^{2+2s} (p-1) }\right)   .
    \end{split}\end{equation}
Note that we will need the following special values to compute the off-diagonal main terms. 
\begin{lem} \label{lem:valueatsequal-1} We have
    $$ \widetilde{F}_0 (-1, z) = 1, $$
    $$ W_p(-1, z)^{-1} = 1 - \frac 1p.$$
Moreover,   
$$ \mathfrak J_1 (-1, z; \mathbb{L} , c_2, d_2 ) = \delta_{c_2d_2=\elltwo^2 }. $$
\end{lem}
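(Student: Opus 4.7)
\medskip

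\noindent\textbf{Proof proposal for Lemma \ref{lem:valueatsequal-1}.}

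The plan is to prove the three identities by direct substitution of $s=-1$ into the explicit Euler product formulas defining $W_p$, $\widetilde F_0$, and $\mathfrak J_1$, followed by a careful bookkeeping of which local factors contribute. The first two identities will fall out of elementary algebra; the third will require a short case analysis of the supports of the various indicators.

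First I would compute $W_p(-1,z)$. Substituting $s=-1$ in the definition of $W_p(s,z)$ causes the pairs $-p^{-(1\pm 2z)}$ and $+p^{-(2\pm 2z+s)}=+p^{-(1\pm 2z)}$ to cancel; the surviving terms are $1$, $\frac{1}{p^{2+s}}=\frac{1}{p}$, $-\frac{1}{p^{3+s}}=-\frac{1}{p^{2}}$, $\frac{1}{p^{3+2s}(p-1)}=\frac{1}{p(p-1)}$, and $\frac{1}{p^{2}}$. Summing yields $W_p(-1,z)=1+\frac{1}{p}+\frac{1}{p(p-1)}=\frac{p}{p-1}$, which inverts to $1-\frac{1}{p}$. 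This immediately gives the second identity. Plugging this into the Euler product for $\widetilde F_0(s,z)$ in \eqref{def:widetilde{F}_0 (s,z)}, the local factor at $p$ becomes
\begin{equation*}
\bigl(1-p^{-(1+2z)}\bigr)^{-1}\bigl(1-p^{-(1-2z)}\bigr)^{-1}\bigl(1-p^{-(1+2z)}\bigr)\bigl(1-p^{-(1-2z)}\bigr)\bigl(1-p^{-1}\bigr)\cdot\frac{p}{p-1}=1,
\end{equation*}
so $\widetilde F_0(-1,z)=1$, establishing the first identity.

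For the third identity, I would substitute $s=-1$ into each of the six products appearing in the definition \eqref{def:J1euler} of $\mathfrak J_1$. A short computation shows that every surviving Euler factor collapses to a clean value: the prefactor $(d_2,\ell_2)^{1+s}/d_2^{1+s}$ becomes $1$; the $d_2$-product simplifies to $\prod_{p\mid d_2}\frac{p\,\delta_{p\nmid\m_2}}{p-1}$; the $L_1/\ell_1$-product and the $\ell_1$-product combine (using squarefreeness and disjointness of primes) to $\prod_{p\mid L_1}\frac{p}{p-1}$; the $\ell_2/(d_2,\ell_2)$-product simplifies to $\prod_{p\mid\ell_2/(d_2,\ell_2)}\delta_{p\nmid\m_2}\cdot\frac{p}{p-1}$; the $\m_2$-product with $p\nmid\ell_2 d_2$ has local factor $W_p(-1,z)-1-\frac{1}{p-1}=\frac{p}{p-1}-\frac{p}{p-1}=0$; and the final $W_p^{-1}$-product becomes $\prod(1-\frac{1}{p})$.

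Then I would split into two cases. When $c_2 d_2=\elltwo^2$, the indicator $\delta_{p\nmid\m_2}$ is identically $1$ (as $\m_2=1$), and the $\m_2$-product is empty. Using that $L_1, L_2$ are coprime and squarefree and that the primes of $L_1$ are disjoint from those of $\ell_2, d_2$, the $\prod_{p\mid L_1}\frac{p}{p-1}$ factor cancels exactly with the $L_1$-part of $\prod_{p\mid L_1\cdot\mathrm{lcm}(\ell_2,d_2)}(1-\frac{1}{p})$, and the remaining primes (those in $d_2\cup\ell_2$) telescope via $\frac{p}{p-1}\cdot(1-\frac{1}{p})=1$ after partitioning $\mathrm{lcm}(\ell_2,d_2)$ into primes of $d_2\setminus\ell_2$, $\ell_2\setminus d_2$, and $d_2\cap\ell_2$ and matching them with the $d_2$ and $\ell_2/(d_2,\ell_2)$ products. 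The result is $\mathfrak J_1(-1,z;\mathbb L,c_2,d_2)=1$.

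When $c_2 d_2\neq\elltwo^2$, I would choose any prime $p\mid\m_2$ and show that at least one local factor vanishes: if $p\mid d_2$ then the $d_2$-product contains the factor $\delta_{p\nmid\m_2}=0$; if $p\nmid d_2$ but $p\mid\ell_2$ then $p\mid\ell_2/(d_2,\ell_2)$ (using $\ell_2$ squarefree) and the corresponding product vanishes; if $p\nmid\ell_2 d_2$ then the $\m_2$-product with $p\nmid\ell_2 d_2$ contributes the zero factor computed above. The only real care needed is this trichotomy; once it is set up, vanishing is immediate. This case analysis is the main (and only mildly delicate) obstacle.
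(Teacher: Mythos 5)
Your proposal is correct and follows essentially the same route as the paper: direct substitution of $s=-1$ into the Euler-product definitions, with the first two identities dropping out of elementary algebra and the third handled by the same case split on whether $\mathfrak m_2=1$ and, when $\mathfrak m_2\neq 1$, the same trichotomy over a prime $p\mid\mathfrak m_2$ according to whether $p\mid d_2$, $p\mid \ell_2/(d_2,\ell_2)$, or $p\nmid \ell_2 d_2$. The only difference is that you spell out the telescoping verification that $\mathfrak J_1(-1,z;\mathbb L,c_2,d_2)=1$ when $c_2d_2=\elltwo^2$, which the paper dismisses as "straightforward from definitions."
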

\begin{proof}
We only compute $\mathfrak J_1(-1, z; \mathbb{L} , c_2, d_2 )$ when $ c_2 d_2 \neq \elltwo^2 $, since the other cases are straightforward from definitions. 
If $ (d_2 , m_2 ) \neq 1 $, then the product over $p|d_2$ in \eqref{def:J1euler} at $ s= -1$ is $0$. If $ ( \frac{ \ell_2}{(d_2 , \ell_2 ) } , \m_2 ) \neq 1$, then the product over $p | \frac{ \ell_2}{(d_2 , \ell_2 ) } $ in \eqref{def:J1euler} at $s=-1$ is $0$. Hence, the remaining case is that $(d_2 \ell_2 , \m_2 ) = 1 $ and $ \m_2 \neq 1 $. In this case, the product over $  p|   \m_2 $ and $ p \nmid  \ell_2 d_2$ in \eqref{def:J1euler} at $ s=-1$ is $0$. Thus, we have $\mathfrak J_1 (-1, z; \mathbb{L}, c_2, d_2 )=0 $ when $ \m_2 \neq 1 $.    
\end{proof}

Define
\es{\label{def:J2euler} 
 & \mathfrak J_2 (s, z; \mathbb{L} ) 
 : =       \sum_{ \substack{ c_2, d_2 \\ c_2 d_2|\elltwo^2}  } \frac{ \mu(  d_2)  (c_2 d_2)^{2z}  }{ \elltwo^{2z} } \frac{ \mathfrak J_1 (s, z; \mathbb{L} , c_2, d_2 )}{  L_1\ell_1 \ell_2    }    }
 for $\mathbb{L}$ as in \eqref{def:mathbbL}, then by \eqref{def:J1euler} we have
\begin{equation}\label{eqn:J2 bound}
  \mathfrak J_2 (-s, z ; \mathbb{L} ) \ll \frac{(L_1 \ell_2 \elltwo )^\epsilon}{L_1 \ell_1 \ell_2 }
  \end{equation}
 for any $\epsilon>0$ and for $ - \frac{10}{ \log Q } \leq \tRe(s) \leq 1 +  \frac{10}{ \log Q} $ and $ |\tRe(z)| \leq  \frac{10}{ \log Q}$.
By Lemma \ref{lem:eisenteincoeff}, \eqref{eqn:Fs formula 3} and \eqref{def:J2euler}, we find that
\begin{multline*}
\widetilde{ \varrho}_{L_0, \mathfrak p(K), \elltwo ; t} (s )  
	 =  \frac{ \zeta(1+s)    \widetilde{F}  (s,it)  }{   |\zeta(1+2it)|^2}    
     \mathfrak J_2 (s, it ; \mathbb{L} )    \sum_{ \substack{c_1, d_1 \\ c_1 d_1|\mathfrak p (K)  }}\frac{\mu(d_1  )\mathfrak p(K)^{it}}{c_1^{2it} d_1^{1+s+2it}  }  \\
 \times    \prod_{p | d_1  } \left( \left(   1    +  \frac{1}{p^{1+s} (p-1)}   \right) \frac{1}{ W_p ( s, it)} \right)  \prod_{   p| \frac{\mathfrak p(K)}{c_1d_1}   } \left( 1 - \left(  \frac{1}{p^{1+s}} + \frac{1}{p^{2+2s} (p-1) } \right) \frac{1}{ W_p (s, it) } \right)   .
    \end{multline*}
We can switch the order of integrals and sums in \eqref{Section9 eqn 3} and integrate the $\xi$-integral first by Lemma \ref{lem:MellinofprodofJbessel}.
Next, applying the above and Lemma \ref{lem:Fsformula} and substituting $ it = z$, we find that
\begin{equation} \label{eqn:SigmaCtn0}
 \Sigma_{\Ctn_0 }  =   \mathfrak M_1 ( -\epsilon_1, 0 ;  Q^s \mathfrak K_{ 1, L_1L_2}(K : s, z ) ) ,
\end{equation}
where
\begin{multline}\label{def:M1cKsz}
 \mathfrak M_1 ( c_s, c_z ; \mathfrak K ( s,z)) :=    \frac{ i^{-k}}{2}    \int_{(c_z)}     \int_{( c_s) } \widetilde{\Psi} (s)   \widetilde{F}_0 (-s,z)  \frac{ \mathfrak J_2 (-s, z ; \mathbb{L} ) }{(4 \pi L_1 L_2 \elltwo)^s }  
\\
   \times \frac{ \zeta(1-s)  \zeta (2-s) \zeta(2+2z-s) \zeta(2-2z-s)  }{    \zeta(1+2z)\zeta(1 - 2z) \sin ( \pi z) }     \mathfrak K ( s, z )
       \\
 \times  2^{s-1} \Gamma ( 1 - s ) ( \mathcal{G}_{2z, k-1} (s) -  \mathcal{G}_{-2z, k-1} (s) )    ds         dz  
\end{multline}
and 
\begin{equation}\label{def:K1Lsz}\begin{split}
\mathfrak K_{ 1, L_1L_2}(K : s, z ):= & \sumsharp_{\substack{ p_{k_1} , \ldots , p_{k_\kappa} \\ ( \mathfrak p (K) , L_1L_2 )=1   }}
  \prod_{ j=1}^\kappa \left( \frac{ \log p_{k_j} }{  p_{k_j}^{\frac12(1+s) }}  \widehat \Phi_{k_j}\left(\frac{\log p_{k_j}}{\log Q}\right)     \right)  \\
         & \times \sum_{  c_1 d_1|\mathfrak p (K)  }\frac{\mu(d_1  )\mathfrak p(K)^z}{c_1^{2z} d_1^{1-s+2z}  }         \prod_{p | d_1  } \left( \left(   1    +  \frac{1}{p^{1-s} (p-1)}   \right) \frac{1}{ W_p ( -s, z)} \right) \\
         & \times \prod_{   p| \frac{\mathfrak p(K)}{c_1d_1}   }  \left( 1 - \left(  \frac{1}{p^{1-s}} + \frac{1}{p^{2-2s} (p-1) } \right) \frac{1}{ W_p (-s, it) } \right) . 
\end{split}\end{equation}

We compute the integrals in \eqref{def:M1cKsz} by residue calculus. We want to shift the $s$-contour to $ \tRe(s) \approx 1 $. 
By Lemma \ref{lem:MellinofprodofJbessel}, we have
\begin{equation} \label{eqn:boundforGammaGoversin} \frac{ \Gamma(1-s) \mathcal G_{\pm 2z, k-1} (s) }{\sin ( \pi z )} \ll ( 1+ | \tIm (s) |)^{ \sigma - \frac52} ( 1+ | \tIm (z)|)^{2 \sigma - 2 }, 
\end{equation}
where $ \sigma =  \tRe(s) $. We shift the $z$-contour to $ \tRe (z) = \frac{1}{ \log Q}$ and the $s$-contour to $ \tRe(s) = \frac{4}{ \log Q} $ to find that
\begin{equation} \label{eqn:SigmaCtn0 1}
 \Sigma_{\Ctn_0 } 
 = \mathfrak M_1 \left( \frac{4}{ \log Q} ,  \frac{1}{ \log Q}  ;         Q^s  \mathfrak K_{1,   L_1L_2}(K: s, z )  \right)   .
\end{equation}
Here, the residue at $ s=0 $ vanishes by \eqref{eqn:diffJbesselat0}. 
Since the $z$-integral may not be convergent when $\sigma \geq 1/2 $ by \eqref{eqn:SigmaCtn0 1}, we cannot move the $s$-contour to the right of $1/2$. To overcome this difficulty, we will find small terms in $  \mathfrak K_{1,   L_1L_2}(K: s, z )$ such that we can shift the $s$-contour to the right of $1/2$ except for the small terms.

By letting $c_1 =\mathfrak p (K_1 ) $, $ d_1 = \mathfrak p (K_2)$ and $  \mathfrak p(K_3) = \mathfrak p (K) / c_1d_1 $ for $K_1 \sqcup K_2 \sqcup K_3 = K $
in \eqref{def:K1Lsz}, we have
\begin{equation} \label{eqn:K1Lsz 1} \begin{split}
\mathfrak K_{ 1, L_1L_2}(K : s, z ) & = \sum_{K_1 \sqcup K_2 \sqcup K_3 = K  } (-1)^{|K_2|} \mathfrak K_{ 2, L_1L_2}(\mathbb K  : s, z ) ,\\
\mathfrak K_{ 2, L_1L_2}(\mathbb K : s, z ) & := \sumsharp_{\substack{ p_{k_1} , \ldots , p_{k_\kappa} \\ ( \mathfrak p (K) , L_1L_2 )=1   }} \prod_{i=1,2,3} \left( \prod_{k_j \in K_i} \mathcal P_{i, k_j} ( p_{k_j} , s, z) \right) 
\end{split}\end{equation}
for $ \mathbb K :=  (  K_1 , K_2 , K_3 )   $, where
\begin{equation}\label{def:mathcal Pksz} \begin{split}
 \mathcal P_{1,k_j} ( p , s, z) & :=    \frac{ \log p  }{  p^{\frac12 + \frac{s}{2} +z }}  \widehat \Phi_{k_j}\left(\frac{\log p}{\log Q}\right)  , \\
 \mathcal  P_{2,k_j} ( p , s, z) & := \frac{ \log p }{  p^{\frac32 - \frac{s}{2} +z }} \left(   1    +  \frac{1}{p^{1-s} (p-1)}   \right) \frac{1}{ W_{p} ( -s, z)}  \widehat \Phi_{k_j} \left(\frac{\log p}{\log Q}\right)  ,        \\
\mathcal  P_{3,k_j} ( p, s, z) & :=     \frac{ \log p }{  p^{\frac12 + \frac{s}{2} -z }}   \left( 1 - \left(  \frac{1}{p^{1-s}} + \frac{1}{p^{2-2s} (p-1) } \right) \frac{1}{ W_p (-s, z) } \right)  \widehat \Phi_{k_j}\left(\frac{\log p}{\log Q}\right)        . 
\end{split}\end{equation}
By Lemma \ref{lem:cs}, we have
\begin{equation}\label{eqn:mathfrak KLsz 1}
    \mathfrak K_{ 2, L_1L_2}(\mathbb K : s, z ) = \sum_{\underline G \in \Pi_K } \mu^* (\underline G ) \prod_{j=1}^\nu \mathfrak P_{G_j ; \mathbb K} ( s, z)  ,
\end{equation}
where
\begin{equation} \label{def:mathfrak PGjKsz}
      \mathfrak P_{G_j ; \mathbb K} ( s, z) := \sum_{\substack{p \\ (p, L_1 L_2 )=1} } \prod_{i=1,2,3} \left( \prod_{k_j \in K_i\cap G_j } \mathcal P_{i, k_j} ( p , s, z) \right).
\end{equation}
By Lemmas \ref{lem:sumprimeDiagonal} and \ref{lemma:prime sum}, it is straightforward to estimate $ \mathfrak P_{G_j ; \mathbb K} ( s, z)  $ as follows. 
\begin{lem}\label{lemma:mathfrak PGKsz asymp 1}
 Assume RH. Let $ s, z $ be complex numbers satisfying $ \tRe (s) =  \frac{4}{ \log Q} $ and $ \tRe (z) = \frac{1}{ \log Q} $.
  If $ |G_j | \geq 2 $, then 
   $$ \mathfrak P_{G_j ; \mathbb K} ( s, z) \ll ( \log Q)^2 . $$
If $G_j =  \{ k_j \} $, then we have 
  $$  \mathfrak P_{G_j ; \mathbb K} ( s, z) =
  \begin{cases}
      \Phi_{k_j} (- i \mathcal U ( \tfrac12 - \tfrac{s}{2} - z )) \log Q + O( (\log Q)^2)    & \textup{ if } k_j \in K_1 , \\
       O(1) & \textup{ if } k_j \in K_2 , \\
       \Phi_{k_j} (- i \mathcal U ( \tfrac12 - \tfrac{s}{2} + z )) \log Q + O( (\log Q)^2)  & \textup{ if } k_j \in K_3  , 
  \end{cases}$$
  where $ \mathcal U = \frac{ \log Q}{ 2 \pi} $.
 \end{lem}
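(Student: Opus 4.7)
The plan is to decompose each of $\mathcal P_{1,k_j}$, $\mathcal P_{2,k_j}$, $\mathcal P_{3,k_j}$ into a leading monomial of the shape $\frac{\log p}{p^{\alpha_i(s,z)}}\widehat \Phi_{k_j}\bigl(\frac{\log p}{\log Q}\bigr)$ plus a remainder bounded by the same expression with an additional factor of $1/p$. In the thin strip $\tRe(s) = 4/\log Q$, $\tRe(z) = 1/\log Q$, each of the decorations $1 + \frac{1}{p^{1-s}(p-1)}$, $W_p(-s,z)^{\pm 1}$, and $1 - \bigl(\frac{1}{p^{1-s}} + \frac{1}{p^{2-2s}(p-1)}\bigr)/W_p(-s,z)$ is uniformly of the form $1 + O(1/p)$ by its explicit Euler-factor description from Lemma \ref{lem:Fsformula}; the leading exponents are $\alpha_1 = \tfrac 12 + \tfrac s2 + z$, $\alpha_2 = \tfrac 32 - \tfrac s2 + z$, and $\alpha_3 = \tfrac 12 + \tfrac s2 - z$.

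For the singleton case $|G_j| = 1$ with $k_j \in K_1$, one has $\tRe(\alpha_1) = \tfrac 12 + O(1/\log Q)$, so the second regime of Lemma \ref{lemma:prime sum} gives $R(\alpha_1, \Phi_{k_j}) = O((\log Q)^2)$. The sum therefore equals $\Phi_{k_j}(-i\mathcal U(\tfrac 12 - \tfrac s2 - z))\log Q + O((\log Q)^2)$, as claimed. The case $k_j \in K_3$ is identical after $z \mapsto -z$, with the $O(1/p)$ residue arising from the $1 - (\cdots)/W_p$ factor producing a sum of exponent $\approx 3/2$ that converges absolutely and is safely absorbed into the $O((\log Q)^2)$ error. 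For $k_j \in K_2$, $\tRe(\alpha_2) \approx 3/2$, so absolute convergence alone gives an $O(1)$ bound.

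For $|G_j| \geq 2$, let $a,b,c$ denote the cardinalities of $G_j \cap K_1$, $G_j \cap K_2$, $G_j \cap K_3$ respectively. The real part of the total $p$-exponent in the product is $\tfrac{|G_j|}{2} + b + O(1/\log Q)$. Whenever $b \geq 1$ or $|G_j| \geq 3$ this exceeds $1$ and the sum is $O(1)$ by absolute convergence. The only critical case is $|G_j| = 2$ with $b = 0$, where the exponent on $p$ is $1 + O(1/\log Q)$; here Lemma \ref{lem:sumprimeDiagonal} supplies the required bound $O((\log Q)^2)$. No serious obstacle is expected, the only routine bookkeeping being to ensure that the $O(1/p)$ decorations from the $W_p$-factors are uniform in $p$ and across the narrow strip of $(s,z)$, which is immediate from the Euler-factor expressions.
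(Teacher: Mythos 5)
Your proof is correct and takes essentially the same route the paper intends: the paper states this lemma follows straightforwardly from Lemmas \ref{lem:sumprimeDiagonal} and \ref{lemma:prime sum}, and your argument fills in that outline by isolating the leading $p$-exponents $\alpha_1 = \tfrac12 + \tfrac s2 + z$, $\alpha_2 = \tfrac32 - \tfrac s2 + z$, $\alpha_3 = \tfrac12 + \tfrac s2 - z$ and invoking the second regime of Lemma \ref{lemma:prime sum} for $\tRe(\alpha) \approx \tfrac12$. Two small bookkeeping points you elide: $\mathcal P_{1,k_j}$ carries no $W_p$ decoration at all (only $\mathcal P_{2,k_j}$ and $\mathcal P_{3,k_j}$ do); and $\mathfrak P_{G_j;\mathbb K}$ sums only over $p \nmid L_1L_2$, so before applying Lemma \ref{lemma:prime sum} one must restore those primes — their contribution is $\ll \sum_{p \mid L_1 L_2}\log p \ll \log(L_1L_2) \ll \log\log Q$ by \eqref{conditions mathbbL}, which sits comfortably inside the $O((\log Q)^2)$ error.
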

The above lemma readily implies the following corollary.
\begin{cor}\label{lemma:mathfrak K geq 2 is small}
  Assume RH.  Let $ \Pi_{K , E,1 }$ be the set of $ \underline G = \{ G_1 , \ldots , G_\nu \} \in \Pi_K $ such that $ G_j \subset K_2 $ whenever $ |G_j | = 1 $. Then we have
   $$ \sum_{\underline G \in \Pi_{K,E,1} } \mu^* (\underline G ) \prod_{j=1}^\nu \mathfrak P_{G_j ; \mathbb K} ( s, z)   \ll (\log Q)^{|K|}  $$
for $ \tRe (s) =  \frac{4}{ \log Q} $ and $ \tRe (z) = \frac{1}{ \log Q} $.

\end{cor}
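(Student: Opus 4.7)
The plan is to apply Lemma \ref{lemma:mathfrak PGKsz asymp 1} directly to each factor in the product and carefully count the powers of $\log Q$ that appear. The definition of $\Pi_{K,E,1}$ is designed precisely so that the ``big'' contributions (singletons indexed by $K_1$ or $K_3$, which contribute a full factor of $\log Q$ plus a bounded coefficient) are excluded; the partitions kept in $\Pi_{K,E,1}$ only have singleton parts indexed by $K_2$, whose $\mathfrak P_{G_j;\mathbb K}$ is $O(1)$.

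Concretely, I would fix $\underline G = \{G_1, \ldots, G_\nu\} \in \Pi_{K,E,1}$ and decompose it into its singleton parts and its multi-element parts. Let $s$ denote the number of singletons (each necessarily contained in $K_2$) and $m = \nu - s$ the number of parts with $|G_j| \ge 2$. By Lemma~\ref{lemma:mathfrak PGKsz asymp 1}, at $\tRe(s) = 4/\log Q$ and $\tRe(z) = 1/\log Q$ we have
\begin{equation*}
\mathfrak P_{G_j;\mathbb K}(s,z) \ll
\begin{cases} 1 & \text{if } G_j \text{ is a singleton in } K_2, \\ (\log Q)^2 & \text{if } |G_j| \ge 2. \end{cases}
\end{equation*}

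Since the multi-element parts partition a subset of $K$ of size $|K| - s$ and each such part contains at least two elements, we must have $2m \leq |K| - s$. Consequently,
\begin{equation*}
\prod_{j=1}^{\nu} \mathfrak P_{G_j;\mathbb K}(s,z) \ll (\log Q)^{2m} \leq (\log Q)^{|K| - s} \leq (\log Q)^{|K|}.
\end{equation*}
The combinatorial factor satisfies $|\mu^*(\underline G)| \leq \prod_{G_j \in \underline G}(|G_j|-1)! \ll_{|K|} 1$, and $\Pi_{K,E,1} \subset \Pi_K$ is a finite set whose size depends only on $|K|$. Summing over all $\underline G \in \Pi_{K,E,1}$ then yields the claimed bound $O((\log Q)^{|K|})$.

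There is no serious obstacle here; the corollary is essentially a bookkeeping consequence of Lemma~\ref{lemma:mathfrak PGKsz asymp 1}. The only subtlety worth emphasizing is why the restriction to $\Pi_{K,E,1}$ is the right one: a singleton $G_j = \{k_j\}$ with $k_j \in K_1$ or $k_j \in K_3$ would contribute an extra factor of $\log Q$ beyond the bookkeeping above (via the terms $\Phi_{k_j}(-i\mathcal U(\tfrac12 - \tfrac{s}{2} \mp z))\log Q$), producing exactly the off-diagonal contributions to be captured later in \S \ref{sec:ctnrescalc} rather than a term of the diagonal order. Thus the corollary isolates the genuinely small partitions, leaving the main-term partitions (those with singletons in $K_1 \sqcup K_3$) to be analyzed separately.
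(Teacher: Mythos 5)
Your proof is correct and spells out exactly the bookkeeping the paper treats as immediate from Lemma \ref{lemma:mathfrak PGKsz asymp 1}: singletons in $K_2$ give $O(1)$, larger blocks give $O((\log Q)^2)$, and the inequality $2m \le |K|-s$ between the number $m$ of multi-element blocks and the number $s$ of singletons yields the stated power of $\log Q$.

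One small caveat about your closing remark: a singleton $\{k_j\}$ with $k_j\in K_1\sqcup K_3$ does substantially worse than contributing ``an extra factor of $\log Q$.'' At $\tRe(s)=\tfrac{4}{\log Q}$, $\tRe(z)=\tfrac{1}{\log Q}$, the argument of $\Phi_{k_j}\bigl(-i\mathcal U(\tfrac12-\tfrac{s}{2}\mp z)\bigr)$ has imaginary part of size roughly $\tfrac{\mathcal U}{2}\asymp\log Q$, so by Lemma \ref{lem:FourierResult} this factor can be as large as $Q^{\sigma_{k_j}/2}$; such partitions are excluded not because they cost one extra $\log Q$, but because they are not polynomially bounded on this contour and must be handled by the contour shifts and residue extraction in \S\ref{sec:trivialchar off diag main terms}--\S\ref{sec:ctnrescalc}. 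This does not affect the validity of your bound for $\Pi_{K,E,1}$.
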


 Let $ \Pi'_{K} := \Pi_K \setminus \Pi_{K, E,1 } $. Then $\underline G \in \Pi'_K $ means that there exists $ G_j \in \underline G$ such that $ |G_j | = 1 $ and $ G_j \subset K_1 \sqcup K_3 $. Motivated from Lemma \ref{lemma:mathfrak PGKsz asymp 1}, for $G_j= \{ k_j \} $ we let 
\begin{equation}\label{def:mathfrak PGK0}
   \frac{ \mathfrak P_{G_j ; \mathbb K , 0 } (s,z)}{\log Q}:=
    \begin{cases}
         \Phi_{k_j} (- i \mathcal U ( \tfrac12 - \tfrac{s}{2} - z ))   & \textup{ if } G_j  \subset  K_1 , \\     
          \Phi_{k_j} (- i \mathcal U (- \tfrac12 + \tfrac{s}{2} - z ))   & \textup{ if } G_j  \subset  K_2 , \\     
         \Phi_{k_j} ( -i \mathcal U ( \tfrac12 - \tfrac{s}{2} + z ))   & \textup{ if } G_j  \subset  K_3 ,
    \end{cases}
\end{equation}
\begin{equation}\label{def:mathfrak PGK1}
    \frac{\mathfrak P_{G_j ; \mathbb K , 1 } (s,z)}{\log Q }:=
    \begin{cases}
         -  \int_{-\infty}^0 \widehat \Phi_{k_j} (w) Q^{(\frac12 - \frac{s}{2} - z )w} dw & \textup{ if } G_j  \subset  K_1 , \\     
           -   \int_{-\infty}^0 \widehat \Phi_{k_j} (w) Q^{(-\frac12 + \frac{s}{2} - z )w} dw  & \textup{ if } G_j  \subset  K_2            ,
    \end{cases}
\end{equation}
$$ \frac{ \mathfrak P_{G_j ; \mathbb K , 1 } (s,z)}{\log Q}   :=   - \Phi_{k_j} ( -i \mathcal U (- \tfrac12 +\tfrac{s}{2} + z )) + \int_{-\infty}^0  \widehat \Phi_{k_j} (w)  (Q^{  ( -\frac12 + \frac{s}{2} + z ) w } - Q^{ ( \frac12 - \frac{s}{2} + z )  w}   ) dw   $$
if $G_j \subset  K_3  $, 
and 
\begin{equation}\label{def:mathfrak PGKE}
\mathfrak P_{G_j ; \mathbb K, E} (s,z) := \mathfrak P_{G_j ; \mathbb K} (s,z)  - \mathfrak P_{G_j ; \mathbb K, 0} (s,z).
\end{equation}
Then we obtain the following lemma.
\begin{lem} \label{lemma:K3GKsz}
  Assume RH. Let $ s, z $ be complex numbers satisfying $ \tRe (s) =  \frac{4}{ \log Q} $ and $ \tRe (z) = \frac{1}{ \log Q} $.
Define 
$$  K_\sing := K_\sing (  \underline G, \mathbb K   ) := 
\bigcup_{\substack{ G_j \in \underline{G} \\  |G_j| = 1 , ~ G_j \subset K_1 \sqcup K_3  }}  G_j , $$
then $ K_\sing \neq \emptyset$ for each $ \underline G \in \Pi'_K  $ and  
$$ \mathfrak K_{ 2, L_1L_2}(\mathbb K : s, z ) = \sum_{\underline G \in \Pi'_{K  } } \mu^* (\underline G )  
 \sum_{\substack{ K_{M }\sqcup K_{  E} =  K_\sing   \\  K_{M} \neq \emptyset  }}  \mathfrak K_{ 3, L_1L_2}(\underline G, \mathbb K , K_M , K_E : s, z )
+O(( \log Q)^{2|K|}), $$
where 
\begin{equation}\label{def:K3LGKsz} \begin{split}
   \mathfrak K_{ 3, L_1L_2}(\underline G, \mathbb K , K_M , K_E : s, z )  := &  \prod_{ G_j \subset K_{M}  }   \mathfrak P_{G_j ; \mathbb K,0} ( s, z)   \prod_{ G_j \subset K_{  E }  } \mathfrak P_{G_j ; \mathbb K,E} ( s, z)   \\
 & \times \prod_{ G_j \not \subset K_\sing   } \mathfrak P_{G_j ; \mathbb K} ( s, z) .
\end{split}\end{equation} 
\end{lem}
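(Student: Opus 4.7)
The plan is to start from the set-partition identity \eqref{eqn:mathfrak KLsz 1} for $\mathfrak K_{2,L_1L_2}$, split $\Pi_K$ according to where the singletons of each partition sit, and peel off the pieces not matching the claimed form into the $O((\log Q)^{2|K|})$ remainder. First I would decompose $\Pi_K = \Pi_{K,E,1} \sqcup \Pi'_K$. The contribution from $\underline G \in \Pi_{K,E,1}$ is bounded by Corollary \ref{lemma:mathfrak K geq 2 is small} to be $O((\log Q)^{|K|})$, safely inside the error. For $\underline G \in \Pi'_K$, by the complementary definition at least one singleton of $\underline G$ must lie in $K_1 \sqcup K_3$, so $K_\sing(\underline G, \mathbb K) \neq \emptyset$, establishing the first assertion.

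For each such $\underline G \in \Pi'_K$ and each singleton $G_j \subset K_\sing$, I would apply the definition \eqref{def:mathfrak PGKE} to split $\mathfrak P_{G_j;\mathbb K} = \mathfrak P_{G_j;\mathbb K,0} + \mathfrak P_{G_j;\mathbb K,E}$ and expand
\begin{equation*}
\prod_{G_j \subset K_\sing} \mathfrak P_{G_j;\mathbb K}(s,z) = \sum_{K_M \sqcup K_E = K_\sing} \prod_{G_j \subset K_M} \mathfrak P_{G_j;\mathbb K,0}(s,z) \prod_{G_j \subset K_E} \mathfrak P_{G_j;\mathbb K,E}(s,z).
\end{equation*}
Multiplying by the untouched factors $\prod_{G_j \not\subset K_\sing} \mathfrak P_{G_j;\mathbb K}$ and comparing with \eqref{def:K3LGKsz} reproduces the claimed sum over $(K_M,K_E)$ with $K_M \neq \emptyset$, plus exactly the leftover $K_M = \emptyset$ piece that must be shown to be admissible.

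For the $K_M = \emptyset$ leftover, the size bounds all come from Lemma \ref{lemma:mathfrak PGKsz asymp 1} in the regime $\tRe(s) = 4/\log Q$, $\tRe(z) = 1/\log Q$: a singleton $G_j \subset K_2$ contributes $O(1)$, a part of size $\geq 2$ contributes $O((\log Q)^2)$, and a singleton $G_j \subset K_\sing$ yields $\mathfrak P_{G_j;\mathbb K,E} = \mathfrak P_{G_j;\mathbb K} - \mathfrak P_{G_j;\mathbb K,0} \ll (\log Q)^2$ by direct subtraction of the exact main term. Writing $s_1,s_2,m$ for the numbers of singletons in $K_\sing$, singletons in $K_2$, and parts of size $\geq 2$ respectively, the product has size $(\log Q)^{2(s_1+m)}$; since $s_1 + s_2 + m = \nu \leq |K|$, this is $\ll (\log Q)^{2|K|}$, and summing over the finitely many $\underline G \in \Pi'_K$ preserves the bound. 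The only genuinely analytic input is the error $O((\log Q)^2)$ in Lemma \ref{lemma:mathfrak PGKsz asymp 1}; once that is in hand the rest is combinatorial bookkeeping.
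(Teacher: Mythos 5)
Your proposal is correct and follows essentially the same route as the paper: reduce to $\Pi'_K$ via Corollary \ref{lemma:mathfrak K geq 2 is small}, expand each factor over $G_j \subset K_\sing$ through the decomposition $\mathfrak P_{G_j;\mathbb K} = \mathfrak P_{G_j;\mathbb K,0} + \mathfrak P_{G_j;\mathbb K,E}$ given in \eqref{def:mathfrak PGKE}, and then discard the $K_M=\emptyset$ term. The paper states the $K_M=\emptyset$ bound in one line, citing Lemmas \ref{lemma:prime sum} and \ref{lemma:mathfrak PGKsz asymp 1}; you supply the detailed count $s_1, s_2, m$ of singletons in $K_\sing$, singletons in $K_2$, and parts of size $\geq 2$ to get $(\log Q)^{2(s_1+m)} \ll (\log Q)^{2|K|}$ from $s_1+m \leq \nu \leq |K|$, which is exactly the reasoning the paper compresses.
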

\begin{proof}
By \eqref{eqn:mathfrak KLsz 1} and Corollary \ref{lemma:mathfrak K geq 2 is small}, we have
$$    \mathfrak K_{ 2, L_1L_2}(\mathbb K : s, z )  = \sum_{\underline G \in \Pi'_{K} } \mu^* (\underline G ) \prod_{j=1}^\nu \mathfrak P_{G_j ; \mathbb K} ( s, z)  +O( ( \log Q)^{|K|}) .$$ 
For each $ \underline G \in \Pi'_{K} $, then we have
\begin{multline*}
\prod_{j=1}^\nu \mathfrak P_{G_j ; \mathbb K} ( s, z) =  \prod_{ G_j \subset   K_\sing   } ( \mathfrak P_{G_j ; \mathbb K,0} ( s, z)  + \mathfrak P_{G_j ; \mathbb K,E} ( s, z)  )\prod_{ G_j \not \subset K_\sing   } \mathfrak P_{G_j ; \mathbb K} ( s, z) \\    
= \sum_{K_{M}\sqcup K_{ E} =  K_\sing   } \mathfrak K_{ 3, L_1L_2}(\underline G, \mathbb K , K_M , K_E : s, z )  .
\end{multline*}
By Lemmas \ref{lemma:prime sum} and \ref{lemma:mathfrak PGKsz asymp 1}, 
  $\mathfrak K_{ 3, L_1L_2}(\underline G, \mathbb K , K_M , K_E : s, z )  = O( (\log Q)^{2|K|}) $ when $ K_{M} = \emptyset$. This proves the lemma.  
\end{proof}

By collecting the above results we have the following lemma.
\begin{lem}\label{lemma:SigmaCtn0GK}
 Assuming RH and \eqref{conditions mathbbL}, we have
$$ \Sigma_{\Ctn_0 } = \sum_{ K_1 \sqcup K_2 \sqcup K_3 = K } (-1)^{|K_2|} \sum_{ \underline G \in \Pi'_K  } \mu^* ( \underline G )  \sum_{\substack{ K_{M }\sqcup K_{  E} =  K_\sing   \\  K_{M} \neq \emptyset  }}  \Sigma_{\Ctn_0 } (\underline G , \mathbb K, K_M , K_E )  + O( Q^\epsilon)  $$
for any $ \epsilon>0$, where
\begin{equation} \label{def:SigmaCtn0GK}
 \Sigma_{\Ctn_0 } (\underline G , \mathbb K, K_M , K_E )  
 : = \mathfrak M_1 \left( \frac{4}{ \log Q} ,  \frac{1}{ \log Q}  ;         Q^s  \mathfrak K_{3,   L_1L_2}(  \underline G , \mathbb K , K_M , K_E  : s, z )  \right)    
\end{equation}
with $\mathfrak M_1 $ defined in \eqref{def:M1cKsz}.
\end{lem}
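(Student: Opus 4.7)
The plan is to derive the claim by threading three available ingredients through the contour integral \eqref{def:M1cKsz}. First, \eqref{eqn:SigmaCtn0 1} already expresses $\Sigma_{\Ctn_0}$ as $\mathfrak M_1\bigl(\tfrac{4}{\log Q}, \tfrac{1}{\log Q};\, Q^s \mathfrak K_{1, L_1 L_2}(K: s, z)\bigr)$. Second, the identity \eqref{eqn:K1Lsz 1} writes $\mathfrak K_{1, L_1 L_2}(K: s, z)$ as a finite alternating sum indexed by decompositions $K = K_1 \sqcup K_2 \sqcup K_3$ of the terms $(-1)^{|K_2|} \mathfrak K_{2, L_1 L_2}(\mathbb K: s, z)$. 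Third, Lemma \ref{lemma:K3GKsz} rewrites each $\mathfrak K_{2, L_1 L_2}(\mathbb K: s, z)$ as the desired sum over $\underline G \in \Pi'_K$ and partitions $K_M \sqcup K_E = K_\sing$ with $K_M \neq \emptyset$, up to a uniform error $O((\log Q)^{2|K|})$ on the contour. Substituting both decompositions and using linearity of $\mathfrak M_1$ in its last argument produces the claimed main term (each summand matching \eqref{def:SigmaCtn0GK}), with a residual error equal to $\mathfrak M_1\bigl(\tfrac{4}{\log Q}, \tfrac{1}{\log Q};\, Q^s \cdot O((\log Q)^{2|K|})\bigr)$.

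The remaining task is to verify this residual is $O(Q^\epsilon)$. I will bound the integrand of \eqref{def:M1cKsz} absolutely along the contour $\tRe(s) = 4/\log Q$, $\tRe(z) = 1/\log Q$. On this contour $|Q^s| \asymp 1$ and $|(4\pi L_1 L_2 \elltwo)^{-s}| \asymp 1$ since $\log(L_1 L_2 \elltwo) \ll \log\log Q$ by \eqref{conditions mathbbL}; $\widetilde \Psi(s)$ decays faster than any polynomial in $|\tIm(s)|$; $\widetilde F_0(-s, z)$ is bounded by Lemma \ref{lem:Fsformula}; and $|\mathfrak J_2(-s, z; \mathbb L)| \ll (L_1 \ell_2 \elltwo)^\epsilon/(L_1 \ell_1 \ell_2)$ by \eqref{eqn:J2 bound}. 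The numerator zeta factors are polynomially bounded in $|\tIm(s)|$ and $|\tIm(z)|$, contributing a factor of $\log Q$ from $\zeta(1-s)$ near $s=0$ and another from $1/\zeta(1 \pm 2z)$ near $z=0$. Finally, the transcendental kernel $2^{s-1}\Gamma(1-s)(\mathcal G_{2z,k-1}(s) - \mathcal G_{-2z,k-1}(s))/\sin(\pi z)$ is absolutely integrable on the contour by \eqref{eqn:boundforGammaGoversin}, supplying an integrable weight of the form $(1+|\tIm s|)^{-5/2}(1+|\tIm z|)^{-2}$. Collecting these yields a residual of size $(\log Q)^{C}/(L_1 \ell_1 \ell_2)$ for some fixed $C$, which is absorbed into $O(Q^\epsilon)$.

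The main obstacle is simply to verify uniform absolute convergence of the double contour integral against the aggregate bound, which is routine given \eqref{eqn:boundforGammaGoversin}, \eqref{eqn:J2 bound}, and standard estimates for $\zeta$ and $1/\zeta$ near the line $\tRe = 1$. The substantive combinatorial content of the statement, namely the parametrization of the main term by the tuples $(\mathbb K, \underline G, K_M, K_E)$, is entirely supplied by \eqref{eqn:K1Lsz 1} and Lemma \ref{lemma:K3GKsz}; no fundamentally new idea is required at this step.
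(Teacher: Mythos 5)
Your proposal is correct and follows essentially the same route as the paper: feed \eqref{eqn:K1Lsz 1} and Lemma \ref{lemma:K3GKsz} into \eqref{eqn:SigmaCtn0 1} by linearity of $\mathfrak M_1$ in its last argument, then bound the residual absolutely on the contour using the rapid decay of $\widetilde\Psi$, the kernel estimate \eqref{eqn:boundforGammaGoversin}, the bound \eqref{eqn:J2 bound} on $\mathfrak J_2$, and standard zeta estimates near $\tRe=1$. One small imprecision: the factor $1/|\zeta(1\pm 2z)|$ does not grow like $\log Q$ near $z=0$ (it actually vanishes there since $\zeta(1\pm 2z)$ has a pole); the $\log Q$ losses come only from $\zeta(1-s)$ and $\widetilde\Psi(s)$ near $s\approx 0$, plus the explicit $(\log Q)^{2|K|}$ error from Lemma \ref{lemma:K3GKsz}, all of which are harmlessly absorbed into $Q^\epsilon$, so the conclusion is unaffected.
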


\begin{proof}
    By \eqref{eqn:J2 bound}, \eqref{eqn:SigmaCtn0 1}, \eqref{eqn:K1Lsz 1} and Lemma \ref{lemma:K3GKsz}, it is enough to show that
    \begin{multline*}  
    \int_{(\frac{1}{ \log Q})}     \int_{( \frac{4}{ \log Q} ) }     \frac{ | \widetilde{\Psi} (s)     \zeta(1-s) |   }{  |  \zeta(1+2z)\zeta(1 - 2z) \sin ( \pi z) | }    ( \log Q)^{2|K|}
       \\
   \times | \Gamma ( 1 - s ) ( \mathcal{G}_{2z, k-1} (s) -  \mathcal{G}_{-2z, k-1} (s) )  | |  ds |     |   dz  |    \ll Q^\epsilon
\end{multline*}
for any $\epsilon>0$. This can be easily justified by the following inequalities.

 By repeated integration by parts, we have $ \widetilde{\Psi }(s)  \ll \frac{1}{  |s| ( 1+|s|)^A } $
 for any $ A \geq 0 $.  We also have an upper bound for 
  $\big| \frac{\Gamma ( 1 - s )  \mathcal{G}_{\pm 2z, k-1} (s)}{\sin (\pi z)} \big| $
 in \eqref{eqn:boundforGammaGoversin}. Together with   well-known bounds for the Riemann zeta function near $\tRe(s) = 1 $, these bounds are sufficient to justify the lemma. 
\end{proof}

By Lemma \ref{lemma:SigmaCtn0GK}, we next compute the integral 
$ \Sigma_{\Ctn_0 } (\underline G , \mathbb K , K_M , K_E ) $ 
for $\underline G \in \Pi'_{K}$ and $K_M \neq \emptyset$. 
Each $ \mathfrak K_{ 3, L_1L_2}(\underline G, \mathbb K , K_M , K_E  : s, z )$ in \eqref{def:K3LGKsz} has a factor $ \mathfrak P_{G_j ; \mathbb K,0} ( s, z) $ defined in \eqref{def:mathfrak PGK0}, which is rapidly decreasing as $|\tIm ( \frac{s}{2} \pm z ) | \to \infty$ on given vertical lines of $s$ and $z$. Since $\widetilde \Psi (s) $ is also rapidly decreasing as $|\tIm (s)| \to \infty$, the convergence issue has been resolved. We can now move the $s$-contour in \eqref{def:SigmaCtn0GK} to $\tRe (s) = 1- \frac{4}{\log Q}  $ and see that 
\begin{equation} \label{eqn:SigmaCtn0GK 1}
 \Sigma_{\Ctn_0 } (\underline G , \mathbb K, K_M , K_E)
 = \mathfrak M_1 \left(  1- \frac{4}{ \log Q} ,   \frac{1}{ \log Q}  ;      Q^s \mathfrak K_{3,   L_1L_2}(  \underline G , \mathbb K , K_M , K_E : s, z ) \right).
\end{equation}
To estimate $\mathfrak K_{ 3, L_1L_2}(\underline G, \mathbb K ,  K_M , K_E: s, z )$, we find asymptotic formulas for $ \mathfrak P_{G_j ; \mathbb K} ( s, z)  $. 
\begin{lem}\label{lemma:mathfrak PGKsz asymp 2}
 Assume RH. Let $ s, z $ be complex numbers that satisfy $ \tRe (s) = 1- \frac{4}{ \log Q} $ and $ \tRe (z) = \frac{1}{ \log Q} $.
  If $ |G_j | \geq 2 $, then 
   $$ \mathfrak P_{G_j ; \mathbb K} ( s, z) \ll 1 . $$
If $ |G_j | = 1$, then   
  $$ \mathfrak P_{G_j ; \mathbb K , 2 } (s,z) :=   \mathfrak P_{G_j ; \mathbb K, E } ( s, z) -
 \mathfrak P_{G_j ; \mathbb K , 1 } (s,z)  \ll    1+|s|+|z|   . $$ 
 Moreover, $  \mathfrak P_{G_j ; \mathbb K, 0 } ( s, z)\ll \log Q  $ and $  \mathfrak P_{G_j ; \mathbb K, E } ( s, z)\ll \log Q + |s| + |z|$, when $ |G_j | = 1 $. 
  
\end{lem}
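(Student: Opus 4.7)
\textbf{Proof plan for Lemma \ref{lemma:mathfrak PGKsz asymp 2}.}
The plan is to split into the two cases $|G_j| \geq 2$ and $|G_j| = 1$, and in both cases reduce the prime sum \eqref{def:mathfrak PGjKsz} to quantities we can control using Lemma~\ref{lemma:prime sum} (for $|G_j|=1$) or absolute convergence (for $|G_j|\geq 2$). A preliminary observation that will be used throughout is that on the contour $\text{Re}(s) = 1 - 4/\log Q$, $\text{Re}(z) = 1/\log Q$, the local factor $W_p(-s,z)$ is bounded away from $0$ and satisfies $W_p(-s,z) = 1 + O(p^{-1})$ uniformly in $p \geq 2$; this is because each of its nontrivial summands is $O(p^{-1+2|\text{Re}(z)|}) = O(p^{-1})$ uniformly for $p \leq Q$, and $W_p(-1,z)^{-1} = 1 - 1/p$ by Lemma~\ref{lem:valueatsequal-1} keeps it away from zero. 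Consequently the extra factors appearing in $\mathcal P_{2,k_j}$ and $\mathcal P_{3,k_j}$ from \eqref{def:mathcal Pksz} can be written as $1 + O(p^{-1})$ uniformly. The coprimality condition $(p,L_1L_2)=1$ removes at most $\sum_{p\mid L_1L_2}\log p/p \ll \log\log(L_1L_2) \ll \log\log\log Q$ worth of primes from each individual prime sum, which is absorbed into the error.

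For the first case $|G_j| \geq 2$, suppose $G_j$ has $a$, $b$, $c$ elements in $K_1$, $K_2$, $K_3$ respectively with $a+b+c = |G_j| \geq 2$. Then the real part of the exponent of $p$ in $\prod \mathcal P_{i,k_j}$ equals
\begin{equation*}
a\bigl(\tfrac12+\tfrac{\text{Re}(s)}{2}+\text{Re}(z)\bigr) + b\bigl(\tfrac32 - \tfrac{\text{Re}(s)}{2}+\text{Re}(z)\bigr) + c\bigl(\tfrac12+\tfrac{\text{Re}(s)}{2}-\text{Re}(z)\bigr) = |G_j| + O\!\bigl(\tfrac{1}{\log Q}\bigr),
\end{equation*}
so the $p$-sum $\sum_p(\log p)^{|G_j|}p^{-|G_j|+O(1/\log Q)}$ converges absolutely and is bounded by $O(1)$, with the bounded extra factors and truncation from $\widehat{\Phi}_{k_j}$ not affecting this estimate.

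For the second case $|G_j|=1$ with $G_j = \{k_j\}$, I expand each $\mathcal P_{i,k_j}(p,s,z)$ as its leading Dirichlet monomial $\log p\, p^{-\alpha} \widehat{\Phi}_{k_j}(\log p/\log Q)$ times $1 + O(1/p)$, where $\alpha = \tfrac12+\tfrac{s}{2}+z$, $\tfrac32-\tfrac{s}{2}+z$, or $\tfrac12+\tfrac{s}{2}-z$ for $i=1,2,3$ respectively. The correction $O(1/p)$ produces an absolutely convergent sum with real exponent $\geq 1 + O(1/\log Q)$ restricted to $p \leq Q^{\sigma_{k_j}}$, which contributes $O(1)$. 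To the leading monomial I apply Lemma~\ref{lemma:prime sum}, noting that $\text{Re}(\alpha) - 1/2 \geq 1/2 + O(1/\log Q)$ in every case, so the error is $O(1+|\alpha|) = O(1+|s|+|z|)$. The resulting main term has the form $F(-i\mathcal U(1-\alpha))\log Q - \log Q\int_{-\infty}^0 \widehat{F}(w)Q^{(1-\alpha)w}\,dw$. For $G_j \subset K_1$ or $K_2$, these two pieces match \eqref{def:mathfrak PGK0} and \eqref{def:mathfrak PGK1} exactly, so $\mathfrak P_{G_j;\mathbb K, E} = \mathfrak P_{G_j;\mathbb K, 1} + O(1+|s|+|z|)$ and $\mathfrak P_{G_j;\mathbb K, 2} \ll 1+|s|+|z|$ follows. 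For $G_j \subset K_3$, the definition of $\mathcal P_{3,k_j}$ contains the subtracted second Dirichlet monomial with exponent $\tfrac32-\tfrac{s}{2}-z$, which must also be evaluated via Lemma~\ref{lemma:prime sum}; the four resulting pieces combine to reproduce exactly the expression in \eqref{def:mathfrak PGK0} plus the $K_3$-case of \eqref{def:mathfrak PGK1}.

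The auxiliary bounds $\mathfrak P_{G_j;\mathbb K, 0} \ll \log Q$ and $\mathfrak P_{G_j;\mathbb K, E} \ll \log Q + |s| + |z|$ then follow by inspecting \eqref{def:mathfrak PGK0} and \eqref{def:mathfrak PGK1}: $\Phi_{k_j}$ is entire with $\widehat{\Phi}_{k_j}$ compactly supported in $(-\sigma_{k_j}, \sigma_{k_j})$, and on the contour the relevant complex arguments have imaginary part of absolute value at most $\mathcal U \cdot O(1/\log Q) = O(1)$, so Lemma~\ref{lem:FourierResult} gives $\Phi_{k_j}(\cdots) = O(1)$; the integrals are majorized by $\|\widehat{\Phi}_{k_j}\|_1 = O(1)$ since $|Q^{(1-\alpha)w}| \leq 1$ for $w \leq 0$. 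The main obstacle is bookkeeping in the $G_j \subset K_3$ case, where two Dirichlet monomials appear and one must verify that the resulting four main-term pieces align precisely with the definition of $\mathfrak P_{G_j;\mathbb K,1}$ in the $K_3$ case of \eqref{def:mathfrak PGK1}; this requires careful tracking of signs and of the role of the leading constant $1$ versus the subtracted $(1/p^{1-s} + 1/(p^{2-2s}(p-1)))/W_p(-s,z)$ factor in $\mathcal P_{3,k_j}$.
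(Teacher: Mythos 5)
The paper's own proof is deliberately brief ("straightforward from Lemmas \ref{lemma:prime sum} and \ref{lem:FourierResult}"), and your plan fills this in with essentially the intended argument: identify the dominant Dirichlet monomials in each $\mathcal P_{i,k_j}$, apply Lemma~\ref{lemma:prime sum} to each, and use the rapid decay of $\Phi_{k_j}$ and the compact support of $\widehat\Phi_{k_j}$ (via Lemma~\ref{lem:FourierResult}) to bound $\mathfrak P_{G_j;\mathbb K,0}$ and $\mathfrak P_{G_j;\mathbb K,1}$. The case split you perform, and especially the four-piece bookkeeping in the $K_3$ case where both exponents $\tfrac12+\tfrac{s}{2}-z$ and $\tfrac32-\tfrac{s}{2}-z$ must be evaluated, is exactly what is needed to reproduce \eqref{def:mathfrak PGK0} and the $K_3$-branch of \eqref{def:mathfrak PGK1}.

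A few of your assertions, however, are imprecise and worth correcting. First, your "preliminary observation" that \emph{both} of the extra factors in $\mathcal P_{2,k_j}$ and $\mathcal P_{3,k_j}$ are $1+O(p^{-1})$ is false for $\mathcal P_{3,k_j}$: the subtracted term $p^{-(1-s)}/W_p(-s,z)$ has modulus $p^{-4/\log Q}|W_p|^{-1}$, which for bounded $p$ is of size $\asymp 1$, not $O(p^{-1})$. You implicitly acknowledge this later by extracting the second monomial with exponent $\tfrac32-\tfrac{s}{2}-z$, so the argument is self-correcting, but the preliminary statement as written is misleading. Second, your phrase "real exponent $\geq 1 + O(1/\log Q)$ ... which contributes $O(1)$" is too weak: a sum $\sum_{p\leq Q^\sigma}\log p\cdot p^{-1+O(1/\log Q)}$ is $O(\log Q)$, not $O(1)$. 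What saves you is that the $O(1/p)$ corrections push the real exponent to $\approx 2$, and that is what you should state. Third, the removal of the coprimality condition $(p,L_1L_2)=1$ is not quite free: $\sum_{p\mid L_1L_2}\log p/p \ll \log\log(L_1L_2) \ll \log\log\log Q$ because $L_1L_2 < (\log Q)^{\kappa+4}$, which is not $O(1)$. This slightly exceeds the stated $O(1+|s|+|z|)$ bound; the downstream applications (Corollary~\ref{lemma:mathfrak K geq 2 is small 2}, Lemma~\ref{lemma:M1 bound}) clearly have slack for such a factor, but if one takes the lemma statement literally this is a gap shared by the paper's own terse proof. Finally, your justification that $W_p(-s,z)$ is uniformly bounded away from $0$ on the contour leans only on the special value at $s=1$ from Lemma~\ref{lem:valueatsequal-1}; for a complete argument you should verify, from the explicit formula for $W_p$, that $W_p(-s,z) = 1 + O(1/p)$ with an effective constant uniform for $\tRe(s)$ near $1$ and $\tRe(z)$ near $0$, so that $W_p$ stays away from zero at least for $p$ larger than some absolute bound, and check small $p$ directly.
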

The proof is straightforward from Lemmas  \ref{lemma:prime sum} and \ref{lem:FourierResult}. 
The above lemma readily implies the following corollary.
\begin{cor}\label{lemma:mathfrak K geq 2 is small 2}
 Assume RH. Let $\underline G \in \Pi'_K  $ and assume that $ |G_j | \geq 2 $ for some $ G_j \in \underline G $.  Then we have
   $$   \mathfrak K_{ 3, L_1L_2}(\underline G, \mathbb K , K_M , K_E : s, z ) \ll  ( \log Q + |s|+|z| )^{|K|-2}
 \prod_{ G_j \subset K_{M }  } \frac{ | \mathfrak P_{G_j ; \mathbb K,0} ( s, z)|}{\log Q}     $$
for $ \tRe (s) = 1-  \frac{4}{ \log Q} $ and $ \tRe (z) = \frac{1}{ \log Q} $.  
\end{cor}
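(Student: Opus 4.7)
\textbf{Proof proposal for Corollary \ref{lemma:mathfrak K geq 2 is small 2}.}
The plan is to bound each of the three products appearing in the definition \eqref{def:K3LGKsz} of $\mathfrak K_{3, L_1L_2}$ using the already-established Lemma \ref{lemma:mathfrak PGKsz asymp 2}, and then to exploit the hypothesis that at least one block $G_j \in \underline G$ has $|G_j| \geq 2$ to gain a saving of two powers of $(\log Q + |s| + |z|)$. Write $b := \sum_{|G_j|\ge 2} |G_j|$ and $a$ for the number of singleton blocks $G_j$ with $G_j \subset K_2$ (so $G_j \not\subset K_{\sing}$). Then $|K| = |K_M| + |K_E| + a + b$, and our standing hypothesis gives $b \ge 2$.

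For each factor in \eqref{def:K3LGKsz} we appeal to Lemma \ref{lemma:mathfrak PGKsz asymp 2}: blocks with $|G_j| \ge 2$ contribute $\mathfrak P_{G_j;\mathbb K}(s,z) \ll 1$; each singleton block in $K_M$ contributes $\mathfrak P_{G_j;\mathbb K,0}(s,z)$, which is (trivially) itself; each singleton block in $K_E$ contributes $\mathfrak P_{G_j;\mathbb K,E}(s,z) \ll \log Q + |s| + |z|$; and each remaining singleton block (necessarily $\subset K_2$, so not in $K_{\sing}$) contributes $\mathfrak P_{G_j;\mathbb K}(s,z) \ll \log Q + |s| + |z|$ by the final statement of Lemma \ref{lemma:mathfrak PGKsz asymp 2} (this is the sum of the $\mathfrak P_{\cdot,0}$ and $\mathfrak P_{\cdot,E}$ pieces). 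Multiplying these estimates together yields
\[
\mathfrak K_{3, L_1 L_2}(\underline G, \mathbb K, K_M, K_E : s, z)
\ll (\log Q + |s| + |z|)^{|K_E| + a} \prod_{G_j \subset K_M} |\mathfrak P_{G_j;\mathbb K, 0}(s,z)|.
\]

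To put this in the form stated in the corollary, we factor $(\log Q)^{|K_M|}$ out of the product over $K_M$:
\[
\prod_{G_j \subset K_M} |\mathfrak P_{G_j;\mathbb K, 0}(s,z)| = (\log Q)^{|K_M|} \prod_{G_j \subset K_M} \frac{|\mathfrak P_{G_j;\mathbb K, 0}(s,z)|}{\log Q}.
\]
Combining and using $\log Q \le \log Q + |s| + |z|$ gives the overall bound
\[
(\log Q + |s| + |z|)^{|K_M| + |K_E| + a} \prod_{G_j \subset K_M} \frac{|\mathfrak P_{G_j;\mathbb K, 0}(s,z)|}{\log Q}
= (\log Q + |s| + |z|)^{|K| - b} \prod_{G_j \subset K_M} \frac{|\mathfrak P_{G_j;\mathbb K, 0}(s,z)|}{\log Q}.
\]
Since $b \ge 2$ by hypothesis, we have $|K| - b \le |K| - 2$, and the claimed bound follows.

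The proof is routine once Lemma \ref{lemma:mathfrak PGKsz asymp 2} is in hand; the only point to verify carefully is the bookkeeping that each block $G_j$ not lying in $K_{\sing}$ is either a multi-element block (saving two powers per extra element) or a singleton contained in $K_2$ (no saving, but also no barrier to the $O(\log Q + |s|+|z|)$ estimate). Thus the whole argument should fit into a short paragraph once the setup is recalled. I do not anticipate any substantial obstacle here; the real work has already been done in proving Lemma \ref{lemma:mathfrak PGKsz asymp 2}, which in turn relies on the explicit formula via Lemma \ref{lemma:prime sum} and the rapid decay of $\Phi_{k_j}$ on vertical lines via Lemma \ref{lem:FourierResult}.
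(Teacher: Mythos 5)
Your proof is correct and takes exactly the approach the paper intends: the paper simply asserts that Corollary \ref{lemma:mathfrak K geq 2 is small 2} is an immediate consequence of Lemma \ref{lemma:mathfrak PGKsz asymp 2}, and your factor-by-factor estimate of the three products in \eqref{def:K3LGKsz}, combined with the bookkeeping $|K| = |K_M| + |K_E| + a + b$ and $b \geq 2$, is precisely the omitted calculation. Your observation that singleton blocks contained in $K_2$ satisfy $\mathfrak P_{G_j;\mathbb K}(s,z) = \mathfrak P_{G_j;\mathbb K,0}(s,z) + \mathfrak P_{G_j;\mathbb K,E}(s,z) \ll \log Q + |s| + |z|$ (via \eqref{def:mathfrak PGKE} and the last statement of Lemma \ref{lemma:mathfrak PGKsz asymp 2}) is the one small point that needed checking, and you handled it correctly.
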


 Let $ \Pi_{K , E, 2 }$ be the set of $ \underline G = \{ G_1 , \ldots , G_\nu \} \in \Pi'_K  $ such that $ |G_j |\geq 2 $ for some $ j \leq \nu $, then $\Pi'_K  = \{ \pi_{K,1} \} \sqcup \Pi_{K,E,2} $, where
$ \pi_{K,1} = \{ \{k \} | k \in K \}  $. To show that the contribution of $\underline G \in \Pi_{K,E,2} $ is small, we need a technical lemma as follows.
\begin{lem}\label{lemma:M1 bound}
Let $ \Phi$ be an even Schwartz function with its Fourier transform compactly supported. Define
\begin{multline}\label{def:M1absolute}
 |\mathfrak M_1|  ( c_s, c_z ; \mathfrak K ( s,z)) :=    \frac{1}{( L_1 L_2 \elltwo )^{c_s} }  \int_{(c_z)}     \int_{( c_s) } |\widetilde{\Psi} (s)   \widetilde{F}_0 (-s,z)   \mathfrak J_2 (-s, z ;\mathbb{L} ) |   
\\
   \times \frac{ | \zeta(1-s)  \zeta (2-s) \zeta(2+2z-s) \zeta(2-2z-s) |  }{    | \zeta(1+2z)\zeta(1 - 2z) \sin ( \pi z) |  }     \mathfrak K ( s, z )
       \\
\times    | \Phi ( -i \mathcal U ( \tfrac12 - \tfrac{s}{2} \pm z )) \Gamma ( 1 - s ) ( \mathcal{G}_{2z, k-1} (s) -  \mathcal{G}_{-2z, k-1} (s) )  | |  ds |      |   dz | .
\end{multline}
Let $ c_s = 1 - \frac{4}{ \log Q}$, $ c_z = \frac{1}{ \log Q}$ and $ A_0 \geq 0 $, then we have 
$$ |\mathfrak M_1|  ( c_s, c_z ;  1+|s|^{A_0} + |z|^{A_0}) \ll   \frac{(L_1 \ell_2 \elltwo )^\epsilon}{ ( L_1 L_2 \elltwo )^{c_s} L_1 \ell_1 \ell_2 }  
    \log \log Q   $$
 for any $\epsilon>0 $.
\end{lem}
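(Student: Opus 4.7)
I will parametrize $s = c_s + iu$, $z = c_z + iw$, reducing the double integral to one in the real variables $(u,w)$, and estimate it regime by regime after localizing with the $\Phi$-factor.

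Applying Lemma \ref{lem:FourierResult} to $\Phi(-i\mathcal U(\tfrac12 - \tfrac s2 \pm z))$: a direct computation shows the argument has bounded imaginary part (of size $O(1)$) and real part $\mathcal U(\pm w - u/2)$, so $|\Phi(-i\mathcal U(\tfrac12 - \tfrac s2 \pm z))| \ll \min\{1, (\mathcal U|w \mp u/2|)^{-A_1}\}$ for any $A_1 \geq 0$. This restricts the effective $(u,w)$-integration to the narrow strip $|w \mp u/2| \lesssim 1/\log Q$ around the line $w = \pm u/2$. Together with super-polynomial decay of $\widetilde\Psi(s)$ in $|u|$ (Mellin transform of a Schwartz function) and the cancellation of the exponential growth $e^{\pi|w|/2}$ in the Gamma-$\mathcal G$ bound from Lemma \ref{lem:MellinofprodofJbessel} against the $e^{-\pi|w|}$ decay of $1/\sin(\pi z)$, the effective integration truncates to $|u|, |w| \lesssim 1$, and the polynomial factor $1 + |s|^{A_0} + |z|^{A_0}$ is absorbed. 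On this reduced region I bound the remaining factors: $|\widetilde F_0(-s,z)| \ll 1$ by Lemma \ref{lem:Fsformula}; $|\mathfrak J_2(-s,z;\mathbb L)| \ll (L_1\ell_2\elltwo)^\epsilon/(L_1\ell_1\ell_2)$ by \eqref{eqn:J2 bound}, which supplies the arithmetic prefactor; $|\zeta(2-s)|$ and $|\zeta(2 \pm 2z - s)|$ are of size $\log Q$ when the distance to their respective poles $s = 1, 1 \pm 2z$ is $O(1/\log Q)$ and decay like the reciprocal of that distance otherwise; $|\zeta(1-s)| \ll 1 + |u|^{1/2+\epsilon}$ via functional equation and Lindel\"of under GRH; Littlewood's RH bound yields $|\zeta(1 \pm 2z)|^{-1} \ll |2z| + \log\log(|w|+3)$; and $|\sin(\pi z)|^{-1} \sim 1/|z|$ near $z=0$.

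Executing the integration along the localization strip $w \approx \pm u/2$ of $dw$-width $1/\log Q$ splits into three regimes. For $|u| \leq 1/\log Q$ the four numerator zetas saturate at $\sim \log Q$, compensated by three $1/\log Q$-factors from the denominator, and integration over the box of area $O(1/\log^2 Q)$ contributes $O(1)$. For $|u| \geq 1$, rapid $\widetilde\Psi$-decay bounds the contribution by $O(1)$. The main contribution comes from $1/\log Q \leq |u| \leq 1$: a direct computation shows the $\pm$-alignment keeps one of $\zeta(2 \pm 2z - s)$ constant at polar size $\log Q$ (indeed $2 \pm 2z - s = 1 + O(1/\log Q)$ along the strip) while the complementary factor together with $\zeta(2-s)$ decays to $\sim 1/|u|$, and $|\zeta(1 \pm 2z)\sin(\pi z)|^{-1} \sim |u|$ cancels one power of $u$; the integrand evaluates to $\sim \log Q/|u|$ per unit $du$. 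Against the strip $dw$-width $1/\log Q$, the integral $\int_{1/\log Q}^{1} du/|u| = \log\log Q + O(1)$ produces the desired logarithm.

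The main obstacle is tracking the four competing $\log Q$-scale contributions from the pole-like behaviors of the numerator zetas $\zeta(2-s), \zeta(2 \pm 2z - s)$ against the three $1/\log Q$-scale contributions from $|\zeta(1 \pm 2z)|^{-1}$ and $|\sin(\pi z)|^{-1}$, so that they combine with the $\Phi$-localization width $1/\log Q$ to yield exactly $\log\log Q$ and no larger power.
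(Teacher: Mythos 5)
Your approach is essentially the same as the paper's: localize via the rapid decay of $\widetilde\Psi$ and of $\Phi(-i\mathcal U(\tfrac12 - \tfrac s2 \pm z))$ (the latter forcing $|\tIm(s \mp 2z)| \lesssim 1/\log Q$), bound the arithmetic factors $\widetilde F_0$ and $\mathfrak J_2$ using Lemma~\ref{lem:Fsformula} and \eqref{eqn:J2 bound}, track the near-polar behaviour of the numerator zeta factors against the zeros of the denominator at $z=0$, and extract $\log\log Q$ from a logarithmically divergent integral $\int_{1/\log Q}^1 du/|u|$. The paper does this after the rescaling $t \mapsto t/\log Q$, $y \mapsto y/\log Q$, which makes the bound $\int_{|t|<\log Q}(1+|t|)^{-1}dt \ll \log\log Q$ cleaner, but this is the same computation in different coordinates.

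Two small inaccuracies in your write-up, neither fatal: in the small-$|u|$ regime you say ``four numerator zetas saturate at $\sim\log Q$,'' but only $\zeta(2-s)$ and $\zeta(2\pm 2z-s)$ are near their pole — $\zeta(1-s) = \zeta(4/\log Q - iu)$ is $O(1)$ there; and you say ``three $1/\log Q$-factors from the denominator,'' but $|\sin(\pi z)|^{-1} \sim \log Q$, so the net contribution from $|\zeta(1+2z)\zeta(1-2z)\sin(\pi z)|^{-1}$ is $\sim 1/\log Q$, not $(1/\log Q)^3$. The arithmetic still closes to $O(1)$ for this regime. Also, you invoke GRH/Lindel\"of for $\zeta(1-s)$ and Littlewood under RH for $\zeta(1\pm 2z)^{-1}$; neither is needed (nor is RH assumed in this lemma) — the $\Phi$-localization to $|z|\ll 1$ and the super-polynomial $\widetilde\Psi$-decay make unconditional convexity and elementary lower bounds for $\zeta$ near $\tRe=1$ entirely sufficient, as in the paper.
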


\begin{proof}
Let $\tRe(s) = 1 - \frac{4}{ \log Q} $ and $ \tRe (z) = \frac{1}{ \log Q}$. By repeated integration by parts, we have $ \widetilde \Psi(s) \ll   |s|^{-A}  $. Also, by Lemma \ref{lem:FourierResult},  $\Phi ( -i \mathcal U ( \tfrac12 - \tfrac{s}{2} \pm z )) \ll ( 1 + | \tIm(s  \mp 2z )| \log Q)^{- A}  $ for any $A \geq 0 $.
By these bounds, \eqref{eqn:J2 bound} and Lemmas \ref{lem:MellinofprodofJbessel} and \ref{lem:Fsformula}, it suffices to show that 
\begin{align*}
  \mathfrak I_1  :=&   \int_{(c_z)}     \int_{( c_s) } 
   \frac{( 1+|s|^{A_0} + |z|^{A_0} )  |  \zeta (2-s) \zeta(2+2z-s) \zeta(2-2z-s) |  e^{\pi | \tIm (z) |}}{  |s|^A ( 1 + | \tIm(s  \mp 2z )| \log Q)^{A}  | \zeta(1+2z)\zeta(1 - 2z) \sin ( \pi z) |   }     
       |  ds |      |   dz | \\
       & \ll  \log \log Q
       \end{align*}
for some $ A >0$. We only consider the minus case of $  \tIm(s  \mp 2z ) $, since the other case holds by the same way. 

 By the bounds
$$ \frac{ e^{\pi|\tIm(z)| }  }{  |  \zeta(1+2z)\zeta(1 - 2z)  \sin( \pi z) | } \ll \frac{1}{ \log Q} + |y| , $$
 $$\zeta( 2-s) \ll    \frac{1}{  \frac{1}{ \log Q} + |t|  } + |t|^\epsilon   , \qquad  \zeta(2\pm 2z - s ) \ll  \frac{1}{  \frac{1}{ \log Q} + |t \mp 2y |  } + |t\mp 2y|^\epsilon   $$
for $ z= \frac{1}{ \log Q} + iy $ and $ s = 1- \frac{4}{\log Q} + it $, we find that
\begin{multline*} 
  \mathfrak I_1 \ll  \int_{-\infty }^\infty \int_{-\infty }^\infty   \frac{  ( 1 + |t|+|y| )^{A_0 } }{ (1+|t|)^A ( 1 + | t - 2y| \log Q)^A   }  \left( \frac{1}{ \log Q} + |y| \right) \\
         \times  \left( \frac{1}{  \frac{1}{ \log Q} + |t|  } + |t|^\epsilon   \right)
          \left(  \frac{1}{  \frac{1}{ \log Q} + |t - 2y |  } + |t - 2y|^\epsilon \right)
          \left(  \frac{1}{  \frac{1}{ \log Q} + |t + 2y |  } + |t + 2y|^\epsilon \right)
           dt dy     
\end{multline*}
for any $ \epsilon >0$. By substituting $ t,y$ to $ \frac{t}{ \log Q}$, $\frac{y}{\log Q} $, we have
\begin{align*} 
\mathfrak I_1 \ll &   \int_{-\infty }^\infty \int_{-\infty }^\infty   \frac{  ( 1 +  \frac{|t|+|y| }{ \log Q  })^{A_0} (1 +|y|)}{ (1+\frac{|t|}{\log Q})^A ( 1 + | t - 2y|  )^A   }              \left( \frac{1}{ 1 + |t|  } + \frac{|t|^\epsilon }{(\log Q)^{1+\epsilon} }  \right)\\
   & \qquad  \times      \left(  \frac{1}{  1 + |t - 2y |  } + \frac{ |t - 2y|^\epsilon}{( \log Q)^{1+\epsilon} } \right)
          \left(  \frac{1 }{ 1+ |t + 2y |  } + \frac{ |t + 2y|^\epsilon}{ (\log Q)^{1+\epsilon}} \right)
            dt dy .
\end{align*}
Because of the factor $(1+\frac{|t|}{\log Q})^A ( 1 + | t - 2y|  )^A$ in the denominator with any choice of $A>0$, we expect that the main contribution comes from the region $|t| \ll \log Q$ and $|t - 2y| \ll 1 $. The integral over this region is bounded by
\est{  \int_{|t| < \log Q} \int_{|t - 2y | \ll 1} (1 + |y|)\left( \frac{1}{1 + |t|} \right) \left( \frac{1}{1 + |t + 2y|} \right) \> dy \> dt \ll \log \log Q. }
Here, the inequality $ 1+|y| \leq ( 1+ |t-2y |)(1+|t+2y|)$ may be useful. 
\end{proof}

Now we are ready to show that the contribution of $\underline G \in \Pi_{K,E,2} $ is small, so that only $ \pi_{K,1}$ contributes.
\begin{lem}\label{lemma:SigmaCtn0piK1}
    Assuming RH and \eqref{conditions mathbbL}, we have
    \begin{align*}
      \Sigma_{\Ctn_0} =&   \sum_{ K_1 \sqcup K_2 \sqcup K_3 = K } (-1)^{|K_2|}   \sum_{\substack{ K_{M }\sqcup K_{  E} =  K_\sing   \\  K_{M} \neq \emptyset  }}  \Sigma_{\Ctn_0 } ( \pi_{K,1} , \mathbb K, K_M , K_E )  \\
      & + O \left(   \frac{Q( \log Q)^{|K|-2} \log \log Q}{ L_1^{2-\epsilon} L_2 \ell_1 (\ell_2 \elltwo )^{1-\epsilon}    }   \right)
      \end{align*}      
    for any $\epsilon>0 $, where $\mathbb K = ( K_1, K_2, K_3 ) $ and $K_\sing = K_1 \sqcup K_3 $.
\end{lem}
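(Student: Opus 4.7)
The plan is to use the decomposition from Lemma \ref{lemma:SigmaCtn0GK} and split $\Pi'_K = \{\pi_{K,1}\} \sqcup \Pi_{K,E,2}$. Since $\mu^*(\pi_{K,1}) = 1$, the partition $\underline G = \pi_{K,1}$ contributes exactly the main term claimed in the lemma statement, so the task reduces to showing that every $\underline G \in \Pi_{K,E,2}$ — i.e.\ each partition containing at least one block of size $\geq 2$ — contributes only to the stated error, uniformly in the fixed combinatorial data $(K_1, K_2, K_3, K_M, K_E)$.

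For such $\underline G$, I would start from the integral representation \eqref{eqn:SigmaCtn0GK 1} and invoke Corollary \ref{lemma:mathfrak K geq 2 is small 2}, which gives the pointwise bound
\[
\mathfrak{K}_{3, L_1 L_2}(\underline G, \mathbb K, K_M, K_E : s, z) \ll (\log Q + |s| + |z|)^{|K|-2} \prod_{G_j \subset K_M} \frac{|\mathfrak{P}_{G_j;\mathbb{K},0}(s,z)|}{\log Q}
\]
on the contour $\tRe(s) = 1 - 4/\log Q$, $\tRe(z) = 1/\log Q$. Since $K_M \neq \emptyset$, the product contains at least one factor, which by \eqref{def:mathfrak PGK0} has the form $\Phi_{k_j}(-i\mathcal{U}(\tfrac{1}{2} - \tfrac{s}{2} \pm z))$ — precisely the rapid-decay $\Phi$-factor built into the definition \eqref{def:M1absolute} of $|\mathfrak{M}_1|$. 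I would retain exactly one such factor to supply decay in $\tIm(s \mp 2z)$ and bound the remaining $|K_M| - 1$ factors trivially by $O(1)$, using that $\Phi_{k_j}$ is bounded while the real part of its argument is $O(1/\log Q)$ on the contour.

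Expanding the binomial $(\log Q + |s| + |z|)^{|K|-2}$ and applying Lemma \ref{lemma:M1 bound} term-by-term with $A_0 \leq |K|-2$ then gives
\[
|\Sigma_{\Ctn_0}(\underline G, \mathbb K, K_M, K_E)| \ll \frac{Q \,(L_1 \ell_2 \elltwo)^\epsilon (\log Q)^{|K|-2} \log\log Q}{L_1^2 L_2 \elltwo \, \ell_1 \ell_2},
\]
after absorbing the factor $Q^{1 - 4/\log Q}$ coming from $Q^s$ and neutralizing $(L_1L_2\elltwo)^{-4/\log Q}$ via the bound $L_1 L_2 \elltwo \ll (\log Q)^{O(1)}$ from \eqref{conditions mathbbL}. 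Since the number of triples $(K_1, K_2, K_3)$, partitions $\underline G \in \Pi_{K,E,2}$, and splittings $K_M \sqcup K_E = K_\sing$ is $O(1)$, summing preserves this bound, which matches the claimed error.

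The main obstacle is that Corollary \ref{lemma:mathfrak K geq 2 is small 2} introduces polynomial growth in $|s|$ and $|z|$ through the factor $(\log Q + |s| + |z|)^{|K|-2}$, while the integrand of $\mathfrak{M}_1$ has genuine singularities near $s = 1$ and $z = 0$ arising from $\zeta(1-s)\zeta(2 \pm 2z - s)/\sin(\pi z)$. Lemma \ref{lemma:M1 bound} is designed precisely for this tension: a single rapid-decay $\Phi$-factor, coupled with the rapid decay of $\widetilde\Psi(s)$, absorbs the polynomial growth, while careful analysis near the singularities loses only $\log\log Q$. Once that lemma is in hand, the rest of the argument is routine combinatorial bookkeeping.
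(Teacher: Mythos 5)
Your proposal is correct and follows the paper's own (very terse) argument essentially verbatim: decompose via Lemma \ref{lemma:SigmaCtn0GK}, isolate $\pi_{K,1}$ as the main term, bound the $\Pi_{K,E,2}$ contribution using \eqref{eqn:SigmaCtn0GK 1}, Corollary \ref{lemma:mathfrak K geq 2 is small 2}, and Lemma \ref{lemma:M1 bound}. Your write-up supplies the bookkeeping the paper leaves implicit, including the correct treatment of the single retained $\Phi$-factor built into $|\mathfrak M_1|$ and the observation that the conditions \eqref{conditions mathbbL} make $(L_1 L_2 \elltwo)^{-4/\log Q}$ harmless.
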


\begin{proof}

By Lemma \ref{lemma:SigmaCtn0GK}, \eqref{eqn:SigmaCtn0GK 1} and the definition of $\Pi_{K,E,2} $ above Lemma \ref{lemma:M1 bound}, it suffices to show that
$$\mathfrak M_1 \left(  1- \frac{4}{ \log Q} ,   \frac{1}{ \log Q}  ;    Q^s   \mathfrak K_{3,   L_1L_2}(  \underline G , \mathbb K , K_M , K_E : s, z ) \right) \ll   \frac{Q( \log Q)^{|K|-2} \log \log Q}{ L_1^{2-\epsilon} L_2 \ell_1 (\ell_2 \elltwo  )^{1-\epsilon}    } $$
for $ \underline G \in \Pi_{K,E,2}$, $K_M \sqcup K_E = K_1 \sqcup K_3 $ and $ K_M \neq \emptyset$. It follows from \eqref{def:M1cKsz}, Corollary \ref{lemma:mathfrak K geq 2 is small 2} and Lemma \ref{lemma:M1 bound}. \end{proof}

To compute $ \Sigma_{\Ctn_0 } ( \pi_{K,1} , \mathbb K, K_M , K_E ) $ in Lemma \ref{lemma:SigmaCtn0piK1}, we see that
\begin{equation}\label{eqn:K3LKsz} \begin{split}
\mathfrak K_{ 3, L_1L_2}& ( \pi_{K,1} , \mathbb K , K_M , K_E : s, z )\\
   = &\prod_{ k_j  \in  K_{M}  }   \mathfrak P_{ \{ k_j \} ; \mathbb K,0} ( s, z)   \prod_{    k_j   \in K_{E }  } ( \mathfrak P_{ \{ k_j \} ; \mathbb K,1} ( s, z)+ O( 1+|s|+|z|)  )\\
   &  \times \prod_{   k_j \in  K_2   } (  \mathfrak P_{ \{ k_j \} ; \mathbb K,0} ( s, z) + \mathfrak P_{ \{ k_j \} ; \mathbb K,1} ( s, z)  +    O( 1+|s|+|z|)  ) 
\end{split}\end{equation}
by \eqref{def:K3LGKsz} and Lemma \ref{lemma:mathfrak PGKsz asymp 2}.
Since $ \mathfrak P_{ \{ k_j \} ; \mathbb K,0} ( s, z)\ll \log Q  $ and $ \mathfrak P_{ \{ k_j \} ; \mathbb K,1} ( s, z) \ll \log Q $, we expect that the contribution of the $O$-terms in \eqref{eqn:K3LKsz} is small. This is justified in the next lemma.
\begin{lem}\label{lemma:SigmaCtn0 SigmaCtn1}
 Assuming RH and \eqref{conditions mathbbL}, we have
 \begin{equation}\label{eqn:SigmaCtn0 SigmaCtn1}
\Sigma_{\Ctn_0 } =  \mathfrak M_1  \left(   1- \frac{4}{ \log Q},  \frac{1}{ \log Q};     Q^s   \mathfrak K_{ 4, L_1L_2}(   s, z )\right)   + O \left( \frac{Q ( \log Q)^{|K|-1}  \log \log Q }{ L_1^{2-\epsilon} L_2 \ell_1 (\ell_2 \elltwo )^{1-\epsilon} } \right)
\end{equation}
 for any $\epsilon>0 $,
 where   $ \mathfrak M_1$ is defined in \eqref{def:M1cKsz} and 
\begin{equation}  \label{eqn:K4Lsz 1}\begin{split}
     & \mathfrak K_{ 4, L_1L_2}(   s, z )    (\log Q)^{-|K|} \\
     = & \prod_{ k_j \in K}    \int_0^\infty  \widehat \Phi_{k_j} (w)  (   Q^{(\frac12 - \frac{s}{2} - z )w} - Q^{(-\frac12  + \frac{s}{2} - z )w} +   Q^{ ( \frac12 - \frac{s}{2} + z )  w}  - Q^{  ( -\frac12 + \frac{s}{2} + z ) w } )    dw \\
  & -    \prod_{    k_j \in  K     }    \bigg(      -2   \int_0^\infty    \widehat \Phi_{k_j} (w) ( Q^{( -\frac12 + \frac{s}{2} + z )w}  + Q^{(-\frac12 + \frac{s}{2} - z )w} ) dw  \bigg).
 \end{split}   \end{equation}
\end{lem}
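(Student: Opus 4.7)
The plan is to derive \eqref{eqn:SigmaCtn0 SigmaCtn1} from Lemma \ref{lemma:SigmaCtn0piK1} in two stages: first, simplify $\mathfrak K_{3, L_1 L_2}(\pi_{K,1}, \mathbb K, K_M, K_E : s, z)$ by discarding lower-order pieces; second, perform the combinatorial sums over $(K_M, K_E)$ and the triple partition $K_1 \sqcup K_2 \sqcup K_3 = K$, converting the result into the explicit integral form \eqref{eqn:K4Lsz 1}.

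For the first stage, by Lemma \ref{lemma:mathfrak PGKsz asymp 2} each factor in the product \eqref{eqn:K3LKsz} has the form (leading term) $+ O(1+|s|+|z|)$, with leading terms $\mathfrak P_{\{k_j\}; \mathbb K, 0}$ for $k_j \in K_M$, $\mathfrak P_{\{k_j\}; \mathbb K, 1}$ for $k_j \in K_E$, and $\mathfrak P_{\{k_j\}; \mathbb K, 0} + \mathfrak P_{\{k_j\}; \mathbb K, 1}$ for $k_j \in K_2$. Each leading factor is bounded by $O(\log Q)$, so expanding and tracking every term containing at least one $O(1+|s|+|z|)$ factor gives a contribution of size at most $(\log Q)^{|K|-1}(1+|s|+|z|)$ to $\mathfrak K_3$. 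Substituting into \eqref{def:SigmaCtn0GK} and bounding via Lemma \ref{lemma:M1 bound} with $A_0 = 1$, together with the $Q^s$ factor evaluated at $\tRe(s) = 1-4/\log Q$, produces exactly the error bound claimed in the lemma.

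For the second stage, the retained piece of $\mathfrak K_3$ equals $\prod_{k_j \in K_M} \mathfrak P_{0}^{(k_j)} \prod_{k_j \in K_E} \mathfrak P_{1}^{(k_j)} \prod_{k_j \in K_2}(\mathfrak P_0^{(k_j)} + \mathfrak P_1^{(k_j)})$. Summing over $K_M \sqcup K_E = K_1 \sqcup K_3$ with $K_M \neq \emptyset$ factors each $k_j \in K_1 \sqcup K_3$ independently and yields
$$\prod_{k_j \in K_1 \sqcup K_3}\bigl(\mathfrak P_0^{(k_j)} + \mathfrak P_1^{(k_j)}\bigr) - \prod_{k_j \in K_1 \sqcup K_3}\mathfrak P_1^{(k_j)}.$$
The ensuing sum $\sum_{K_1 \sqcup K_2 \sqcup K_3 = K}(-1)^{|K_2|}$ factors over individual $k_j$, producing
$$\prod_{k_j \in K} \bigl(A_1(k_j) - A_2(k_j) + A_3(k_j)\bigr) - \prod_{k_j \in K}\bigl(B_1(k_j) - A_2(k_j) + B_3(k_j)\bigr),$$
where $A_i(k_j) := (\mathfrak P_0 + \mathfrak P_1)|_{k_j \in K_i}$ and $B_i(k_j) := \mathfrak P_1|_{k_j \in K_i}$. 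To finish, we plug in \eqref{def:mathfrak PGK0} and \eqref{def:mathfrak PGK1}, use the identity $\Phi(-i\mathcal U \alpha) = \int_{-\infty}^{\infty} \widehat\Phi(w) Q^{\alpha w} dw$ from Lemma \ref{lem:FourierResult}, and exploit the evenness of $\widehat\Phi$ to reflect $\int_{-\infty}^0$ integrals into $\int_0^\infty$ integrals with negated exponents. A direct computation then gives $A_1 - A_2 + A_3 = \log Q \cdot \int_0^\infty \widehat\Phi_{k_j}(w)\bigl(Q^{(1/2-s/2-z)w} - Q^{(-1/2+s/2-z)w} + Q^{(1/2-s/2+z)w} - Q^{(-1/2+s/2+z)w}\bigr)dw$ and $B_1 - A_2 + B_3 = -2\log Q \cdot \int_0^\infty \widehat\Phi_{k_j}(w)\bigl(Q^{(-1/2+s/2+z)w} + Q^{(-1/2+s/2-z)w}\bigr)dw$, exactly matching the two products in \eqref{eqn:K4Lsz 1}.

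The main obstacle is bookkeeping the combinatorial rearrangement alongside the evenness-based reflections without sign errors. The most delicate case is $k_j \in K_3$: the extra $-\Phi(-i\mathcal U(-1/2+s/2+z))$ term in $\mathfrak P_{\{k_j\}; \mathbb K, 1}$ must combine with $\mathfrak P_{\{k_j\}; \mathbb K, 0}$ (via the Fourier identity) to produce the clean difference of two half-line integrals appearing in $A_3$. A secondary point is verifying that the multi-variable decay estimate underlying Lemma \ref{lemma:M1 bound} remains valid when $K_M$ contributes several $\Phi_{k_j}$-factors to the integrand; inspection of the proof shows these extra factors only improve the decay in $\tIm(s \mp 2z)$, so the $\log\log Q$ bound carries through unchanged.
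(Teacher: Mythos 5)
Your two-stage plan is the same as the paper's, and Stage 2 — factoring the sum over $K_M \sqcup K_E$ as a full product minus the $K_M = \emptyset$ product, then factoring the $(-1)^{|K_2|}$-weighted sum over triples pointwise and using $\Phi(-i\mathcal{U}\alpha) = \int_{\mathbb{R}} \widehat\Phi(w) Q^{\alpha w}\,dw$ together with evenness of $\widehat\Phi$ — is computed correctly and reproduces \eqref{eqn:K4Lsz 1}.

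Stage 1, however, is stated too crudely to actually invoke Lemma~\ref{lemma:M1 bound}. You bound $\mathfrak K_3 - \mathfrak K_4$ by $(\log Q)^{|K|-1}(1+|s|+|z|)$, which discards the $K_M$-factors $\mathfrak P_{\{k_j\};\mathbb K,0}(s,z)$. But these must be retained: the definition \eqref{def:M1absolute} of $|\mathfrak M_1|$ has exactly one $|\Phi(-i\mathcal{U}(\tfrac12 - \tfrac s2 \pm z))|$ built into its integrand, and that factor is what makes the $z$-integral converge (the $\Gamma$--$\mathcal G$ bound of Lemma~\ref{lem:MellinofprodofJbessel} gives no decay in $|\tIm(z)|$ once $\tRe(s) \approx 1$, since $2\sigma - 2 \approx 0$; only the pairing of $\widetilde\Psi(s)$-decay in $|\tIm(s)|$ with $\Phi$-decay in $|\tIm(s \mp 2z)|$ pins down $\tIm(z)$). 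The paper therefore keeps the bound in the form
$$
\mathfrak K_3 - \mathfrak K_4 \ll \big((1+|s|+|z|)\log Q\big)^{|K|-1} \prod_{k_j \in K_M} \frac{|\mathfrak P_{\{k_j\};\mathbb K,0}(s,z)|}{\log Q},
$$
and it is precisely one of these $\Phi$-factors (available because $K_M \neq \emptyset$) that supplies the $|\Phi|$ demanded by Lemma~\ref{lemma:M1 bound}; the remaining ones are bounded by $O(1)$ (your "secondary point"). Relatedly, the power of $(1+|s|+|z|)$ should be taken as $|K|-1$, not $1$, since multiple $K_E$ or $K_2$ factors can contribute an $O(1+|s|+|z|)$ term simultaneously, though this is harmless because Lemma~\ref{lemma:M1 bound} holds uniformly in $A_0$. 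Since your "secondary point" already shows you understand where the $\Phi$-decay lives, the fix is simply to carry $\prod_{k_j \in K_M} |\mathfrak P_{\{k_j\};\mathbb K,0}|/\log Q$ explicitly through the bound rather than absorbing each $\mathfrak P_0$ into a $\log Q$.
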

\begin{proof}
Define
\begin{equation}\label{def:K4LKsz} \begin{split}
  \mathfrak K_{ 4, L_1L_2}( \mathbb K , K_M , K_E : s, z )  := &\prod_{ k_j  \in  K_{M }  }   \mathfrak P_{ \{ k_j \} ; \mathbb K,0} ( s, z)   \prod_{   k_j  \in K_{E}  }  \mathfrak P_{ \{ k_j \} ; \mathbb K,1} ( s, z)  \\
   &\times  \prod_{    k_j \in  K_2   } (  \mathfrak P_{ \{ k_j \} ; \mathbb K,0} ( s, z) + \mathfrak P_{ \{ k_j \} ; \mathbb K,1} ( s, z)     )
\end{split}\end{equation}
 for $\mathbb K = (K_1 , K_2 , K_3) $.
    By expanding the product \eqref{eqn:K3LKsz}, we find that
\begin{multline*}
  \mathfrak K_{ 3, L_1L_2}( \pi_{K,1} , \mathbb K , K_M , K_E : s, z ) -   \mathfrak K_{ 4, L_1L_2}( \mathbb K , K_M , K_E : s, z )  \\
  \ll \left(  (1+|s|+|z|) \log Q \right)^{|K|-1} \prod_{ k_j  \in  K_{M}  }  \frac{|  \mathfrak P_{ \{ k_j \} ; \mathbb K,0} ( s, z)|}{\log Q } ,
  \end{multline*}
  which is similar to the bound in Corollary \ref{lemma:mathfrak K geq 2 is small 2}, but essentially $\log Q $ larger. By Lemma \ref{lemma:M1 bound} and the above inequality, \eqref{eqn:SigmaCtn0 SigmaCtn1} holds with 
 \begin{equation}\label{def:K4Lsz}
     \mathfrak K_{ 4, L_1L_2}(    s, z )  := \sum_{ K_1 \sqcup K_2 \sqcup K_3 = K } (-1)^{|K_2|}     \sum_{\substack{ K_{M}\sqcup K_{E} =  K_\sing   \\  K_{M} \neq \emptyset  }}  \mathfrak K_{ 4, L_1L_2}(  \mathbb K , K_M , K_E : s, z ) .
 \end{equation} 

Next, we compute $\mathfrak K_{ 4, L_1L_2}(    s, z ) $. The inner sum in \eqref{def:K4Lsz} equals to 
\begin{align*}
      \sum_{  K_{M}\sqcup K_{E} =  K_1 \sqcup K_3   }  & \mathfrak K_{ 4, L_1L_2}(  \mathbb K , K_M , K_E : s, z ) -      \mathfrak K_{ 4, L_1L_2}(  \mathbb K ,  \emptyset,  K_\sing : s, z ) \\
 = & \prod_{    k_j \in  K    } (  \mathfrak P_{ \{ k_j \} ; \mathbb K,0} ( s, z) + \mathfrak P_{ \{ k_j \} ; \mathbb K,1} ( s, z)     ) \\
 & - \prod_{   k_j  \in K_1\sqcup K_3   }  \mathfrak P_{ \{ k_j \} ; \mathbb K,1} ( s, z)   \prod_{    k_j \in  K_2    } (  \mathfrak P_{ \{ k_j \} ; \mathbb K,0} ( s, z) + \mathfrak P_{ \{ k_j \} ; \mathbb K,1} ( s, z)     ).
\end{align*}
By the definitions \eqref{def:mathfrak PGK0} and \eqref{def:mathfrak PGK1} and the fact that each $\Phi_{k_j} $ is even, we find that 
\begin{align*}
     & \sum_{ K_1 \sqcup K_2 \sqcup K_3 = K } \frac{ (-1)^{|K_2|} }{(\log Q)^{|K|}} \prod_{    k_j \in  K    }   (  \mathfrak P_{ \{ k_j \} ; \mathbb K,0} ( s, z) + \mathfrak P_{ \{ k_j \} ; \mathbb K,1} ( s, z)     ) \\
  =  &  \sum_{ K_1 \sqcup K_2 \sqcup K_3 = K }       \prod_{    k_j \in  K_1    }     \int_0^\infty  \widehat \Phi_{k_j} (w) Q^{(\frac12 - \frac{s}{2} - z )w} dw     \prod_{    k_j \in  K_2    }  \left( -  \int_0^\infty  \widehat \Phi_{k_j} (w) Q^{(-\frac12 + \frac{s}{2} - z )w} dw   \right) \\
 &  \times \prod_{    k_j \in  K_3    }      \int_0^{\infty}  \widehat \Phi_{k_j} (w)  (   Q^{ ( \frac12 - \frac{s}{2} + z )  w}  - Q^{  ( -\frac12 + \frac{s}{2} + z ) w }   ) dw   \\
 =  & \prod_{ k_j \in K}    \int_0^\infty  \widehat \Phi_{k_j} (w)  (   Q^{(\frac12 - \frac{s}{2} - z )w} - Q^{(-\frac12  + \frac{s}{2} - z )w} +   Q^{ ( \frac12 - \frac{s}{2} + z )  w}  - Q^{  ( -\frac12 + \frac{s}{2} + z ) w } )    dw     
\end{align*}
and similarly
\begin{align*}
    &\sum_{ K_1 \sqcup K_2 \sqcup K_3 = K }    \frac{(-1)^{|K_2|} }{(\log Q)^{|K|}}     \prod_{   k_j  \in K_1\sqcup K_3   }    \mathfrak P_{ \{ k_j \} ; \mathbb K,1} ( s, z)   \prod_{    k_j \in  K_2    } (  \mathfrak P_{ \{ k_j \} ; \mathbb K,0} ( s, z) + \mathfrak P_{ \{ k_j \} ; \mathbb K,1} ( s, z)     ) \\
  = &   \sum_{ K_1 \sqcup K_2 \sqcup K_3 = K }        \prod_{    k_j \in  K_1    }   \left(   -    \int_{-\infty}^0 \widehat \Phi_{k_j} (w) Q^{(\frac12 - \frac{s}{2} - z )w} dw     \right) \prod_{    k_j \in  K_2    }   \left( -   \int_0^\infty   \widehat \Phi_{k_j} (w) Q^{(-\frac12 + \frac{s}{2} - z )w} dw   \right) \\
  &  \times
  \prod_{    k_j \in  K_3    }   \left(  -     \int_{-\infty}^0 \widehat \Phi_{k_j} (w) Q^{(\frac12 - \frac{s}{2} +z )w} dw - \int_0^\infty  \widehat \Phi_{k_j} (w) Q^{(-\frac12 + \frac{s}{2} + z )w} dw   \right) \\
  =          & \prod_{    k_j \in  K     }   \bigg(   - 2  \int_0^\infty  \widehat \Phi_{k_j} (w) Q^{(-\frac12 + \frac{s}{2} + z )w} dw       - 2 \int_0^\infty   \widehat \Phi_{k_j} (w) Q^{(-\frac12 + \frac{s}{2} - z )w} dw    \bigg) .
  \end{align*}
 These equations imply \eqref{eqn:K4Lsz 1}.
 \end{proof}

Next we perform the change of variables $ u = - \frac12 + \frac{s}{2} + z $ and $ v =  - \frac12 + \frac{s}{2} - z $, or equivalently $ s = u+v+1 $ and $ z = \frac12 (u-v )$, then we have
\begin{multline} \label{eqn:SigmaCtn1 SigmaCtn2 2}
 \mathfrak M_1  \left(   1- \frac{4}{ \log Q},  \frac{1}{ \log Q};      Q^s  \mathfrak K_{ 4, L_1L_2}(   s, z )\right)  \\
 = \frac{ Q ( \log Q)^{|K|}}{ L_1 L_2 e}   \mathfrak M_2  \left(    \frac{-1}{ \log Q},    \frac{ - 3}{ \log Q};     Q^{u+v}   \mathfrak K_{ 5, L_1L_2}(   u, v )\right)   
\end{multline}
and
 \begin{multline} \label{def:M2cuv}
\mathfrak M_2  \left( c_u,  c_v;        \mathfrak K (   u, v )\right)      : = \frac{ i^{-k}}{8\pi}    \int_{(c_v)}     \int_{(  c_u ) } \widetilde{\Psi} (u+v+1)    \mathfrak J_3 ( u,v ;\mathbb{L} ) 
\\ \times
   \frac{ \zeta(-u-v)  \zeta (1-u-v) \zeta(1-2v) \zeta(1-2u)  }{    \zeta(1+u-v)\zeta(1 - u+v) \sin ( \frac{\pi}{2} (u-v) ) }        \mathfrak K (   u,v )
       \\ \times
     \Gamma ( -u-v ) ( \mathcal{G}_{u-v, k-1} (u+v+1) -  \mathcal{G}_{v-u, k-1} (u+v+1) )    dudv,
\end{multline}
where
\begin{equation}\label{def:J3}
 \mathfrak J_3 ( u,v ;\mathbb{L} ) :=     \widetilde{F}_0 \left(-u-v-1, \frac{ u-v}{2} \right)  \frac{ \mathfrak J_2 (-u-v-1, \frac{u-v}{2} ; \mathbb{L} ) }{(2 \pi L_1 L_2 \elltwo )^{u+v} }
\end{equation}
and
$$ \mathfrak K_{ 5, L_1L_2}   (   u,v )  := \mathfrak K_{4, L_1L_2} ( u+v+1 , \tfrac12 (u-v)) ( \log Q)^{-|K|}    . $$
Note that by Lemma \ref{lem:Fsformula}, \eqref{def:J1euler}, \eqref{def:J2euler} and \eqref{def:J3}, we have
\begin{equation}\label{eqn:J3 bound}
  \mathfrak J_3 ( u,v ; \mathbb{L} )   \ll  \frac{(L_1 \ell_2 \elltwo )^\epsilon  (1+\elltwo^{ \tRe(v-u) }) }{L_1 \ell_1 \ell_2 (L_1 L_2 \elltwo )^{ \tRe (u+v)}}.
\end{equation}
Since
$\Phi_{k_j} ( i\mathcal U u ) = \int_0^\infty \widehat \Phi_{k_j} (w) Q^{uw} dw+\int_0^\infty \widehat \Phi_{k_j} (w) Q^{-uw} dw $, by \eqref{eqn:K4Lsz 1} with the substitution to $u,v$, 
we find that
\begin{equation}  \label{eqn:K5Luv 1}\begin{split}
     \mathfrak K_{ 5, L_1L_2}   (   u,v ) 
     & =    \prod_{ k_j \in K}   \bigg( \Phi_{k_j} ( i\mathcal U u ) + \Phi_{k_j} ( i\mathcal U v ) - 2\int_0^\infty \widehat \Phi_{k_j} (w)  ( Q^{uw} + Q^{vw} ) dw      \bigg)\\
   & \qquad -    \prod_{    k_j \in  K     }   \bigg(      - 2 \int_0^\infty  \widehat \Phi_{k_j} (w) ( Q^{uw} + Q^{vw})  dw  \bigg)\\
       & = \sum_{ \substack{ K_1 \sqcup K_2 \sqcup K_3 = K  \\  K_3 \neq K   } }  
       \mathfrak K_{5, L_1 L_2} ( K_1, K_2 , K_3 : u,v), 
 \end{split}   \end{equation}
where
\begin{multline}\label{def:K5LKuv}
    \mathfrak K_{5, L_1 L_2} ( K_1, K_2 , K_3 : u,v) \\
    :=       
       \prod_{ k_j \in K_1}      \Phi_{k_j} ( i\mathcal U u )  \prod_{k_j \in K_2 } \Phi_{k_j} ( i\mathcal U v )   \prod_{k_j \in K_3} \left( -2 \int_0^\infty \widehat \Phi_{k_j} (w) ( Q^{uw} + Q^{vw} ) dw  \right).
\end{multline}

We now analyze $ \mathfrak M_2  \left(   \frac{ - 1}{ \log Q},    \frac{-3}{ \log Q};      Q^{u+v}  \mathfrak K_{ 5, L_1L_2}(   u, v )\right) $. As an analogue of the $P_1^{it}P_2^{-it}$ structure discussed in the introduction, we show that the main contribution comes from $ \mathfrak K_{5, L_1 L_2} ( K_1, K_2 , K_3 : u,v)$ with $K_1  , K_2  \neq \emptyset $ in \eqref{eqn:K5Luv 1}. 
In this case, $\mathfrak K_{5, L_1 L_2} ( K_1, K_2 , K_3 : u,v)$ contains both factors $  \Phi_{\ell_1} ( i\mathcal U u ) $ and $ \Phi_{\ell_2} ( i\mathcal U v ) $
for some $ \ell_1 \neq \ell_2 $. Hence, define
\begin{equation}\label{def:K6Luv}
    \mathfrak K_{ 6, L_1L_2}   (   u,v ) := \sum_{ \substack{ K_1 \sqcup K_2 \sqcup K_3 = K  \\  K_1 ,K_2  \neq \emptyset   } }  
       \mathfrak K_{5, L_1 L_2} ( K_1, K_2 , K_3 : u,v),
\end{equation}
then we justify the above discussion.
\begin{lem} \label{lemma:SigmaCtn0 to SigmaCtn1} 
 Assuming RH and \eqref{conditions mathbbL}, we have
 \begin{equation} 
\Sigma_{\Ctn_0 } = \frac{ Q ( \log Q)^{|K|}  }{ L_1 L_2 \elltwo}  \Sigma_{\Ctn, 1}    + O\left( \frac{Q ( \log Q)^{|K|-1}  \log \log Q }{ L_1^{2-\epsilon} L_2 \ell_1 (\ell_2 \elltwo)^{1-\epsilon} } \right)
\end{equation}
 for any $\epsilon>0 $, where 
 \begin{equation}\label{def:SigmaCtn1}
   \Sigma_{\Ctn, 1} :=  \mathfrak M_2  \left(     \frac{-1}{ \log Q},  \frac{-3}{ \log Q};  Q^{u+v}      \mathfrak K_{ 6, L_1L_2}(  u,v )\right).  
   \end{equation}
\end{lem}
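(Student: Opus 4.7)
The plan is to first decompose $\mathfrak K_{5, L_1L_2}(u,v) - \mathfrak K_{6, L_1L_2}(u,v) = \mathcal D_u(u,v) + \mathcal D_v(u,v)$, where, reading off \eqref{eqn:K5Luv 1} and \eqref{def:K6Luv},
\est{\mathcal D_v(u,v) := \sum_{\substack{K_2 \sqcup K_3 = K \\ K_2 \neq \emptyset}} \prod_{k_j \in K_2} \Phi_{k_j}(i\mathcal U v)\prod_{k_j \in K_3}\left(-2\int_0^\infty \widehat\Phi_{k_j}(w)(Q^{uw}+Q^{vw}) \, dw\right)}
collects the partitions with $K_1 = \emptyset$ (those missing the factor $\Phi_{k_j}(i\mathcal U u)$), and $\mathcal D_u$ is the symmetric partner. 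The remainder of the integrand of $\mathfrak M_2$ in \eqref{def:M2cuv} is essentially symmetric under $u \leftrightarrow v$, up to a mild, bound-irrelevant asymmetry in $\mathfrak J_3$, so it is enough to bound $\mathfrak M_2(c_u, c_v; Q^{u+v}\mathcal D_v)$ and invoke this near-symmetry for $\mathcal D_u$.

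The decisive step is a simultaneous leftward shift of both contours by the same amount $\delta = 3/2$, onto $c_u' = -3/2 - 1/\log Q$ and $c_v' = -3/2 - 3/\log Q$. Because $u$ and $v$ move together, $z = (u-v)/2$ is preserved, so by Lemma \ref{lem:Fsformula} $\widetilde F_0(-u-v-1, z)$ remains in its absolute-convergence region $|\tRe(z)| < 1/4$ along the deformation. The denominators $\zeta(1\pm(u-v))$ and $\sin(\pi(u-v)/2)$ are untouched, and the poles of $\zeta(1-2u), \zeta(1-2v), \zeta(1-u-v)$ and $\Gamma(-u-v)$ at $u=0$, $v=0$, and $u+v \in \{0,1,2,\ldots\}$ all lie strictly to the right of the path. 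The only pole crossed is the simple pole of $\zeta(-u-v)$ at $u+v = -1$. At that locus $u+v+1 = 0$, so the factor $\mathcal G_{u-v, k-1}(u+v+1) - \mathcal G_{v-u, k-1}(u+v+1)$ reduces to $\mathcal G_{u-v, k-1}(0) - \mathcal G_{v-u, k-1}(0)$, which vanishes by the orthogonality identity \eqref{eqn:diffJbesselat0}. This is the phantom cancellation flagged in \S \ref{sec:outline}, and it hinges on $k$ being even. Therefore no residue is picked up, and
\est{\mathfrak M_2(c_u, c_v; Q^{u+v}\mathcal D_v) = \mathfrak M_2(c_u', c_v'; Q^{u+v}\mathcal D_v).}

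On the shifted contour, $|Q^{u+v}| \ll Q^{-3}$, and \eqref{eqn:J3 bound} gives $|\mathfrak J_3(u, v;\mathbb L)| \ll (L_1 L_2 \elltwo)^{3+\epsilon}/(L_1 \ell_1 \ell_2)$; each $B_k(u, v)$ factor within $\mathcal D_v$ is trivially $O(1)$ since $|Q^{uw}|, |Q^{vw}| \leq 1$ for $w > 0$ and $\tRe(u), \tRe(v) < 0$. The remaining factors are tame: $\widetilde\Psi(u+v+1)$ provides polynomial decay in $|\tIm(u+v)|$; the combined ratio $(\mathcal G_{u-v, k-1}(u+v+1) - \mathcal G_{v-u, k-1}(u+v+1))/\sin(\pi(u-v)/2)$ admits, by the same manipulations used in the proof of Lemma \ref{lem:MellinofprodofJbessel}, the closed form
\est{\frac{-2(-1)^{k/2}\Gamma(u + k/2)\Gamma(v + k/2)\cos(\pi(u+v)/2)}{\pi \, \Gamma(k/2 - u)\Gamma(k/2 - v)},}
which when paired with $\Gamma(-u-v)$ grows only polynomially in the imaginary parts (the Stirling exponentials cancel); the surviving $\zeta$-factors are bounded on the shifted lines; and $1/\sin(\pi(u-v)/2)$ gives exponential decay in $|\tIm(u-v)|$. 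Integrating yields $|\mathfrak M_2(c_u', c_v'; Q^{u+v}\mathcal D_v)| \ll Q^{-3+\epsilon}(L_1 L_2 \elltwo)^{3+\epsilon}/(L_1 \ell_1 \ell_2)$, and multiplying by the prefactor $Q(\log Q)^{|K|}/(L_1 L_2 \elltwo)$ from \eqref{eqn:SigmaCtn1 SigmaCtn2 2} gives a contribution of size $O(Q^{-2+\epsilon}(\log Q)^{|K|})$. Since $L_1 L_2 \elltwo \ll (\log Q)^{|K|+4}$ by \eqref{conditions mathbbL}, this is negligible compared to the stated error; the bound for $\mathcal D_u$ is obtained by the symmetric argument, shifting the $v$-contour instead.

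The principal obstacle is purely bookkeeping: confirming rigorously that no unexpected poles are crossed along the deformation, and verifying that the residue at $u+v=-1$ really does vanish identically through \eqref{eqn:diffJbesselat0} (a feature special to the even weight $k$, reflecting the orthogonality of the holomorphic cusp spectrum to the Eisenstein spectrum that underlies the whole argument).
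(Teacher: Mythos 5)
Your decomposition of $\mathfrak K_{5,L_1L_2} - \mathfrak K_{6,L_1L_2}$ into the $K_1 = \emptyset$ piece ($\mathcal D_v$) and the $K_2 = \emptyset$ piece ($\mathcal D_u$) matches the paper exactly, but the contour shift that follows has a fatal gap: you never control the factors $\Phi_{k_j}(i\mathcal U v)$ for $k_j \in K_2 \neq \emptyset$ after moving $v$ to $\tRe(v)\approx -3/2$. Writing $v = c_v' + it$, the argument of $\Phi_{k_j}$ is $-\mathcal U t + i\mathcal U c_v'$, and by Lemma \ref{lem:FourierResult} (with $\widehat\Phi_{k_j}$ supported in $(-\sigma_{k_j},\sigma_{k_j})$) one has $|\Phi_{k_j}(i\mathcal U v)| \ll e^{2\pi\sigma_{k_j}\mathcal U|c_v'|} = Q^{\sigma_{k_j}|c_v'|}$, with polynomial but no exponential decay in $|t|$. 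With $|c_v'|\approx 3/2$ this is $Q^{3\sigma_{k_j}/2}$, which for $\sigma_{k_j}$ near $4$ is of order $Q^6$ and overwhelms the $Q^{-3}$ you gain from $Q^{u+v}$. The same objection kills the symmetric treatment of $\mathcal D_u$ with a leftward shift of $u$. You itemize the decay of $\widetilde\Psi$, the $\mathcal G/\sin$ ratio, the $\zeta$-factors and the $\mathfrak J_3$ bound, but the $\Phi_{k_j}(i\mathcal U v)$ factors never appear in your estimate, and nothing else in the integrand compensates their growth.

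The correct move, and the one the paper makes, is to shift only \emph{one} contour and only by a fixed small distance. For the $K_1 = \emptyset$ terms, the $u$-contour is moved to $\tRe(u) = -1/7$ while $v$ stays on $\tRe(v) = -3/\log Q$, so the $\Phi_{k_j}(i\mathcal U v)$ factors remain $O(1)$; the integrals $\int_0^\infty\widehat\Phi_{k_j}(w)(Q^{uw}+Q^{vw})\,dw$ are still $O(1)$ since $\tRe(u),\tRe(v)<0$; and $|Q^{u+v}| \ll Q^{-1/7}$ gives a saving that, after multiplication by the prefactor $Q(\log Q)^{|K|}/(L_1L_2\elltwo)$ from \eqref{eqn:SigmaCtn1 SigmaCtn2 2}, comfortably beats the claimed error. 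Crucially, this shift never reaches $u+v=-1$, so the pole of $\zeta(-u-v)$ and the orthogonality identity \eqref{eqn:diffJbesselat0} play no role in this lemma; the phantom cancellation you single out as the decisive ingredient was already spent at \eqref{eqn:SigmaCtn0 1}, when the $s$-contour was pushed past $s=0$. Two further, smaller issues with your version: shifting $u$ and $v$ one after the other (as a genuine double-contour deformation requires) would sweep past poles of $1/\zeta(1+u-v)$ at $\tRe(u-v)=-1/2$ under RH unless you first pass to the $(u+v,\,u-v)$ variables; and \eqref{eqn:J3 bound} is derived from \eqref{eqn:J2 bound}, which is stated only for $-10/\log Q \le \tRe(-u-v-1) \le 1+10/\log Q$, a range your shifted contour leaves by a wide margin. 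Both of these could in principle be patched, but the unbounded growth of $\Phi_{k_j}(i\mathcal U v)$ is an obstruction you cannot argue around by any rearrangement of the shift.
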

\begin{proof}
By Lemma \ref{lemma:SigmaCtn0 SigmaCtn1} and \eqref{eqn:SigmaCtn1 SigmaCtn2 2}, it is enough to show that 
$$    \mathfrak M_2  \left(     \frac{-1}{ \log Q},  \frac{-3}{ \log Q};      Q^{u+v}(  \mathfrak K_{ 5, L_1L_2}(  u,v )-\mathfrak K_{ 6, L_1L_2}(   u,v))\right) \ll \frac{ \elltwo^{ \epsilon}     }{ L_1^{1-\epsilon}   \ell_1 \ell_2^{1-\epsilon} } \frac{ \log \log Q}{ \log Q} .$$
  By \eqref{eqn:K5Luv 1} and \eqref{def:K6Luv}, 
 $ \mathfrak K_{ 5, L_1L_2}(  u,v )-\mathfrak K_{ 6, L_1L_2}(   u,v)$ is the sum of $\mathfrak K_{5, L_1 L_2} ( K_1,K_2 , K_3 : u,v)$ over $ K_1 \sqcup K_2 \sqcup K_3 = K $, $K_3 \neq K $ and  $K_1 $ or $ K_2 = \emptyset $.
 So it suffices to show that
\begin{equation}\label{eqn:lemma sigma Ctn01}
  \mathfrak M_2  \left(     \frac{-1}{ \log Q},  \frac{-3}{ \log Q};      Q^{u+v} \mathfrak K_{5, L_1 L_2} ( \emptyset ,K_2 , K_3 : u,v)   \right) \ll \frac{ \elltwo^{ \epsilon}     }{ L_1^{1-\epsilon}   \ell_1 \ell_2^{1-\epsilon} } \frac{ \log \log Q}{ \log Q} 
\end{equation}
 for $ K_2 \sqcup K_3 = K$ and $ K_2 \neq \emptyset $.  The case $ K_2 = \emptyset $ would hold similarly.

 By shifting the $u$-contour to $ \tRe(u) = - \frac17 $ and then applying Lemma \ref{lem:MellinofprodofJbessel} and 
 $$ \int_0^{\infty} \widehat{\Phi}_k(w) (Q^{uw} + Q^{vw}) \> dw \ll \int_0^{\infty} \widehat{\Phi}_k(w) (Q^{-w/7 } + e^{-3w}) \> dw\ll 1,$$
  we find that
 \begin{multline*}  
   \mathfrak M_2  \left(     \frac{-1}{ \log Q},  \frac{-3}{ \log Q};      Q^{u+v} \mathfrak K_{5, L_1 L_2} ( \emptyset ,K_2 , K_3 : u,v)   \right) \\
   \ll Q^{-\frac17} \int_{( \frac{- 3}{ \log Q})}     \int_{(    \frac{ - 1}{ 7} ) }   \prod_{k_j \in K_2 }| \Phi_{k_j} ( i\mathcal U v ) |   \frac{|  \mathfrak J_3 ( u,v  ; \mathbb{L}) |}{|u+v|^A} 
  \frac{ |  \zeta(1-2v) \zeta(1-2u) | | du||dv|  }{  |  \zeta(1+u-v)\zeta(1 - u+v)  | }  \ll Q^{\frac{-1}{7} + \epsilon}
\end{multline*}
for any $\epsilon>0$. This implies \eqref{eqn:lemma sigma Ctn01}. 
\end{proof}
In the next section, we compute $ \Sigma_{\Ctn,1}$ and conclude the proof of Proposition \ref{prop:Ctn0}.

\section{Residue calculation: computation of $\Sigma_{\Ctn,1}$} \label{sec:ctnrescalc}
In this section, assuming \eqref{conditions mathbbL} for $\mathbb L$, we compute $ \Sigma_{\Ctn, 1}  $ defined in \eqref{def:SigmaCtn1}. By \eqref{def:K5LKuv} and \eqref{def:K6Luv}, we have
\begin{align*}
       \mathfrak K_{ 6, L_1L_2}   (   u,v ) = & \sum_{ \substack{ K_1 \sqcup K_2 \sqcup K_3 \sqcup K_4  = K  \\  K_1 ,K_2  \neq \emptyset   } }   (-2)^{|K_3|+|K_4|}
           \prod_{k_j \in K_1}  \Phi_{k_j} ( i\mathcal U u )   \prod_{k_j \in K_2} \Phi_{k_j} ( i\mathcal U v )  \\
       & \prod_{k_j \in K_3} \int_0^\infty \widehat \Phi_{k_j} (w_{k_j})   Q^{uw_{k_j}}   dw_{k_j} \prod_{k_j \in K_4} \int_0^\infty \widehat \Phi_{k_j} (w_{k_j})   Q^{vw_{k_j}}   dw_{k_j} .
\end{align*}
By Fourier inversion and the change of variables, we have
$$ \Phi_{K_1} ( i \mathcal U u ) := \prod_{k_j \in K_1}  \Phi_{k_j} ( i\mathcal U u )  = \int_{-\infty}^\infty \widehat \Phi_{K_1} ( t_1 +W_1 ) Q^{-u (t_1 +W_1) } dt_1  ,  $$
and after integrating by parts twice
$$ \Phi_{K_2} ( i \mathcal U v )  =   \int_{-\infty}^\infty \widehat \Phi''_{K_2} ( t_2 +W_2  ) \frac{ Q^{-v (t_2 +W_2 ) } }{ v^2 ( \log Q)^2 } dt_2 ,   $$
where $$ W_1 := 1+ \sum_{ k_j \in K_3 } w_{k_j} ,\qquad W_2 := 1+ \sum_{k_j \in K_4 } w_{k_j} .$$
Therefore,
\begin{align*}
      Q^{u+v} \mathfrak K_{ 6, L_1L_2} &  (   u,v ) =  \sum_{ \substack{ K_1 \sqcup K_2 \sqcup K_3 \sqcup K_4  = K  \\  K_1 ,K_2  \neq \emptyset   } }  \frac{  (-2)^{|K_3|+ |K_4|} }{   ( \log Q)^2 } 
           \int_{[0,\infty)^{|K_3|+|K_4|}    }  \int_{-\infty}^\infty       \int_{-\infty}^\infty      \\
       & \frac{Q^{  -ut_1   -v t_2  }}{v^2} 
        \widehat \Phi_{K_1} ( t_1 +W_1) \widehat \Phi''_{K_2} ( t_2 +W_2) dt_1 dt_2  
        \prod_{k_j \in K_3 \sqcup K_4}   \left( \widehat \Phi_{k_j} (w_{k_j})       dw_{k_j} \right).
\end{align*}
The reason we had $  \widehat \Phi''_{K_2} ( t_2 ) $ is that the factor $ \frac1{v^2} $ provides an absolute convergence of the integrals in $\Sigma_{\Ctn,1} $ so that we can change their orders. We obtain
\begin{multline}  \label{eqn:Ctn1 to Ctn2}
  \Sigma_{\Ctn,1}   =  \sum_{ \substack{ K_1 \sqcup K_2 \sqcup K_3 \sqcup K_4  = K  \\  K_1 ,K_2  \neq \emptyset   } }  \frac{  (-2)^{ |K_3|+ |K_4|} }{  ( \log Q)^2 }  \\
             \int_{[0,\infty)^{|K_3|+|K_4|}    }    \Sigma_{\Ctn,2} (W_1 , W_2 )
         \prod_{k_j \in K_3 \sqcup K_4}   \left( \widehat \Phi_{k_j} (w_{k_j})       dw_{k_j} \right),
\end{multline}
where 
\begin{multline}\label{def:SigmaCtn2}
    \Sigma_{\Ctn,2}(W_1, W_2 ) 
    :=  \int_{-\infty}^\infty       \int_{-\infty}^\infty   \mathfrak M_2 \left(   \frac{-1}{ \log Q} , \frac{-3}{ \log Q} ;  
         \frac{  Q^{  - u  t_1  - v  t_2   }}{v^2}    \right) \\
         \widehat \Phi_{K_1} ( t_1 +W_1 ) \widehat \Phi''_{K_2} ( t_2 +W_2 ) dt_1 dt_2   .
\end{multline}

The factor $Q^{  - u  t_1  - v  t_2  }$ essentially determines the size of $\Sigma_{\Ctn,2}$. Depending on whether $   t_1 $ is positive or negative, we shift the $u$-integral to the right or left, respectively, so that the power of $Q$ becomes smaller after the shifts. We do the same for $  t_2$ and the $v$-integral. Then we expect to collect residues when we shift the integrals to the right and the resulting integrals are expected to be small. These observations are justified in the following two lemmas.
\begin{lem}\label{lemma:SigmaCtn2 1}
    Assume \eqref{conditions mathbbL} and $ |W_1 |, |W_2 |  \leq W $ for some $W>0$, then we have  
\begin{multline*}
    \Sigma_{\Ctn,2}(W_1, W_2 )  
     =  \int_{0}^\infty       \int_{0}^\infty   \mathfrak M_2 \left(   \frac{-1}{ \log Q} , \frac{-3}{ \log Q} ;  
         \frac{  Q^{ -u t_1  - v t_2  }}{v^2}    \right) \\
         \widehat \Phi_{K_1} ( t_1 +W_1 ) \widehat \Phi''_{K_2} ( t_2 +W_2 ) dt_1 dt_2    
         + O(( \log Q)^{1+\epsilon})
\end{multline*}
 for any $\epsilon>0$.
\end{lem}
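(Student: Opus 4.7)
The plan is to restrict the $(t_1,t_2)$-integration in $\Sigma_{\Ctn,2}(W_1,W_2)$ to the positive quadrant and show that the complementary contribution is $O((\log Q)^{1+\epsilon})$. By inclusion-exclusion this reduces to bounding each of the three regions $\{t_1<0\}$, $\{t_2<0\}$, and $\{t_1<0,\ t_2<0\}$; the three are analogous, so I sketch the argument for $\{t_1<0\}$.

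First I would interchange the order of the $(u,v)$-contour integration and the $(t_1,t_2)$-integration using Fubini--Tonelli. This is legitimate because $\widehat{\Phi}_{K_1}$ and $\widehat{\Phi}''_{K_2}$ are compactly supported (so the $t_j$-intervals are bounded in terms of $W_1,W_2$), the regularizer $1/v^2$ is present, $\widetilde{\Psi}(u+v+1)$ decays rapidly in $|\tIm(u+v)|$, and the Gamma--Bessel combination is controlled by \eqref{eqn:boundforGammaGoversin}. Carrying out the $t_2$-integral over all of $\mathbb{R}$ via two integrations by parts (with vanishing boundary terms, by compact support) produces the factor $(v\log Q)^2 Q^{vW_2}\Phi_{K_2}(i\mathcal{U}v)$, which together with $1/v^2$ yields the clean factor $(\log Q)^2 Q^{vW_2}\Phi_{K_2}(i\mathcal{U}v)$.

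Next I would shift the $u$-contour leftward from $\tRe u=-1/\log Q$ to $\tRe u=-\eta$ for a small fixed $\eta>0$. The only candidate singularity of the integrand in this strip is at $u=v$, coming from $1/\sin(\pi(u-v)/2)$; however, the Laurent expansions $\zeta(1+u-v)\sim(u-v)^{-1}$ and $\sin(\pi(u-v)/2)\sim\pi(u-v)/2$ show that the combination $[\zeta(1+u-v)\sin(\pi(u-v)/2)]^{-1}$ is regular at $u=v$, so no residue is picked up. On the new contour, the inner integral
\begin{equation*}
G^-(u):=\int_{-\infty}^{0} Q^{-ut_1}\,\widehat{\Phi}_{K_1}(t_1+W_1)\,dt_1
\end{equation*}
is bounded by
\begin{equation*}
|G^-(u)|\le\|\widehat{\Phi}_{K_1}\|_\infty\int_{-\infty}^{0}Q^{\eta t_1}\,dt_1=\frac{\|\widehat{\Phi}_{K_1}\|_\infty}{\eta\log Q},
\end{equation*}
supplying a saving of $(\log Q)^{-1}$.

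Finally, I would estimate the remaining double contour integral in the spirit of Lemma \ref{lemma:M1 bound}. On the shifted contour, \eqref{eqn:boundforGammaGoversin} gives that the Gamma--Bessel/sine combination is $\ll(1+|\tIm(u+v)|)^{-3/2-\eta}(1+|\tIm(u-v)|)^{-2\eta}$; combined with the Schwartz decay of $\Phi_{K_2}(i\mathcal{U}v)$ in $|\tIm v|$ (which integrates to $O(1/\log Q)$ in the $\tIm v$-variable), the rapid decay of $\widetilde{\Psi}$ in $|\tIm(u+v)|$, a factor $\log Q$ from the near pole of $\zeta(1-2v)$, and losses of $(\log Q)^{\epsilon}$ from standard moment bounds on $\zeta$ near $\tRe s=1$, the resulting estimate becomes
\begin{equation*}
(\log Q)^2\cdot(\log Q)^{-1}\cdot(\log Q)^{-1}\cdot\log Q\cdot(\log Q)^{\epsilon}=(\log Q)^{1+\epsilon}.
\end{equation*}
The main obstacle is the careful verification that the pole-zero cancellation at $u=v$ within the strip being swept is clean (and that no other poles intrude), together with the extension of the $\mathfrak{J}_2$-bound \eqref{eqn:J2 bound} to the shifted line $\tRe u=-\eta$; the latter follows with only minor modifications since $\mathfrak{J}_2$ is a finite Euler-product-type expression over primes dividing the $L$-variables.
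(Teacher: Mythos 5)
Your proposal is correct and reaches the same $(\log Q)^{1+\epsilon}$ bound, but it takes a genuinely different route from the paper. The paper works with the disjoint decomposition $\{t_1\le 0\}\cup\{t_1\ge 0,\,t_2\le 0\}$: for $\{t_1\le 0\}$ it shifts $u$ to $\tRe u=-\epsilon_1$, then integrates by parts once in $t_1$ to extract a factor $1/(|u|\log Q)$, and bounds the resulting $|\mathfrak M_2|\bigl(-\epsilon_1,\tfrac{-3}{\log Q};\tfrac{1}{|v|^2|u|\log Q}\bigr)$ via \eqref{eqn:J3 bound} and Lemma \ref{lem:MellinofprodofJbessel}; for $\{t_1\ge 0,\,t_2\le 0\}$ it instead shifts $v$ and integrates by parts in $t_2$ to obtain $1/(|v|^3\log Q)$, which gives the sharper $(\log Q)^\epsilon$. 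Your route instead uses inclusion--exclusion over $\{t_1<0\}$, $\{t_2<0\}$, $\{t_1<0,t_2<0\}$; carries out the full $t_2$-integral to turn $\widehat\Phi''_{K_2}/v^2$ into $(\log Q)^2 Q^{vW_2}\Phi_{K_2}(i\mathcal U v)$; and replaces integration by parts with the cruder uniform bound $|G^-(u)|\ll 1/(\eta\log Q)$. The $\log Q$-accounting balances either way, because on the shifted line $|u|\ge\eta$ so the paper's extra $1/|u|$ only buys a constant, while the $u$-integral converges anyway via $\widetilde\Psi$ and the $\Gamma$--$\mathcal G$ decay. A genuinely useful feature of your write-up is that you explicitly address why no residue is picked up at $u=v$, verifying that the zero of $1/[\zeta(1+u-v)\zeta(1-u+v)]$ (together with the vanishing of $\mathcal G_{u-v,k-1}-\mathcal G_{v-u,k-1}$) overwhelms the pole of $1/\sin(\tfrac{\pi}{2}(u-v))$; the paper silently takes this for granted. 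Two small corrections: the Euler-product factor appearing in $\mathfrak M_2$ is $\mathfrak J_3$, whose uniform bound on vertical lines is \eqref{eqn:J3 bound} (not $\mathfrak J_2$ and \eqref{eqn:J2 bound}); and it would strengthen the final estimate to note explicitly that the localized $\zeta(1-2v)\asymp\log Q$ near $v=0$ cancels against the $O(1/\log Q)$ width of the region where $\Phi_{K_2}(i\mathcal U v)$ is not yet decaying, so those two factors in your ledger contribute $O(1)$ and the surviving $\log Q$ really comes from $(\log Q)^2\cdot(\log Q)^{-1}$.
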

\begin{proof}
It is enough to estimate the integral in   \eqref{def:SigmaCtn2} for $ t_1 \leq 0 $ or $t_2 \leq 0 $.
    For $  t_1 \leq 0  $, we first shift the $u$-contour in \eqref{def:M2cuv} to $ \tRe(u) = - \epsilon_1 $ and then change the order of integrals. We find that 
 \begin{align*}
    &   \int_{-\infty}^\infty       \int_{-\infty}^{0}   \mathfrak M_2 \left(   -\epsilon_1  , \frac{-3}{ \log Q} ;  
         \frac{  Q^{ - u t_1  - v t_2   }}{v^2}    \right)  \widehat \Phi_{K_1} ( t_1 +W_1 ) \widehat \Phi''_{K_2} ( t_2 +W_2 ) dt_1 dt_2   \\
    =&   \int_{-\infty}^\infty         \mathfrak M_2 \left(   -\epsilon_1  , \frac{-3}{ \log Q} ;  
         \frac{  Q^{ - vt_2    }}{v^2} \int_{-\infty}^{0 } Q^{  - u t_1   }    \widehat \Phi_{K_1} ( t_1 +W_1 ) dt_1    \right)  \widehat \Phi''_{K_2} ( t_2 +W_2 ) dt_2 .       
\end{align*}
After the $t_1 $-integration by parts, we find that the above is
\begin{align*}
    \ll         | \mathfrak M_2 | \left(   -\epsilon_1  , \frac{-3}{ \log Q} ;  
         \frac{   1 }{|v|^2  |u| \log Q }   \right)    ,
\end{align*}
where
\begin{multline} \label{def:M2 abs}
|\mathfrak M_2 |  \left( c_u,  c_v;        \mathfrak K (   u, v )\right)      : =     \int_{(c_v)}     \int_{(  c_u ) } |\widetilde{\Psi} (u+v+1)   \mathfrak J_3 ( u,v ; \mathbb{L} ) |
\\
   \frac{ |\zeta(-u-v)  \zeta (1-u-v) \zeta(1-2v) \zeta(1-2u) | }{   | \zeta(1+u-v)\zeta(1 - u+v) \sin ( \frac{\pi}{2} (u-v) ) |}        \mathfrak K (   u,v )
       \\
     |\Gamma ( -u-v ) ( \mathcal{G}_{u-v, k-1} (u+v+1) -  \mathcal{G}_{v-u, k-1} (u+v+1) ) |  | du||dv|.
\end{multline}
By Lemma \ref{lem:MellinofprodofJbessel} and \eqref{eqn:J3 bound},
we find that
\begin{align*}
      | \mathfrak M_2 | & \left(   -\epsilon_1  , \frac{-3}{ \log Q} ;       \frac{   1 }{|v|^2  |u| \log Q }   \right)   \\
      & \ll     \frac{  (\log Q)^{ \epsilon } }{\log Q}   \int_{(\frac{-3}{ \log Q} )}     \int_{(   -\epsilon_1 ) }\frac{|\zeta(1 - 2v)|}{|u+v|^A  | \zeta(1+u-v)   | |v|^2  |u|   }    | du||dv| \\
   &  \ll       {(\log Q)^{  \epsilon }}   \int_{(\frac{-3}{ \log Q} )}     
   \frac{ 1}{    |v|^2}      |dv| \ll  ( \log Q)^{1+\epsilon} 
\end{align*}
for any $A\geq 0$ and $\epsilon >0 $, where $ \epsilon_1 $ needs to be sufficiently small depending on $\epsilon$.

For $ t_1  \geq 0  $ and $   t_2 \leq 0 $, 
we shift the $v$-contour in \eqref{def:M2cuv} to $ \tRe(v) = - \epsilon_1 $ and then change the order of integrals, we find that 
 \begin{align*}
    &   \int_{-\infty}^{0}       \int_{0}^\infty   \mathfrak M_2 \left(  \frac{-1}{ \log Q} ,   -\epsilon_1     ;  
         \frac{  Q^{ -ut_1 -vt_2 }}{v^2}    \right)  \widehat \Phi_{K_1} ( t_1 +W_1 ) \widehat \Phi''_{K_2} ( t_2  +W_2) dt_1 dt_2   \\
    =&           \int_{0}^\infty   \mathfrak M_2 \left(  \frac{-1}{ \log Q} ,   -\epsilon_1     ;  
         \frac{  Q^{ -u t_1   }}{v^2}  \int_{-\infty}^{0}  Q^{  - v t_2  }   \widehat \Phi''_{K_2} ( t_2 +W_2 )  dt_2  \right)  \widehat \Phi_{K_1} ( t_1 +W_1 )  dt_1 .       
\end{align*}
After the $t_2 $-integration by parts, we find that the above is
\begin{align*}
    \ll &    |  \mathfrak M_2 | \left(  \frac{-1}{ \log Q} ,   -\epsilon_1     ;           \frac{ 1  }{|v|^3 \log Q}    \right)  \\
         \ll &  (\log Q)^{ \epsilon}  \int_{( -\epsilon_1  )}     \int_{(  \frac{-1}{ \log Q}) }  \frac{1}{|u+v|^A   |  \zeta(1 - u+v)   | |v|^3  }           | du||dv| \ll  (\log Q)^{ \epsilon} 
\end{align*}
for any $A\geq 0$ and $\epsilon >0 $ by Lemma \ref{lem:MellinofprodofJbessel} and \eqref{eqn:J3 bound}.
\end{proof}

Next, we compute the integral in Lemma \ref{lemma:SigmaCtn2 1}. 
\begin{lem}\label{lemma:SigmaCtn2 2}
  Assume \eqref{conditions mathbbL} and $ |W_1 |, |W_2 |  \leq W $ for some $W>0$, then we have  
 $$ \Sigma_{\Ctn,2}(W_1, W_2 ) =   ( \log Q)^2   
    \frac{ \widetilde{\Psi} ( 1)\delta_{\elltwo =1} }{ 8L_1 \ell_1 \ell_2  }  \mathscr{I}( \Phi_{K_1} , \Phi_{K_2} ; W_1 -1 , W_2 -1 )+O( ( \log Q)^{1+\epsilon} )   $$  
for any $ \epsilon>0$, where $\mathscr{I}$ is defined in \eqref{def:I12}.
\end{lem}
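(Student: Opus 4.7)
The strategy is to evaluate $\mathfrak{M}_2\!\left(\tfrac{-1}{\log Q}, \tfrac{-3}{\log Q};\, Q^{-ut_1 - vt_2}/v^2\right)$ pointwise in $t_1, t_2 \ge 0$ by iterated contour shifts, then integrate against $\widehat{\Phi}_{K_1}(t_1 + W_1)\widehat{\Phi}''_{K_2}(t_2 + W_2)$ over $[0,\infty)^2$. I would first shift the $u$-contour from $\tRe(u) = -1/\log Q$ past $\tRe(u) = 3/\log Q$, picking up residues at the simple pole $u = 0$ (from $\zeta(1-2u)$) and the double pole $u = -v$ (from $\zeta(1-u-v)\Gamma(-u-v)$, noting that $\tRe(-v) = 3/\log Q$ sits to the right of $c_u$). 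The residual integral at large $\tRe(u)$ contributes only $O((\log Q)^{1+\epsilon})$ by the Stirling decay of $\Gamma(-u-v)$ and the factor $Q^{-ut_1}$ for $t_1\ge 0$, together with estimates analogous to Lemma~\ref{lemma:M1 bound}.

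For the residue at $u = 0$, the remaining expression is $-\tfrac12\,\tilde H(0,v)\, Q^{-v t_2}/v^2$, where $\tilde H(0,v)$ has a third-order pole at $v=0$ coming from $\zeta(1-2v)\Gamma(-v)/v^2$, mollified by the simple zeros of $1/\zeta(1+v)$ and of the $\mathcal{G}$-difference at $v = 0$. Shifting the $v$-contour past $v = 0$, the leading $(\log Q)^2$ contribution to the resulting residue comes from $\partial_v^2 Q^{-v t_2}\big|_{v=0} = t_2^2(\log Q)^2$ multiplied by $v\tilde H(0,v)\big|_{v=0}$. Using the explicit Laurent expansions of $\zeta(1-2v)$, $\Gamma(-v)$, $1/\zeta(1+v)$ and $\mathcal{G}_{-v,k-1}(v+1) - \mathcal{G}_{v,k-1}(v+1)$, together with Lemma~\ref{lem:valueatsequal-1} (which yields $\mathfrak{J}_3(0,0) = \delta_{\elltwo=1}/(L_1 \ell_1 \ell_2)$), this quantity reduces to a closed-form multiple of $\widetilde\Psi(1)\delta_{\elltwo=1}$. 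After integrating against $\widehat\Phi_{K_1}\widehat\Phi''_{K_2}$ and performing two integrations by parts in $t_2$ (which converts $t_2^2 \widehat\Phi''_{K_2}$ into $2\widehat\Phi_{K_2}$), this yields the separable first term of $\mathscr{I}$ with prefactor $\tfrac{(\log Q)^2 \widetilde\Psi(1)\delta_{\elltwo=1}}{8 L_1 \ell_1 \ell_2}$.

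For the residue at $u = -v$, the key identity is $(u+v)^2 \zeta(1-u-v)\Gamma(-u-v) = 1 + O((u+v)^2)$, where the linear-in-$(u+v)$ coefficient vanishes. Evaluating the remaining factors at $u+v=0$ using $\mathcal{G}_{\pm 2v,k-1}(1) = \mp(-1)^{k/2}\sin(\pi v)/\pi$ and Lemma~\ref{lem:valueatsequal-1}, one finds that $A(-v, v) := (u+v)^2 H(u,v)\big|_{u=-v} = (-1)^{k/2}\widetilde\Psi(1)\delta_{\elltwo=1}/(\pi L_1 \ell_1 \ell_2)$, remarkably independent of $v$. The leading $\log Q$-term of the residue then arises from $\partial_u Q^{-ut_1}\big|_{u=-v} = -t_1(\log Q)Q^{v(t_1 - t_2)}$, and the outer $v$-integral $\int_{(c_v)} Q^{v(t_1-t_2)}/v^2\, dv = 2\pi i(t_1 - t_2)(\log Q)\, \delta_{t_1 < t_2}$ is computed by shifting the $v$-contour to $\tRe(v) = \pm\infty$ according to the sign of $t_1 - t_2$ and collecting the double pole at $v = 0$. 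Substituting back and performing two integrations by parts in $t_2$, the iterated integral $\int_0^{t_2} t_1(t_1-t_2)\widehat\Phi_{K_1}(t_1+W_1)\, dt_1$ collapses to $-\int_0^\infty t\,\widehat\Phi_{K_1}(t+W_1)\widehat\Phi_{K_2}(t+W_2)\, dt$, producing the second term of $\mathscr{I}$ with the correct sign. Summing the two contributions yields the claim. The main obstacle is the delicate bookkeeping around the double pole at $u=-v$: the vanishing of the linear-in-$(u+v)$ coefficient of $(u+v)^2 \zeta(1-u-v)\Gamma(-u-v)$ is what isolates the leading $\log Q$ contribution, and Lemma~\ref{lem:valueatsequal-1} is what produces the $\delta_{\elltwo=1}$ factor.
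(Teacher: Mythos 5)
Your proposal follows essentially the same two-residue strategy as the paper: shift the $u$-contour past $\tRe(u)=0$ and $\tRe(u)=-\tRe(v)$, collect the simple-pole residue from $\zeta(1-2u)$ and the double-pole residue from $\zeta(1-u-v)\Gamma(-u-v)$, bound the displaced integral, and then handle the two residues separately by a $v$-contour shift. The key structural observations match the paper's proof exactly: $\mathfrak{J}_3(-v,v;\mathbb{L})=\delta_{\elltwo=1}/(L_1\ell_1\ell_2)$ (from Lemma \ref{lem:valueatsequal-1}), the vanishing of the $\mathcal{G}$-difference at integer arguments, the $v$-independence of the value of the double-pole integrand at $u=-v$, and the vanishing of the linear-in-$(u+v)$ coefficient in $(u+v)^2\zeta(1-u-v)\Gamma(-u-v)$.

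The one genuine difference is the order of operations in the $u=0$ residue. You take the $v$-residue of the triple pole first and extract the leading $(\log Q)^2 t_2^2/2$ from $\partial_v^2 Q^{-vt_2}$, then integrate $t_2^2\widehat{\Phi}''_{K_2}$ by parts twice to get $2\int\widehat{\Phi}_{K_2}$. The paper instead integrates $Q^{-vt_2}\widehat{\Phi}''_{K_2}(t_2+W_2)$ over $t_2$ first, using the identity $\int_0^\infty Q^{-vt_2}\widehat{\Phi}''_{K_2}\,dt_2 = v^2(\log Q)^2\int_0^\infty Q^{-vt_2}\widehat{\Phi}_{K_2}\,dt_2 + O(|v|\log Q)$ to cancel the $1/v^2$ explicitly and reduce the triple pole to a simple pole before shifting $v$. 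The two orderings compute the same thing, and yours is perfectly viable, though the paper's ordering makes the $O((\log Q)^{1+\epsilon})$ error bookkeeping slightly cleaner via the $|\mathfrak{M}_2|$-estimates of Lemma \ref{lemma:M1 bound}. A small bookkeeping remark: in counting the pole order at $v=0$ for the $u=0$ residue, note that the simple zero of the $\mathcal{G}$-difference (from $\sin(\pi v)$) is consumed by the simple pole of $1/\sin(\tfrac{\pi}{2}(u-v))|_{u=0}$, leaving a bounded $\cos(\pi v/2)$ factor; the net triple pole you claim is nonetheless correct. Also watch the sign convention in your $v$-contour shift for the $\mathfrak{R}_2$ term: starting at $\tRe(v)=-3/\log Q<0$ and pushing right past $v=0$ picks up $-2\pi i$ times the residue, so $\int_{(c_v)}Q^{v(t_1-t_2)}\,dv/v^2 = -2\pi i(t_1-t_2)(\log Q)\delta_{t_1<t_2}$; this sign is what makes the final contribution come out with the $-4$ prefactor in $\mathscr{I}$.
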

\begin{proof} 
 Let $ t_1 , t_2 \geq 0$. Then we shift the $u$-contour to $\tRe(u) = \epsilon_1 >0 $ and obtain 
\begin{equation}\label{eqn:M2 residue u}
 \mathfrak M_2  \left(   \frac{-  1}{ \log Q} , \frac{-3}{ \log Q} ;     \frac{  Q^{   -u t_1  - v t_2 }}{v^2}    \right)  
 =     \mathfrak R_1  +  \mathfrak R_2  
   + \mathfrak M_2 \left(   \epsilon_1 , \frac{-3}{ \log Q} ;     \frac{  Q^{   -u t_1  - v t_2 }}{v^2}    \right) , 
   \end{equation}
 where we recall that $\mathfrak M_2$ is defined in \eqref{def:M2cuv}, $\mathfrak R_1 $ is the residue at $ u=0 $ from the factor $\zeta(1-2u)$, and $ \mathfrak R_2 $ is the residue at $ u=-v$ from the factor $\zeta(1-u-v)\Gamma(-u-v)$.
 The contribution of the last term in \eqref{eqn:M2 residue u} to $\Sigma_{\Ctn,2}(W_1,W_2)$ in Lemma \ref{lemma:SigmaCtn2 1} is
\es{\label{eqn:M2 residue u error}
    & \int_{0}^\infty       \int_{0}^\infty   \mathfrak M_2 \left(   \epsilon_1 , \frac{-3}{ \log Q} ;  
         \frac{  Q^{ -u t_1  - v t_2  }}{v^2}    \right)  \widehat \Phi_{K_1} ( t_1 +W_1 ) \widehat \Phi''_{K_2} ( t_2 +W_2 ) dt_1 dt_2   \\
   &\ll  | \mathfrak M_2 | \left(   \epsilon_1 , \frac{-3}{ \log Q} ;  
         \frac{ 1}{|v|^2|u| \log Q }    \right) \ll ( \log Q)^{1+\epsilon}
}
for any $\epsilon>0$, similarly to the proof of Lemma \ref{lemma:SigmaCtn2 1}.
 
 Next we compute the residue $\mathfrak R_1 $ at $ u=0$.  
By \eqref{eqn:diffJbesselatvplus1} we find that
 $$      \mathfrak R_1 =      
         \frac{1}{4\pi i }    \int_{(\frac{-3}{ \log Q})}       \widetilde{\Psi} (v+1)    \mathfrak J_3 ( 0,v  ; \mathbb{L}) Q^{ -  v t_2     }
  \frac{ \zeta(-v)   \zeta(1-2v)  }{     \zeta(1 +v)   }       
  \frac{\Gamma(-v) \Gamma(v + \frac{k}{2}) \cos \frac{\pi v }{2} }{ v^2 \Gamma ( -v+ \frac{k}{2}) }     dv.  
 $$
  Since
\begin{equation*}\label{eqn:Fourier inversion second deriv}\begin{split}
    \int_0^\infty Q^{-vt_2 }    \widehat \Phi''_{K_2} ( t_2 +W_2 ) dt_2 =&  -  \widehat \Phi'_{K_2} (  W_2 ) - v \log Q   \widehat \Phi_{K_2} ( W_2 ) \\
& + v^2 ( \log Q)^2 \int_0^\infty Q^{-vt_2 }    \widehat \Phi_{K_2} ( t_2 +W_2 ) dt_2, 
\end{split}\end{equation*}
we find that
\begin{multline*}
   \int_0^\infty   \mathfrak R_1   \widehat \Phi''_{K_2} ( t_2 +W_2 ) dt_2  =  ( \log Q)^2 \int_0^\infty 
    \frac{1}{4\pi i }    \int_{(\frac{-3}{ \log Q})}       \widetilde{\Psi} (v+1)    \mathfrak J_3 ( 0,v  ; \mathbb{L}) Q^{ -  v t_2     } \\ \times
  \frac{ \zeta(-v)   \zeta(1-2v)  }{     \zeta(1 +v)   }       
  \frac{\Gamma(-v) \Gamma(v + \frac{k}{2}) \cos \frac{\pi v }{2} }{   \Gamma ( -v+ \frac{k}{2}) }     dv 
   \widehat \Phi_{K_2} ( t_2 +W_2 ) dt_2 + O( ( \log Q)^{1+\epsilon} )
\end{multline*}
for any $ \epsilon>0$. By shifting the $v$-contour to $ \tRe( v) = \epsilon_1 >0$, the residue at $ v=0$ is
$$  ( \log Q)^2   
    \frac{ 1}{ 8 }            \widetilde{\Psi} ( 1)    \mathfrak J_3 ( 0,0  ; \mathbb{L})   
  \int_0^\infty \widehat \Phi_{K_2} ( t_2 +W_2 ) dt_2.$$
 For the integral shifted to $ \tRe(v) = \epsilon_1 $, we integrate by parts twice with respect to $v$ and  obtain that it is $ O( ( \log Q)^{1+\epsilon} ) $ for any $\epsilon>0 $.
By Lemma \ref{lem:valueatsequal-1}, \eqref{def:J2euler} and \eqref{def:J3}, we find that 
\begin{equation}\label{eqn:J3-vv}
 \mathfrak J_3 ( -v, v ; \mathbb{L}) = \frac{ 1}{L_1 \ell_1 \ell_2 } \delta_{\elltwo =1} . 
 \end{equation}
By collecting the above estimations we find that
\begin{multline}\label{eqn:contribution of R1}
     \int_{0}^\infty       \int_{0}^\infty    \mathfrak R_1  \widehat \Phi_{K_1} ( t_1 +W_1 ) \widehat \Phi''_{K_2} ( t_2 +W_2 ) dt_1 dt_2   \\
     =  ( \log Q)^2   
    \frac{ \widetilde{\Psi} ( 1)\delta_{\elltwo=1} }{ 8L_1 \ell_1 \ell_2  }                 
  \int_0^\infty \widehat \Phi_{K_1} ( t_1 +W_1 ) dt_1
   \int_0^\infty \widehat \Phi_{K_2} ( t_2 +W_2 ) dt_2 +O( ( \log Q)^{1+\epsilon} )  
\end{multline}
for any $\epsilon>0 $.

Lastly, we find that   
\begin{multline*}  
 \mathfrak R_2
 = \frac{ -i^{-k}}{8\pi}    \int_{( \frac{-3}{ \log Q} )}     \oint_{ |u|= \frac{1}{ \log Q} } \widetilde{\Psi} (u+1)    \mathfrak J_3 ( u-v,v )   Q^{   - u t_1   + v( t_1 -t_2    ) }      
\\ \times
   \frac{ \zeta(-u)    \zeta(1-2v) \zeta(1-2u+2v)  ( \mathcal{G}_{u-2v, k-1} (u+1) -  \mathcal{G}_{2v-u, k-1} (u+1) )   }{    \zeta(1+u-2v)\zeta(1 - u+2v) \sin ( \frac{\pi}{2} (u-2v) ) }    \frac{du}{u^2}  \frac{dv}{v^2}.
\end{multline*}
We have a double pole at $ u=0$. 
By \eqref{eqn:diffGamma1} and \eqref{eqn:diffGammanumer}, we have
\begin{align*}  
 \mathfrak R_2
 =&  \frac14 \int_{( \frac{-3}{ \log Q} )}     \oint_{ |u|= \frac{1}{ \log Q} }  \mathfrak H ( u,v)   Q^{   - u t_1   + v( t_1 -t_2    ) }     
   \frac{ (u-2v)(-u+2v)    }{(-2v)(-2u+2v)    }   \frac{du}{u^2}  \frac{dv}{v^2} \\
   =& \frac{ \pi i }{2}\int_{( \frac{-3}{ \log Q} )} \left( - t_1 \log Q \mathfrak H(0,v) + \frac{\partial \mathfrak H }{ \partial u} (0,v) \right) Q^{v(t_1 - t_2 )}  \frac{dv}{v^2}    ,
\end{align*}
where
\begin{multline*}
    \mathfrak H ( u,v) := \widetilde{\Psi} (u+1)    \mathfrak J_3 ( u-v,v ; \mathbb{L} )     \frac{   \zeta(-u) \zeta(1-2v) \zeta(1-2u+2v)   }{    \zeta(1+u-2v)\zeta(1 - u+2v) }   
   \frac{\cos \frac{ \pi u }{2}  }{ \sin(\pi u - \pi v) \sin (\pi v) }  \\ \times
   \frac{(-2v)(-2u+2v)      }{ (u-2v)(-u+2v)      \Gamma(-v- \frac{k}{2} +1  )\Gamma(-u+v- \frac{k}{2} +1  ) \Gamma(  \frac{k}{2} -u+v ) \Gamma(  \frac{k}{2} -v)   }
\end{multline*}
is analytic at $ u=0$ and $ v=0$, and $ \mathfrak H ( 0,v) = - \frac{1}{2\pi^2} \widetilde \Psi(1)  \frac{ 1}{L_1 \ell_1 \ell_2 } \delta_{\elltwo =1}   $.
We shift the contour to $ \tRe( v ) = \epsilon_1 > 0 $ if $ t_1 \leq t_2 $ and to $ \tRe(v) = - \epsilon_1 $ if $ t_1 > t_2 $, then we find that
$$ \mathfrak R_2 = ( \log Q)^2      \frac{ \widetilde \Psi(1) \delta_{\elltwo=1}   }{2L_1 \ell_1 \ell_2 }  \delta_{ t_1 \leq t_2 } t_1 (t_1-t_2)    + O( ( \log Q)^{1+\epsilon})  $$
for any $\epsilon>0$. 
Since
$$ \int_{t_1}^\infty (t_1-t_2) \widehat \Phi''_{K_2} ( t_2 +W_2 ) dt_2 = - \widehat \Phi_{K_2} ( t_1 +W_2 ), $$
The contribution of $\mathfrak R_2 $ is 
\begin{multline}\label{eqn:contribution of R2}
     \int_{0}^\infty       \int_{0}^\infty    \mathfrak R_2  \widehat \Phi_{K_1} ( t_1 +W_1 ) \widehat \Phi''_{K_2} ( t_2 +W_2 ) dt_1 dt_2   \\
     = - ( \log Q)^2   
    \frac{ \widetilde{\Psi} ( 1)\delta_{\elltwo=1} }{ 2L_1 \ell_1 \ell_2  }                 
  \int_0^\infty  t_1 \widehat \Phi_{K_1} ( t_1 +W_1 )      \widehat \Phi_{K_2} ( t_1 +W_2 )  dt_1 +O( ( \log Q)^{1+\epsilon} )  .
\end{multline} 
The lemma follows from \eqref{eqn:M2 residue u}, \eqref{eqn:M2 residue u error}, \eqref{eqn:contribution of R1} and \eqref{eqn:contribution of R2}.
 \end{proof}

\subsection{Proof of Proposition \ref{prop:Ctn0} -- off-diagonal main terms.} 
By Lemmas \ref{lemma:SigmaCtn0 to SigmaCtn1} and \ref{lemma:SigmaCtn2 2}, and \eqref{eqn:Ctn1 to Ctn2},   
\begin{multline*}   
  \Sigma_{\Ctn_0 } =   Q ( \log Q)^{|K|}     \frac{ \widetilde{\Psi} ( 1)\delta_{\elltwo =1} }{ 8L_1^2 L_2  \ell_1 \ell_2  }    \sum_{ \substack{ K_1 \sqcup K_2 \sqcup K_3 \sqcup K_4  = K  \\  K_1 ,K_2  \neq \emptyset   } }   (-2)^{ |K_3|+ |K_4|}   \\
          \times   \int_{[0,\infty)^{|K_3|+|K_4|}    }    \mathscr{I} \bigg( \Phi_{K_1} , \Phi_{K_2} ; \sum_{k_j \in K_3 } w_{k_j} , \sum_{k_j \in K_4 } w_{k_j} \bigg) 
         \prod_{k_j \in K_3 \sqcup K_4}   \left( \widehat \Phi_{k_j} (w_{k_j})       dw_{k_j} \right) \\
          + O\left(  \frac{ Q ( \log Q)^{|K|-1+\epsilon }  }{ L_1 L_2 \elltwo}  \right)   
\end{multline*}
for any $ \epsilon>0$. Since $K_1 $ and $K_2$ are not empty, let $\ell_1 $ and $\ell_2 $ be their minimums, respectively. Then we replace $ K_1 =  \{\ell_1 \} \sqcup K_1' $ and $K_2 = \{ \ell_2 \} \sqcup K_2'$, and we see that $ \Phi_{K_1 } = \Phi_{\ell_1 , K_1'}$ and $ \Phi_{K_2} = \Phi_{\ell_2 , K_2'} $, where $\Phi_{\ell_i, K_i'}$ is defined in \eqref{eqn:PhikG}.  By \eqref{def:V}, the main term of $ \Sigma_{\Ctn_0 }$ is 
\begin{align*}   
    &Q ( \log Q)^{|K|}     \frac{ \widetilde{\Psi} ( 1)\delta_{\elltwo =1} }{  8L_1^2 L_2  \ell_1 \ell_2  }    \sum_{ \{ \ell_1 , \ell_2 \} \sqcup K'' = K } \frac{2}{ (-2)^{|K|-2}} \mathscr V ( \{ \ell_1 , \ell_2 \} , K'')  \\
    = &  \ Q ( \log Q)^{|K|}     \frac{ \widetilde{\Psi} ( 1)\delta_{\elltwo =1} }{ (-2)^{|K|}  L_1^2 L_2  \ell_1 \ell_2  }    \sum_{ \substack{ K'\sqcup K'' = K \\ |K'|=2}  } \mathscr V ( K' , K''). 
\end{align*}
This proves the proposition.

%-----------------------------------

\section{Lemma \ref{lem:CMQ(R)_fill prime} -- the term $\mathscr C_{K_1, K_2, < }(Q)$} \label{sec:fillprimes}

Let $K= \{ k_1, \ldots , k_\kappa \} $ and assume that $ K_1 \sqcup K_2 \subset K $ and $ K_1 \neq \emptyset$. In this section, we prove Lemma \ref{lem:CMQ(R)_fill prime} by induction on $|K_2 | $. Recall the definitions in and below \eqref{def:CKQ}:
\begin{equation}  \label{def:CK1K2Q small}
\mathscr C_{K_1,K_2,<}  (Q) :=       \frac{  (-2)^\kappa}{ ( \log Q)^\kappa }     \sumprime_{ \mathbb{L}} \frac{\mu(L_1L_2)\zeta_{L_1} (2) }{L_1L_2}    \frac{\mu( \ell_1   \ell_2 )}{ \ell_1 ^2  \elltwo }   C_{K_1,K_2,<  }  (Q; \mathbb{L} ) , 
      \end{equation}
      where the prime sum is over $\mathbb{L}$ satisfying \eqref{conditions mathbbL}, and 
\begin{equation} \label{def:CK1K2QL small} \begin{split}
 C_{K_1,K_2,<} & (Q; \mathbb{L} )  := \sum_n  \Psi\left( \frac{L_1^2 L_2 \ell_1  \ell_2 n}{Q} \right)\\
 &  \times   \sumsharp_{\substack{ p_{k_1} , \ldots , p_{k_\kappa} \\ ( \mathfrak p (K) , L_1L_2 )=1    \\ \mathfrak p (K_1) |n , \, \mathfrak p(K_1)< \LL_{3\kappa} \\ ( \mathfrak p(K_2) , n ) = 1 }}
  \prod_{ j=1}^\kappa \left( \frac{ \log p_{k_j} }{ \sqrt{ p_{k_j} }} \widehat{\Phi}_{k_j} \left( \frac{ \log p_{k_j} }{ \log Q} \right)        \right)  
   \Delta_{L_1 \ell_1  \ell_2 n} ( \mathfrak p (K), \elltwo^2).
  \end{split}\end{equation}
 Since
$$  \sumsharp_{\substack{ p_{k_1} , \ldots , p_{k_\kappa} \\ ( \mathfrak p (K) , L_1L_2 )=1    \\ \mathfrak p (K_1) |n , \, \mathfrak p(K_1)< \LL_{3\kappa} \\ ( \mathfrak p(K_2) , n ) = 1 }}      =  \sumsharp_{\substack{ p_{k_1} , \ldots , p_{k_\kappa} \\ ( \mathfrak p (K) , L_1L_2 )=1    \\ \mathfrak p (K_1) |n , \, \mathfrak p(K_1)< \LL_{3\kappa}  }} - \sum_{\substack{ K_3 \sqcup K_4 = K_2  \\  K_3 \neq \emptyset } } \sumsharp_{\substack{ p_{k_1} , \ldots , p_{k_\kappa} \\ ( \mathfrak p (K) , L_1L_2 )=1    \\ \mathfrak p (K_1 \sqcup K_3 ) |n , \, \mathfrak p(K_1)< \LL_{3\kappa} \\ ( \mathfrak p(K_4) , n ) = 1 }} ,   $$
we expect that
\begin{equation}\label{eqn:inductive step}  
  \mathscr C_{K_1,K_2,<}  (Q) = \mathscr C_{K_1,\emptyset ,<}  (Q)  -    \sum_{\substack{ K_3 \sqcup K_4 = K_2  \\  K_3 \neq \emptyset } }  \mathscr C_{K_1\sqcup K_3 ,K_4,<}  (Q) +  O \left( \frac{Q}{ \log Q} \right) .
 \end{equation}
  The $O$-term is the contribution of the distinct primes $ p_{k_1} , \ldots , p_{k_\kappa} $ satisfying $ ( \mathfrak p (K) , L_1L_2 )=1  $, $ \mathfrak p (K_1 \sqcup K_3 ) |n $, $  \mathfrak p(K_1\sqcup K_3 ) \geq \LL_{3\kappa} $, $ \mathfrak p(K_1)< \LL_{3\kappa} $ and $  ( \mathfrak p(K_4) , n ) = 1 $ for $K_3 \sqcup K_4 = K_2 $ and $K_3 \neq \emptyset$. 
 If we replace $K_1 $, $K_1 \sqcup K_3 $ and $K_4 $ by $ K_1'$, $K_1$ and $K_2 $, respectively, with an additional condition $K_1' \subset K_1 $, then it is easy to see that \eqref{eqn:inductive step} follows from the proof of Lemma \ref{lem:boundtruncatedprimetR} by adding a condition $\mathfrak p(K_1') < \LL_{3\kappa}$ appropriately. So we omit the details.

Since $|K_4 |< |K_2|$ in \eqref{eqn:inductive step}, we already have the inductive step \eqref{eqn:inductive step} to prove Lemma \ref{lem:CMQ(R)_fill prime}. Thus, to complete the proof, it is sufficient to show the initial case 
\begin{equation}\label{eqn:induction initial step}
 \mathscr C_{K_1,\emptyset ,<}  (Q)  \ll \frac{Q}{ \log Q} 
 \end{equation}
for every nonempty $K_1 \subset K$. We will sketch how to modify the arguments in \S \ref{sec:applyKuznetsov}--\S \ref{sec:trivialchar off diag main terms} to prove \eqref{eqn:induction initial step}.

First, we remove the condition $ \mathfrak p (K_1 ) |n$ and replace $ n$ by $ \mathfrak p (K_1 ) n $ in \eqref{def:CK1K2QL small}. Then we see that
\begin{equation} \label{eqn:CK1K2QL small 1} \begin{split}
 C_{K_1,\emptyset ,<} & (Q; \mathbb{L} )   = \sum_n  \sumsharp_{\substack{ p_{k_1} , \ldots , p_{k_\kappa} \\ ( \mathfrak p (K) , L_1L_2 )=1    \\  \mathfrak p(K_1)< \LL_{3\kappa}  }}  \prod_{ j=1}^\kappa \left( \frac{ \log p_{k_j} }{ \sqrt{ p_{k_j} }} \widehat{\Phi}_{k_j} \left( \frac{ \log p_{k_j} }{ \log Q} \right)        \right)  
  \\
 &  \times   
  \Psi\left( \frac{L_1^2 L_2 \ell_1  \ell_2 \mathfrak p (K_1) n}{Q} \right) \Delta_{L_1 \ell_1  \ell_2 \mathfrak p (K_1) n} ( \mathfrak p (K), \elltwo^2).
  \end{split}\end{equation}
If we compare it with \eqref{def:CKQLre}, we can obtain \eqref{eqn:CK1K2QL small 1} by replacing $n$ with $ \mathfrak p(K_1) n $ and adding the condition $ \mathfrak p(K_1 ) < \LL_{3\kappa} $ to \eqref{def:CKQLre}.
Applying Petersson's formula and following the arguments in the beginning of \S \ref{sec:applyKuznetsov}, we have 
\begin{multline}\label{eqn:CK_1QL 1}
 C_{K_1, \emptyset, <}  (Q; \mathbb{L}) \\
 =   2  \pi i^{-k}  \sumd_{P_1, \ldots , P_\kappa }  \sum_{c \geq 1}  \int_{-\infty}^\infty \cdots \int_{-\infty}^\infty \widehat{H}(u, \vb)     \sumsharp_{\substack{ p_{k_1} , \ldots , p_{k_\kappa} \\ ( \mathfrak p (K) , L_1L_2 )=1  \\ \mathfrak p(K_1 ) < \LL_{3\kappa} }}
  \prod_{ j=1}^\kappa \left( \frac{ \log p_{k_j} }{ \sqrt{ p_{k_j} }} V \left( \frac{ p_{k_j}}{P_j} \right)   \e{ \frac{p_{k_j}}{P_j} v_j }   \right)  
   \\
\times    \sum_{n}   \frac{S( \elltwo^2, \mathfrak p (K) ; cL_1 \ell_1  \ell_2 \mathfrak p(K_1 ) n )}{ cL_1 \ell_1  \ell_2 \mathfrak p(K_1 ) n }  
 h_u  \left( \frac{4\pi \elltwo \sqrt{\mathfrak p (K) }}{cL_1 \ell_1  \ell_2 \mathfrak p(K_1 ) n}\right)   du dv_1 \cdots dv_{\kappa}   
  \end{multline}
similarly to \eqref{eqn:CKQ 2}.

We apply Kuznetsov's formula to \eqref{eqn:CK_1QL 1} as described in \S \ref{sec:applyKuznetsov}, but with $ N = c L_1 \ell_1 \ell_2 \mathfrak p(K_1) $ and $ \mathfrak p(K_1) < \LL_{3\kappa}$.
Arguing as in \S \ref{sec:dispart} and \S \ref{sec:Eisenwithnontrivialchar}, we obtain bounds analogous to Propositions \ref{prop:DisHol} and \ref{prop:Ctn non} for the contribution from the discrete spectrum, the holomorphic forms, and the Eisenstein series associated to non-principal characters. We essentially need to replace $n$ by $ \mathfrak p(K_1)n$ and add conditions $ \mathfrak p(K_1 ) < \LL_{3\kappa} $ to the sums over the primes in \S \ref{sec:applyKuznetsov}, \S \ref{sec:dispart} and \S \ref{sec:Eisenwithnontrivialchar}. Since the bounds in Propositions \ref{prop:DisHol} and \ref{prop:Ctn non} have a factor $Q^\epsilon$ with arbitrary $ \epsilon>0$, we see that $ \mathfrak p(K_1) < \LL_{3 \kappa} $ is small enough so that we may crudely estimate the sums over $p_{k_j}$ for $ k_j \in K_1 $.

By Kuznetsov's formula to \eqref{eqn:CK_1QL 1} as in the previous paragraph, we find that the contribution from the Eisenstein series associated to the principal characters is
\begin{multline*}
 \Sigma'_{\Ctn_0} := \frac{i^{-k}}{2} \sumd_{P_1, \ldots , P_\kappa }  \sum_{c \geq 1}  \int_{-\infty}^\infty \cdots \int_{-\infty}^\infty      \sumsharp_{\substack{ p_{k_1} , \ldots , p_{k_\kappa} \\ ( \mathfrak p (K) , L_1L_2 )=1  \\ \mathfrak p(K_1 ) < \LL_{3\kappa} }}
  \prod_{ j=1}^\kappa \left( \frac{ \log p_{k_j} }{ \sqrt{ p_{k_j} }} V \left( \frac{ p_{k_j}}{P_j} \right)   \e{ \frac{p_{k_j}}{P_j} v_j }   \right)  
   \\
\times   \sum_{  M | cL'_0}  \int_{-\infty}^{\infty}  \rho_{\chi_0, M, cL'_0}(\mathfrak p(K), t) \overline{\rho_{\chi_0, M, cL'_0}(\elltwo^2, t)} h_{u, +}(t) \> dt  \widehat{H}(u, \vb) du dv_1 \cdots dv_{\kappa}  
  \end{multline*}
with $ L'_0 = L_0 \mathfrak p(K_1) =   L_1 \ell_1 \ell_2 \mathfrak p(K_1) $. This equals to $ \Sigma_{\Ctn_0 }$ in \eqref{Section9 eqn 1} except for $L'_0  $ and the condition $ \mathfrak p (K_1 ) < \LL_{3 \kappa } $. 
By following the arguments in \S \ref{sec:trivialchar off diag main terms}, we find that
\begin{multline}  \label{eqn:Sigmaprime 1}
 \Sigma'_{\Ctn_0 }  = \frac{ i^{-k}}{2}    \int_{-\infty}^\infty    \int_{0}^\infty  \int_{( -\epsilon_1) } \frac{ \widetilde{\Psi} (s) Q^s }{(4 \pi L_1 L_2 \elltwo)^s }  \sumsharp_{\substack{ p_{k_1} , \ldots , p_{k_\kappa} \\ ( \mathfrak p (K) , L_1L_2 )=1  \\ \mathfrak p(K_1)< \LL_{3\kappa} }}
  \prod_{ j=1}^\kappa \left( \frac{ \log p_{k_j} }{  p_{k_j}^{\frac12(1+s) }}  \widehat \Phi_{k_j}\left(\frac{\log p_{k_j}}{\log Q}\right)     \right) 
\\
  \times \widetilde{\varrho}_{L'_0 , \mathfrak p(K), \elltwo ; t} (-s )                      (J_{2it}(\xi) - J_{-2it} ( \xi) ) J_{k-1} (\xi)   \xi^{s-1} ds     d\xi     \frac{  dt}{ \sinh ( \pi t)}     
\end{multline}
similarly to \eqref{Section9 eqn 3}.

Next, we find an expression for $ \widetilde{\varrho}_{L'_0 , \mathfrak p(K), \elltwo ; t} ( s )$. 
By Lemma \ref{lem:eisenteincoeff} with $L'_0 $ in place of $L_0$, we have
$$ 	\widetilde{ \varrho}_{L'_0, \mathfrak p(K), \elltwo ; t} (s )  
	 =  \frac{ \mathfrak p(K)^{it} }{  L_0 \mathfrak p(K_1)   \elltwo^{2it} |\zeta(1+2it)|^2}    
     \sum_{ \substack{ d_1|\mathfrak p (K) \\ d_2|\elltwo^2 } }\frac{\mu(d_1 d_2)}{d_1^{2it} d_2^{-2it}}   \sum_{ \substack{ c_1 | \mathfrak p (K) / d_1  \\  c_2 | \elltwo^2 / d_2  }} \frac{c_2^{2it}}{c_1^{2it}} F_{L'_0, d_1  d_2 , \m    }  (s,it ) ,
$$ 
where $ \m  =  \frac{ \mathfrak p(K) \elltwo^2}{c_1c_2d_1d_2}$. By applying Lemma \ref{lem:Fsformula} with 
$ \alpha = L_0' = L_0 \mathfrak p(K_1) $, $ r = d_1 d_2 $, $\alpha_1 := \frac{L_0 \mathfrak p(K_1) }{ ( L_0 \mathfrak p(K_1), d_1 d_2 )    } = \frac{\mathfrak p(K_1)}{( \mathfrak p(K_1) , d_1 )} \cdot \frac{L_0}{(d_2,\ell_2 )}  $ and $ r_1 := \frac{d_1d_2}{ ( L_0 \mathfrak p(K_1), d_1 d_2 ) } = \frac{d_1}{(d_1 , \mathfrak p(K_1)) } \cdot \frac{d_2}{(d_2, \ell_2 )} $ as in the beginning of \S \ref{Section:combinatorics and computations}, we find that
 \begin{multline}\label{eqn:Fs formula 4}
 F_{L_0' , d_1d_2 , \m } (s,it)  
	=   F_{L_0 , d_1d_2 , \m } (s,it) (d_1 , \mathfrak p (K_1))^{1 + s}  \\    
    \prod_{ \substack{ p |  \mathfrak p (K_1 ) \\ p \nmid d_1 }  }  \left( \frac1p - \frac{1}{p^{2+s}} +   \delta_{p\nmid \m_1 } \left( 1  +   \frac{1 }{p^{s+1} (p-1)}  \right) \right)    \\  
    \prod_{   \substack{ p| \m_1 \\ p|  \mathfrak p(K_1)/(d_1, \mathfrak p (K_1)) }   } \left(  W_p (s, it) - \frac{1}{p^{1+s}} - \frac{1}{p^{2+2s} (p-1) }\right)^{-1}  \prod_{ \substack{ p |\mathfrak p(K_1)   \\ p \nmid \m_1 d_1     }} W_p ( s, it)^{-1}
    \end{multline}
with $ \m_1 = \frac{\mathfrak p(K)}{c_1d_1}  $.
By \eqref{eqn:Fs formula 3} and \eqref{eqn:Fs formula 4}, we have
 \begin{multline*}\label{eqn:Fs formula 5}
 F_{L_0' , d_1d_2 , \m } (s,it)  
	=   \zeta(1+s)   \widetilde{F}  (s,it) \mathfrak J_1(s, it ; \mathbb{L}, c_2, d_2 )  \frac{ (d_1 , \mathfrak p (K_1))^{1 + s} }{ d_1^{1 + s}}   \\    
    \prod_{p | d_1  }  \left(   1    +  \frac{1}{p^{1+s} (p-1)}   \right)    
    \prod_{p | \m_1 d_1  } W_p ( s, it)^{-1} \prod_{   p| \m_1    } \left( W_p (s, it) - \frac{1}{p^{1+s}} - \frac{1}{p^{2+2s} (p-1) }\right) \\
    \prod_{ \substack{ p |  \mathfrak p (K_1 ) \\ p \nmid d_1 }  }  \left( \frac1p - \frac{1}{p^{2+s}} +   \delta_{p\nmid \m_1 } \left( 1  +   \frac{1 }{p^{s+1} (p-1)}  \right) \right)    \\
    \prod_{   \substack{ p|  \mathfrak p(K_1) \\ p| \m_1, ~ p \nmid d_1   }   } \left(  W_p (s, it) - \frac{1}{p^{1+s}} - \frac{1}{p^{2+2s} (p-1) }\right)^{-1}  \prod_{ \substack{ p |\mathfrak p(K_1)   \\ p \nmid \m_1 d_1     }} W_p ( s, it)^{-1}.
    \end{multline*}
Thus, we find that
\begin{multline*}
    \widetilde{ \varrho}_{L'_0, \mathfrak p(K), \elltwo ; t} (s )  
	 =  \frac{ \mathfrak p(K)^{it} \zeta(1+s)   \widetilde{F}  (s,it) \mathfrak J_2(s, it ; \mathbb{L} )  }{   \mathfrak p(K_1)     |\zeta(1+2it)|^2}    
     \sum_{  c_1 d_1|\mathfrak p (K)   }\frac{\mu(d_1  ) (d_1 , \mathfrak p (K_1))^{1 + s} }{ c_1^{2it} d_1^{1+s+2it}   }        \\    
    \prod_{p | d_1  }  \left(   1    +  \frac{1}{p^{1+s} (p-1)}   \right)    
    \prod_{p | \m_1 d_1  } W_p ( s, it)^{-1} \prod_{   p| \m_1    } \left( W_p (s, it) - \frac{1}{p^{1+s}} - \frac{1}{p^{2+2s} (p-1) }\right) \\
    \prod_{ \substack{ p |  \mathfrak p (K_1 ) \\ p \nmid d_1 }  }  \left( \frac1p - \frac{1}{p^{2+s}} +   \delta_{p\nmid \m_1 } \left( 1  +   \frac{1 }{p^{s+1} (p-1)}  \right) \right)    \\
    \prod_{   \substack{ p|  \mathfrak p(K_1) \\ p| \m_1, ~ p \nmid d_1   }   } \left(  W_p (s, it) - \frac{1}{p^{1+s}} - \frac{1}{p^{2+2s} (p-1) }\right)^{-1}  \prod_{ \substack{ p |\mathfrak p(K_1)   \\ p \nmid \m_1 d_1     }} W_p ( s, it)^{-1}.
\end{multline*}
Due to the factor $\frac{1}{ \mathfrak p(K_1)}$ in the first line of the above display, when we follow the arguments in \S \ref{sec:trivialchar off diag main terms}, we expect that the sum over the primes $p_{k_j}$ for $ k_j \in K_1 $ is $ (\log Q)^{|K_1|} $ smaller so that
$$ \mathscr C_{K_1,\emptyset ,<}  (Q)  \ll \frac{Q}{ (\log Q)^{|K_1|}} $$
would hold. This implies \eqref{eqn:induction initial step}.

 %------------------------------------------------------------
\section{$n$-th centered moments for $O(N)$} \label{section:nth centered moment ON}
In this section we prove Theorem \ref{thm:Cn}. Recall that $SO(2N)$ and $USp(2N)$ are the classical even orthogonal and symplectic groups, respectively.  
Define
\es{  
\label{def T G N 0}  \mathscr{T}_{+} (S) := \lim_{N \to \infty} \mathscr{T}_{+,N} (S)  &  :=   \lim_{N \to \infty}   \int_{SO (2N)}   \bigg( \prod_{ \ell \in S}      \sum_{0<  |j| \leq N   } \Phi_\ell \left( \frac{N \theta_j  }{ \pi}  \right)  \bigg)   dX_{SO(2N)} , }
\es{
\label{def T G N 1} \mathscr{T}_{-} (S) :=  \lim_{N \to \infty} \mathscr{T}_{-, N} (S)  & := \lim_{N \to \infty}\int_{USp (2N)}   \bigg( \prod_{ \ell \in S}   \sum_{0 <  |j| \leq N   } \Phi_\ell \left( \frac{N \theta_j  }{ \pi}  \right)  \bigg)     dX_{USp(2N)}     }
 for $ S \subset \{ 1, \ldots , n \}$.  By Lemma \ref{lemma O- to USp} we have
 \begin{equation}\label{def T G N 2} 
  \mathscr{T}_{-} (S)  = \lim_{N \to \infty}\int_{O^-(2N+2)}   \bigg( \prod_{ \ell \in S}   \sum_{0 <  |j| \leq N   } \Phi_\ell \left( \frac{N \theta_j  }{ \pi}  \right)  \bigg)     dX_{O^-(2N+2)}  .   
  \end{equation}
Now, by \eqref{def C even}, \eqref{def C odd}, \eqref{def T G N 0} and \eqref{def T G N 2}, we have
   \begin{equation}\label{formula C even odd by T}\begin{split}
 C_{even}(n)  & = \sum_{ S_1 \sqcup S_2 = [n] }\mathscr{T}_+ (S_1 ) \prod_{ \ell \in S_2 } \left( - \widehat{\Phi}_\ell (0) - \frac{ \Phi_\ell ( 0 )}{2} \right)   ,  \\
C_{odd}(n)    & = \sum_{ S_1 \sqcup S_2 = [n] }\mathscr{T}_- (S_1 ) \prod_{ \ell \in S_2 } \left( - \widehat{\Phi}_\ell (0) + \frac{ \Phi_\ell ( 0 )}{2} \right)  .
\end{split}   \end{equation}
 This reduces our problem to computing the limits  
 $$ \mathscr{T}_{\pm} := \lim_{N \to \infty}  \mathscr{T}_{\pm, N } :=\lim_{N \to \infty}  \mathscr{T}_{ \pm , N}( [ \nu ] ),$$ 
 where without loss of generality, we have replaced $S$ by $[\nu]$ when $S\subset [n]$ with $|S| = \nu$, $1\le \nu \le n$.

First we apply results from Mason and Snaith \cite{MS} to \eqref{def T G N 0} and \eqref{def T G N 1}.  The work of Mason and Snaith builds upon the method introduced by Conrey and Snaith \cite{CS1}, who derived a new formula for $n$-correlations of eigenvalues of matrices from the unitary group and subsequently applied it in \cite{CS2} to match number theory results with random matrix predictions. Although their formula appears intricate, it expresses the result in terms of a test function whose Fourier transform may have restricted support. This formulation allows one to identify immediately which terms survive under the support restrictions on the test function, thereby aligning with the number theoretic calculations.

For notational convenience  we rename the functions $J^* $ and $J_{USp(2N)}^*$ defined at (2.26) and (3.12) in \cite{MS}  as  $ J_+^*$ and $J_-^*$, respectively, and define
\begin{equation}\label{def J star}  
  J_{\pm}^* (W , N)  :=  \sum_{ \substack{  W'\sqcup W''  \sqcup W_1 \sqcup \cdots \sqcup W_R  = W   \\     |W_r | = 2  } } e^{-2N \sum_{ w \in W' } w } H_0^{\mp}(W')  \prod_{ \alpha \in W''}   H_1^\mp ( W' ,   \alpha   )   \prod_{r=1}^R H_2  (  W_r )  , 
  \end{equation}
  where 
  \begin{equation}\label{def HWs} \begin{split}
 H_0^{\mp}(W') & := (-1)^{|W'|}    \prod_{   \{ \alpha, \beta \}  \subset W' , \alpha \neq \beta }   \frac{ ( 1- e^{- \alpha + \beta}) ( 1- e^{  \alpha - \beta})  }{(1- e^{-\alpha-\beta})(1- e^{\alpha+\beta})} \prod_{ \alpha \in W'} \frac{  1 }{   1 - e^{ \mp 2 \alpha }   }   , \\ 
 H_1^\mp ( W' ,  \alpha  )  & :=   \sum_{ w \in W'}  \left(  \frac{1}{1- e^{\alpha - w} }   - \frac1{1-e^{ \alpha + w } } \right) \mp \frac1{1-e^{2 \alpha}}    , \\ 
H_2  (  \{  \alpha, \beta \}  )  & :=  \frac{e^{\alpha + \beta} }{ (1-e^{\alpha + \beta})^2} .
\end{split}\end{equation} 
Here, $W'$ corresponds to $D$ in the definitions of  $J^* $ and $J_{USp(2N)}^*$ in \cite{MS}, $W''$ is the union of the $W_r$  with $ |W_r|=1 $ and $R$ is any positive integer.

We further fix, for $1\le \ell \le \nu$, positive numbers $\delta_\ell$ to be determined later.  Let
\begin{equation}\label{eqn:deltaellNdef}
    \delta(\ell , N) := \frac{ \log \log N}{ N} \delta_\ell.
\end{equation}
For clarity, we record the following lemma.
\begin{lemma}\label{lem:Phibdd}
Fix notation as above and suppose that $z_\ell = u  \pm it $ with $ |u| \leq \delta ( \ell, N)$ and the support of $ \widehat{\Phi}_\ell $ is contained in $[ - \sigma_\ell, \sigma_\ell ]. $
Then
\begin{equation}\label{eqn Phi ell bound 1}
 \Phi_\ell \left( \frac{ i N z_\ell  }{ \pi}  \right) 
 \ll_A  \frac{ (\log N)^{2 \sigma_\ell \delta_\ell}   }{(N|z_\ell|)^A },
\end{equation}
for any integer $A\ge 0$. 
\end{lemma}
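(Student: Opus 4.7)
The approach is to derive the bound directly from Lemma~\ref{lem:FourierResult}, the Paley--Wiener-type decay estimate for Schwartz functions whose Fourier transform has compact support. Writing $z_\ell = u \pm it$, I have $iNz_\ell/\pi = v + iy$ with $v := \mp Nt/\pi$ and $y := Nu/\pi$. Lemma~\ref{lem:FourierResult} applied with $\kappa_0 = \sigma_\ell$ and an integer parameter $A_1$ to be chosen gives
\[
\left|\Phi_\ell\!\left(\tfrac{iNz_\ell}{\pi}\right)\right| \ll_{A_1} \min\!\left\{1,\, \frac{1 + |y|^{A_1}}{|v|^{A_1}}\right\} e^{2\pi \sigma_\ell |y|}.
\]

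The numerator $(\log N)^{2\sigma_\ell\delta_\ell}$ of the claim comes entirely from the exponential factor. Since $|u| \leq \delta(\ell,N) = \delta_\ell(\log\log N)/N$, one has $|y| \leq \delta_\ell(\log\log N)/\pi$, and hence $e^{2\pi\sigma_\ell |y|} \leq e^{2\sigma_\ell\delta_\ell\log\log N} = (\log N)^{2\sigma_\ell \delta_\ell}$. This is the only price paid for the excursion off the real axis.

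The polynomial decay $(N|z_\ell|)^{-A}$ then requires a case analysis, since $N|z_\ell|/\pi = \sqrt{v^2+y^2}$ while the Fourier bound only decays in $|v|$. I would split at the threshold $N|z_\ell| \asymp (\log\log N)^{A+2}$. In the regime $N|z_\ell| \geq (\log\log N)^{A+2}$, the bound $|y|\leq \delta_\ell (\log\log N)/\pi$ forces $|v| \geq N|z_\ell|/(\sqrt{2}\pi)$ (for $N$ large enough); choosing $A_1 = A+1$ then yields
\[
\left|\Phi_\ell\!\left(\tfrac{iNz_\ell}{\pi}\right)\right| \ll_A \frac{(\log\log N)^{A+1}}{(N|z_\ell|)^{A+1}}(\log N)^{2\sigma_\ell\delta_\ell} \ll \frac{(\log N)^{2\sigma_\ell\delta_\ell}}{(N|z_\ell|)^A},
\]
since the extra factor $(\log\log N)^{A+1}/(N|z_\ell|)$ is $O(1)$ in this regime. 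In the complementary regime $N|z_\ell| < (\log\log N)^{A+2}$, the trivial bound $|\Phi_\ell|\leq \|\Phi_\ell\|_\infty$ suffices, as $(N|z_\ell|)^A < (\log\log N)^{A(A+2)} \ll (\log N)^{2\sigma_\ell\delta_\ell}$ for $N$ large whenever $\sigma_\ell\delta_\ell > 0$ (and the degenerate case $\sigma_\ell = 0$ forces $\Phi_\ell\equiv 0$).

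The argument is essentially mechanical once Lemma~\ref{lem:FourierResult} is invoked; the only care needed lies in choosing the correct threshold for the case split and in verifying that the $(\log\log N)^{A+1}$ loss from the polynomial numerator of the Fourier bound is always absorbed into one excess power of $N|z_\ell|$. I anticipate no substantive obstacle.
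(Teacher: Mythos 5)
The gap is in Case~2, at the assertion ``the trivial bound $|\Phi_\ell|\leq \|\Phi_\ell\|_\infty$ suffices.'' The function $\Phi_\ell$ is being evaluated at the \emph{complex} argument $iNz_\ell/\pi = v + iy$ with $y = Nu/\pi$, and $|y|$ can be as large as $\delta_\ell(\log\log N)/\pi \to\infty$. The sup norm of $\Phi_\ell$ over $\mathbb{R}$ does not control $|\Phi_\ell(v+iy)|$ for $y\neq 0$; the correct trivial bound (Lemma~\ref{lem:FourierResult} with $A_1=0$, or a direct estimate) is $|\Phi_\ell(v+iy)| \ll e^{2\pi\sigma_\ell|y|} \ll (\log N)^{2\sigma_\ell\delta_\ell}$, which is unbounded in $N$. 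With this correct bound your chain of inequalities breaks: you would need $(N|z_\ell|)^A\ll 1$ in Case~2, but there $N|z_\ell|$ can be of size $(\log\log N)^{A+2}$. Worse, the problem is structural rather than a threshold choice: take $t=0$ and $u=\delta(\ell,N)$, so $v=0$ and $N|z_\ell| = \delta_\ell\log\log N$. Then Lemma~\ref{lem:FourierResult} gives only $|\Phi_\ell| \ll e^{2\pi\sigma_\ell|y|}$ with no decay at all, because its decay factor $\min\{1,(1+|y|^{A_1})/|v|^{A_1}\}$ depends on $|v|$, not on $|z_\ell|=|v+iy|$. Thus no choice of threshold or $A_1$ closes the argument.

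The paper's proof avoids Lemma~\ref{lem:FourierResult} entirely. Writing $\Phi_\ell(iNz_\ell/\pi)=\int\widehat\Phi_\ell(x)e^{-2Nz_\ell x}\,dx$ and integrating by parts $A$ times with respect to $x$ --- treating $2Nz_\ell$ as a single complex coefficient --- brings down a factor $(2Nz_\ell)^{-A}$, hence $|2Nz_\ell|^{-A}=(2N|z_\ell|)^{-A}$, at each step, while the modulus $|e^{-2Nz_\ell x}|\le (\log N)^{2\sigma_\ell\delta_\ell}$ and $\int|\widehat\Phi_\ell^{(A)}|<\infty$ control the remaining integral. This gives the stated bound uniformly with no case split. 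The distinction is that integration by parts with respect to the complex frequency $2Nz_\ell$ yields decay in $|z_\ell|$, whereas Lemma~\ref{lem:FourierResult}'s estimate (which separately integrates by parts against $e^{2\pi i wv}$) only sees $|v|=|\mathrm{Re}(iNz_\ell/\pi)|$. You should replace your argument with this direct integration by parts; the exponential factor computation you did is exactly right and carries over.
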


\begin{proof} 
Our condition on $z_\ell$ implies that $|e^{2Nz_\ell x}| \le e^{2N \sigma_\ell \delta(\ell, N)} \le (\log N)^{2 \sigma_\ell \delta_\ell}$, for $\delta(\ell, N)$ as in \eqref{eqn:deltaellNdef}.  Thus, by integration by parts $A$ times, we have
 \begin{equation}
 \Phi_\ell \left( \frac{ i N z_\ell  }{ \pi}  \right) = \int_{ - \infty}^\infty \widehat{\Phi}_\ell ( x) e^{ - 2N z_\ell x } dx \ll_A  \frac{ (\log N)^{2 \sigma_\ell \delta_\ell}   }{(N|z_\ell|)^A },
\end{equation}
as desired.
\end{proof}

\begin{lemma}\label{lemma eqn MS}

Let notation be as above. Then we have 
\begin{equation}\label{eqn T pm lemma eqn MS}
\mathscr{T}_{\pm  }   =      \lim_{N \to \infty}    \sum_{ K  \sqcup \tilde{K} = [ \nu  ] }    \frac{  N^{|\tilde{K} |}  }{ (   \pi i )^\nu }  
 \int_{  ( \delta (\ell,N) ; [ \nu ] ) }    J_{\pm}^* (   z_{K }   ,N )
  \prod_{ \ell =1}^\nu      \Phi_\ell \left( \frac{   i N z_\ell  }{  \pi}  \right)  dz_1 \cdots dz_\nu ,   
\end{equation}
  where
 $ ( \delta(\ell,N)  ; L) $ means that   the $z_\ell$-integral for each $ \ell \in L $ is over the vertical line  from $ \delta(\ell,N) -  i  \infty $ to $  \delta(\ell,N) + i \infty  $, and  
 $$   z_K := \{  z_k : k \in K \} . $$ 
 
\end{lemma}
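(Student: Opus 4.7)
The plan is to derive this identity from the formulas of Mason--Snaith \cite{MS}, specifically their non-normalized $n$-th centered moment formulas for $SO(2N)$ and $USp(2N)$ (equations (2.26) and (3.12) of \cite{MS}), which evaluate integrals of the shape
\est{\int_G \prod_{\ell=1}^\nu \sum_{0<|j|\le N} e^{\mp 2 N z_\ell \theta_j}\, dX}
over $G = SO(2N)$ or $USp(2N)$ in terms of the kernel $J_\pm^*$ defined in \eqref{def J star}.

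First, for each $\ell$ I would apply Fourier inversion to write
\est{\Phi_\ell\!\left(\frac{N\theta_j}{\pi}\right) = \int_{-\infty}^\infty \widehat\Phi_\ell(x)\, e^{2 i N x\theta_j}\, dx,}
and then convert this real integral into a vertical contour integral in a complex variable $z_\ell$ by the substitution $x = iNz_\ell/\pi$ combined with a deformation of the contour to $\Re(z_\ell) = \delta(\ell,N)$. The shift $\delta(\ell,N) = \delta_\ell \log\log N/N$ is small enough that the bound \eqref{eqn Phi ell bound 1} of Lemma \ref{lem:Phibdd} still produces the rapid decay of $\Phi_\ell(iNz_\ell/\pi)$ needed to justify the interchange of integrals, while being strictly positive so that the kernel terms $e^{-2Nz_\ell}$ appearing in $H_0^\mp$ decay.

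Second, substituting these representations into $\mathscr T_{\pm,N}$ and interchanging the $z_\ell$-integrals with the group integral reduces the problem to evaluating
\est{\int_G \prod_{\ell=1}^\nu \sum_{0<|j|\le N} e^{\mp 2 N z_\ell \theta_j}\, dX.}
Here Mason--Snaith's formula applies directly: it expresses this group average as a combinatorial sum in which each index $\ell \in [\nu]$ is placed either into a subset that enters the kernel $J_\pm^*$ (the non-diagonal indices, which we label $K$) or into a ``diagonal'' subset, where the sum $\sum_{0<|j|\le N} 1$ produces a factor of $N$ (this gives rise to $\tilde K$ and the prefactor $N^{|\tilde K|}$ in \eqref{eqn T pm lemma eqn MS}). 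Within the $K$-indexed block, Mason--Snaith's further decomposition into singletons ($W' \sqcup W''$) and pairs ($W_1,\dots,W_R$) reproduces exactly the structure of $J_\pm^*(z_K, N)$ in \eqref{def J star}.

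Finally, I would take $N\to\infty$. The error terms in the Mason--Snaith identity (which come from approximate averaging and exponentially small corrections in $N$) are controlled using Lemma \ref{lem:Phibdd} together with the compact support of $\widehat\Phi_\ell$, which provides decay of $\Phi_\ell(iNz_\ell/\pi)$ uniformly along the shifted contour at rate overwhelming any polynomial loss in $N$. The main obstacle is the bookkeeping in the previous step: one must carefully match Mason--Snaith's combinatorial partitioning with the $K \sqcup \tilde K$ decomposition, verify that the ``diagonal'' singletons indeed contribute a clean factor of $N$ for each $\ell\in\tilde K$ (and that no cross terms between $K$ and $\tilde K$ appear beyond those recorded here), and check that the resulting contour-shift contours $(\delta(\ell,N); [\nu])$ are compatible with the convergence of $J_\pm^*$ (whose factors $H_0^\mp, H_1^\mp, H_2$ develop singularities at $z_k = 0$ and at $z_k \pm z_{k'} = 0$, all safely avoided for our choice of $\delta_\ell$).
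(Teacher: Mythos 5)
Your high-level ingredients are the right ones (the Mason--Snaith kernel $J_\pm^*$, Lemma~\ref{lem:Phibdd}, contour deformations to $\Re z_\ell = \delta(\ell,N)$), but the route you describe diverges from the paper's and has a couple of genuine gaps.

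The paper does not Fourier-invert $\Phi_\ell$ and then average exponentials; it applies \cite[Lemma~2.9]{MS} (respectively \cite[Lemma~3.5]{MS}) \emph{directly} to the test function $f(\theta_{j_1},\dots,\theta_{j_\nu}) = \prod_\ell \Phi_\ell(N\theta_{j_\ell}/\pi)$. This lemma already outputs a three-way partition $K_1 \sqcup K_2 \sqcup K_3 = [\nu]$ with mixed contours $C_+^{K_1}$, $C_-^{K_2\sqcup K_3}$ and a factor $(2N)^{|K_3|}$. The two-way decomposition $K \sqcup \tilde K$ in \eqref{eqn T pm lemma eqn MS} then arises only after a non-trivial post-processing: one shifts the $K_3$-contours from $C_-$ to $C_+$, substitutes $z_k \mapsto -z_k$ for $k \in K_2$, and \emph{uses the evenness of $\Phi_\ell$} to absorb the sign, thereby merging $K_1\sqcup K_2$ into a single set $K$ with all contours on the same side. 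Your description of $\tilde K$ as "diagonal singletons where $\sum_j 1$ produces a factor of $N$" does not capture this: in the actual MS lemma, $K_3$ and its $(2N)^{|K_3|}$ prefactor arise structurally, and the identification $\tilde K = K_3$ only happens after the contour-merging step, which your proposal omits entirely.

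A second gap: \cite[Lemma~2.9]{MS} is stated for \emph{periodic} test functions, and that periodicity is exactly what makes the horizontal contour integrals cancel in Mason--Snaith's proof. Here $f = \prod \Phi_\ell$ is not periodic, so one cannot apply the lemma as stated; the substantive content of the paper's proof is the observation that Lemma~\ref{lem:Phibdd} gives a decay bound strong enough to show these horizontal integrals are $o(1)$ in $N$, which substitutes for periodicity. Your proposal mentions Lemma~\ref{lem:Phibdd} only for "justifying the interchange of integrals" and "controlling error terms"; it never identifies the periodicity hypothesis as the obstruction or explains how decay replaces it. Finally, a small computational point: the substitution $x = iNz_\ell/\pi$ you propose produces $e^{-2N^2 z_\ell\theta_j/\pi}$, not $e^{\mp 2Nz_\ell\theta_j}$; the $N$-scaling in your change of variables is off, though this is a superficial slip compared with the two structural gaps above.
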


\begin{proof}
Let  
$C_\pm  $ be the path from $ \pm  \frac{ \log \log N}{ N} \delta  - \pi i $ to $ \pm \frac{ \log \log N}{ N} \delta + \pi i $ for a small $\delta >0$. 
 By applying \cite[Lemma 2.9]{MS} to \eqref{def T G N 0}, we have
\begin{align*}
\mathscr{T}_{+ ,N}   =     &    \sum_{ K_1 \sqcup K_2  \sqcup K_3 = [ \nu  ] }    \frac{ (2N)^{|K_3 |}  }{ ( 2 \pi i )^\nu } \int_{ C_+^{K_1}} \int_{C_-^{K_2 \sqcup K_3}}  J_+^* (   z_{K_1}  \sqcup  -z_{K_2}  ,N  ) \prod_{ \ell=1}^\nu      \Phi_\ell \left( \frac{ i N z_\ell  }{  \pi}  \right)  dz_1 \cdots dz_\nu\\
& + o(1)
\end{align*}
as $N \to \infty$, where    $\int_{ C_+^{K_1}} \int_{C_-^{K_2 \sqcup K_3}}$ means we are integrating all the variables in $z_{K_1}$ along the $C_+$ path and all others up to the $C_-$ path   and $  - z_K := \{  -z_k : k \in K \}$.  In our application of \cite[Lemma 2.9]{MS}, we have taken their $f$ to be 
$$f(\theta_{j_1}, \ldots ,\theta_{j_\nu}) = \prod_{\ell =1}^\nu \Phi_\ell \bfrac{N\theta_{j_\ell}}{\pi}.
$$

In \cite[Lemma 2.9]{MS}, $f$ is assumed to be periodic, which is used in the proof in order to show that certain horizontal integrals cancel out.  Those same horizontal integrals can be justified to be $o(1)$ in our case by the bound from Lemma \ref{lem:Phibdd}, which implies that
\begin{equation}\label{eqn Phi ell bound 2}
 \Phi_\ell \left( \frac{ i N z_\ell  }{ \pi}  \right) \ll_A  \frac{ (\log N)^{2 \sigma_\ell \delta_\ell}   }{(N|z_\ell|)^A } \ll_A  \frac{ (\log N)^{2 \sigma_\ell \delta_\ell}   }{N^A },
 \end{equation}
for any integer $A\ge 0$, since $|z_\ell| \ge \pi$.

Now, since the integrand is holomorphic in $z_k$ for all $z_k \in K_3$, we may shift each $ z_k$ contour for $ k \in K_3 $ to $C^+$, where again the horizontal parts are small by \eqref{eqn Phi ell bound 1} from Lemma \ref{lem:Phibdd}.  We further substitute $ z_k$ by $ - z_k $ for each $ k \in K_2$ to see that
\begin{equation}\label{eqn T even N proof 1}
  \mathscr{T}_{+ ,N}   =          \sum_{ K_1 \sqcup K_2  \sqcup K_3 = [\nu ] }    \frac{ (2N)^{|K_3 |}  }{ ( 2 \pi i )^\nu }  
 \int_{ C_+^{[\nu]}}    J_+^* (   z_{K_1   \sqcup  K_2}  ,N )   \prod_{ \ell =1 }^\nu       \Phi_\ell \left( \frac{ i N z_\ell  }{ \pi}  \right)  dz_1 \cdots dz_\nu +o(1),
\end{equation}
where we have used the fact that $ \Phi_\ell$ is even for all $\ell$. Since the integrand in \eqref{eqn T even N proof 1} is holomorphic, we shift each $z_\ell$ contour in \eqref{eqn T even N proof 1} to the line segment from $\delta (\ell ,N)  -  \pi i $ to $  \delta (\ell , N)  + \pi  i   $. By the shifts, we obtain extra terms containing horizontal $z_\ell$-integrals, which are also negligible by \eqref{eqn Phi ell bound 1}. 
We can also extend these integrals to the vertical line from $  \delta(\ell,N) - i \infty$ to $ \delta(\ell,N) + i \infty$ in a similar way. By combining the sum over $K_1 $ and $K_2 $ as a sum over $K$ and replacing $K_3 $ by $ \tilde{K}$, we prove the even case. 

 To prove the odd case, we apply \cite[Lemma 3.5]{MS} instead and argue similarly to the even case. 
\end{proof}

We integrate the $z_\ell$-integrals in \eqref{eqn T pm lemma eqn MS} for $ \ell \in \tilde{K} $. 
For each $ \ell \in \tilde{K} $, we see that
$$ \frac{N}{ \pi i } \int_{ ( \delta(\ell, N) ) } \Phi_\ell \left( \frac{ iN z_\ell }{  \pi }  \right) dz_\ell =\frac{N}{ \pi i } \int_{- i \infty }^{i \infty} \Phi_\ell \left( \frac{ iN z_\ell }{  \pi }  \right) dz_\ell =  \int_{-   \infty }^{  \infty} \Phi_\ell (   z_\ell ) dz_\ell =    \widehat{\Phi}_\ell (0). $$
Thus, we find that 
\begin{equation} \label{eqn T pm U pm K}
\mathscr{T}_{\pm   }    
   =           \sum_{  K  \sqcup \tilde{K} = [\nu] }  \left(   \prod_{ \ell \in \tilde{K} }    \widehat{\Phi}_\ell (0) \right)   \mathscr{U}_{\pm} (K )     ,
 \end{equation}
where
\begin{equation}\label{def U pm K}
  \mathscr{U}_\pm (K  ) := \lim_{N \to \infty}  \frac{1}{ (  \pi i )^{ |K  |}}  \int_{ (\delta(\ell, N) ; K)   }   J_\pm^* (z_{K } ,N  )    \prod_{ \ell \in K  }    \Phi_\ell \left( \frac{i N z_\ell  }{  \pi}  \right)    dz_\ell.
  \end{equation}
Define
\begin{equation}\label{def U pm j K}
  \mathscr{U}_{\pm, j}  (K  ) := \lim_{N \to \infty}  \frac{1}{ (  \pi i )^{ |K  |}}  \int_{ (\delta(\ell, N) ; K)   }   J_{\pm,j}^* (z_{K } ,N  )    \prod_{ \ell \in K  }    \Phi_\ell \left( \frac{i N z_\ell  }{  \pi}  \right)    dz_\ell 
  \end{equation}
  where
\begin{equation}\label{def J pm j star}  
  J_{\pm, j }^* (z_K  , N)  :=  \sum_{ \substack{  K'\sqcup K''  \sqcup K_1 \sqcup \cdots \sqcup K_R  = K   \\  |K'|=j, ~   |K_r | = 2  } } e^{-2N \sum_{ k \in K' } z_k } H_0^{\mp}(z_{K'} )  \prod_{ \ell \in K''}   H_1^\mp ( z_{K'}  ,   z_\ell   )   \prod_{r=1}^R H_2  ( z_{ K_r }  )    
  \end{equation}
  for $ j \geq 0 $, then we see that 
\begin{equation}\label{eqn U pm K sum over all j}
\mathscr{U}_{\pm}  (K  ) = \sum_{j = 0 }^{ |K| }   \mathscr{U}_{\pm, j}  (K  ).    
\end{equation}
Due to the support conditions on $\widehat{\Phi}_\ell$, the above sum is actually shorter. 
\begin{lemma} \label{lemma:supports restriction}
Assume that  the support of $ \widehat{\Phi}_\ell $ is contained in $[- \sigma_\ell, \sigma_\ell ]$ for $ \ell \leq \nu$ and 
$  \sum_{\ell=1}^\nu  \sigma_\ell <4 $. Then 
 \begin{equation}\label{eqn U pm K split}
 \mathscr{U}_\pm (K  ) = \sum_{j=0}^3 \mathscr{U}_{\pm,j} (K  )  .
 \end{equation}
 \end{lemma}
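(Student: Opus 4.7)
The plan is to show $\mathscr{U}_{\pm,j}(K) = 0$ for every $j \ge 4$ by a rightward contour shift in the variables $z_k$ with $k \in K'$, exploiting the compact support of each $\widehat{\Phi}_k$. Fix any set partition $K = K' \sqcup K'' \sqcup K_1 \sqcup \cdots \sqcup K_R$ contributing to $J_{\pm,j}^*$ so that $|K'| = j$, and analyze the corresponding term of the integral in \eqref{def U pm j K}.

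First, I would choose the parameters $\delta_\ell$ in \eqref{eqn:deltaellNdef} so that $\delta_k > \delta_m$ whenever $k \in K'$ and $m \in K'' \sqcup K_1 \sqcup \cdots \sqcup K_R$. Then, as a function of any single $z_k$ with $k \in K'$, the integrand is analytic in the right half plane $\operatorname{Re}(z_k) > 0$: the singularities of $H_0^\mp(z_{K'})$ (zeros of $1 - e^{\pm(z_\alpha \pm z_\beta)}$ and $1 - e^{\mp 2z_\alpha}$ for $\alpha, \beta \in K'$) all lie on $\operatorname{Re} \le 0$, while the zeros of $1 - e^{z_m - z_k}$ and $1 - e^{z_m + z_k}$ in $H_1^\mp(z_{K'}, z_m)$ occur at $\operatorname{Re}(z_k) = \delta(m,N)$ and $-\delta(m,N)$ respectively, both strictly less than $\delta(k,N)$.

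Next, shift for each $k \in K'$ the contour $\operatorname{Re}(z_k) = \delta(k,N)$ rightward to $\operatorname{Re}(z_k) = T$ for large $T > 0$. By the analyticity above no residues are picked up, so Cauchy's theorem keeps the integral unchanged. On the shifted contour one verifies directly that $|H_0^\mp(z_{K'})| \ll e^{-2T\binom{|K'|}{2}}$ and $|H_1^\mp(z_{K'}, z_m)| = O(1)$, while the support condition $\widehat{\Phi}_k \subset [-\sigma_k, \sigma_k]$ gives $|\Phi_k(iNz_k/\pi)| \ll e^{2N\sigma_k T}$. Combining with $|e^{-2Nz_k}| = e^{-2NT}$,
\est{
\bigg|\prod_{k \in K'} e^{-2Nz_k}\Phi_k(iNz_k/\pi)\bigg| \ll e^{-2NT(|K'| - \sum_{k \in K'}\sigma_k)}.
}
The Schwartz-type decay of each $\Phi_k(iNz_k/\pi)$ in $|\operatorname{Im}(z_k)|$, inherited from the smoothness of $\widehat{\Phi}_k$, guarantees absolute integrability on the shifted contour. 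Hence, provided $|K'| > \sum_{k \in K'}\sigma_k$, the absolute integral tends to $0$ as $T \to \infty$, which forces the term itself to be $0$.

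Finally, under the hypothesis $\sum_{\ell=1}^\nu \sigma_\ell < 4$, whenever $j = |K'| \ge 4$ we have $\sum_{k \in K'}\sigma_k \le \sum_\ell \sigma_\ell < 4 \le |K'|$, so the decay condition is automatically satisfied by every partition contributing to $J_{\pm,j}^*$. Summing over the finitely many such partitions yields $\mathscr{U}_{\pm,j}(K) = 0$ for $j \ge 4$, which is precisely \eqref{eqn U pm K split}. The main technical obstacle is the careful tracking of the singularities of $H_0^\mp$ and $H_1^\mp$ to ensure the rightward shift crosses no poles; the proposed ordering $\delta_k > \delta_m$ on the parameters is what makes this clean.
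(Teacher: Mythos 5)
Your approach is genuinely different from the paper's: you propose to show each $\mathscr{U}_{\pm,j}(K)$ for $j\ge 4$ vanishes \emph{exactly} by pushing the $K'$-contours far to the right, whereas the paper keeps the contours fixed at $\tRe z_\ell = \delta_\ell\log\log N/N$, bounds $|J_{\pm,j}^*|$ directly, and shows the resulting quantity is $\ll(\log N)^{-\epsilon_1\delta_1}\to 0$. Your bookkeeping of the exponential factors is essentially correct: on $\tRe z_k = T$ one does have $|e^{-2Nz_k}\Phi_k(iNz_k/\pi)|\ll e^{-2NT(1-\sigma_k)}$, the factor from $H_0^\mp$ gives further decay, and $j>\sum_{k\in K'}\sigma_k$ holds precisely when $j\ge 4$ and $\sum_\ell\sigma_\ell<4$. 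So the arithmetic is fine.

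However, there is a genuine gap in the contour-shifting step, and you flagged the right place but resolved it incorrectly. You propose to arrange $\delta_k>\delta_m$ whenever $k\in K'$ and $m\in K''\sqcup K_1\sqcup\cdots\sqcup K_R$, so that pushing $z_k$ rightward never crosses the pole of $H_1^\mp(z_{K'},z_m)$ at $z_k=z_m+2\pi i\mathbb{Z}$. But $\mathscr{U}_{\pm,j}(K)$ in \eqref{def U pm j K} is a \emph{single} integral over fixed contours $\tRe z_\ell=\delta(\ell,N)$ of a \emph{sum} over all set partitions $K=K'\sqcup K''\sqcup K_1\sqcup\cdots\sqcup K_R$ with $|K'|=j$. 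Different terms in that sum have different $K'$: for example with $K=\{1,\dots,5\}$ and $j=4$, both $K'=\{1,2,3,4\}$ and $K'=\{2,3,4,5\}$ occur, and one cannot simultaneously have $\delta_1,\dots,\delta_4>\delta_5$ and $\delta_2,\dots,\delta_5>\delta_1$. So a single global ordering of the $\delta_\ell$ cannot satisfy your condition for every partition. If instead you try to treat partitions one at a time and re-order the $\delta_\ell$ per partition, that re-ordering is itself a contour move which crosses the simple pole of $\frac{1}{1-e^{z_\ell-z_k}}$ at $z_k=z_\ell$ (with residue $1$, not $0$), producing lower-dimensional residue integrals that you have not shown to vanish or cancel. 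As written, your argument therefore does not establish $\mathscr{U}_{\pm,j}(K)=0$. The paper sidesteps this entirely by never moving the $K'$-contours past the $K''$-contours: it fixes one ordering $\delta_1<\cdots<\delta_\nu$, uses $|e^{-Nz_k}|\ll(\log N)^{-\delta_1}$, crude polynomial-in-$N$ bounds on $H_1,H_2$, the bound on $H_0$ from the Vandermonde-type structure, and the decay of $\Phi_\ell$ in Lemma \ref{lem:Phibdd}, and then lets $N\to\infty$. To repair your argument you would need either to account for all the residues picked up when re-ordering, or to avoid the re-ordering altogether, which essentially leads you back to the paper's direct-bound method.
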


\begin{proof}
By \eqref{eqn U pm K sum over all j} it is enough to show that $\mathscr{U}_{\pm,j} (K  ) = 0$ for $j \geq 4 $. 
Let $ \epsilon_1 := 4 -  \sum_{\ell=1}^\nu  \sigma_\ell >0 $. 
We  choose $ \delta_1 , \ldots, \delta_\nu $ satisfying 
$$ 0 < \delta_1 <  \cdots < \delta_\nu \leq   \frac{ 8 - \epsilon_1 }{ 8 - 2 \epsilon_1 } \delta_1 . $$  For notational convenience, let $\lambda_N = \frac{\log \log N}{N},$ and note that $\tRe z_k \ge \delta_1 \lambda_N$, so that $e^{-N z_k} \ll (\log N)^{-\delta_1}$.  
Putting this into \eqref{def J pm j star}, we get
\begin{align*}
      |J_{\pm, j }^* (z_K  , N)|  
  &\ll   (\log N)^{ -2 j \delta_1 } \sum_{ \substack{  K'\sqcup K''  \sqcup K_1 \sqcup \cdots \sqcup K_R  = K   \\  |K'|=j, ~   |K_r | = 2  } }|H_0^{\mp}(z_{K'} )|  \prod_{ \ell \in K''}   |H_1^\mp ( z_{K'}  ,   z_\ell   )|   \prod_{r=1}^R |H_2  ( z_{ K_r }  )|\\
  &\ll  (\log N)^{ -2 j \delta_1 } \bfrac{N}{\log \log N}^{|K| - j} \sum_{ \substack{  K'\subset  K   \\  |K'|=j } }|H_0^{\mp}(z_{K'} )|,
\end{align*}
where we have used the crude bounds $|H_1^\mp ( z_{K'}  ,   z_\ell   )|\ll \frac{N}{\log \log N}$, and $|H_2  ( z_{ K_r }  )|\ll \bfrac{N}{\log \log N}^2$.
By Lemma \ref{lem:Phibdd}, we see that
$$ \int_{ ( \delta( \ell, N))} \left| \Phi_\ell \left(  \frac{ i N z_\ell }{   \pi } \right) \right| | dz_\ell |    \ll   \int_{-\infty}^\infty   \frac{   (\log N )^{ 2 \sigma_\ell \delta_\ell } }{  (N| \delta( \ell, N) + it  |)^2   } dt  \leq  \frac{  (\log N)^{ \sigma_\ell  \frac{ 8 - \epsilon_1 }{ 4 -   \epsilon_1 } \delta_1 }  }{  N \log \log N }  $$
for $ \ell \in K \setminus K'$.
Hence, by applying the above inequalities to \eqref{def U pm j K}, we find that
\begin{align}
     \mathscr{U}_{\pm, j}  (K  ) 
     &\ll \lim_{N \to \infty} \sum_{ \substack{  K'\subset  K   \\  |K'|=j } } (\log N)^{ -2 j \delta_1 }   \prod_{\ell \in K\backslash K'} \frac{  (\log N)^{ \sigma_\ell  \frac{ 8 - \epsilon_1 }{ 4 -   \epsilon_1 } \delta_1 }  }{   ( \log \log N )^2 } \notag \\
     &\times   \int_{ (\delta(\ell, N) ; K')   } |H_0^{\mp}(z_{K'} )|  \prod_{ \ell \in K'  }    \left|\Phi_\ell \left( \frac{i N z_\ell  }{  \pi}  \right)    dz_\ell\right|  . \label{eqn:UpmjK 1}
\end{align}

We now consider two cases of the $z_\ell $ for $\ell \in K' $ in \eqref{eqn:UpmjK 1}.  When $|z_{\ell_0}|  \ge \frac{1}{10}$ for some $\ell_0 \in K'$, we use the crude bound
$$|H_0^{\mp}(z_{K'} )|\ll \bfrac{N}{\log \log N}^{j+2\binom{j}{2}},
$$
and the bound from Lemma \ref{lem:Phibdd},
$$ \Phi_{\ell} \left( \frac{i N z_\ell  }{  \pi}  \right)  \ll \frac{N^{\epsilon_1 }}{(|z_\ell|N)^{A}},
$$
for any $A>0$, noting that the above also implies that the integral over $ z_{\ell_0} $ satisfying $ | z_{\ell_0}| \geq \frac{1}{10}$ is $  \ll \frac{1}{N^A}$ for any $A>0$.  Thus, the final contribution of this case of the $z_\ell$ to $ \mathscr{U}_{\pm, j}  (K  ) $ in \eqref{eqn:UpmjK 1} is $\ll \lim_{N\rightarrow \infty} N^{-A} = 0$.

For $|z_\ell| \le \frac{1}{10}$ for all $\ell \in K'$, we write $z_\ell = \delta_\ell \lambda_N + it_{\ell}$, and get  
\begin{align*}
    |H_0^{\mp}(z_{K'} )|  & \ll   \prod_{   \{\ell_1, \ell_2 \}  \subset K' , \ell_1 \neq \ell_2 }   \frac{ ( 1- e^{- z_{\ell_1} + z_{\ell_2}}) ( 1- e^{z_{\ell_1} - z_{\ell_2}})  } {(1- e^{-z_{\ell_1}-z_{\ell_2}})(1- e^{z_{\ell_1}+z_{\ell_2}}) }
    \prod_{ \ell \in K'} \frac{  1 }{   1 - e^{ \mp 2 z_\ell }}  \\
    &\asymp  \prod_{   \{\ell_1, \ell_2 \}  \subset K' , \ell_1 \neq \ell_2}   \frac{(\lambda_N+ |t_{\ell_1} - t_{\ell_2}|)^2}{(\lambda_N + |t_{\ell_1}+t_{\ell_2}|)^2 }
    \prod_{ \ell \in K'} \frac{  1 }{\lambda_N+ |t_\ell| }  \\
    &= \prod_{   \{\ell_1, \ell_2 \}  \subset K' , \ell_1 \neq \ell_2 }   \frac{(1+ \left|\frac{t_{\ell_1}}{\lambda_N} - \frac{t_{\ell_2}}{\lambda_N}\right|)^2}{(1 + \left|\frac{t_{\ell_1}}{\lambda_N} +\frac{t_{\ell_2}}{\lambda_N}\right|)^2 }
    \prod_{ \ell \in K'} \frac{  1 }{\lambda_N+ |t_\ell| }  \\
    &\le \prod_{   \{\ell_1, \ell_2 \}  \subset K' , \ell_1 \neq \ell_2 }   \left(1+ \left|\frac{t_{\ell_1}}{\lambda_N}\right|\right)^2 \left(1+\left|\frac{t_{\ell_2}}{\lambda_N}\right|\right)^2 
    \prod_{ \ell \in K'} \frac{  1 }{\lambda_N+ |t_\ell| }  \\
    &= \prod_{ \ell \in K'} \frac{ \left(1+ \left|\frac{t_{\ell}}{\lambda_N}\right|\right)^{2(j-1)} }{\lambda_N+ |t_\ell| } 
\end{align*}
Again by Lemma \ref{lem:Phibdd},
\begin{align*}
\int_{ ( \delta( \ell, N))}&  \left| \Phi_\ell \left(  \frac{ i N z_\ell }{   \pi } \right) \right| \frac{ \left(1+ \left|\frac{t_{\ell}}{\lambda_N}\right|\right)^{2(j-1)} }{\lambda_N+ |t_\ell| }  | dz_\ell |    \\
&\ll   \int_{-\infty}^\infty   \frac{   (\log N )^{ 2 \sigma_\ell \delta_\ell } }{  (N( \lambda_N  + |t_\ell|) )^{2+2(j-1)}   } \frac{ \left(1+ \left|\frac{t_{\ell}}{\lambda_N}\right|\right)^{2(j-1)} }{\lambda_N+ |t_\ell| }  dt_\ell 
\ll  \frac{  (\log N)^{ \sigma_\ell  \frac{ 8 - \epsilon_1 }{ 4 -   \epsilon_1  } \delta_1 }  }{ (\log \log N)^{2j}}.
\end{align*}
Hence, by applying the above inequalities to \eqref{eqn:UpmjK 1} and ignoring negative powers of $\log \log N$, we have
\begin{align*}
\mathscr{U}_{\pm, j}  (K) 
&\ll \lim_{N\rightarrow \infty}   (\log N)^{-2j\delta_1  +  \sum_{\ell=1}^\nu      \sigma_\ell \frac{ 8 - \epsilon_1 }{ 4 -   \epsilon_1 } \delta_1 }  \leq \lim_{N\rightarrow \infty}   (\log N)^{ -    \epsilon_1   \delta_1  }  =0
\end{align*}
  for $ j \geq 4 $.
  This proves the lemma.
 \end{proof}
 
Next we compute $   \mathscr{U}_{\pm,j} (K  ) $. 
 By shifting each contour $( \delta(\ell, N))$ to $ (  \pi \delta_\ell  / N)$ and then substituting $z_\ell = \pi w_\ell/N$, we find that   
 \begin{equation}\label{eqn U pm j K int 1}
  \mathscr{U}_{\pm, j}  (K  ) =   \frac{1}{ (\pi i)^{ |K  |}}   \int_{ (\delta_\ell  ; K)   }  J_{\pm, j}^{**} (w_K)   \prod_{ \ell \in K  }    \Phi_\ell (i w_\ell )    dw_\ell , 
    \end{equation}
    where
    $$J_{\pm, j}^{**} (w_K):=    \lim_{N \to \infty}  \frac{\pi^{|K|} }{ N^{ |K  |}}   J_{\pm,j}^* \left( \frac{ \pi}{N} w_{K } , N  \right)  .$$ 
  Then by changing the order of the limit and the integrals we find that
  \begin{equation*} 
 J_{\pm, j}^{**} (w_K)  =  \sum_{ \substack{  K' \sqcup K''  \sqcup K_0   = K   \\  |K' |=j } }  \sum_{ \underline{G} \in \Pi_{K_0 , 2 }}  e^{-2\pi  \sum_{ k \in K' } w_k } \cH_0^{\mp}(w_{K'} )  \prod_{ \ell \in K''}  \cH_1^\mp ( w_{K'}  ,   w_\ell   )   \prod_{G_i \in \underline G } \cH_2  ( w_{ G_i }  ),    
  \end{equation*}
  where
  \begin{align*}
  \cH_0^{\mp}(w_{K'} )   & :=   \lim_{N \to \infty}  \frac{\pi^{|K'|}}{ N^{ |K'  |}}   H_0^{\mp}\left(  \frac{\pi}{N} w_{K'} \right), \\
    \cH_1^\mp ( w_{K'}  ,   w_\ell   )   & :=  \lim_{N \to \infty}  \frac{\pi }{ N  }       H_1^\mp \left( \frac{\pi}{N} w_{K'}  , \frac{\pi}{N}  w_\ell   \right),  \\
   \cH_2  ( w_{ G_i }  )   & :=  \lim_{N \to \infty}  \frac{\pi^2 }{ N^{ 2}}    H_2  \left( \frac{\pi}{N} w_{ G_i}  \right)   .  
  \end{align*}
  By \eqref{def HWs} we have 
   \begin{align*}
  \cH_0^{\mp}(w_{K'} )  & = (\mp 1)^{|K'|}   \cH_0 (w_{K'} )  :=   (\mp 1)^{|K'|}   \prod_{ \substack{ k_1, k_2  \in K'  \\ k_1 > k_2  }}   \frac{  (w_{k_1}  - w_{k_2} )^2  }{ (w_{k_1} + w_{k_2} )^2 } \prod_{ k \in K'} \frac{   1 }{ 2w_k }    , \\ 
  \cH_1^\mp ( w_{K'}  ,   w_\ell   ) & =   \sum_{ k \in K'}  \left(  \frac{1}{ w_\ell + w_k   }   -  \frac1{  w_\ell -w_k  } \right) \pm  \frac1{2w_\ell }    , \\ 
\cH_2  ( \{ w_{k_1} , w_{k_2} \}  ) &  =  \frac{1 }{ (w_{k_1}  + w_{k_2} )^2} .
\end{align*}

 We first integrate the $w_{G_i }$ integrals in \eqref{eqn U pm j K int 1}. By Lemma \ref{lemma integrals 1} and \eqref{def:I2Kr} we have
   $$
      \frac{1}{  (  \pi i )^2  }  \int_{ (\delta_k) }  \int_{ (\delta_m ) }   \frac{    1}{ ( w_m + w_k)^2 }          \Phi_m (iw_m) \Phi_k   (iw_k )    dw_m dw_k  = \mathscr{I}_{2 }  ( \{ m,k \} ) .  $$
Then we have
 \begin{multline}\label{eqn U pm j K int 2 }
  \mathscr{U}_{\pm, j}  (K  ) = \sum_{ \substack{  K' \sqcup K''  \sqcup K_0   = K   \\  |K' |=j } }  C_0 ( K_0)   \frac{ (\mp 1)^{|K'|}  }{ (\pi i)^{ |K'\sqcup K'' |}}\\
    \times   \int_{ (\delta_\ell  ; K'\sqcup K'')   }  e^{-2\pi  \sum_{ k \in K' } w_k } \cH_0(w_{K'} )  \prod_{ \ell \in K''}  \cH_1^\mp ( w_{K'}  ,   w_\ell   )    \prod_{ \ell \in K' \sqcup K''  }    \Phi_\ell (i w_\ell )    dw_\ell , 
  \end{multline}
  where
  \begin{equation}\label{def: C0K0}
   C_0 ( K_0 ) :=  \sum_{ \underline{G} \in \Pi_{K_0 , 2 }}  \prod_{G_i \in \underline{G}}  \mathscr{I}_{2 }  ( G_i  )  . 
   \end{equation}
 
  Next, for $ \ell \in K''$ we have
 \begin{align*} 
  \frac{1}{ \pi i}   \int_{ (\delta_\ell )   }  &   \cH_1^\mp ( w_{K'}  ,   w_\ell   )      \Phi_\ell (i w_\ell )    dw_\ell   
  =    \sum_{ k \in K'} 4 \int_0^\infty \widehat{\Phi}_\ell (t) e^{-2 \pi t w_k } dt  -  \sum_{ k \in K',   k < \ell }  2 \Phi_\ell ( i w_k)   \pm \frac{ \Phi_\ell (0)}2   
  \end{align*}
  by Lemma \ref{lemma integrals 1}.   Thus, we have  
   \begin{equation}\label{eqn U pm j K int 3 }
 \begin{split}
 & \mathscr{U}_{\pm, j}    (K  ) \\
   & =   \sum_{ \substack{  K' \sqcup K''  \sqcup K_0   = K   \\  |K' |=j } }  C_0 ( K_0)    \frac{(\mp 1)^{|K'|} }{ (\pi i)^{ |K' |}}   \int_{ (\delta_\ell  ; K' )   }  e^{-2\pi  \sum_{ k \in K' } w_k } \cH_0(w_{K'} )     \\
  & \times \prod_{\ell \in K''} \left(  \sum_{ k \in K'} 4 \int_0^\infty \widehat{\Phi}_\ell (t) e^{-2 \pi t w_k } dt  -  \sum_{ k \in K',   k < \ell }  2 \Phi_\ell ( i w_k)   \pm \frac{ \Phi_\ell (0)}2 \right)
   \prod_{ \ell \in K'    }    \Phi_\ell (i w_\ell )    dw_\ell  \\
   & =  \sum_{ \substack{  K' \sqcup K'' \sqcup K''' \sqcup K_0   = K   \\  |K' |=j } }  C_0 ( K_0)
\left( \prod_{\ell \in K'''}   \pm \frac{ \Phi_\ell (0)}2  \right) (\mp 1)^{|K'|}  \mathscr{V} ( K', K'')  
  \end{split}     \end{equation}
for $ j \geq 0$,  where $  \mathscr{V} ( \emptyset, \emptyset)=1$, $  \mathscr{V} ( \emptyset,  K'' ) = 0 $ for $ K'' \neq \emptyset$ and 
  \begin{multline} \label{def V K}
   \mathscr{V} ( K', K'') := 
\frac{1 }{ (2\pi i)^{ |K' |}}   \int_{ (\delta_\ell  ; K' )   }  e^{-2\pi  \sum_{ k \in K' } w_k } \prod_{ \substack{ k_1, k_2  \in K' \\  k_1> k_2  }}   \frac{  (w_{k_1}   - w_{k_2} )^2  }{ (w_{k_1} + w_{k_2} )^2 }       \\
\times \prod_{\ell \in K''} \left(  \sum_{ k \in K'} 4 \int_0^\infty \widehat{\Phi}_\ell (t) e^{-2 \pi t w_k } dt  -  \sum_{ k \in K',   k < \ell }  2 \Phi_\ell ( i w_k)   \right)
   \prod_{ \ell \in K'    }  \frac{  \Phi_\ell (i w_\ell ) }{w_\ell}    dw_\ell 
\end{multline}
for $  K' \neq \emptyset  $.

By \eqref{eqn T pm U pm K}, \eqref{eqn U pm K split} and \eqref{eqn U pm j K int 3 }, we find that 
\begin{equation} \label{eqn T pm 1} 
\mathscr{T}_{\pm   }    
   =         \sum_{ \substack{  K'\sqcup K'' \sqcup \tilde{K}  \sqcup K_0 = [\nu ]    \\  |K'| \leq 3    } }  C_0 (K_0)        \prod_{ \ell \in \tilde{K} }    \left(\widehat{\Phi}_\ell (0)   \pm \frac{ \Phi_\ell (0)}2 \right)        (\mp 1)^{|K'|} \mathscr{V} ( K', K'')   .
 \end{equation}
  By \eqref{formula C even odd by T} and \eqref{eqn T pm 1}, we find that
   \begin{equation}\label{eqn:C even odd nice cancellation}\begin{split}
     C_{even}(n)  
  &=  \sum_{ \substack{  K'\sqcup K''   \sqcup K_0 = [n]    \\  |K'| \leq 3   } }      C_0 (K_0) (- 1)^{|K'|} \mathscr{V} ( K', K'')   \\
    C_{odd}(n)  
  & =   \sum_{ \substack{  K'\sqcup K''   \sqcup K_0 = [n]    \\  |K'| \leq 3   } }      C_0 (K_0)  \mathscr{V} ( K', K'')  . 
\end{split}   \end{equation}
By \eqref{def C} and the above, the $n$-th centered moment for $O(N)$ is 
\begin{equation}\label{eqn Cn Tplusn} 
 C (n) =   \sum_{ \substack{  K'\sqcup K''   \sqcup K_0 = [n]    \\  |K'| = 0, 2    } }      C_0 (K_0)  \mathscr{V} ( K', K'')    .  
 \end{equation}
 By letting $C_j (n)$ the contribution of the $K'$ with $|K'|=j $, one can easily deduce the first part of Theorem \ref{thm:Cn}. 

To complete the proof of Theorem \ref{thm:Cn}, it remains to compute $\mathscr{V} ( \{ k_1 , k_2 \} , G ) $ for $\{ k_1 , k_2 \} \sqcup G \subset [n]$. By \eqref{def V K}, we see that
  \begin{equation*} \begin{split}
   \mathscr{V} ( \{ k_1 , k_2 \} , G ) = &
\frac{1  }{ (2 \pi i)^2 }   \int_{ (\delta_{k_1}  )   }\int_{ (\delta_{k_2}  )   }  e^{-2\pi  ( w_{k_1} + w_{k_2} )}          \\
& \times \prod_{\ell \in G} \left(  \sum_{ j=1,2 } \bigg(  4 \int_0^\infty \widehat{\Phi}_\ell (t) e^{-2 \pi t w_{k_j} } dt  -   2 \Phi_\ell ( i w_{k_j} ) {\bf 1}_{k_j  < \ell } \bigg)   \right)\\
 & \times \frac{  (w_{k_1}   - w_{k_2}  )^2  }{ (w_{k_1}  + w_{k_2}  )^2 }
     \frac{   \Phi_{k_1} (i w_{k_1} )  \Phi_{k_2} (i w_{k_2}  )  }{  w_{k_1}w_{k_2} }     dw_{k_2} dw_{k_1} .
\end{split}\end{equation*} 
By expanding the product over $\ell \in G$ we find that
\begin{align*} 
   \mathscr{V}& ( \{ k_1 , k_2 \} , G )  \\
   = &\sum_{ G_1 \sqcup G_2 \sqcup G_3 \sqcup G_4 = G} 4^{|G_1 |+|G_2| } (-2)^{ |G_3|+|G_4| }
\frac{1  }{ (2\pi i)^2 }   \int_{ (\delta_{k_1}  )   }\int_{ (\delta_{k_2}  )   }  e^{-2\pi  ( w_{k_1} + w_{k_2} )}          \\
&\times \prod_{\ell \in G_1} \left(    \int_0^\infty \widehat{\Phi}_\ell (t) e^{-2 \pi t w_{k_1} } dt    \right)\prod_{\ell \in G_2} \left(     \int_0^\infty \widehat{\Phi}_\ell (t) e^{-2 \pi t w_{k_2} } dt    \right)\\
&\times \prod_{\ell \in G_3 } \left(     \Phi_\ell ( i w_{k_1} ) {\bf 1}_{k_1  < \ell }     \right)\prod_{\ell \in G_4} \left(    \Phi_\ell ( i w_{k_2} ) {\bf 1}_{k_2  < \ell }    \right)  \frac{  (w_{k_1}   - w_{k_2}  )^2  }{ (w_{k_1}  + w_{k_2}  )^2 }
     \frac{   \Phi_{k_1} (i w_{k_1} )  \Phi_{k_2} (i w_{k_2}  )  }{ w_{k_1}w_{k_2} }     dw_{k_2} dw_{k_1}  .
     \end{align*}
     If every element of $G_3 $ is bigger than $k_1$, then 
     $$ \prod_{\ell \in G_3 } \left(     \Phi_\ell ( i w_{k_1} ) {\bf 1}_{k_1  < \ell }     \right)   \Phi_{k_1} (i w_{k_1} )  = \Phi_{k_1, G_3 }( i w_{k_1})   ,$$
     and equals $  0$  otherwise. Thus, we have
     \begin{align*}
 \mathscr{V} ( \{ k_1 , k_2 \} , G ) = &\sum_{ \substack{ G_1 \sqcup G_2 \sqcup G_3 \sqcup G_4 = G  \\ G_3 \subset \{ k_1 + 1, \ldots, n \} \\ G_4 \subset \{ k_2 + 1, \ldots, n \}   }}  4^{|G_1 |+|G_2| } (-2)^{ |G_3|+|G_4| }
\frac{1  }{ (2\pi i)^2 }   \int_{ (\delta_{k_1}  )   }\int_{ (\delta_{k_2}  )   }  e^{-2\pi  ( w_{k_1} + w_{k_2} )}          \\
&\times \prod_{\ell \in G_1} \left(    \int_0^\infty \widehat{\Phi}_\ell (u_\ell ) e^{-2 \pi u_\ell  w_{k_1} } du_\ell    \right)\prod_{\ell \in G_2} \left(     \int_0^\infty \widehat{\Phi}_\ell (u_\ell ) e^{-2 \pi u_\ell w_{k_2} } du_\ell    \right)\\
&\times  \frac{  (w_{k_1}   - w_{k_2}  )^2  }{ (w_{k_1}  + w_{k_2}  )^2 }
     \frac{   \Phi_{k_1, G_3 } (i w_{k_1} )  \Phi_{k_2, G_4} (i w_{k_2}  )  }{  w_{k_1}w_{k_2} }     dw_{k_2} dw_{k_1} .
  \end{align*}
Now change  the order of integration so that the $ w_{k_1} , w_{k_2}$ are the innermost integrals. By Lemma \ref{lemma integral I 12} applied to the $ w_{k_1} , w_{k_2}$ integrals, we obtain \eqref{def:V}. This completes the proof of Theorem \ref{thm:Cn}.

\subsection{Technical lemmas required in this section}\label{sec:technical lemmas}

      \begin{lemma}\label{lemma O- to USp}
 Let $\Phi_i $ be an even Schwartz function for each $i \leq n $.  We have 
\begin{multline*}  
 \lim_{N \to \infty}     \int_{O^{-} (2N+2)}  \summany_{\substack{ -N  \leq   j_1 , \dots, j_n \leq N  \\  j_1 , \dots, j_n \neq 0      }} \prod_{\ell =1}^n \Phi_\ell  \bigg( \frac{N \theta_{j_\ell}}{ \pi}   \bigg)  dX_{O^{-} (2N+2)}    \\
 = \lim_{N \to \infty}     \int_{USp (2N)}  \summany_{ -N  \leq   j_1 , \dots, j_n \leq N      }   \prod_{\ell =1}^n \Phi_\ell  \bigg( \frac{N \theta_{j_\ell}}{ \pi}   \bigg) dX_{USp(2N)}  .
 \end{multline*}
\end{lemma}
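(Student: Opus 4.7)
The strategy is to reduce both integrals in the lemma to integrals over the same spectral domain via Weyl's integration formula, and then observe that, once the forced eigenvalues $\pm 1$ of elements of $O^-(2N+2)$ are factored out, the joint densities of the remaining eigenangles on $O^-(2N+2)$ and on $USp(2N)$ coincide exactly. Consequently the identity in the lemma holds at every finite $N$, not merely in the limit.

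Concretely, I would first apply Weyl's integration formula on $USp(2N)$: for any symmetric continuous $F(\theta_1,\dots,\theta_N)$,
\begin{equation*}
\int_{USp(2N)} F \, dX_{USp(2N)} = \frac{2^{N^2}}{N!\,\pi^N} \int_{[0,\pi]^N} F(\boldsymbol{\theta}) \prod_{1\le j<k\le N}(\cos\theta_j-\cos\theta_k)^2 \prod_{j=1}^N \sin^2\theta_j \, d\boldsymbol{\theta}.
\end{equation*}
Next I would record the analogous formula on $O^-(2N+2)$ (see, e.g., Katz--Sarnak~\cite{KaSa}, \S 5), where a matrix has forced eigenvalues at $\pm 1 = \pm e^{i\theta_0}$ and $N$ further conjugate pairs $e^{\pm i\theta_j}$, $1\le j\le N$. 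The Weyl density of the non-trivial angles $(\theta_1,\dots,\theta_N)$ is, after the Weyl denominator contributions coming from the forced $\pm 1$ collapse into the factor $\prod_j \sin^2\theta_j$,
\begin{equation*}
\int_{O^-(2N+2)} F\, dX_{O^-(2N+2)} = \frac{2^{N^2}}{N!\,\pi^N}\int_{[0,\pi]^N} F(\boldsymbol{\theta})\prod_{1\le j<k\le N}(\cos\theta_j-\cos\theta_k)^2 \prod_{j=1}^N \sin^2\theta_j \, d\boldsymbol{\theta}.
\end{equation*}
These two densities are identical, which is the essential input.

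With these formulas in hand, both sides of the lemma become integrals of the same symmetric function
\begin{equation*}
F_N(\theta_1,\dots,\theta_N) := \prod_{\ell=1}^n \bigg(\,\sum_{1\le |j_\ell|\le N}\Phi_\ell\Big(\tfrac{N\theta_{j_\ell}}{\pi}\Big)\bigg),
\end{equation*}
where the convention $\theta_{-j}=-\theta_j$ is used. Note that on $O^-(2N+2)$ the index $j_\ell=0$ is explicitly excluded, and on $USp(2N)$ there is no eigenangle indexed by $0$, so the two $n$-fold sums are indeed identical as symmetric functions of $(\theta_1,\dots,\theta_N)$. Matching the Weyl formulas therefore gives equality of the two integrals for each $N$, and taking $N\to\infty$ yields the claim.

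The only non-cosmetic point is the identification of the Weyl density on $O^-(2N+2)$ once the forced $\pm 1$ eigenvalues are removed: the Vandermonde contributions $\prod_{j}(1-\cos\theta_j)$ and $\prod_j(1+\cos\theta_j)$ arising from $\theta=0$ and $\theta=\pi$ combine to produce precisely $\prod_j \sin^2\theta_j$, matching the symplectic density exactly. This is standard (cf.~\cite{KaSa}, \S 5), so the proof is short; I do not anticipate any genuine obstacle.
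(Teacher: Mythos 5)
Your proof is correct, and it takes a genuinely different and cleaner route than the paper. The paper's proof invokes Katz--Sarnak Theorem AD.2.2, which asserts equality of the scaling limits of the $n$-level correlations (sums over \emph{distinct} indices) for $O^-(2N+2)$ and $USp(2N)$, and then uses the combinatorial sieve identity from \eqref{eqn:combinatorial sieve} to pass from sums over distinct indices to unrestricted sums, appealing finally to the evenness of the $\Phi_\ell$ to extend the index range to $-N\le j\le N$. You instead observe that the Weyl measures themselves agree exactly at every finite $N$: on $O^-(2N+2)$ the Weyl-denominator contributions of the forced eigenvalues $\pm 1$ against $e^{\pm i\theta_j}$ produce $\prod_j(1-\cos\theta_j)(1+\cos\theta_j)=\prod_j\sin^2\theta_j$, which is precisely the factor that, on $USp(2N)$, arises from the long roots $\pm 2e_j$; the cross terms give the Vandermonde $\prod_{j<k}(\cos\theta_j-\cos\theta_k)^2$ in both cases. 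Since the integrand is the same symmetric function of $(\theta_1,\dots,\theta_N)$ on both sides (the $j=0$ index is excluded on $O^-$ and simply absent on $USp$), the two integrals agree identically in $N$, and the limit is then immediate. This buys you a stronger statement (finite-$N$ equality) at the cost of having to know or check the coincidence of Weyl densities; the paper's route is more modular, offloading the measure computation entirely to the cited theorem in Katz--Sarnak, at the cost of the extra sieve step to reconcile distinct and unrestricted index sums.
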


\begin{proof}
By \cite[Theorem AD.2.2]{KaSa} we find that
\begin{multline*}
 \lim_{N \to \infty}   \  \int_{O^{-} (2N+2)} \  \sumsharp_{1  \leq   j_1 , \dots, j_n \leq N   }  \prod_{\ell =1}^n \Phi_\ell  \bigg( \frac{N \theta_{j_\ell}}{ \pi}   \bigg)  dX_{O^{-} (2N+2)}    \\
 = \lim_{N \to \infty}   \  \int_{USp (2N)}  \
  \sumsharp_{ 1  \leq   j_1 , \dots, j_n \leq N   }  ~ \prod_{\ell =1}^n \Phi_\ell  \bigg( \frac{N \theta_{j_\ell}}{ \pi}   \bigg)  dX_{USp(2N)}   .
 \end{multline*}
 By \eqref{eqn:combinatorial sieve} with
$$   C_{\underline G } = \sum_{ 1 \leq j_1 , \ldots j_\nu \leq N } \prod_{\ell=1}^\nu \Phi_{G_\ell }  \bigg( \frac{N \theta_{j_\ell}}{ \pi}   \bigg) , \qquad 
  R_{\underline G }  =  \sumsharp_{ 1 \leq j_1 , \ldots j_\nu \leq N } \prod_{\ell=1}^\nu \Phi_{G_\ell }  \bigg( \frac{N \theta_{j_\ell}}{ \pi}   \bigg)$$
   for $ \underline G = \{ G_1 , \ldots , G_\nu \} \in \Pi_n $, we have $\displaystyle C_{\underline O } = \sum_{ \underline G \in \Pi_n } R_{\underline G} $, in other words, 
\begin{multline*}
 \lim_{N \to \infty}   \  \int_{O^{-} (2N+2)} \  \sum_{1  \leq   j_1 , \dots, j_n \leq N   }  \prod_{\ell =1}^n \Phi_\ell  \bigg( \frac{N \theta_{j_\ell}}{ \pi}   \bigg)  dX_{O^{-} (2N+2)}    \\
 = \lim_{N \to \infty}   \  \int_{USp (2N)}  \sum_{ 1  \leq   j_1 , \dots, j_n \leq N   }  ~ \prod_{\ell =1}^n \Phi_\ell  \bigg( \frac{N \theta_{j_\ell}}{ \pi}   \bigg)  dX_{USp(2N)}   .
 \end{multline*}
 By symmetry, the lemma holds.
\end{proof}

\begin{lemma}\label{lemma integrals 1}
Let $ \delta, U_1 , U_2 \in \mathbb{R}$ and $ \delta_1 , \delta_2  > 0 $. Assume that $ \Phi_1 $ and $ \Phi_2 $ are even and their Fourier transforms are compactly supported.  Then we have
$$ \frac{1}{   2 \pi i  }  \int_{ (\delta) }       \frac{e^{- 2 \pi U_1 w } }{ w - z  }    \Phi_1 (iw)    dw = 
\begin{cases}
\int_0^{\infty} \widehat{\Phi}_1( t  + U_1 )  e^{ 2 \pi t z}  dt    & \mathrm{~if~} \delta > \tRe(z), \\  
- \int_0^{\infty} \widehat{\Phi}_1( t  - U_1 )   e^{- 2 \pi t z }   dt   & \mathrm{~if~} \delta < \tRe(z),
\end{cases}$$
and
$$ \frac{1}{   (2 \pi i)^2   }    \int_{ (\delta_2) }       \int_{ (\delta_1) }   \frac{ e^{- 2 \pi ( U_1 w_1 + U_2 w_2 )} }{ ( w_1 + w_2)^2 }          \Phi_1 (iw_1) \Phi_2   (iw_2 )    dw_1 dw_2 = \int_0^{\infty} t \widehat{\Phi}_1( t + U_1 )   \widehat{\Phi}_2 ( t + U_2 )  dt
.$$
In particular, by letting $U_1  = z= 0 $ 
$$ \frac{1}{   2 \pi i  }  \int_{ (\delta_1) }       \frac{1 }{ w_1  }    \Phi_1 (iw_1)    dw_1 = \int_0^{\infty} \widehat{\Phi}_1( t   )    dt  = \frac{  \Phi_1 (0)}{2} .$$

\end{lemma}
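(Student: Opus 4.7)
The plan is to reduce each identity to Fourier inversion combined with residue calculus. By the Paley--Wiener property (since $\widehat{\Phi}_i$ is compactly supported), each $\Phi_i$ extends to an entire function and satisfies
\begin{equation*}
\Phi_i(iw) = \int_{-\infty}^{\infty} \widehat{\Phi}_i(t)\,e^{-2\pi wt}\,dt,
\end{equation*}
with the integral absolutely convergent for every $w\in\mathbb{C}$. Also, evenness of $\Phi_i$ forces evenness of $\widehat{\Phi}_i$.

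For the first identity, I would substitute the above expression for $\Phi_1$ into the contour integral and interchange the order of integration to reduce matters to
\begin{equation*}
\frac{1}{2\pi i}\int_{(\delta)}\frac{e^{-2\pi(U_1+t)w}}{w-z}\,dw.
\end{equation*}
I would then close rightward when $U_1+t>0$ and leftward when $U_1+t<0$; the simple pole at $w=z$ contributes exactly when $\tRe(z)$ lies on the enclosed side of the contour $\delta$. Collecting the four cases and rewriting the surviving integral via the substitution $u=\pm(U_1+t)$ (together with the evenness of $\widehat{\Phi}_1$) yields both of the stated formulas.

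For the second identity, I would first perform the $w_1$-integral after applying Fourier inversion to $\Phi_1$, which gives the inner integral
\begin{equation*}
\frac{1}{2\pi i}\int_{(\delta_1)}\frac{e^{-2\pi(U_1+t_1)w_1}}{(w_1+w_2)^{2}}\,dw_1.
\end{equation*}
Its integrand has a double pole at $w_1=-w_2$, and since $\tRe(-w_2)=-\delta_2<0<\delta_1$ this pole sits to the left of the contour. Closing rightward gives zero when $U_1+t_1>0$; closing leftward when $U_1+t_1<0$ captures the residue $-2\pi(U_1+t_1)e^{2\pi(U_1+t_1)w_2}$ (differentiation of the numerator at the double pole). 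After the substitution $u=-(U_1+t_1)$ and evenness of $\widehat{\Phi}_1$, the $w_1$-integral becomes $2\pi\int_0^{\infty}u\,\widehat{\Phi}_1(u+U_1)\,e^{-2\pi uw_2}\,du$. For the remaining $w_2$-integral I would invoke bilateral Laplace inversion: viewing $\Phi_2(iw_2)$ as the two-sided Laplace transform of $\widehat{\Phi}_2$, one has
\begin{equation*}
\frac{1}{2\pi i}\int_{(\delta_2)}e^{-2\pi cw_2}\Phi_2(iw_2)\,dw_2 = \frac{\widehat{\Phi}_2(c)}{2\pi}
\end{equation*}
for every real $c$ (this can be checked either by the same Fourier-inversion-plus-residue scheme as above, or directly by parametrizing $w_2=\delta_2+is$ and applying Fourier inversion to $\Phi_2$, together with evenness). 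Inserting this with $c=u+U_2$ produces $\int_0^{\infty}u\,\widehat{\Phi}_1(u+U_1)\widehat{\Phi}_2(u+U_2)\,du$, as desired.

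The third identity is the special case $z=U_1=0$ of the first (with $\delta=\delta_1>0$), and $\int_0^{\infty}\widehat{\Phi}_1(t)\,dt=\Phi_1(0)/2$ follows at once from the Fourier inversion $\Phi_1(0)=\int_{-\infty}^{\infty}\widehat{\Phi}_1(t)\,dt$ together with evenness. The main technical point throughout is the justification of each interchange of integration and the vanishing of the semicircular arcs used to close contours, but compact support of $\widehat{\Phi}_i$ guarantees absolute convergence in strips of arbitrary width and decay of $\Phi_i(iw)$ along horizontal segments, so Fubini and Jordan's lemma apply and the bookkeeping is the only serious chore.
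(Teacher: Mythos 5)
Your proposal is correct and follows essentially the same route as the paper's own proof: Fourier-invert $\Phi_1$, interchange the order of integration, evaluate the inner contour integral by closing left or right according to the sign of the exponent and picking up the (possibly double) pole, and then treat the remaining $w_2$-integral as a one-sided inverse Laplace transform. The only cosmetic difference is that you keep the shift $U_1+t$ in the exponent while the paper absorbs it into a translated $\widehat{\Phi}_1(t-U_1)$, and that you state the bilateral Laplace inversion step explicitly where the paper writes down its output directly.
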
 
 
 \begin{proof}
 By Fourier inversion, we have that
 \begin{equation}\label{eqn Fourier inversion}
 e^{-2 \pi Uw}  \Phi ( i w ) = e^{-2 \pi Uw}  \int_{-\infty}^{\infty} \widehat{\Phi}( t ) e^{ - 2 \pi t w } dt  = \int_{-\infty}^{\infty} \widehat{\Phi}( t -U ) e^{ - 2 \pi t w    } dt  
 \end{equation}
 for any real $U$.  Thus, if $ \delta > \tRe(z)$,
 \begin{multline*}
 \frac{1}{   2 \pi i  }  \int_{ (\delta) }       \frac{e^{- 2 \pi U_1 w } }{ w -z }    \Phi_1 (iw)    dw =    
 \int_{-\infty}^{\infty} \widehat{\Phi}_1( t_1 -U_1 ) \frac{1}{   2 \pi i  }  \int_{ (\delta) }       \frac{ e^{ - 2 \pi t_1 w    }  }{ w -z  }         dw   dt_1 \\
 =    
 \int_{-\infty}^{0 } \widehat{\Phi}_1( t_1 -U_1 )  e^{ - 2 \pi t_1 z    }  dt_1   
  =    
 \int_0^{\infty} \widehat{\Phi}_1( t + U_1 )   e^{  2 \pi t z    }  dt. 
\end{multline*}
 For $\delta < \tRe(z)$, the formula follows by the same arguments.

For the second expression,  by \eqref{eqn Fourier inversion}, we obtain that
\begin{align*}
\frac{1}{   (2 \pi i)^2   }   &  \int_{ (\delta_2) }       \int_{ (\delta_1) }   \frac{ e^{- 2 \pi ( U_1 w_1 + U_2 w_2 )} }{ ( w_1 + w_2)^2 }          \Phi_1 (iw_1) \Phi_2   (iw_2 )    dw_1 dw_2   \\
=&   \int_{-\infty}^{\infty} \widehat{\Phi}_1( t_1 -U_1 )   \frac{1}{   (2 \pi i)^2   }    \int_{ (\delta_2) }        \int_{ (\delta_1) }     \frac{e^{ - 2 \pi t_1 w_1  } }{ ( w_1 + w_2)^2 } dw_1e^{- 2 \pi U_2 w_2 } \Phi_2 ( iw_2)  dw_2 dt_1 . 
\end{align*}
For $ t_1 >0 $, we shift the $w_1$-contour far to the right and the $w_1$-integral is zero. For $ t_1 \leq 0 $, we shift the $w_1 $-contour far to the left and pick up a residue at $ w_1 = - w_2 $. Hence, the above equals
\begin{align*}
& \int_{-\infty}^{0} \widehat{\Phi}_1( t_1 -U_1 )   \frac{1}{    2 \pi i    }    \int_{ (\delta_2) }          (  - 2 \pi t_1   )  e^{  2 \pi t_1   w_2 }      e^{- 2 \pi U_2 w_2 } \Phi_2 ( iw_2)  dw_2   dt_1   \\
= &  \int_{-\infty}^{0} \widehat{\Phi}_1( t_1 -U_1 )  (-t_1 ) \widehat{\Phi}_2 ( t_1 - U_2 )  dt_1  
=   \int_0^{\infty} t \widehat{\Phi}_1( t + U_1 )   \widehat{\Phi}_2 ( t + U_2 )  dt . 
\end{align*}

 \end{proof}

\begin{lemma}\label{lemma integral I 12}
Let  $ \delta_1 , \delta_2 > 0 $ and $ U_1 , U_2 \in \mathbb{R}$. Then we have
\begin{align*}
 \mathscr{I}_{1,2} := & \frac{1}{ (  2 \pi i )^{ 2}}  \int_{ (\delta_1  )} \int_{ (\delta_2 )}           e^{-2\pi U_1  w_1  - 2 \pi U_2 w_2 }           \frac{ ( w_1 - w_2)^2 }{   w_1 w_2  ( w_1 + w_2)^2      }     \Phi_{1 }  ( i w_1 )  \Phi_{2} ( i w_2 )    dw_{2}     dw_{1}  \\
 = & \int_0^{   \infty}   \widehat{\Phi}_1 ( t + U_1 ) dt  \int_{ 0}^{ \infty}   \widehat{\Phi}_2 ( t + U_2 ) dt   - 4 \int_0^{ \infty}   t \widehat{ \Phi}_{1 }  (t+U_1 ) \widehat{ \Phi}_{2 }  (t+U_2 )        dt.
\end{align*}

\end{lemma}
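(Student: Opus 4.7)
The plan is to reduce the statement to two applications of Lemma \ref{lemma integrals 1} via a single algebraic identity that splits the kernel into a product kernel and a $(w_1+w_2)^{-2}$ kernel. The key identity is
\begin{equation*}
\frac{(w_1-w_2)^2}{w_1 w_2 (w_1+w_2)^2} \;=\; \frac{(w_1+w_2)^2 - 4 w_1 w_2}{w_1 w_2 (w_1+w_2)^2} \;=\; \frac{1}{w_1 w_2} - \frac{4}{(w_1+w_2)^2},
\end{equation*}
which is immediate from $(w_1-w_2)^2 = (w_1+w_2)^2 - 4 w_1 w_2$. Substituting this decomposition into the definition of $\mathscr{I}_{1,2}$ and invoking absolute convergence (which is justified by the Schwartz decay of $\Phi_1(iw_j)$ on the vertical lines $\tRe(w_j)=\delta_j>0$ together with the fact that the singularities at $w_1=0$, $w_2=0$, and $w_1+w_2=0$ lie to the left of the contours) allows us to write $\mathscr{I}_{1,2} = \mathscr{I}_{1,2}^{(a)} - 4 \mathscr{I}_{1,2}^{(b)}$, where
\begin{equation*}
\mathscr{I}_{1,2}^{(a)} := \frac{1}{(2\pi i)^2} \int_{(\delta_1)}\int_{(\delta_2)} \frac{e^{-2\pi U_1 w_1 - 2\pi U_2 w_2}}{w_1 w_2} \Phi_1(iw_1)\Phi_2(iw_2)\,dw_2\,dw_1,
\end{equation*}
\begin{equation*}
\mathscr{I}_{1,2}^{(b)} := \frac{1}{(2\pi i)^2} \int_{(\delta_1)}\int_{(\delta_2)} \frac{e^{-2\pi U_1 w_1 - 2\pi U_2 w_2}}{(w_1+w_2)^2} \Phi_1(iw_1)\Phi_2(iw_2)\,dw_2\,dw_1.
\end{equation*}

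Next, I would evaluate each piece using Lemma \ref{lemma integrals 1}. For $\mathscr{I}_{1,2}^{(a)}$, the integrand factors as a product in $w_1$ and $w_2$, so the double integral separates into a product of single integrals, and applying the first formula of Lemma \ref{lemma integrals 1} with $z=0$ to each factor yields
\begin{equation*}
\mathscr{I}_{1,2}^{(a)} = \int_0^\infty \widehat{\Phi}_1(t+U_1)\,dt \cdot \int_0^\infty \widehat{\Phi}_2(t+U_2)\,dt.
\end{equation*}
For $\mathscr{I}_{1,2}^{(b)}$, the second formula of Lemma \ref{lemma integrals 1} gives directly
\begin{equation*}
\mathscr{I}_{1,2}^{(b)} = \int_0^\infty t\, \widehat{\Phi}_1(t+U_1)\widehat{\Phi}_2(t+U_2)\,dt.
\end{equation*}
Combining the two evaluations produces the claimed formula.

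There is no serious obstacle here: the only step requiring a touch of care is the algebraic identity (easy once spotted) and the justification of splitting the integral, which is routine given the Schwartz decay of $\Phi_j$ along vertical lines (inherited from the compact support of $\widehat\Phi_j$) ensuring absolute convergence of each of the two resulting pieces separately.
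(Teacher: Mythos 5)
Your proof is correct and follows the paper's argument exactly: the same algebraic decomposition $\frac{(w_1-w_2)^2}{w_1w_2(w_1+w_2)^2} = \frac{1}{w_1w_2} - \frac{4}{(w_1+w_2)^2}$, splitting the integral into two pieces, and applying Lemma \ref{lemma integrals 1} to each.
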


\begin{proof}  
 Since 
$$  \frac{ ( w_1 - w_2)^2 }{   w_1 w_2  ( w_1 + w_2)^2      }   =    \frac{ ( w_1 + w_2)^2  - 4 w_1 w_2  }{   w_1 w_2  ( w_1 + w_2)^2      }  = \frac{1}{ w_1 w_2} - \frac{4}{ ( w_1 + w_2 )^2 }  , $$
we have
\begin{align*}
\mathscr{I}_{1,2}  = &  \frac{1}{ (  2 \pi i )^{ 2}}  \int_{ (\delta_1  )} \int_{ (\delta_2 )}           e^{-2\pi U_1  w_1  - 2 \pi U_2 w_2 }           \frac{ 1 }{   w_1 w_2        }     \Phi_{1 }  ( i w_1 )  \Phi_{2} ( i w_2 )    dw_{2}     dw_{1}  \\
& -  \frac{1}{ (  2 \pi i )^{ 2}}  \int_{ (\delta_1  )} \int_{ (\delta_2 )}           e^{-2\pi U_1  w_1  - 2 \pi U_2 w_2 }           \frac{ 4}{     ( w_1 + w_2)^2      }     \Phi_{1 }  ( i w_1 )  \Phi_{2} ( i w_2 )    dw_{2}     dw_{1}    . 
\end{align*}
 The lemma follows by applying Lemma \ref{lemma integrals 1} to the above.
\end{proof}

\section*{Acknowledgement}

 V.C. acknowledges support from NSF grant DMS-2502599. X.L. acknowledges support from Simons Travel Grant 962494 and NSF grant DMS-2302672. Y.L. was supported by the National Research Foundation of Korea (NRF) grant funded by the Korea government (MSIT) (No. RS-2024-00415601, RS-2025-16070236).

 We would like to thank Henryk Iwaniec and Nina Snaith for comments.

%\newpage
%-------------------
%-------------------
%-------------------

\end{document}